\newcommand{\YS}{{$\gamma$\textsf{RYS }}}
\newcommand{\YSP}{{$\gamma$\textsf{RYS+ }}}
\newcommand{\pc}{\mathbf{P}}
\newcommand{\oc}{\mathbf{O}}
\newcommand{\uc}{\mathbf{U}}
\newcommand{\pp}{\mathbb{P}}
\newcommand{\ox}{\mathbb{X}}
\newcommand{\po}{\mathbb{O}}
\newcommand{\md}{\mathrm{Md}}
\newcommand{\ms}{\mathcal{S}}
\begin{document}

\title{Dialectics of Counting and the Mathematics of Vagueness}
\author{A. Mani}
\institute{Department of Pure Mathematics\\
University of Calcutta\\
9/1B, Jatin Bagchi Road\\
Kolkata(Calcutta)-700029, India\\
\texttt{$a.mani.cms@gmail.com$}\\
Homepage: \url{http://www.logicamani.in}}

\maketitle

\begin{abstract}
New concepts of rough natural number systems are introduced in this research paper from
both formal and less formal perspectives. These are used to improve most rough
set-theoretical measures in general Rough Set theory (\textsf{RST}) and to represent rough
semantics. The foundations of the theory also rely upon the axiomatic approach to
granularity for all types of general \textsf{RST} recently developed by the present
author. The latter theory is expanded upon in this paper. It is also shown that algebraic
semantics of classical \textsf{RST} can be obtained from the developed dialectical
counting procedures. Fuzzy set theory is also shown to be representable in purely
granule-theoretic terms in the general perspective of solving the contamination problem
that pervades this research paper. All this constitutes a radically different approach to
the mathematics of vague phenomena and suggests new directions for a more realistic
extension of the foundations of mathematics of vagueness from both foundational and
application points of view. Algebras corresponding to a concept of \emph{rough naturals}
are also studied and variants are characterised in the penultimate section.

\medskip
\textbf{keywords}:
Mathematics of Vagueness, Rough Natural Number Systems, Axiomatic Theory of Granules,
Granulation, Granular Rough Semantics, Algebraic Semantics, Rough Y-Systems, Cover Based
Rough Set Theories, Rough Inclusion Functions, Measures of Knowledge, Contamination
Problem.

\end{abstract}


\section{Introduction}

Rough and Fuzzy set theories have been the dominant approaches to vagueness and
approximate reasoning from a mathematical perspective. Some related references
are
\cite{Sk08,Paw94,PPM2,Baz06,Sk05,AM909,Ba03,We07,Sh06,Ke00,Ke99,Ru23,Paw97,Bo08,DP80,LP02}
.
In rough set theory (\textsf{RST}),vague and imprecise information are dealt with through
binary relations (for some form of indiscernibility) on a set or covers of a set or
through more abstract operators. In classical \textsf{RST} \cite{ZPB}, starting from an
approximation space consisting of a pair of a
set and an equivalence relation over it, approximations of subsets of the set are
constructed out of equivalence partitions of the space (these are crisp or definite) that
are also regarded as granules in many senses. Most of the developments in \textsf{RST}
have been within the ZFC or ZF set-theoretic framework of mathematics. In such frame
works, rough sets can be seen as pairs of sets of the form $(A,\,B)$, with $A\subseteq B$
or more generally as in the approaches of the present author as collections of ''some
sense definite elements'' of the form \[\{a_1, a_2, \ldots a_{n},\,b_{1}, b_{2},\ldots b_r
\}\] subject to $a_{i}$s being 'part of' of some of the $b_{j}$s (in a Rough $Y$-system)
\cite{AM99}.  

Relative \textsf{RST}, fuzzy set theory may be regarded as a complementary approach or as
a special case of \textsf{RST} from the membership function perspective \cite{ZP5}. Hybrid
rough-fuzzy and fuzzy-rough variants have also been studied. In these a partitioning of
the meta levels can be associated for the different types of phenomena, though it can be
argued that these are essentially of a rough set theoretic nature. All of these approaches
have been confined to ZFC or ZF or the setting of classical mathematics. Exceptions to
this trend include the Lesniewski-mereology based approach \cite{PS3}. Though Rough
$Y$-systems have been introduced by the present author in ZF compatible settings
\cite{AM99}, they can be generalised to semi-sets and variants in a natural way. This
semi-set theoretical variant is work in progress. Note that the term 'theory' in
\textsf{RST} is being used in the loose sense of the literature on the subject. It matters
because the paper has strong connections with logical systems and philosophy. 

The granular computing paradigm can be traced to the mid nineties and has been used in
different fields including fuzzy and rough set theories. An overview is considered in
\cite{TYL}. The dominant interpretation of the paradigm within \textsf{RST} has been that
granularity is a context-dependent notion that gets actualized in a quasi inductive way
(see \cite{YY5} for example). A few axiomatic approaches for specific rough set theories
are known in the literature \cite{WW}, while in some others like \cite{SW3} different
types of granules have been used in tolerance approximation spaces. The axiomatic theory
\cite{AM99} developed by the present author is the most general one in the context of
general \textsf{RST}s. \cite{KCM} considers 'ontology driven formal theory of granules'
for application to biological information systems, related specifications and logic
programming. The motivations and content are mostly orthogonal to the concerns of the
present research paper.

As classical \textsf{RST} is generalised to more general relations and covers, the process
of construction and definition of approximations becomes more open ended and draws in
different amounts of arbitrariness or hidden principles. The relevant concept of
'granules', the things that generate approximations, may also become opaque in these
settings. To address these issues and for semantic compulsions, a new axiomatic theory of
granules has been developed over a \textsf{RYS} in \cite{AM99} and other recent papers by
the present author. This theory has been used to formalise different principles, including
the local clear discernibility principle in the same paper. In this research, it is
extended to various types of general \textsf{RST} and is used to define the concepts of
discernibility used in counting procedures, generalised measures and the problems of
representation of semantics.  

Many types of contexts involving vagueness cannot be handled in elegant way through
standard mathematical techniques. Much of \textsf{RST} and \textsf{FST} are not known to
be particularly elegant in handling different measures like the degree of membership or
inclusion. For example, these measures do not determine the semantics in clear terms or
the semantics do not determine the measures in a unique way. But given assumptions about
the measures, compatible semantics \cite{PL} in different forms are well known. This
situation is due to the absence of methods of counting collections of objects including
relatively indiscernible ones and methods for performing arithmetical operations on them.
However in the various algebraic semantics of classical \textsf{RST} some boundaries on
possible operations may be observed.

The process of counting a set of objects, given the restriction that some of these may be
indiscernible within themselves, may appear to be a very contextual matter and on deeper
analysis may appear to bear no easy relationship with any fine structure concerning the
vagueness of a collection of such elements or the rough semantics (algebraic or frame).
This is reflected in the absence of related developments on the relationship between the
two in the literature and is more due to the lack of developments in the former. 

It should be noted that the convenience of choice between concepts of formal logics (in
axiomatic or sequent calculus form) preceding algebraic semantics or the converse depend
on the context. For many classes of logics, the absence of real distinction is
well-known \cite{GT,FJ}. Any intention to deal with models automatically comes with an
ontological commitment to the proof-theoretical approach and vice versa. The literature on
rough sets shows that the converse works better -- for example the rough algebra semantics
\cite{BC1}, the double stone algebra semantics \cite{DU} and super rough semantics
\cite{AM3} were developed from a 'semantic viewpoint'. The modal perspective originated
from both a semantic viewpoint and an axiomatic viewpoint (not proof-theoretic). The more
important thing to be noted is that full application contexts of rough sets come with many
more kinds of problems (like the reduct problem) that are not represented in rough logics
or semantics, since the focus is on reasoning. This means fragments of the full process
are
abstracted for the formulation of rough logics in proof-theoretical or model-theoretical
terms. When I speak of \emph{semantics of \textsf{RST}}, I mean such an abstraction
necessarily. More clarifications are provided in the section on semantic domains and the
contamination problem.

In this research, theories of \emph{vague numbers} or rather procedures for counting
collections of objects including indiscernible ones have been introduced by the present
author and have been applied to extend various measures of \textsf{RST}. The extended
measures have better information content and also supplement the mereological theory from
a quantitative perspective. Proper extension of these to form a basis of \textsf{RST}
within ZF/ZFC is also considered in this research paper. Here, by a 'basis of
\textsf{RST}', I mean a theory analogous to the theory of numbers from which all
mathematics of exact phenomena can be represented. Throughout this paper, the theory may
be seen to be restricted to ZF/ZFC set theoretical setting, though a naive or a second
order reading will not be problematic. Relaxation of the ZF axioms given the dialectical
interpretation of semantic domain will be taken up in subsequent papers, but the
philosophical motivations for such a paradigm shift will be considered in later sections.
From a purely vagueness perspective, the goal is also to enlarge the scope of mathematical
perspective of vagueness. 

Notation and terminology are fixed in the second section. In the third section, the basic
orientation of object and meta levels used, and the relation with concepts is elucidated.
The concept of \emph{contamination} of information across meta-levels is introduced and
described in the next section. In the fifth section, the reason for using a fragment of
mereology as opposed to the Polkowski-Skowron mereology is explained. Some non-standard
(with respect to the literature on \textsf{RST}) examples motivating key aspects of the
axiomatic approach to granules are presented in the sixth section. In the next section,
the entire structure of the proposed program and aspects of the measures used in
\textsf{RST} are discussed. In the following section, aspects of counting in domains of
vague reasoning are explained in a novel perspective. In the ninth section, the axiomatic
theory of granules over rough $Y$-systems is extended. In the following two sections this
is applied to relation-based and cover-based rough set theories. The ninth, tenth and
eleventh sections may also be found in a forthcoming paper by the present author and have
been included for completeness. Dialectical counting processes are introduced next. These
are used to generalise rough inclusion functions, degrees of knowledge dependency and
other measures in the following section. In the fourteenth section, possible
representation of different types of counts is developed. An application to rough
semantics and integration of granularity with a method of counting is considered in the
fifteenth section. In the following section, I show how fuzzy set theory can be viewed as
a particular form of granularity in the perspective of the contamination problem. The
relation with earlier approaches is also indicated.  Subsequently I consider the problem
of improving the representation of counts in a low-level perspective and develop the
algebra of rough naturals in detail. Further directions are mentioned in the eighteenth
section.

\section{Some Background, Terminology}

A \emph{Tolerance Approximation Space} \textsf{TAS} \cite{ZP6} is a pair $S=\left\langle
\underline{S},\,T\right\rangle$, with $\underline{S}$ being a set and $T$ a tolerance
relation over it. They are also known as similarity and as tolerance approximation spaces
(conflicting the terminology introduced in \cite{SS1}). For each $x\in S$, the associated
set of $T$-related elements is given by $[x]_{T}=\{y\,;\,(x,\,y)\,\in\,T\}$. Some
references for extension of classical RST to \textsf{TAS} are \cite{SS1}, \cite{KM},
\cite{CG98} and \cite{KB98}. In \cite{SW3} specific granulations are considered separately
in \textsf{TAS}, but many types of duality and connections with logics are not considered.
The actual body of work in the field is huge and no attempt to mention all possibly
relevant references will be made.

An approach \cite{KM} has been to define a new equivalence $\theta_{0}$ on  $S$ via
$(x,\,y)\,\in\,\theta_{0}$ if and only if $dom_{T}(x)=dom_{T}(y)$  with 
$dom_{T}(z)=\cap\{[x]_{T}\,:\,z\,\in\,[x]_{T}\}$. This is an unduly cautious 'clear
perspective' approach. A generalization of the approximation space semantics using
$T$-related sets (or tolerance sets) can be described from the point of view of
generalised covers (see \cite{IM}). This includes the approach to defining the lower and
upper approximation of a set $A$ as
\[A^{l}=\bigcup\{[x]_{T}\,;\,[x]_{T}\,\subseteq\,A\},\] and
\[A^{u}=\bigcup\{[x]_{T}\,;\,[x]_{T}\,\cap\,A\,\neq\,\emptyset ,\,x\,\in\,A\}.\] A
\emph{bited modification} proposed in \cite{SW}, valid for many definable concepts of
granules, consists in defining a \emph{bited upper approximation}. Algebraic semantics of
the same has been considered by the present author in \cite{AM105}. It is also shown that
a full representation theorem is not always possible for the semantics.    

The approximations \[A^{l*}=\{
x\,;\,(\exists{y})\,(x,\,y)\,\in\,T,\,[y]_{T}\,\subseteq\,A\},\] and
\[A^{u*}=\{x\,;\,(\forall{y})\,((x,\,y)\,\in\,T\,\longrightarrow\,[y]_{T}\,\cap\,A\,\neq\,
\emptyset)\} = (A^{c})^{l*c}\] were considered in \cite{PO,CG98}. It can be shown that,
for any subset $A$,
\[A^{l}\subseteq\,A^{l*}\,\subseteq\,A\,\subseteq\,A^{u*}\,\subseteq\,A^{u}.\] In the BZ
and Quasi-BZ algebraic semantics \cite{CC}, the lower and upper rough operators are
generated by a preclusivity operator and the complementation relation on the power set of
the approximation space, or on a collection of sets under suitable constraints in a more
abstract setting. Semantically, the BZ-algebra and variants do not capture all the
possible ways of arriving at concepts of discernibility over similarity spaces. 

Let ${S}$ be a set and $\mathcal{S}\,=\,\{K_{i}\}_{1}^{n}\,:n\,<\,\infty$ be a collection
of subsets of it. We will abbreviate subsets of natural numbers of the form $\{1, 2,
\ldots , n\}$ by $\mathbf{N}(n)$. For convenience, we will assume that
$K_{0}\,=\,\emptyset$, $K_{n+1}\,=\,S$. $\left\langle S,\,\mathcal{S} \right\rangle $ will
also be referred to as a \emph{Cover Approximation System} (CAS).

Cover-based \textsf{RST} can be traced to \cite{WZ}, where the approximations $A^{l}$ and
$A^{u}$  are defined over the cover $\{[x]_{T}; x\in S\}$. A 1-neighbourhood \cite{YY9}
$n(x)$ of an element $x\in S$ is simply a subset of $S$. The collection of all
1-neighbourhoods $\mathcal{N}$ of $S$ will form a cover if and only if $(\forall
x)(\exists y) x\in n(y)$ (anti-seriality). So in particular a reflexive relation on $S$ is
sufficient to generate a cover on it. Of course, the converse association does not
necessarily happen in a unique way.     

If $\mathcal{S}$ is a cover of the set $S$, then the \emph{Neighbourhood} \cite{LTJ} of
$x\in S$ is defined via, \[nbd(x)\,=\,\bigcap\{K:\,x\in K\,\in\,\mathcal{S}\}.\] The sixth
type of lower and upper approximations \cite{ZW3,YY9} of a set $X$ are then defined by
\[X_{\$}\,=\,\{x:\,nbd(x)\,\subseteq\,X\},\] and
\[X^{\$}\,=\,\{x:\,nbd(x)\cap\,X\neq\,\emptyset \}.\]

The minimal description of an element $x\in S$ is defined to be the collection
\[\mathrm{Md} (x)\,=\,\{A\,:x\in A\in \mathcal{S},\, \forall{B}(x\in B\rightarrow
\sim(A\subset B))\}.\] The \emph{Indiscernibility} (or friends) of an element $x\,\in S$
is defined to be \[Fr(x)\,=\,\bigcup\{K:\,x\in K\in\mathcal{S}\}.\] The definition was
used first in \cite{PO}, but has been redefined again by many others (see \cite{CP4}). An
element $K\in \ms$ will be said to be \emph{Reducible} if and only if \[(\forall x\in K)
K\neq\,\md(x).\] The collection $\{nbd(x):\,x\in\,S\}$ will be denoted by $\mathcal{N}$.
The cover obtained by the removal of all reducible elements is called a covering reduct.
The terminology is closest to \cite{ZW3} and many variants can be found in the literature
(see \cite{CP4}). 

If $X\subseteq  S$, then let
\begin{enumerate} \renewcommand\labelenumi{\theenumi}
  \renewcommand{\theenumi}{(\roman{enumi})}
\item {$X^{l1}\,=\,\bigcup\{K_{i}\,:\,K_{i}\subseteq  X,\,\,i\,\in \,\{0,\,1,\,...,n\} 
\}$.} 
\item {$X^{l2}\,=\,\bigcup\{\cap_{i\,\in \,I}(S\,\setminus K_{i})\,:\,\cap_{i\,\in
\,I}(S\,\setminus\, K_{i})\subseteq X,\,\,I\subseteq \mathbf{N}(n+1) \} $; the union is
over the $I$'s.}
\item {$X^{u1}\,=\,\bigcap\{\cup_{i\,\in \,I}{K_{i}}\,:\,X\subseteq \cup_{i\,\in
\,I}\,K_{i},\,\,I\subseteq \mathbf{N}(n+1) \} $; the intersection is over the $I$'s.}
\item {$X^{u2}\,=\,\bigcap\{S\,\setminus\,{K_{i}}\,:\,X\subseteq
S\,\setminus\,K_{i},\,\,i\,\in \,\{0,\,...,n\}\}$.}
\end{enumerate}

The pair $(X^{l1},\,X^{u1})$ is called an $AU$-\emph{rough set} by union, while
$(X^{l2},\,X^{u2})$ an $AI$-\emph{rough set} by intersection (in the present author's
notation \cite{AM24}). In the notation of ~\cite{IM}, these are $\left(
\mathcal{F}_{*}^{\cup}(X),\,\mathcal{F}_{\cup}^{*}(X)\right)$ and $\left(
\mathcal{F}_{*}^{\cap}(X),\,\mathcal{F}_{\cap}^{*}(X)\right)$, respectively. I will also
refer to the pair $\left\langle S,\,\mathcal{K} \right\rangle $ as an
\textsf{AUAI}-\emph{approximation system}.

\begin{theorem}
The following hold in AUAI approximation systems:

\begin{enumerate}\renewcommand\labelenumi{\theenumi}
  \renewcommand{\theenumi}{(\roman{enumi})} 
\item {$X^{l1}\subseteq \,X\subseteq X^{u1}$;  $X^{l2}\subseteq \,X\subseteq X^{u2} $; 
$\emptyset^{l1}\,=\,\emptyset^{l2}\,=\,\emptyset $,}
\item {$(\cup{\mathcal{K}}\,=\,S\,\longrightarrow\,S^{u1}\,=\,S^{u2}\,=\,S) $ ; 
$(\cup{\mathcal{K}}\,=\,S\,\longrightarrow\,\emptyset^{u2}\,=\,\emptyset,\,S^{l1}\,=\,S)
$,}
\item
{$(\cap{\mathcal{K}}\,=\,\emptyset\,\longrightarrow\,\emptyset^{u1}\,=\,\emptyset,\,S^{l2}
\,=\,S) $,}
\item {$(X\,\cap\,Y)^{l1}\subseteq X^{l1}\,\cap\,Y^{l1}$ ; 
$(X\,\cap\,Y)^{l2}\,=\,X^{l2}\,\cap\,Y^{l2}$,}
\item {$(X\,\cup\,Y)^{u1}\,=\,X^{u1}\,\cup\,Y^{u1}$ ;    $X^{u2}\,\cup\,Y^{u2}\subseteq
(X\,\cup\,Y)^{u2}$,}
\item {$(X\subseteq Y\,\longrightarrow\,X^{l1}\subseteq Y^{l1},\,X^{l2}\subseteq Y^{l2},
X^{u1}\subseteq Y^{u1},\,X^{u2}\subseteq Y^{u2})$,}
\item {If $(\forall i \neq j) K_{i}\cap K_{j} = \emptyset$ then $(X\,\cap\,Y)^{l1} =
X^{l1}\,\cap\,Y^{l1},\, (X\,\cup\,Y)^{u2} = X^{u2}\,\cup\,Y^{u2} $,}
\item {$X^{l1}\,\cup\,Y^{l1}\subseteq (X\,\cup\,Y)^{l1}$ ;  $X^{l2}\,\cup\,Y^{l2}\subseteq
(X\,\cup\,Y)^{l2}$,}
\item {$(X\,\cap\,Y)^{u1}\subseteq X^{u1}\,\cap\,Y^{u1}$ ;  $(X\,\cap\,Y)^{u2}\subseteq
X^{u2}\,\cap\,Y^{u2}$,}
\item {$(S\,\setminus\,X)^{l1}\,=\,S\,\setminus\,X^{u2}$ ; 
$(S\,\setminus\,X)^{l2}\,=\,S\,\setminus\,X^{u1}$, } 
\item {$(S\,\setminus\,X)^{u1}\,=\,S\,\setminus\,X^{l2}$ ; 
$(S\,\setminus\,X)^{u2}\,=\,S\,\setminus\,X^{l1}$,}
\item {$(X^{l1})^{l1}\,=\,X^{l1}$ ; $(X^{l2})^{l2}\,=\,X^{l2}$ ;
$(X^{u1})^{u1}\,=\,X^{u1}$, }
\item {$(X^{u2})^{u2}\,=\,X^{u2}$ ; $(X^{l1})^{u1}\,=\,X^{l1}$ ;  
$(X^{u2})^{l2}\,=\,X^{u2}$,}
\item {$X^{l2}\subseteq (X^{l2})^{u2}$,   $(X^{u1})^{l1}\subseteq X^{u1}$, }
\item {$(\mathcal{K}_{j}^{\cap}(X))^{u2}\,=\,\mathcal{K}_{j}^{\cap}(X)$, $ j=
1,\,2,...,t_{1}$ ; $(\mathcal{K}_{j}^{\cup}(X))^{l1}\,=\,\mathcal{K}_{j}^{\cup}(X)$, $j =
1,\,2,...,t_{2}$.}
\end{enumerate}
In this, $(\mathcal{K}_{j}^{\cup}(X))$ is the minimal union of sets of the form $K_{i}$
that include $X$ (for $j$ being in the indicated range) and $(\mathcal{K}_{j}^{\cap}(X))$
is the maximal intersection of sets of the form $K_{i}^{c}$ that are included in $X$.
\end{theorem}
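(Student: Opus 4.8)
The plan is to derive all fifteen items from a handful of structural facts rather than verifying each line separately. The first move is the De Morgan rewriting $\bigcap_{i\in I}(S\setminus K_i)=S\setminus\bigcup_{i\in I}K_i$. Applied to the definitions this gives at once $X^{u2}=S\setminus(S\setminus X)^{l1}$ and $X^{u1}=S\setminus(S\setminus X)^{l2}$, which are items (x)--(xi) after replacing $X$ by $S\setminus X$. I would prove these two identities first, because they halve the remaining work: any statement about $l1$ transports verbatim to a dual statement about $u2$, and any statement about $l2$ to one about $u1$ (and back), simply by conjugating with complementation and using the infinite De Morgan laws in the powerset Boolean algebra.

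Second, I would record the operator-theoretic picture. Let $\mathcal{U}=\{\bigcup_{i\in I}K_i:\ I\subseteq\{0,\dots,n\}\}$. Then $X^{l1}$ is the largest member of $\mathcal{U}$ contained in $X$, and the fixed points of $l1$ are exactly the members of $\mathcal{U}$; dually $u2$ is the closure operator whose closed sets are the intersections of the $S\setminus K_i$. Likewise $u1$ is the closure operator generated by $\mathcal{U}$ (intersection of all unions of $K_i$, $i\in\mathbf{N}(n+1)$, that contain $X$) and $l2$ is its complement-dual kernel operator. From this, contractivity/extensivity (item (i), with $\emptyset^{l1}=\emptyset^{l2}=\emptyset$ coming from $K_0=\emptyset$) and monotonicity (item (vi)) are immediate from the definitions, and idempotence follows: for (xii) the point for $u1$ is that $\{U\in\mathcal{U}:X\subseteq U\}$ and $\{U\in\mathcal{U}:X^{u1}\subseteq U\}$ coincide because $X\subseteq X^{u1}$, so their intersections agree. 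Items (xiii)--(xiv) are then just ``fixed points are fixed'': $X^{l1}\in\mathcal{U}$ is $u1$-closed, $X^{u2}$ is $l2$-open, and $X^{l2}\subseteq(X^{l2})^{u2}$ and $(X^{u1})^{l1}\subseteq X^{u1}$ are instances of (i); item (xv) is the same remark for a minimal covering union of $K_i$ (it is $l1$-fixed) and a maximal intersection of $K_i^{c}$ (it is $u2$-fixed). Items (ii)--(iii) are the explicit evaluations $S^{l1}=\bigcup\mathcal{K}$, $\emptyset^{u2}=S\setminus\bigcup\mathcal{K}$ and their duals, which collapse under the stated hypotheses $\bigcup\mathcal{K}=S$ or $\bigcap\mathcal{K}=\emptyset$.

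Third come the distributivity items (iv), (v), (viii), (ix) and the disjointness item (vii). All the merely-inclusion claims there ($X^{l1}\cup Y^{l1}\subseteq(X\cup Y)^{l1}$, $(X\cap Y)^{l1}\subseteq X^{l1}\cap Y^{l1}$, and the $u$-versions) are immediate from monotonicity. The two genuine identities are $(X\cup Y)^{u1}=X^{u1}\cup Y^{u1}$ and, by the De Morgan dual, $(X\cap Y)^{l2}=X^{l2}\cap Y^{l2}$; for the first I would take arbitrary $U_1,U_2\in\mathcal{U}$ with $X\subseteq U_1$, $Y\subseteq U_2$, note $U_1\cup U_2\in\mathcal{U}$ covers $X\cup Y$, so $(X\cup Y)^{u1}\subseteq U_1\cup U_2$, and then push the double intersection over such pairs through the distributive law $\bigcap_\alpha(A_\alpha\cup B)=(\bigcap_\alpha A_\alpha)\cup B$ to get $(X\cup Y)^{u1}\subseteq X^{u1}\cup Y^{u1}$, the reverse inclusion being monotonicity. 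For (vii): if the $K_i$ are pairwise disjoint and $z\in X^{l1}\cap Y^{l1}$, then $z$ lies in some $K_i\subseteq X$ and some $K_j\subseteq Y$, and $z\in K_i\cap K_j$ forces $i=j$, so $K_i\subseteq X\cap Y$; hence $(X\cap Y)^{l1}=X^{l1}\cap Y^{l1}$, and $(X\cup Y)^{u2}=X^{u2}\cup Y^{u2}$ follows by duality.

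I do not expect a real obstacle: the content is organised bookkeeping, and the only point needing care is the asymmetry of the index conventions --- $l1$ and $u2$ let $i$ range over $\{0,\dots,n\}$, while $l2$ and $u1$ also allow $i=n+1$, so the whole space $S$ is available on the $u1/l2$ side but not on the $l1/u2$ side. This is precisely why items (ii)--(iii) need the covering hypotheses, and why the fixedness assertions in (xv) must be read with the standing assumption $\bigcup\mathcal{S}=S$ (otherwise a minimal union covering an $X\not\subseteq\bigcup\mathcal{S}$ is forced to be $S=K_{n+1}$, whose $l1$ is only $\bigcup\mathcal{K}$). Keeping these edge cases consistent across the statement is the one delicate part of the write-up.
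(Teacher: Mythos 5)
There is nothing in the paper to compare your argument against: this theorem appears in the background section as a recalled result about \textsf{AUAI} approximation systems (with the conventions $K_{0}=\emptyset$, $K_{n+1}=S$) and the paper gives no proof of it. Judged on its own, your outline is correct and covers all fifteen items: the complementation dualities (x)--(xi) do follow from De Morgan exactly as you say and legitimately halve the work; (i), (vi), (xii)--(xiv) are the kernel/closure-operator facts and the ``fixed points are fixed'' observations, and your idempotence argument for $u1$ (the two defining families coincide because $X\subseteq X^{u1}$) is sound; the identity $(X\cup Y)^{u1}=X^{u1}\cup Y^{u1}$ via pairs $U_{1},U_{2}$ and the set identity $\bigcap_{\alpha}(A_{\alpha}\cup B)=(\bigcap_{\alpha}A_{\alpha})\cup B$ is fine, and $(X\cap Y)^{l2}=X^{l2}\cap Y^{l2}$, as well as both halves of (vii), then come out by duality; (ii)--(iii) reduce to the evaluations $S^{l1}=\bigcup\mathcal{K}$, $\emptyset^{u2}=S\setminus\bigcup\mathcal{K}$, $\emptyset^{u1}\subseteq\bigcap\mathcal{K}$, $S^{l2}\supseteq S\setminus\bigcap\mathcal{K}$. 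Two convention-sensitive points deserve explicit mention in a final write-up, both of the kind you already flag. First, your claim in (xiii) that $X^{u2}$ is $l2$-fixed needs the case $X^{u2}=S$ (which occurs whenever $X$ meets every $K_{i}$): this is covered only because $I=\emptyset\subseteq\mathbf{N}(n+1)$ is admissible in the definition of $l2$, with the empty intersection read as $S$; without that reading, $(X^{u2})^{l2}=X^{u2}$ can fail when $\bigcap\mathcal{K}\neq\emptyset$, so state the convention. Second, your caveat on (xv) is right: fixedness of $\mathcal{K}_{j}^{\cup}(X)$ under $l1$ requires the minimal covering unions to avoid $K_{n+1}$, i.e.\ effectively $X\subseteq\bigcup\mathcal{K}$, whereas fixedness of $\mathcal{K}_{j}^{\cap}(X)$ under $u2$ needs no maximality at all, only that it is an intersection of the admissible complements. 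With those readings made explicit, your plan yields a complete and efficient proof of a statement the paper only quotes.
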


All of the above concepts can be extended to covers with an arbitrary number of elements.
The concepts of indiscernibility, neighbourhood and minimum description can be extended to
subsets of $S$. The concept of a \emph{Neighbourhood Operator} has been used in the
literature in many different senses. These can be relevant in the context of the sixth
type ($l6+,\,u6+$) (see the sixth section) approximations for dealing with covers
generated by partially reflexive relations \cite{AM24}. A large number of approximations
in the cover-based approximation context have been studied in the literature using a far
larger set of notations. An improved nomenclature is also proposed in the eleventh
section.

Cover-based \textsf{RST} is more general than relation-based \textsf{RST} and the question
of when covers over a set correspond to relations over the set is resolved through duality
results. It is well known that partitions correspond to equivalences and normal covers to
tolerances. The approach based on neighbourhoods \cite{YY9} provides many one way results.
A more effective way of reducing cover-based \textsf{RST} to relation-based \textsf{RST}
is in \cite{AM960}.

\section{Semantic Domains, Meta and Object Levels}

This section is intended to help with the understanding of the section on the
contamination problem, the definition of \textsf{RYS} and clarify the terminology about
meta and object levels among other things. In classical \textsf{RST} (see \cite{ZPB}), an
approximation space is a pair of the form $\left\langle S,\,R \right\rangle $, with $R$
being an equivalence on the set $S$. On the power set $\wp (S)$, lower and upper
approximation operators, apart from the usual Boolean operations, are definable. The
resulting structure constitutes a semantics for RST (though not satisfactory) from a
classical perspective. This continues to be true even when $R$ is some other type of
binary relation. More generally (see fourth section) it is possible to replace $\wp (S)$
by some set with a parthood relation and some approximation operators defined on it. The
associated semantic domain in the sense of a collection of restrictions on possible
objects, predicates, constants, functions and low level operations on those will be
referred to as the classical semantic domain for general RST. In contrast, the semantics
associated with sets of roughly equivalent or relatively indiscernible objects with
respect to this domain will be called the rough semantic domain. Actually many other
semantic domains, including hybrid semantic domains, can be generated (see \cite{AM699},
\cite{AM105} \cite{AM3}) for different types of rough semantics, but these two broad
domains will always be - though not necessarily with a nice correspondence between the
two. In one of the semantics developed in \cite{AM105}, the reasoning is within the power
set of the set of possible order-compatible partitions of the set of roughly equivalent
elements.  The concept of \emph{semantic domain} is therefore similar to the sense in
which it is used in general abstract model theory \cite{MD} (though one can object to
formalisation on different philosophical grounds).  

Formal versions of these types of semantic domains will be useful for clarifying the
relation with categorical approaches to fragments of granular computing \cite{BY}. But
even without a formal definition, it can be seen that the two approaches are not
equivalent. Since the categorical approach requires complete description of fixed type of
granulations, it is difficult to apply and especially when granules evolve relative
particular semantics or semantic domains. The entire category \textbf{ROUGH} of rough sets
in \cite{BY}, assumes a uniform semantic domain as evidenced by the notion of objects and
morphisms used therein. A unifying semantic domain may not also be definable for many sets
of semantic domains in our approach. This means the categorical approach needs to be
extended to provide a possibly comparable setting.

The term \emph{object level} will mean a description that can be used to directly
interface with fragments (sufficient for the theories or observations under consideration)
of the concrete real world. Meta levels concern fragments of theories that address aspects
of dynamics at lower meta levels or the object level. Importantly, we permit meta level
aspects to filter down to object levels relative different object levels of specification.
So it is always possible to construct more meta levels and expressions carry intentions. 

\emph{Despite all this, two particular meta levels namely Meta-C (or Meta Classical),
Meta-R (or Meta Rough) and an object level will be used for comparing key notions
introduced with the more common approaches in the literature. Meta-R is the meta level
corresponding to the observer or agent experiencing the vagueness or reasoning in vague
terms (but without rough inclusion functions and degrees of inclusion), while Meta-C will
be the more usual higher order classical meta level at which the semantics is formulated.
It should be noted that rough membership functions and similar measures are defined at
Meta-C, but they do not exist at Meta-R. A number of meta levels placed between Meta-R and
Meta-C can possibly be defined and some of these will be implicit in the section on rough
naturals.} 
 
Many logics have been developed with the intent of formalising 'rough sets' as
'well-formed formulae' in a fixed language. They do not have a uniform domain of discourse
and even ones with category theoretically equivalent models do not necessarily see the
domain in the same way (though most meanings can be mapped in a bijective sense). For
example, the regular double stone algebra semantics and complete rough algebra semantics
correspond to different logical systems of classical \textsf{RST} (see \cite{BK3,BC2}).
The super rough algebra semantics in \cite{AM3} actually adds more to the rough algebra
semantics of \cite{BC1}. It is possible to express the ability of objects to approximate
in the former, while this is not possible in the latter. This is the result of a higher
order construction used for generating the former.    

The relation of some rough semantics and topology mentioned in the previous section is
again a statement about the orientation of the semantic domains in the respective subjects
formulated in more crude mathematical terms. 

\subsection{Granules and Concepts}

In \cite{YY3} for example, concepts of human knowledge are taken to consist of an
intensional part and an extensional part. The \emph{intension} of a concept is the
collection of properties or attributes that hold for the set of objects to which the
concept applies. The \emph{extension} is to consist of actual examples of the object. Yao
writes, 'This formulation enables us to study concepts in a logic setting in terms of
intensions and also in a set-theoretic setting in terms of extensions'. The description of
granules characterise concepts from the intensional point of view, while granules
themselves characterise concepts from the extensional point of view. \emph{Granulations}
are collections of granules that contain every object of the universe in question. In a
seemingly similar context, in \cite{PPM} (or \cite{PPM2}) the authors speak of
\emph{extensional granules} and \emph{intensional granules} that are respectively related
to objects and properties. In my opinion the semantic domains in use are different and
these are not conflicting notions, though it is equally well to call the latter a more
strong platonic standpoint. Yao does not take sides on the debate in \emph{what a concept
is} and most of it is certainly nonclassical and non empiricist from a philosophical
point of view. 

In modern western philosophy, intentions and extensions are taken to be possessed by
linguistic expressions and not by concepts. Thus, for example, from Frege's point of view,
the intension is the concept expressed by the expression, and the extension is the
collection of items to which the expression applies. In this perspective, the concept
applies to the same collection of items. It also follows that concepts, in this
perspective, must be tied to \emph{linguistic expressions} as well.  

Concepts are constituents of thinking containing the meaning of words or intended action
or response. As such a linguistic expression for such \emph{concepts} may not be supplied
by the reasoner. Apparently the Fregean point of view speaks of concepts with associated
linguistic expression alone. Even if we use a broad-sense notion of 'linguistic
expression', this may fall short of the \emph{concept} mentioned in the former viewpoint.
Another key difference is that the former version of \emph{concepts} are bound to be more
independent of interpreters (or agents) than the latter. The concept of \emph{granules}
actually evolves across the temporal space of the theory and may be essentially \emph{a
priori } or \emph{a posteriori} (relative to the theory or semantics) in nature. Because
of
these reasons, I will not try to map the two \emph{concepts} into each other in this paper
at least. In the present paper, \emph{a priori} granules will be required in an essential
way. 

It is only natural that possible concepts of granules are dependent on the choice of
semantic domain in the contexts of \textsf{RST}. But \emph{a priori} granules may even be
identified at some stage after the identification of approximations.

\section{Contamination Problem}

Suppose the problem at hand is to model vague reasoning in a particular context and
relative to the agents involved in the context. It is natural for the model to become
\emph{contaminated} with additional inputs from a classical perspective imposed on
the context by the person doing the modelling. In other words, meta level aspects can
contaminate the perception of object level features. From an algebraic perspective, if the
model concerns objects of specific types like 'roughly equivalent objects in some sense',
then the situation is relatively better than a model that involves all types of objects.
But the operations used in the algebra or algebraic system can still be viewed with
suspicion. 

By the contamination problem, I mean the problem of minimising or eliminating the
influences of the classicist perspective imposed on the context. In other words, the
problem is to minimize the contamination of models of meta-R fragments by meta-C
aspects.  One aspect of the problem is solved partially in \cite{AM1005} by the present
author. In the paper, a more realistic conception of rough membership functions and other
measures of \textsf{RST} have been advanced from a minimalist perspective avoiding the
real-valued or rational-valued rough measures that dominate the rough literature. Most of
the rough measures based on cardinalities are of course known to lack mathematical rigour
and have the potential to distort the analysis. 

In the mathematics of exact phenomena, the natural numbers arise in the way they do
precisely because it is assumed that things being counted are well-defined and have exact
existence. When a concrete collection of identical balls on a table are being counted,
then it is their relative position on the table that helps in the process. But there are
situations in real life, where 
\begin{itemize}\renewcommand{\labelitemi}{$\bullet$}
\item {such identification may not be feasible,}
\item {the number assigned to one may be forgotten while counting subsequent objects,}
\item {the concept of identification by way of attributes may not be stable,}
\item {the entire process of counting may be 'lazy' in some sense,}
\item {the mereology necessary for counting may be insufficient.}
\end{itemize}

Apart from examples in \cite{AM1005}, the most glaring examples for avoiding the measures
comes from attempts to apply rough sets to modelling the development of human concepts.
The 'same'
concept $X$ may depend on ten other concepts in one perspective and nine other concepts in
another perspective and concepts of knowing the concept $X$ and gradation does not admit a
linear measure in general. Using one in fields like education or education research would
only encourage scepticism. The quality of measures like 'impact factor' of journals
\cite{AF} provide a supportive example.     

The underlying assumptions behind rough measures are much less than in a corresponding
statistical approach (subject to being possible in the first place in the application
context in question) and do not make presumptions of the form -'relative errors should
have some regularity'. Still the contamination problem is relevant in other domains of
application of \textsf{RST} and more so when the context is developed enough to permit an
evaluation of semantic levels. 

There may be differences in the semantic approach of proceeding from algebraic models to
logics in sequent calculus form in comparison to the approach of directly forming the
logic as a sequent calculus, or the approach of forming the logic in Kripke-like or
Frame-related terminology, but one can expect one to feed the other. It should also be
noted that this has nothing to do with supervaluationary perspectives \cite{Fine75}, where
the goal is to reduce vagueness by improving the language. Moreover the primary concerns
in the contamination problem are not truth-values or gaps in them. The contamination
problem is analogous to the problem of intuitionist philosophers to find a perfect logic
free from problematic classicist assumptions. A difficult approach to the latter problem
can be found in \cite{Dummet}. The important thing to note in \cite{Dummet} is the
suggestion that it is better to reason within a natural deduction system to generate 'pure
logic'. In case of the contamination problem, general understanding stems from
model theoretic interpretations and so should be more appropriate.         

If a model-theoretic perspective is better, then it should be expected to provide
justification for the problem. The problems happen most explicitly in attempts to model
human reasoning, in conceptual modelling especially (in learning contexts), in attempts to
model counting processes in the presence of vagueness and others. In applications to
machine intelligence, an expression of contamination would be 'are you blaming machines
without reason?'

\section{Formalism Compatibility and Mereology}

In the literature various types of mereologies or theories of part-whole relationships
\cite{PSI,LDK} are known. For the axiomatic theory, I used a minimal fragment derived from
set-theory compatible mereology in \cite{AV}. This fragment may also be argued to  be
compatible with even Lesniewskian mereology - but such arguments must be founded on scant
regard of Lesniewski's nominalism and the distortions of his ideas by later authors. Such
distortion is used as the base for generalization in all of 'Lesniewski ontology-based
rough mereology'. This is evident for example, from section-2.1 of \cite{LP4}. Even the
perspective of Gregoryck \cite{GK} is accepted - 'theorems of ontology are those that are
true in every model for atomic Boolean algebras without a null element'. Other papers by
the same author, confirm the pragmatic excesses as classical rough set theory is shown to
be embedded in the rough mereological generalisation as well \cite{PLS}. New
problems/conflicts of a logical/philosophical nature are bound to crop up if the theory is
applied to model reasoning and attempts are made to link it to Lesniewski's approach. In
my opinion, it would be better to term the Polkowski-Skowron approach a
'Lesniewski-inspired' mereology rather than a 'Lesniewski-ontology-based' one.     

The reader can find a reasonable re-evaluation of formal aspects of the mereology of
Lesniewski from a 'platonized perspective' in \cite{UR}. Importantly, it highlights
difficulties in making the formalism compatible with the more common languages of modern
first order or second order logic. The correct translation of expression in the  language
of ontology to a common language of set theory (modulo simplifying assumptions) requires
many axiom schemas and the converse translation (modulo simplifying assumptions) is
doubtful (see pp 184-194,\cite{UR}). 
I am stressing this because it also suggests that the foundational aspects of \cite{PS3}
should be investigated in greater detail in relation to:

\begin{itemize}\renewcommand{\labelitemi}{$\bullet$}
 \item {the apparent stability of the theory in related application contexts, and}
 \item {the exact role of the rough parthood relation and role of degrees of membership
and t-norms in diluting logical categories.}
\end{itemize}
I will not go into detailed discussion of the philosophical aspects of the points made
above
since it would be too much of a deviation for the present paper.  

One of the eventual goals of the present approach is also to extend general \textsf{RST}
to semi-set theoretical contexts \cite{VP9,VH} (or modifications thereof). Semiset theory
has been in development since the 1960s and its original goals have been to capture
vagueness and uncertainty, to be clear about what exactly is available for reasoning, to
understand the infinite as 'something non-transparent', to impose a more sensible
constraint on the relation between objects and properties and to require that any grouping
or association is actualized (i.e. available at our disposal). It can be formalised as
a conservative extension of ZFC, but irrespective of this aspect, the philosophical
framework can be exploited in other directions. However, it is obviously incompatible with
the Lesniewski ontology and nominalism. This is another reason for using a fragment of
set-theoretically compatible mereology in the definition of a general rough Y-system. In
this paper, I will continue to do the theory over ZFC-compatible settings, since most of
the
present paper will be relevant for all types of rough theorists.      

In summary, the differences with the Polkowski-Skowron style mereological approach are:

\begin{enumerate} \renewcommand\labelenumi{\theenumi}
  \renewcommand{\theenumi}{(\roman{enumi})}
\item {The mereology is obviously distinct. The present approach is compatible with
Godel-Bernays classes.}
\item {No assumptions are made about the degree of inclusion or of '$x$ being an
ingredient of $y$ to a degree $r$'.}
\item {Concepts of degree are expected to arise from properties of granules and 'natural
ways' of counting collections of discernible and indiscernible objects.}
\end{enumerate}

\section{Motivating Examples for \textsf{RYS}}

Motivating examples for the general concept of \textsf{RYS} introduced in \cite{AM99} are
provided in this section. These examples do not explicitly use information or decision
tables though all of the information or decision table examples used in various
\textsf{RSTs} would be naturally relevant for the general theory developed. Other general
approaches like that of rough orders \cite{IT2} and abstract approximation spaces
\cite{CC5} are not intended for directly intercepting approximations as envisaged in
possible applications. They would also be restrictive and problematic from the point of
view of the contamination problem. Here the focus is on demonstrating the
need for many approximation operators, choice among granules and conflict resolution.    

\begin{flushleft}
\textbf{Example-1:}\\ 
\end{flushleft}

Consider the following statements associable with the description of an apple in a plate
on a table:

\begin{enumerate} \renewcommand\labelenumi{\theenumi}
  \renewcommand{\theenumi}{(\roman{enumi})}
\item {Object is apple-shaped; Object has maroon colour,}
\item {Object has vitreous skin; Object has shiny skin,}
\item {Object has vitreous, smooth and shiny skin,}
\item {Green apples are not sweet to taste,}
\item {Object does not have coarse skin as some apples do,}
\item {Apple is of variety A; Apple is of variety X.}
\end{enumerate}

Some of the individual statements like those about shape, colour and nature of skin may be
'atomic' in the sense that a more subtle characterization may not be available. It is also
obvious that only some subsets of these statements may consistently apply to the apple on
the table. This leads to the question of selecting some atomic statements over others. But
this operation is determined by consistency of all the choices made. Therefore, from a
\textsf{RST} perspective, the atomic statements may be seen as granules and then it would
also seem that choice among sets of granules is natural. More generally 'consistency' may
be replaced by more involved criteria that are determined by the goals. A nice way to see
this would be to look at the problem of discerning the object in different contexts -
discernibility of apples on trees require different kind of subsets of granules.

\begin{flushleft}
\textbf{Example-2:}\\ 
\end{flushleft}

In the literature on educational research \cite{PD} it is known that even pre-school going
children have access to powerful mathematical ideas in a form. A clear description of such
ideas is however not known and researchers tend to approximate them through subjective
considerations. For example, consider the following example from \cite{PD}:\\

Four-year-old Jessica is standing at the bottom of a small rise in the preschool yard when
she is asked by another four-year-old on the top of the rise to come up to her.\\
\begin{itemize}\renewcommand{\labelitemi}{$\bullet$}
 \item {“No, you climb down here. It’s much shorter for you.”}
\end{itemize}

The authors claim that ''Jessica has adopted a developing concept of comparison of length
to solve —
at least for her - the physical dilemma of having to walk up the rise''. But a number of
other concepts like 'awareness of the effects of gravitational field', 'climbing up is
hard', 'climbing up is harder than climbing down', 'climbing down is easier', 'climbing up
is harder', 'others will find distances shorter', 'make others do the hard work' may or
may not be supplemented by linguistic hedges like \emph{developing} or \emph{developed}
and assigned to Jessica. The well known concept of concept maps cannot be used to
visualise these considerations, because the concept under consideration is not well
defined. Of these concepts some will be assuming more and some less than the actual
concept used and some will be closer than others to the actual concept used. Some of the
proposals may be conflicting, and that can be a problem with most approaches of
\textsf{RST} and fuzzy variants. The question of one concept being closer than another may
also be dependent on external observers. For example, how do 'climbing up is harder' and
'climbing up is harder than climbing down' compare?

The point is that it makes sense to:

\begin{enumerate} \renewcommand\labelenumi{\theenumi}
  \renewcommand{\theenumi}{(\roman{enumi})}
\item {accommodate multiple concepts of approximation,}
\item {assume that subsets of granules may be associated with each of these
approximations,
}
\item {assume that disputes on 'admissible approximations' can be resolved by admitting
more approximations.}
\end{enumerate}

It is these considerations and the actual reality of different \textsf{RST} that motivates
the definition of Rough $Y$-systems.  

\section{Objectivity of Measures and General RST}

In \textsf{RST}, different types of rough membership and inclusion functions are defined
using cardinality of associated sets. When fuzzy aspects are involved then these types of
functions become more controversial as they tend to depend on the judgement of the user in
quantifying linguistic hedges and other aspects. These types of functions are also
dependent on the way in which the evolution of semantics is viewed. But even without
regard to this evolution, the computation of rough inclusion functions invariably requires
one to look at things from a higher meta level - from where all objects appear exact. In
other words an estimate of a perfect granular world is also necessary for such
computations and reasoning.

Eventually, this leads to mix up (contamination) of information obtained from perceiving
things at different levels of granularity. I find all this objectionable from the point of
view of rigour and specific applications too. To be fair such a mix up seems to work fine
without much problems in many applications. But that is due to the scope of the
applications and the fact that oversimplifications through the use of cardinality permits
a second level of 'intuitive approximation'. 

In applications of \textsf{RST} that start from information or decision tables including
information about values associated with attributes (and possibly decisions) for different
objects, the evolution of the theory adheres to the following dependency schemas: 

\begin{center}
\includegraphics[width=12.12cm]{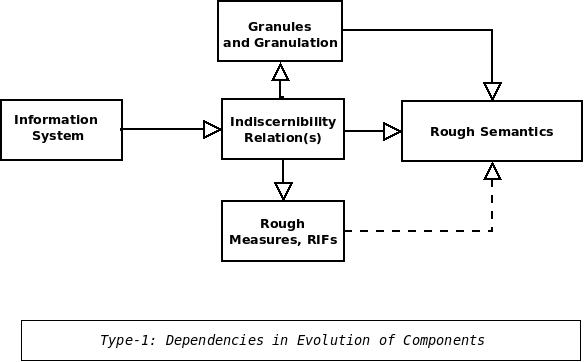} 
\end{center}

\begin{center}
\includegraphics[width=12.12cm]{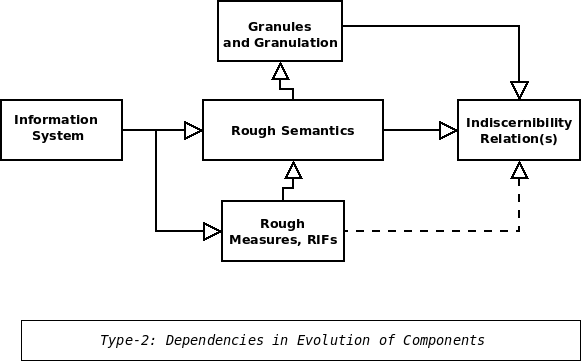} 
\end{center}

In the above two figures, 'rough semantics' can be understood to be in algebraic or in
proof-theoretic sense. The intended reading is - 'components at arrow heads depend on
those at the tail' and multiple directed paths suggest that 'components in alternate paths
may be seen in weaker forms relatively'.  These figures do not show the modified
information system that invariably results due to the computation of reducts of different
kinds, as the entire picture merely gets refreshed to the refined scenario. The
Lesniewski-style ontology-based mereological approach of \cite{PS3,PL} fits into type-1
schemas. Rule discovery approaches would fall within type-2 schemas.

The approach of the present paper is aimed at using measures that are more natural in the
rough setting and to use fewer assumptions about their evolution at the meta level.
Eventually this is likely to result in changes on methods of reduct computation in many
cases. The theory is also aimed at tackling the so-called \emph{inverse problems} of
\cite{AM3} and later papers, which is essentially 'Given a collection of definite objects
and objects of relatively less definite objects with some concepts of approximations (that
is result of vague and possibly biased considerations), find a description of the context
from a general rough perspective'. From a semantic perspective these may reduce to
abstract representation problems. The following dependency schema shows how the different
parts fit in.

\begin{center}
\includegraphics[width=12.12cm]{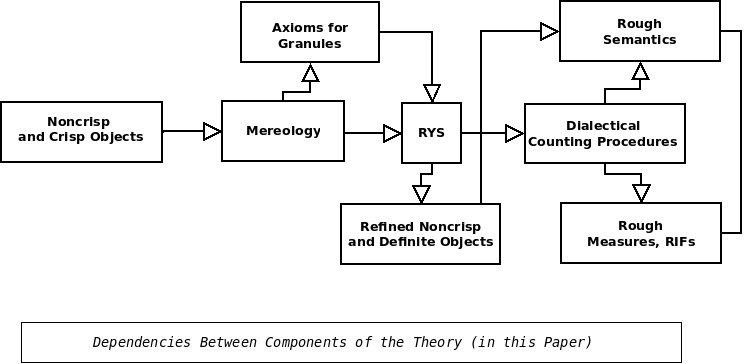} 
\end{center}

The link between 'Rough Semantics' and 'Rough Measures' should be read as 'possibly
related'. 

\section{Numbers and their Generalization}

The problems with using natural numbers for counting collections of objects including
indiscernibles have been mentioned in the fourth section. It was pointed out that there
are situations in real life, where 
\begin{enumerate} \renewcommand\labelenumi{\theenumi}
  \renewcommand{\theenumi}{(\roman{enumi})}
\item {the discernibility required for normal counting may not be feasible,}
\item {the number assigned to one may be forgotten while counting subsequent objects,}
\item {the concept of identification by way of attributes may not be stable,}
\item {the entire process of counting may be 'lazy' in some sense,}
\item {the mereology necessary for counting may be insufficient.}
\end{enumerate}

Some specific examples of such contexts are:
\begin{enumerate}
\item {Direct counting of fishes in a lake is difficult and the sampling techniques used
to estimate the population of fishes do not go beyond counting samples and pre-sampling
procedures. For example some fishes may be caught, marked and then put back into the lake.
Random samples
may be drawn from the mixed population to estimate the whole population using proportion
related statistics. The whole procedure however does not involve any actual counting of
the population.}
\item {In crowd management procedures, it is not necessary to have exact information about
the actual counts of the people in crowds.}
\item {In many counting procedures, the outcome of the last count (that is the total
number of things) may alone be of interest. This way of counting is known to be sufficient
for many apparently exact mathematical applications.}
\item {Suppose large baskets containing many varieties of a fruit are mixed together and
suppose an observer with less-than-sufficient skills in classifying the fruits tries to
count the number of fruits of a variety.  The problem of the observer can be interpreted
in mereological terms.}
\item {Partial algebras are very natural in a wide variety of mathematics. For example, in
semigroup theory the set of idempotents can be endowed with a powerful partial algebraic
structure. Many partial operations may be seen to originate from failure of standard
counting procedures in specific contexts.}
\end{enumerate}

Various generalizations of the concept of natural numbers, integers and real numbers are
known in mathematics. These generalizations are known to arise from algebraic, topological
or mixed considerations. For example, a vast amount of ring and semigroup theory arises
from properties of the integers. These include Euclidean Rings, UFD, Integral Domains,
Positively totally ordered Semigroups \cite{SM}, and Totally Ordered Commutative
Semigroups.
Partial Well Orders and Variants thereof \cite{AM90} and Difference orders can also be
seen in a similar perspective. In all these cases none of the above mentioned aspects can
be captured in any obvious way and neither have they been the motivation for their
evolution. Their actual motivations can be traced to concrete examples of subsets of real
numbers and higher order structures derived from real numbers having properties defining
the structures. Further structures like these often possess properties quite atypical of
integers.  

In counting collections of objects including relatively exact and indiscernible objects,
the situation is far more complex - the first thing to be modified would be the relative
orientation of the object and different meta levels as counting in any sense would be from
a higher meta level. Subsequently the concept of counting (as something to be realised
through injective maps into $N$) can be modified in a suitable way. The eventual goal of
such procedures should be the attainment of order-independent representations.

Though not usually presented in the form, studies of group actions, finite and infinite
permutation groups and related automorphisms and endomorphisms can throw light on lower
level counting. In the mathematics of exact phenomena, these aspects would seem
superfluous because cardinality is not dependent on the order of counting. 
But in the context of the present generalization somewhat related procedures are seen to
be usable for improving representation. A more direct algebra of Meta-R counts is also
developed in the penultimate section. They can be regarded as a natural generalization of
the ordered integral domain associated with integers and was not considered in
\cite{AM1005} by the present author. The former approach does have feasibility issues
associated. For one thing a string of relatively discernible and indiscernible things may
not be countable in all possible ways in actual practice. The latter approach takes a more
holistic view and so the two approaches can be expected to complement each other.    

\section{Granules: An Axiomatic Approach}

Different formal definitions of granules have been used in the literature on rough sets
and in granular computing. An improved version of the axiomatic theory of granules
introduced in \cite{AM99} is presented here. The axiomatic theory is  capable of handling
most contexts and is intended to permit relaxation of set-theoretic axioms at a later
stage. The axioms are considered in the framework of Rough Y-Systems mentioned earlier.
\textsf{RYS} maybe seen as a generalised form of \emph{rough orders} \cite{IT2},
\emph{abstract approximation spaces} \cite{CC5} and approximation framework \cite{CD3}. It
includes relation-based RST, cover-based RST and more. These structures are provided with
enough structure so that a classical semantic domain and at least one rough semantic
domain of roughly equivalent objects along with admissible operations and predicates are
associable. 

Within the domain of naive set theory or ZFC or second order ZFC, the approximation
framework of \cite{CD3} is not general enough because:
\begin{enumerate} \renewcommand\labelenumi{\theenumi}
  \renewcommand{\theenumi}{(\roman{enumi})}
 \item {It assumes too much of order structure.}
 \item {Assumes the existence of a De Morgan negation.}
 \item {It may not be compatible with the formulations aimed at inverse problems. That
holds even if a flexible notion of equality is provided in the language.}
\end{enumerate}

An application to the context of example-2 in the previous section will clearly show that
there is no direct way of getting to a lattice structure or the negation from information
of the type in conjunction with knowledge base about concepts represented in suitable way
unless the context is very special. 

As opposed to a lattice order in a \emph{rough order}, I use a parthood relation that is
reflexive and antisymmetric. It may be non transitive. The justification for using such a
relation can be traced to various situations in which restrictive criteria operating on
inclusion of attributes happen. In many cases, these may be dealt with using fuzzy
methodologies. Contexts using the so-called rough-fuzzy-rough approximations and
extensions thereof \cite{SPD} can be dealt with in a purely rough way through such
relations. The unary operations used in the definitions of the structures are intended as
approximation operators. More than two approximation operators are common in cover-based
\textsf{RST} \cite{ZW3}, dynamic spaces \cite{PP4}, Esoteric \textsf{RST} \cite{AM24},
multiple approximation spaces \cite{AQ1}, in dialectical rough set theory \cite{AM105} and
elsewhere. The requirement of equal number of upper and lower approximation operators is
actually a triviality and studies with non matching number of operators can be traced to
considerations on rough bottom and top equalities \cite{PN6}. Concrete examples are
provided after definitions.

The intended concept of a \emph{rough set} in a rough $Y$-system is as a collection of
some sense definite elements of the form $\{a_1, a_2, \ldots a_{n},\,b_{1}, b_{2},\ldots
b_r \}$ subject to $a_{i}$s being 'part of' of some of the $b_{j}$s.

Both \textsf{RYS+} and \textsf{RYS} can be seen as the generalization of the algebra
formed on the power set of the approximation space in classical \textsf{RST}. $\pc xy$ can
be read as 'x is a part of y' and is intended to generalise inclusion in the classical
case. The elements in $S$ may be seen as the collection of approximable and exact objects
- this interpretation is compatible with $S$ being a set. The description operator of FOPL
$\iota$ is used in the sense: $\iota(x) \Phi(x)$ means 'the $x$ such that $\Phi(x)$'. It
helps in improving expression and given this the metalogical $','$ can be understood as
$\wedge$ (which is part of the language). The description operator actually extends FOPL
by including more of the metalanguage and from the meaning point of view is different,
though most logicians will see nothing different. For details, the reader may refer to
\cite{WH}.

For those understanding '$,$' as being part of the metalanguage, statements of the form
$a+b = \iota(x)\Phi(x)$ can be safely read as $a+b = z $ if and only if $\Phi(z)$. It is
of course admissible to read '$,$' as being in the metalanguage with $\iota$ being part of
the language - the resulting expression would be more readable. 

\begin{definition}
A \emph{Rough Y System} (\textsf{RYS+}) will be a tuple of the form  \[\left\langle
{S},\,W,\,\pc ,\,(l_{i})_{1}^{n},\,(u_{i})_{1}^{n},\,+,\,\cdot,\,\sim,\,1 \right\rangle \]
satisfying all of the following ($\pc$ is intended as a binary relation on $S$ and
$W\,\subset\,S$, $n$ being a finite positive integer. $\iota$ is the description operator
of FOPL: $\iota(x) \Phi(x)$ means 'the $x$ such that $\Phi(x)$ '. $W$ is actually
superfluous and can be omitted):
\begin{enumerate}
\item {$(\forall x) \pc xx$ ; $(\forall x,\,y)(\pc xy,\,\pc yx\,\longrightarrow\,x=y)$,}
\item {For each $i,\,j$, $l_{i}$, $u_{j}$ are surjective functions $:S\,\longmapsto\,W$, }
\item {For each $i$, $(\forall{x,\,y})(\pc xy\,\longrightarrow\,\pc (l_{i}x)(l_{i}y),\,\pc
(u_{i}x)(u_{i}y))$,}
\item {For each $i$, $(\forall{x})\,\pc (l_{i}x)x,\,\pc (x)(u_{i}x))$,}
\item {For each $i$, $(\forall{x})(\pc
(u_{i}x)(l_{i}x)\,\longrightarrow\,x=l_{i}x=u_{i}x)$.}
\end{enumerate}

The operations $+,\,\cdot$ and the derived operations $\oc,\, \pp,\,\uc,\,\ox,\,\po $ will
be assumed to be defined uniquely as follows:
\begin{description}
\item [Overlap:]{$\oc xy\,\mathrm{iff} \,(\exists z)\,\pc zx\,\wedge\,\pc zy$,}
\item [Underlap:]{$\uc xy\,\mathrm{iff} \,(\exists z)\,\pc xz\,\wedge\,\pc yz$,}
\item [Proper Part:]{$\pp xy\, \mathrm{iff}\, \pc xy\wedge\neg \pc yx$,}
\item [Overcross:]{ $\ox xy \,\mathrm{iff}\,\oc xy\wedge\neg \pc xy$,}
\item [Proper Overlap:]{$\po xy \,\mathrm{iff}\,\ox xy \,\wedge\,\ox yx $,}
\item[Sum:]{$x+y=\iota z (\forall w)(\oc wz\, \leftrightarrow\,(\oc wx \vee \oc wy))$, }
\item[Product:]{$x \cdot y=\iota z (\forall w)(\pc wz\,\leftrightarrow\,(\pc wx \wedge \pc
wy))$, }
\item[Difference:]{$x - y=\iota z (\forall w)(\pc wz\,\leftrightarrow\,(\pc wx \wedge \neg
\oc wy))$, }
\item[Associativity:]{It will be assumed that $+,\,\cdot$ are associative operations and
so the corresponding operations on finite sets will be denoted by $\oplus,\,\odot$
respectively.}
\end{description}
\end{definition}

\begin{flushleft}
\textbf{Remark:} 
\end{flushleft}
$W$ can be dropped from the above definition and it can be required that the range of the
operations $u_{i}, l_{j}$ are all equal for all values of $i, j$.   

\begin{definition}
In the above definition, if we relax the surjectivity of $l_i ,u_i $, require partiality
of the operations $+$ and $\cdot$, weak associativity instead of associativity and weak
commutativity instead of commutativity, then the structure will be called a \emph{General
Rough Y-System} (\textsf{RYS}). In formal terms, 
\begin{description}
\item[Sum1:]{$x + y=\iota z (\forall w)(\oc wz\, \leftrightarrow\,(\oc wx \vee \oc wy))$
if defined}
\item[Product1:]{$x \cdot y=\iota z (\forall w)(\pc wz\,\leftrightarrow\,(\pc wx \wedge
\pc wy))$ if defined}
\item[wAssociativity]{$x\oplus (y\oplus z)\,\stackrel{\omega *}{=}(x\oplus y)\oplus z$ and
similarly for product. '$\stackrel{\omega *}{=}$' essentially means if either side is
defined, then the other is and the two terms are equal.}
\item[wCommutativity]{$x\oplus y\,\stackrel{\omega *}{=}\,y\oplus x $;  $x\cdot
y\,\stackrel{\omega *}{=}\,y\cdot x$}
\end{description}
\end{definition}

Both \textsf{RYS} and a \textsf{RYS+} are intended to capture a minimal common fragment of
different RSTs. The former is more efficient due to its generality. Note that the parthood
relation $\pc$, taken as a general form of rough inclusion (in a suitable semantic
domain), is not necessarily transitive. Transitivity of $\pc$ is a sign of fair choice of
attributes (at that level of classification), but non transitivity may result by the
addition of attributes and so the property by itself says little about the quality of
classification. 

\emph{The meaning of the equality symbol in the definition of \textsf{RYS} depends on the
application domain. It may become necessary to add additional stronger equalities to the
language or as a predicate in some situations. In this way, cases where any of conditions
$1, 3, 4, 5$ appear to be violated can be accommodated. All weaker equalities are expected
to be definable in terms of other equalities}.  

For example, using the variable precision \textsf{RST} procedures \cite{ZW,RR}, it is
possible to produce lower approximations that are not included in a given set and upper
approximations of a set that may not include the set. In \cite{AM24}, methods for
transforming the VPRS case are demonstrated. But nothing really needs to be done for
accommodating the VPRS case - the axioms can be assumed. The predicate $\pc$ would become
complicated as a consequence, though we can have $(\forall x, y)( x\,\subseteq\, y
\longrightarrow \pc xy )$. A stronger equality should be added to the language if
required.       

Vague predicates may be involved in the generation of \textsf{RYS} and \textsf{RYS+}.
Suppose crowds assembling at different places are to be comparatively studied in relation
to a database of information on 'relevant' people and others through audiovisual
information collected by surveillance cameras. Typically automatic surveillance equipment
will not be able to identify much, but information about various subsets of particular
crowds and even 'specific people' can be collected through surveillance cameras.
Processing information (so called off-line processing) using the whole database with
classical rough methods will not work because of scalability issues and may be just
irrelevant. Suppose that data about different gatherings have been collected at different
times. The collection of the observed subsets of these crowds can be taken as $S$. The
operations $l_{i}, u_{i}$ can originate on the basis of the capabilities of the
surveillance equipment like cameras. If one camera can see better in infra-red light,
another may see better in daylight, cameras do not have uniform abilities to move and
finally placement of the individual camera in question will affect its sight. Definite
abstract collections of people may be also taken as approximations of subsets of the
crowds based on information from the database and the set of these may form the $W$ of
\textsf{RYS+} or this may be a \textsf{RYS}. These can be used to define predicates like
'vaguely similar' between subsets of the crowd. Because of difficulties in scalability of
the computation process of identification, the collections $\underline{S}$ of possible
subsets of the crowd should be considered with a non-transitive parthood relation-based on
a criteria derived from inclusion of 'relevant' people and others (possibly number of
people with some gross characteristics), instead of set inclusion. The latter would
naturally lead to aggravation of errors and so should not be used.  As of now automated
surveillance is imperfect at best and so the example is very real. \textsf{RYS+} can also
be used to model classical rough set theory and some others close to it, but not esoteric
\textsf{RST} \cite{AM24} as approximations of 'definite objects' may not necessarily be
the same 'definite objects'. \textsf{RYS} on the other hand can handle almost all types of
\textsf{RST}. 

In the above two definitions, the parthood relation is assumed to be reflexive and
antisymmetric, while the approximation operators are required to preserve parthood.
Further any of the lower approximations of an object is required to be part of it and the
object is required to be part of any of its upper approximations. The fifth condition is a
very weak form of transitivity. The Venn diagram way of picturing things will not work
with respect to the mereology, but some intuitive representations may be constructed by
modification. Two objects \emph{overlap} if there is a third object that is part of both
the objects. Two objects \emph{underlap} if both are part of a third object. In general
such an object may not exist, while in ZF all sets taken in pairs will underlap. However
if the considerations are restricted to a collection of sets not closed under union, then
underlap will fail to hold for at least one pair of sets. \emph{Overcross} is basically
the idea of a third object being part of two objects, with the first being not a part of
the second. In the above example a set of 'relevant people' may be part of two subsets of
the crowd (at different times), but the first crowd may contain other people with blue
coloured hair. So the first crowd is not part of the second. If the second crowd contains
other people with their hair adorned with roses while such people are not to be located in
the first crowd then the two crowds have \emph{proper overlap}. 

From the purely mereological point of view a \textsf{RYS+} is a very complicated object.
The sum, product and difference operations are assumed to be defined. They do not follow
in general from the conditions on $\pc$ in the above. But do so with the assumptions of
closed extensional mereology or in other words of the first five of the following axioms.
They can also follow from the sixth (Top) and part of the other axioms.
 
\begin{description}
\item [Transitivity]{$(\pc xy,\,\pc yz\,\longrightarrow\,\pc xz)$,}
\item [Supplementation]{$(\neg \pc xy\,\longrightarrow\,\exists z (\pc zx\,\wedge\,\neg
\po zy))$,}
\item [P5]{$\uc xy\, \rightarrow\,(\exists z)(\forall w)(\oc wz \leftrightarrow (\oc wz
\,\vee\, \oc wy))$,}
\item [P6]{$\oc xy\,\rightarrow\,(\exists z)(\forall w)(\pc wz \leftrightarrow (\pc wz
\,\wedge\, \pc wy))$,}
\item [P7]{$(\exists z)(\pc zx\,\wedge\,\neg\oc zy)\,\rightarrow\,(\exists z)(\forall
w)(\pc wz \leftrightarrow (\pc wx \,\wedge\,\neg \oc wy))$,}
\item [Top]{$(\exists z)(\forall x) \pc xz$.}
\end{description}

In classical RST, 'supplementation' does not hold, while the weaker version $(\neg \pc
xy\,\longrightarrow\,\exists z (\pc zx\,\wedge\,\neg \oc zy))$ is trivially satisfied due
to the existence of the empty object ($\emptyset$).  Proper selection of semantic domains
is essential for avoiding possible problems of ontological innocence \cite{MCE}, wherein
the 'sum' operation may result in non existent objects relative the domain. A similar
operation results in 'plural reference' in \cite{PLS,PL}, and related papers. The
Lesniewski ontology inspired approach originating in \cite{PS3} assumes the availability
of measures beforehand for the definition of the parthood predicate and is not always
compatible with and distinct from the present approach.  

\begin{flushleft}
\textbf{Examples of Non-Transitivity}:\\ 
\end{flushleft}

\begin{flushleft}
\textbf{Example-1}:\\ 
\end{flushleft}

In the classical \emph{handle-door-house} example, parthood is understood in terms of
attributes and a level of being part of. The latter being understood in terms of
attributes again. The example remains very suggestive in the context of applications of
\textsf{RST} and specifically in the context of a \textsf{RYS}. The basic structure of the
example has the form:

\begin{itemize}\renewcommand{\labelitemi}{$\bullet$}
\item {Handle is part of a Door,}
\item {Door is part of a House,}
\item {If 'part of' is understood in the sense of 'substantial part of' (defined in terms
of attributes), then the handle may not be part of the house.}
\end{itemize}

From the application point of view all the concepts of 'Handle', 'Door' and 'House' can be
expected to be defined by sets of relatively atomic sensor (for machines) or sense data.
Additionally a large set of approximation related data (based on not necessarily known
heuristics) can also be expected. But if we are dealing with sensor data, then it can be
reasonable to assume that the data is the result of some rough evolution. Finding one is
an inverse problem. 

\begin{flushleft}
\textbf{Example-2}:\\ 
\end{flushleft}

\begin{itemize}\renewcommand{\labelitemi}{$\bullet$}
\item {Let Information Set-A be the processed data from a grey scale version of a colour
image. }
\item {Let 'Information-B' be the processed data about distribution of similar colours (at
some level).}
\item {In this situation, when 'Information set A' is processed in the light of
'Information set B', then transitivity of the parthood relations about colour related
attributes can be expected to be violated.}
\end{itemize}

This example is based on the main context of \cite{SPD}, but is being viewed from a pure
rough perspective.  
$\square$

\begin{flushleft}
\textbf{Example-3}:\\ 
\end{flushleft}

In processing information from videos in off-line or real time mode, it can be sensible to
vary the partitions on the colour space (used in analytics) across key frames.   $\square$

\begin{definition}
In the above, two approximation operators $u_i$ and $l_i$ will be said to be
\emph{S5-dual} to each other if and only if \[(\forall A\subset
S)\,A^{u_{i}l_{i}}=A^{u_{i}} \;;\; A^{l_{i} u_{i}}=A^{l_{i}} .\] 
\end{definition}

Throughout this paper it will not be assumed that the operators $u_{i}$ are S5-dual or
dual to the operators $l_{i}$ in the classical sense in general. It is violated in the
study of the lower, upper and bitten upper approximation operators in a tolerance space
\cite{AM105} as \textsf{RYS}. There it will also be required to take the identity operator
or repeat the lower approximation operator as a lower approximation operator (as the
number of lower and upper approximation are required to match - a trivial requirement). 

In almost all applications, the collection of all granules $\mathcal{G}$ forms a subset of
the \textsf{RYS} $S$. But a more general setting can be of some interest especially in a
semi-set theoretical setting. This aspect will be considered separately.  

\begin{definition}
When elements of $\mathcal{G}$ are all elements of $S$, it makes sense to identify these
elements with the help of a unary predicate $\gamma$ defined via, $\gamma x$ if and only
if $x\in \mathcal{G}$. A \textsf{RYS} or a \textsf{RYS+} enhanced with such a unary
predicate will be referred to as a \emph{Inner} \textsf{RYS} (or \YS for short) or a
\emph{Inner} \textsf{RYS+} (\YSP for short) respectively. \YS will be written as ordered
pairs of the form $(S,\,\gamma )$ to make the connection with $\gamma$ clearer.
$(S,\,\gamma)$ should be treated as an abbreviation for the algebraic system (partial or
total) \[\left\langle {S},\,\pc
,\,\gamma,\,(l_{i})_{1}^{n},\,(u_{i})_{1}^{n},\,+,\,\cdot,\,\sim,\,1 \right\rangle. \]   
 
\end{definition}

Some important classes of properties possibly satisfiable by granules fall under the broad
categories of \emph{representability}, \emph{crispness}, \emph{stability},
\emph{mereological atomicity} and \emph{underlap}.  If the actual representations are
taken into account then the most involved axioms will fall under the category of
representability. Otherwise the possible defining properties of a set of granules in a
\textsf{RYS} include the following ($t_{i},\,s_{i}$ are term functions formed with
$+,\,\cdot,\,\sim$, while $p,\,r$ are finite positive integers. $\mathbf{\forall}i$,
$\mathbf{\exists}i$ are meta level abbreviations.) Not all of these properties have been
considered in \cite{AM99}:

\begin{description}
\item [Representability, RA] {$\mathbf{\forall}i$, $(\forall x)(\exists y_{1},\ldots
y_{r}\in \mathcal{G})$ $y_{1} + y_{2} + \ldots + y_{r}=x^{l_{i}}$ and $(\forall x)(\exists
y_{1},\,\ldots\,y_{p}\in \mathcal{G})\,y_{1} + y_{2} + \ldots + y_{p}=x^{u_{i}}$,}
\item [Weak RA, WRA] {$\mathbf{\forall}i$, $(\forall x \exists y_{1},\ldots y_{r}\in
\mathcal{G})$ $t_{i}(y_{1},\,y_{2}, \ldots \,y_{r})=x^{l_{i}}$\\ and $(\forall x)(\exists
y_{1},\,\ldots\,y_{r}\in \mathcal{G})\,t_{i}(y_{1},\,y_{2}, \ldots \,y_{p}) = x^{u_{i}}$,}
\item [Sub RA]{$\mathbf{\exists}i$, $(\forall x)(\exists y_{1},\ldots y_{r}\in
\mathcal{G})$ $y_{1} + y_{2} + \ldots + y_{r}=x^{l_{i}}$\\ and $(\forall x)(\exists
y_{1},\,\ldots\,y_{p}\in \mathcal{G})\,y_{1} + y_{2} + \ldots + y_{p}=x^{u_{i}}$,}
\item [Sub TRA, STRA] {$\mathbf{\forall}i$, $(\forall x \exists y_{1},\ldots y_{r}\in
\mathcal{G})$ $t_{i}(y_{1},\,y_{2}, \ldots \,y_{r})=x^{l_{i}}$\\ and $(\forall x)(\exists
y_{1},\,\ldots\,y_{r}\in \mathcal{G})\,t_{i}(y_{1},\,y_{2}, \ldots \,y_{p}) = x^{u_{i}}$,}
\item [Lower RA, LRA] {$\mathbf{\forall}i$, $(\forall x)(\exists y_{1},\ldots y_{r}\in
\mathcal{G})$ $y_{1} + y_{2} + \ldots + y_{r}=x^{l_{i}}$,}
\item [Upper RA, URA] {$\mathbf{\forall}i$, $(\forall x)(\exists y_{1},\,\ldots\,y_{p}\in
\mathcal{G})\,y_{1} + y_{2} + \ldots + y_{p}=x^{u_{i}}$,}
\item [Lower SRA, LSRA] {$\mathbf{\exists}i$, $(\forall x)(\exists y_{1},\ldots y_{r}\in
\mathcal{G})$ $y_{1} + y_{2} + \ldots + y_{r}=x^{l_{i}}$,}
\item [Upper SRA, USRA] {$\mathbf{\exists}i$, $(\forall x)(\exists
y_{1},\,\ldots\,y_{p}\in \mathcal{G})\,y_{1} + y_{2} + \ldots + y_{p}=x^{u_{i}}$,}

\item [Absolute Crispness, ACG] {For each $i$, $(\forall y\in \mathcal{G})\,y^{l_{i}} =
y^{u_{i}} = y$,}
\item [Sub Crispness, SCG] {$\mathbf{\exists}i$, $(\forall y\in \mathcal{G}) y^{l_{i}} =
y^{u_{i}} = y$ (In \cite{AM99}, this was termed 'weak crispness'),}
\item [Crispness Variants] {LACG, UACG, LSCG, USCG will be defined as for
representability,}

\item [Mereological Atomicity,MER] {$\mathbf{\forall}i$, $(\forall y\in
\mathcal{G})(\forall x \in S)(\pc xy,\,x^{l_{i}} = x^{u_{i}} = x \longrightarrow x = y)$,}
\item [Sub MER,SMER] {$\mathbf{\exists}i$, $(\forall y\in \mathcal{G})(\forall x \in
S)(\pc xy,\,x^{l_{i}} = x^{u_{i}} = x \longrightarrow x = y)$ (In \cite{AM99}, this was
termed 'weak MER'),}
\item [Inward MER, IMER]{\[(\forall y\in \mathcal{G})(\forall x \in S)(\pc
xy,\,\bigwedge_{i}(x^{l_{i}} = x^{u_{i}} = x) \longrightarrow x = y),\]}
\item [Lower MER, LMER] {$\mathbf{\forall}i$, $(\forall y\in \mathcal{G})(\forall x \in
S)(\pc xy,\,x^{l_{i}} = x \longrightarrow x = y)$,}
\item [Inward LMER, ILMER] { $(\forall y\in \mathcal{G})(\forall x \in S)(\pc
xy,\,\bigwedge_{i} (x^{l_{i}} = x) \longrightarrow x = y)$,}
\item [MER Variants] {UMER, LSMER, USMER, IUMER will be defined as for representability,}

\item [Lower Stability, LS] {$\mathbf{\forall}i$, $(\forall y \in \mathcal{G})(\forall
{x\in S})\,(\pc yx\,\longrightarrow\,\pc (y)(x^{l_{i}}))$,}
\item [Upper Stability, US] {$\mathbf{\forall}i$, $(\forall y\in\mathcal{G})(\forall {x\in
S})\,(\oc yx \longrightarrow \pc(y)(x^{u_{i}}))$,}
\item [Stability, ST] {Shall be the same as the satisfaction of LS and US,}
\item [Sub LS, LSS] {$\mathbf{\exists}i$, $(\forall y \in \mathcal{G})(\forall {x\in
S})\,(\pc yx\,\longrightarrow\,\pc (y)(x^{l_{i}}))$ (In \cite{AM99}, this was termed
'LS'),}
\item [Sub US, USS] {$\mathbf{\exists}i$, $(\forall y\in\mathcal{G})(\forall {x\in
S})\,(\oc yx \longrightarrow \pc(y)(x^{u_{i}}))$ (In \cite{AM99}, this was termed 'US'),}
\item [Sub ST, SST] {Shall be the same as the satisfaction of LSS and USS,}

\item [No Overlap, NO] {$(\forall x ,\,y\in\mathcal{G}) \neg \po xy$,}

\item [Full Underlap, FU] { $\mathbf{\forall}i$, $(\forall x,\,y\in\mathcal{G})(\exists
z\in S ) \pp xz,\,\pp yz,\,z^{l_{i}} = z^{u_{i}} = z$,}
\item [Lower FU, LFU]{ $\mathbf{\forall}i$, $(\forall x,\,y\in\mathcal{G})(\exists z\in S
) \pp xz,\,\pp yz,\,z^{l_{i}} = z$,}
\item [Sub FU, SFU] { $\mathbf{\exists}i$, $(\forall x,\,y\in\mathcal{G})(\exists z\in S )
\pp xz,\,\pp yz,\,z^{l_{i}} = z^{u_{i}} = z$,}
\item [Sub LFU, LSFU]{ $\mathbf{\exists}i$, $(\forall x,\,y\in\mathcal{G})(\exists z\in S
) \pp xz,\,\pp yz,\,z^{l_{i}} = z$,}
\item [Unique Underlap, UU] {For at least one $i$,\\ $(\forall x, y\in\mathcal{G})(\pp xz,
\pp yz,\, z^{l_{i}}=z^{u_{i}}=z,\,\pp xb, \pp yb,\, b^{l_{i}}=b^{u_{i}}=b \longrightarrow
z=b)$,}
\item [Pre-similarity, PS] {$(\forall x ,\,y\in\mathcal{G})(\exists z \in \mathcal{G}) \pc
(x\cdot y)(z)$,}
\item [Lower Idempotence, LI] {$\mathbf{\forall}i$, $(\forall x \in\mathcal{G})
x^{l_{i}}\,=\,x^{l_{i}l_{i}}$,}
\item [Upper Idempotence, UI] {$\mathbf{\forall}i$, $(\forall x \in\mathcal{G})
x^{u_{i}}\,=\,x^{u_{i} u_{i}}$,}
\item [Idempotence, I] {$\mathbf{\forall}i$, $(\forall x \in\mathcal{G})
x^{u_{i}}\,=\,x^{u_{i} u_{i}},\,x^{l_{i}}\,=\,x^{l_{i} l_{i}}$.}
\end{description}

All of the above axioms can be written without any quantification over $\mathcal{G}$ in an
inner \textsf{RYS} or an inner \textsf{RYS+}. The letter 'I' for 'Inner' will be appended
to the axiom abbreviation in this situation. For example, I will rewrite \textbf{LS} as
\textbf{LSI}:\\
 
\[\mathbf{LSI}:\,\,\mathbf{\exists i}, (\forall {x, y\,\in S})\,(\gamma x ,\, \pc
yx\,\longrightarrow\,\pc (y)(x^{l_{i}})). \]
Further, statements of the form $(S,\gamma )\,\models \mathbf{RAI} \rightarrow (S,\gamma
)\,\models \mathbf{WRAI}$ ($\models$ being the model satisfaction relation in FOPL) will
be abbreviated by '\textbf{RAI} $\twoheadrightarrow$ \textbf{WRAI}'.  

\begin{proposition}
The following holds:
\begin{enumerate}
\item {\textbf{RAI} $\twoheadrightarrow$ \textbf{WRAI},}
\item {\textbf{ACGI} $\twoheadrightarrow$ \textbf{SCGI},}
\item {\textbf{MERI} $\twoheadrightarrow$ \textbf{SMERI},}
\item {\textbf{MERI} $\twoheadrightarrow$ \textbf{IMERI},}
\item {\textbf{FUI} $\twoheadrightarrow$ \textbf{LUI}.}
\end{enumerate}
\end{proposition}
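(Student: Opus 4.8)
The plan is to treat each of the five implications as a direct logical consequence of the respective axiom schemas once they are rewritten in inner form, i.e.\ with quantification over $\mathcal{G}$ replaced by guarded quantification using the predicate $\gamma$. The common pattern is that the antecedent schema is obtained from the consequent schema either by specialising a term function, or by strengthening a meta-quantifier from $\mathbf{\exists}i$ to $\mathbf{\forall}i$, or by adding extra conjuncts to a hypothesis; in every such case the implication $\twoheadrightarrow$ holds because the model-satisfaction relation in FOPL is monotone in exactly the required direction. So for each item I would (a) write out both schemas in the \textbf{I}-notation, (b) exhibit the syntactic substitution or quantifier weakening that turns one into the other, and (c) observe that this substitution is admissible in any \YS or \YSP.

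Concretely: for (1), \textbf{RAI} uses the specific term $y_1 + y_2 + \dots + y_r$, which is a particular term function $t_i(y_1,\dots,y_r)$ built from $+,\cdot,\sim$; hence any witnesses $y_1,\dots,y_r\in\mathcal{G}$ realising $x^{l_i}$ (resp.\ $x^{u_i}$) as a sum also realise it as $t_i(\cdot)$ for that choice of $t_i$, giving \textbf{WRAI} with the same witnesses. For (2), \textbf{ACGI} asserts $y^{l_i}=y^{u_i}=y$ for \emph{every} $i$ and all $\gamma y$, so a fortiori it holds for \emph{some} $i$, which is \textbf{SCGI}; this is just the trivial $\mathbf{\forall}i\Rightarrow\mathbf{\exists}i$ at the meta level (with $n\geq 1$ so the index set is nonempty). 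Item (3) is identical in form: \textbf{MERI} ranges over all $i$, \textbf{SMERI} over some $i$. Item (4) goes from a per-$i$ hypothesis to the conjoined hypothesis: \textbf{IMERI} has antecedent $\pc xy \wedge \bigwedge_i(x^{l_i}=x^{u_i}=x)$, which is logically stronger than the single-$i$ antecedent $\pc xy\wedge(x^{l_i}=x^{u_i}=x)$ used (for each $i$) in \textbf{MERI}; since \textbf{MERI} gives $x=y$ from the weaker hypothesis for each $i$, it certainly gives it from the stronger one. Item (5) is the analogue of (2)--(3) for the underlap schemas: \textbf{FUI} (all $i$) trivially implies \textbf{SFUI} — here I read the stated ``\textbf{LUI}'' as the sub-variant \textbf{SFUI}, matching the pattern of the other four items and the naming conventions in the preceding list.

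The one place that warrants care — and which I expect to be the only genuine obstacle — is bookkeeping: making sure the \textbf{I}-rewriting of each schema is the intended one (the excerpt only works \textbf{LSI} out explicitly, and note it silently uses $\mathbf{\exists}i$ there even though \textbf{LS} in the list is $\mathbf{\forall}i$), and confirming that the index set $\{1,\dots,n\}$ is nonempty so that $\mathbf{\forall}i\twoheadrightarrow\mathbf{\exists}i$ is valid; this is guaranteed by ``$n$ being a finite positive integer'' in Definition of \textsf{RYS+}. No properties of $\pc$ beyond those in the definitions are needed, and no appeal to the mereological axioms (Transitivity, Supplementation, etc.) is required, since every step is a purely logical weakening of hypotheses or strengthening of the quantifier prefix. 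I would therefore present the proof as five short paragraphs, each one sentence of substitution plus one sentence of ``hence the implication holds in every $(S,\gamma)$,'' after fixing the \textbf{I}-forms once at the start.
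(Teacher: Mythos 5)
Your proposal is correct and is essentially the argument the paper intends — the proposition is stated there without proof precisely because each item is an immediate syntactic weakening (term specialisation for (1), $\mathbf{\forall}i\Rightarrow\mathbf{\exists}i$ over a nonempty index set for (2)--(3), strengthening of the antecedent for (4)), exactly as you spell out. One small caveat on item (5): given the paper's own naming (\textbf{Lower FU, LFU}, and the abbreviation LU used in the later theorems), \textbf{LUI} is more plausibly the inner form of \textbf{LFU}, so the implication comes from dropping the conjunct $z^{u_{i}}=z$ in the conclusion rather than from weakening $\mathbf{\forall}i$ to $\mathbf{\exists}i$ as in your reading of it as \textbf{SFUI}; under either reading the implication is equally immediate, so this affects only the bookkeeping, not the validity.
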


The axioms \textsf{RA, WRA} are actually very inadequate for capturing representability in
the present author's opinion. Ideally the predicate relating the set in question to the
granules should be representable in a nice way. A hierarchy of axioms have been developed
for a good classification by the present author and will appear separately in a more
complete study. But I will not digress to this issue in the present paper.  

In any \textsf{RST}, at least some of these axioms can be expected to hold for a given
choice of granules. In the following sections various theorems are proved on the state of
affairs.

\subsection{Concepts Of Discernibility}

In 1-neighbourhood systems, cover-based \textsf{RST}, relation-based \textsf{RST} and more
generally in a \textsf{RYS} various types of indiscernibility relations can be defined. In
most cases, indiscernibility relations that are definable by way of conditions using term
functions involving approximation operators are of interest. Some examples of such
conditions, in a \textsf{RYS} of the form specified in the third section, are:

\begin{enumerate} \renewcommand\labelenumi{\theenumi}
  \renewcommand{\theenumi}{(\roman{enumi})}
\item {$x\approx_{i} y $ if and only if $x^{l_{i}} = y^{l_{i}}$ and $x^{u_{i}} =
y^{u_{i}}$ for a specific $i$,}
\item {$x\approx_{a}y $ if and only if $x^{l_{i}} = y^{l_{i}}$ and $x^{u_{i}} = y^{u_{i}}$
for any $i$,}
\item {$x\approx_{b}y $ if and only if $x^{l_{i}} = y^{l_{i}}$ and $x^{u_{i}} = y^{u_{i}}$
for all $i$,}
\item {$a\approx_{c}y $ if and only if $(\forall g\in \mathcal{G})(\pc
gx^{\alpha}\leftrightarrow \pc gy^{\alpha})$ with $\alpha\in\{l_{i},\,u_{i}\}$ for a
specific $i$,}
\item {$a\approx_{e}y $ if and only if $(\forall g\in \mathcal{G})(\pc
gx^{\alpha}\leftrightarrow \pc gy^{\alpha})$ with $\alpha\in\{l_{i},\,u_{i}\}$ for a
specific $i$,}
\item {$a\approx_{f}y $ if and only if $(\forall g\in \mathcal{G})(\pc
gx^{\alpha}\leftrightarrow \pc gy^{\alpha})$ with $\alpha\in\{l_{i},\,u_{i}\}$ for any
$i$,}
\item {$a\approx_{h}y $ if and only if $(\forall g\in \mathcal{G})(\pc
gx^{\alpha}\leftrightarrow \pc gy^{\alpha})$ with $\alpha\in\{l_{i},\,u_{i}\}$ for all
specific $i$.}
\end{enumerate}

Note that the subscript of $\approx$ has been chosen arbitrarily and is used to
distinguish between the different generalised equalities. Weaker varieties of such
indiscernibility relations can also be of interest.

\subsection{Relative- and Multi-Granulation}

Concepts of relativised granulation have been studied in a few recent papers
\cite{QL,QLY}, under the name 'Multi-Granulation'. These are actually granulations in one
application context considered relative the granulation of another application context.
For example if two equivalences are used to generate approximations using their usual
respective granulations, then 'multi-granulations' have been used according to authors.
The relative part is not mentioned explicitly but that is the intended meaning. In our
perspective all these are seen as granulations. Multiple approximation spaces, for example
use 'multi-granulations'.    

The relation between the two contexts has not been transparently formulated in the
mentioned papers, but it can be seen that there is a way of transforming the present
application context into multiple instances of the other context in at least one
perspective. In general it does not happen that approximations in one perspective  are
representable in terms of the approximation in another perspective (see \cite{QLY2}) and
is certainly not a requirement in the definition of multi-granulation. Such results would
be relevant for the representation problem of granules mentioned earlier.

\section{Relation-Based Rough Set Theory}

\begin{theorem}
In classical \textsf{RST}, if $\mathcal{G}$ is the set of partitions, then all of
\textsf{RA, ACG, MER, AS, FU, NO, PS} hold. \textsf{UU} does not hold in general 
\end{theorem}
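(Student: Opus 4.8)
The plan is to instantiate the abstract apparatus of a \YS\ in the classical Pawlak setting and check each axiom directly. So let $S = \wp(\underline{S})$ for an approximation space $\langle \underline{S}, R\rangle$ with $R$ an equivalence, let $\pc$ be set inclusion $\subseteq$, let $l_1 = l$ and $u_1 = u$ be the usual lower and upper approximations (so $n=1$), and take $+ = \cup$, $\cdot = \cap$, $\sim$ = set complement in $\underline{S}$, $1 = \underline{S}$. The granule set $\mathcal{G}$ is the set of $R$-classes $\{[x]_R : x\in\underline{S}\}$; all of these are subsets of $\underline{S}$ and hence elements of $S$, so we are in an inner \YS. The derived mereological operations $\oc, \uc, \pp$ then take their familiar Boolean meanings: $\oc XY$ iff $X\cap Y\neq\emptyset$, $\uc XY$ always holds (so underlap is total), and $\pp XY$ iff $X\subsetneq Y$.

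Now I would verify the axioms one by one, each being essentially a textbook fact about Pawlak approximations. \textbf{RA}: every $[x]_R$ is crisp, i.e. $[x]_R^{l} = [x]_R^{u} = [x]_R$, and $X^l = \bigcup\{[x]_R : [x]_R\subseteq X\}$, $X^u = \bigcup\{[x]_R : [x]_R\cap X\neq\emptyset\}$ are exactly finite (assuming $\underline{S}$ finite, as is standard; otherwise replace the finite $+$ by the obvious infinitary union) unions of granules, giving lower and upper representability. \textbf{ACG} is precisely the crispness of $R$-classes just noted. \textbf{MER}: if $Z\subseteq [x]_R$ and $Z$ is crisp ($Z^l = Z^u = Z$), then $Z^u = Z$ forces $Z$ to be a union of $R$-classes contained in $[x]_R$, hence $Z\in\{\emptyset, [x]_R\}$; the axiom as stated concludes $Z = [x]_R$ — here one must note that in the chosen domain $\emptyset$ is crisp and is a (proper) part of $[x]_R$, so MER holds only if we either exclude $\emptyset$ or read $\pc$ as proper-part-or-nontrivial; I would follow the paper's evident intention and restrict to nonempty $Z$ or note $\emptyset^l=\emptyset^u=\emptyset$ is the trivial exception, as the author does elsewhere with the empty object. \textbf{AS} (stability, LS and US): for a granule $g=[x]_R$, if $g\subseteq X$ then $g\subseteq X^l$ since $X^l$ collects all classes inside $X$; and if $g\cap X\neq\emptyset$ then $g\subseteq X^u$ by definition of $u$ — so LS and US both hold. \textbf{FU}: given two granules $g_1, g_2$, the set $Z = \underline{S}$ (the top) is crisp, and $g_i \subsetneq \underline{S}$ whenever $\underline{S}$ has more than one class — in the degenerate one-class case $g_i = \underline{S}$ and proper part fails, so FU holds "in general" modulo that trivial case, which again matches the paper's phrasing. \textbf{NO}: distinct $R$-classes are disjoint, so $\oc g_1 g_2$ fails for $g_1\neq g_2$, whence $\neg\,\po{g_1}{g_2}$; and for $g_1 = g_2$, $\po{g}{g}$ requires $\ox{g}{g}$, i.e. $\neg\pc gg$, which is false — so NO holds for all pairs. \textbf{PS}: for granules $g_1, g_2$, $g_1\cdot g_2 = g_1\cap g_2$ is either $\emptyset$ or $g_1$ (when $g_1 = g_2$); in the disjoint case $\pc\emptyset g$ for any granule $g$ (so pick any), in the equal case $\pc{g_1}{g_1}$ — either way there is a granule $z$ with $\pc(g_1\cdot g_2)(z)$.

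Finally, for the negative clause — \textbf{UU} does not hold in general — I would exhibit a small counterexample: take $\underline{S} = \{1,2,3\}$ with $R$ the identity, so the granules are $\{1\},\{2\},\{3\}$ and the crisp sets are all eight subsets. For $g_1 = \{1\}$, $g_2 = \{2\}$, both $z = \{1,2\}$ and $z = \{1,2,3\}$ are crisp, contain $g_1$ and $g_2$ as proper parts, and are distinct — so the uniqueness demanded by UU fails. The main thing to watch is not any single verification (each is routine) but the ambient-domain bookkeeping: making sure $\emptyset$ and the degenerate one-class case are handled exactly as the "in general" hedge intends, and confirming that the total underlap available in $\wp(\underline{S})$ is what makes FU and UU behave as claimed. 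I would close by remarking that the argument is insensitive to replacing $R$ by any equivalence and that finiteness is only used to keep $+$ a finite operation, consistent with the \textsf{RYS+} framework.
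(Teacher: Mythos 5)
Your proof is correct and follows essentially the same route as the paper's: a direct verification of each axiom from the Pawlak definitions, with MER resting on the observation that no nonempty crisp set can be properly included in a single class, and stability on the fact that a class meeting a set is swallowed by that set's upper approximation. You go slightly further than the paper in harmless ways — flagging the $\emptyset$ exception to MER and the one-class degenerate case for FU, and supplying an explicit three-element counterexample for the failure of UU, which the paper asserts without argument.
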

\begin{proof}
The granules are equivalence classes and \textsf{RA, NO, ACG, PS} follow from the
definitions of the approximations and properties of $\mathcal{G}$. \textsf{MER} holds
because both approximations are unions of classes and no crisp element can be properly
included in a single class. If a class overlaps with another subset of the universe, then
the upper approximation of the subset will certainly contain the class by the definition
of the latter.     
\end{proof}

In esoteric \textsf{RST} \cite{AM24}, partial equivalence relations (symmetric, transitive
and partially reflexive relations) are used to generate approximations instead of
equivalence relations. In the simplest case, the upper and lower approximations of a
subset $A$ of a partial approximation space $\left\langle S, \, R \right\rangle$ are
defined via ($[x] = \{y;\, (x,y)\in R\}$ being the pseudo-class generated by $x$)
\[A^{l}\,=\,\bigcup\{[x];\, [x]\subseteq A\};\,\,A^{u}\,=\,\bigcup\{[x];\, [x]\cap A \neq
\emptyset\}.\]

\begin{theorem}
In case of esoteric \textsf{RST} \cite{AM24}, with the collection of all pseudo-classes
being the granules, all of \textsf{RA, MER, NO, UU, US} hold, but \textsf{ACG} may not. 
\end{theorem}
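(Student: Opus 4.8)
The plan begins with the structure of pseudo-classes. Because $R$ is symmetric and transitive, $(x,y)\in R$ forces $(y,x)\in R$ and hence $(x,x)\in R$; so on $D=\{x;\,(x,x)\in R\}$ the relation is a genuine equivalence, $[x]=\emptyset$ for each $x\notin D$, and distinct non-empty pseudo-classes are pairwise disjoint. Thus $\mathcal{G}$ is the partition $D/R$ — a partition of $D$ only, which is the esoteric feature — and I read ``collection of all pseudo-classes'' with the degenerate pseudo-class set aside as not a genuine object, the same reading under which \textsf{MER} is asserted in the classical theorem above and what makes the empty-object case of \textsf{MER} harmless here. The first observation to record is that for every $A$ the sets $A^{l}$ and $A^{u}$, and likewise the values of the extra esoteric operators, are unions of $R|_{D}$-classes.

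From this, \textsf{RA} is immediate: $x^{l_{i}}$ and $x^{u_{i}}$ are already exhibited as (finite) sums of granules. For \textsf{NO}: two distinct granules are disjoint classes, so nothing is a common part of both, whence $\neg\oc xy$, a fortiori $\neg\po xy$, and $\po xx$ is impossible since $\pc xx$. For \textsf{US}: if a granule $[a]$ overlaps an object $x$, a common part yields a point $w\in[a]\cap x$, so $[a]$ meets $x$ and therefore $[a]\subseteq x^{u_{i}}$ by the definition of the upper approximation, i.e. $\pc([a])(x^{u_{i}})$; one has to check this survives passage to the other esoteric upper operators, which it does since each absorbs every class it meets. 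For \textsf{MER}: if $\pc x([a])$ with $x$ being $i$-crisp, then $x^{l_{i}}$ is a union of pseudo-classes lying inside $x\subseteq[a]$, hence it is $[a]$ (the only alternative, the empty union, is not an object), so $x=[a]$.

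The delicate item is \textsf{UU}, and this is where I expect the real work. For the plain operator pair alone the $i$-crisp sets are exactly the $R|_{D}$-closed subsets of $D$, a family already rich enough to refute \textsf{UU} (three one-point classes plus one isolated non-reflexive point suffice, just as classically). So the proof must exploit that esoteric \textsf{RST} carries further approximation operators tailored to the partially reflexive part, and the plan is to single out one index $i$ for which \textsf{UU} is vacuous or forced — e.g. an operator whose family of $i$-crisp sets collapses (no $z$ with $z^{l_{i}}=z=z^{u_{i}}$, or only $D$ lying strictly above a given pseudo-class) once $D\neq S$. The main obstacle is to identify exactly which operator of \cite{AM24} achieves this and to rule out any smaller $i$-crisp superset of a pair of granules; this cannot be settled without that explicit operator list.

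Finally, for ``\textsf{ACG} may not'' it suffices to produce one partial approximation space with $D\neq S$ together with one of the further esoteric operators $u_{j}$ under which a pseudo-class $[a]$ is not fixed, say $[a]^{u_{j}}\supsetneq[a]$ because $u_{j}$ is a coarser upper approximation lumping $[a]$ with neighbouring classes. Then $\mathcal{G}$ is not fixed by all operators, so \textsf{ACG} fails, while the plain pair still fixes each pseudo-class, so \textsf{SCG} survives — precisely the contrast with the classical case, where \textsf{ACG} does hold; a three- or four-element example should already do the job.
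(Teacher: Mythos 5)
Your structural analysis is sound and in fact more careful than the paper's own argument: non-empty pseudo-classes coincide with the classes of $R$ restricted to the reflexive domain $D$, they are pairwise disjoint, the approximations are unions of such classes, and from this your treatments of \textsf{RA}, \textsf{NO}, \textsf{US} and \textsf{MER} (with the empty object set aside, the same convention the classical case needs) are correct. The paper's proof is far terser -- it says only that \textsf{RA, NO} follow from the definition, gives the same one-line argument for \textsf{US} that you give, claims $[x]\subset [x]^{u}$ is possible for \textsf{ACG}, and never addresses \textsf{MER} or \textsf{UU} at all -- so on those four items you have supplied what the paper leaves implicit.

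The genuine gaps are exactly where you stop. \textsf{UU} is asserted in the theorem but you prove nothing about it: you correctly observe that for the single pair $l,u$ written in this paper the crisp sets are precisely the unions of $R|_{D}$-classes, so uniqueness of a crisp underlap fails whenever there are three or more classes, and you then defer to an unnamed further operator of \cite{AM24} that you cannot exhibit; since the paper's proof is silent on \textsf{UU}, this item remains unestablished in your write-up. The same incompleteness affects ``\textsf{ACG} may not hold'': you only promise a counterexample built from some coarser esoteric upper operator without producing one. Note that your own (correct) disjointness computation shows $[x]^{u}=[x]$ for the simplest-case operators defined here, so the failure of \textsf{ACG} -- which the paper attributes to the bare possibility $[x]\subset[x]^{u}$ -- can only be witnessed by one of the additional approximation operators of \cite{AM24}; a complete proof has to name such an operator and verify the strict inclusion on a concrete partial approximation space, which neither your sketch nor the paper's two-sentence proof does. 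So the proposal is correct and fuller than the paper on \textsf{RA, NO, US, MER}, but \textsf{UU} and the \textsf{ACG} counterexample are missing ideas, not just missing details.
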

\begin{proof}
\textsf{RA, NO} follow from the definition. It is possible that $[x]\subset [x]^{u}$, so
\textsf{ACG} may not hold. \textsf{US} holds as if a granule overlaps another subset, then
the upper approximation of the set would surely include the granule.  
\end{proof}

If we consider a reflexive relation $R$ on a set $S$ and define, $[x]\,=\,\{y\,:\,
(x,y)\in R\}$ -the set of x-relateds and define the lower and upper approximation of a
subset $A\subseteq S$ via 
\[A^{l}\,=\,\cup \{[x]\,:\, [x]\subseteq A,\, x\in A \}\,\, \mathrm{and}\]

\[A^{u}\,=\,\cup \{[x]\,:\, [x]\cap A\,\neq\, \emptyset\, x\in A \},\]
($A^{l}\subseteq A\subseteq A^{u}$ for a binary relation $R$ is equivalent to its
reflexivity \cite{YY9,SLV})
then we can obtain the following about granulations of the form
$\{[x]\,:\,x\in S\}$:

\begin{theorem}
\textsf{RA, LFU} holds, but none of \textsf{MER, ACG, LI, UI, NO, FU} holds in general.  
\end{theorem}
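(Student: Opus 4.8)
The plan is to treat each asserted property and each non-property separately, leaning on the definitions of $A^l, A^u$ for a reflexive relation $R$ and the granulation $\mathcal{G} = \{[x] : x \in S\}$, where $[x] = \{y : (x,y) \in R\}$. For the positive claims \textsf{RA} and \textsf{LFU}, I would argue directly from the union-based definitions. \textbf{RA}: by reflexivity, $A^l = \bigcup\{[x] : [x]\subseteq A,\, x\in A\}$ is literally a union of granules, and $A^u = \bigcup\{[x] : [x]\cap A \neq \emptyset,\, x\in A\}$ is likewise a union of granules; since the sum operation $+$ on the underlying \textsf{RYS} (here realized as set union, as in the AUAI-style systems) is associative, this exhibits $x^{l}$ and $x^{u}$ as $y_1 + \cdots + y_r$ with each $y_j \in \mathcal{G}$, giving both \textbf{LRA} and \textbf{URA}, hence \textbf{RA}. \textbf{LFU}: given two granules $[x], [y]$, the candidate $z$ should be $S$ itself (or any granule-union containing both properly); I would check $z^{l} = z$ — i.e. that $S$ is lower-definite — which holds because $S^l = \bigcup\{[x]:[x]\subseteq S, x\in S\} = \bigcup\{[x]:x\in S\} = S$ by reflexivity, and that $[x]\subsetneq S$, $[y]\subsetneq S$ in the generic case (if $S$ is a single $R$-block this degenerates, so the "in general" qualifier covers the statement).

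For the negative claims, the strategy is to produce one small reflexive (but non-symmetric, non-transitive) relation on a finite set $S$ that simultaneously refutes \textsf{MER, ACG, LI, UI, NO, FU}, or a couple of such examples if no single one suffices. \textbf{ACG} fails as soon as some granule $[x]$ is not definite, i.e. $[x]^l \subsetneq [x]$ or $[x]\subsetneq [x]^u$: take $S=\{a,b\}$ with $R$ reflexive plus $(a,b)\in R$ but $(b,a)\notin R$, so $[a]=\{a,b\}=S$ and $[b]=\{b\}$; then $[a]^l = \bigcup\{[z]:[z]\subseteq\{a,b\}, z\in\{a,b\}\}$ — here $[b]=\{b\}\subseteq S$ contributes, $[a]=S\subseteq S$ contributes, so actually $[a]^l = [a]$; but $[b]^u = \bigcup\{[z]: [z]\cap\{b\}\neq\emptyset,\, z\in\{b\}\} = [b] = \{b\}$, while to break \textbf{ACG} I want a granule with $[b]\subsetneq[b]^u$, which needs some $x$ with $[x]\cap[b]\neq\emptyset$ and $x\in[b]$ yet $[x]\not\subseteq[b]$ — so I would enlarge to $S=\{a,b,c\}$ with $(b,a)\in R$ added: then $[b]=\{a,b\}$ and $[a]=\{a\}$, and check $[a]^u \supsetneq [a]$ fails too; the cleanest refuter is $S=\{a,b\}$, $R = \{(a,a),(b,b),(a,b)\}$, giving $[a]=\{a,b\}$, $[b]=\{b\}$, and then $\{b\}^u = \{b\}$ but $\{a\}$-type computations... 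I would instead use the standard three-element serial-but-not-symmetric example from the cover-based literature and verify mechanically that the granule $[x]$ with $x$ "pointing into" a larger set is non-crisp, killing \textbf{ACG}, \textbf{LI}, \textbf{UI} at once (idempotence of $l,u$ fails precisely on such $x$). \textbf{NO} (no proper overlap among granules) fails whenever two granules $[x],[y]$ share a point yet neither contains the other — easy to arrange with $[x]=\{x,z\}$, $[y]=\{y,z\}$, $x\neq y$. \textbf{MER} fails because a crisp singleton $\{z\}$ (crisp when $[z]=\{z\}$) can be properly part of a granule $[x]\supsetneq\{z\}$ with $z\neq x$. \textbf{FU} fails because the $z$ required to properly contain two given granules and be fully definite ($z^l=z^u=z$) need not exist once $S$ itself is the only superset available but $S$ is not $u$-definite — so I would also check $S^u \neq S$ is possible, i.e. exhibit $R$ reflexive with $S^u = \bigcup\{[x]:[x]\cap S\neq\emptyset, x\in S\} = \bigcup_{x}[x]$, which equals $S$ by reflexivity — hmm, so $S^u = S$ always; then \textbf{FU} must fail for a subtler reason, namely that no \emph{proper} common definite superset exists when the two granules are "large" (e.g. $[x]\cup[y]=S$), so $\pp xz,\pp yz$ forces $z\supsetneq[x]\cup[y]=S$, impossible. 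I would package these as one or two explicit finite examples with a short verification table.

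The main obstacle I anticipate is the bookkeeping for \textbf{FU}: unlike the classical partition case where the top element $S$ is definite and serves as a universal underlap witness, here one must confirm that $S$ (or any large union of $[x]$'s) \emph{cannot} serve as a \emph{proper} superset of two granules that already cover $S$, while also not accidentally having some intermediate definite set do the job — so the example must be chosen so that the only definite sets are $\emptyset$, $S$, and perhaps the singleton granules, with the two chosen granules jointly exhausting $S$. A secondary subtlety is that $x^l \subseteq x \subseteq x^u$ is being used freely, which is exactly the cited equivalence with reflexivity of $R$ \cite{YY9,SLV}, so I would invoke that up front. Once a suitable $3$- or $4$-element reflexive relation is fixed, all six negative verifications and the \textbf{LFU} positive check reduce to finite inspection, and \textbf{RA} is immediate from the definitions as above.
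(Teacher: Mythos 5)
Your overall route coincides with the paper's: \textsf{RA} is immediate because both approximations are by definition unions of granules; \textsf{LFU} is witnessed by a lower-definite superset (the paper takes $z=[x]\cup[y]$, which equals its own lower approximation since $x\in[x]$ and $[x]\subseteq[x]\cup[y]$, while you take $z=S$, lower-definite by reflexivity -- both witnesses have the same degenerate edge case when a granule equals $S$, so that difference is immaterial); and the negative claims are to be settled by small finite counterexamples, which the paper does not even write out. Note only that the qualifier ``in general'' in the statement modifies the failures, not \textsf{LFU}, so it cannot be invoked to excuse the degenerate case -- but the paper's own witness is no better off there.

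Two of your counterexample mechanisms would, however, fail as stated. For \textsf{FU} you argue that if $[x]\cup[y]=S$ then proper parthood of $[x]$ and $[y]$ in $z$ forces $z\supsetneq[x]\cup[y]=S$; it does not -- it only forces $z\supseteq[x]\cup[y]$, so $z=S$ remains a valid witness whenever both granules are proper subsets of $S$, and $S$ is always definite here because $S^{l}=S^{u}=\bigcup_{x\in S}[x]=S$ by reflexivity. With this granulation \textsf{FU} can only fail when some granule equals $S$ itself, as in your two-element example $R=\{(a,a),(b,b),(a,b)\}$ where $[a]=S$ admits no proper definite superset; that, not the ``jointly exhausting $S$'' mechanism, is the repair. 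Second, your claim that non-crisp granules kill \textsf{LI} cannot materialize: for every granule, $x\in[x]$ and $[x]\subseteq[x]$ give $[x]^{l}=[x]$, and in fact $(A^{l})^{l}=A^{l}$ for every $A$ (each contributing $[w]$ satisfies $[w]\subseteq A^{l}$ and $w\in[w]\subseteq A^{l}$), so lower idempotence is not refutable from these definitions -- a tension with the statement itself that the paper's one-line proof silently passes over. The remaining items do fall to examples of the kind you describe: a reflexive chain on four points ($R$ reflexive plus $(a,b),(b,c),(c,d)$) gives $[a]^{u}=\{a,b,c\}\subsetneq[a]^{uu}=S$, refuting \textsf{UI} and \textsf{ACG}, while $[a]\cap[b]\neq\emptyset$ with neither included in the other refutes \textsf{NO}, and the crisp singleton $[d]=\{d\}\subsetneq[c]$ refutes \textsf{MER}. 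So apart from repairing the \textsf{FU} argument and flagging the \textsf{LI} anomaly, your plan is sound and no less explicit than the paper's own proof.
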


\begin{proof}
\textsf{RA} holds by definition, \textsf{LFU} holds as the lower approximation of the
union of two granules is the same as the union. It is easy to define an artificial counter
example to support the rest of the statement.
\end{proof}

Let $\left\langle S,(R_{i})_i\in K \right\rangle $ be a multiple approximation space
\cite{AQ1}, then the strong lower, weak lower, strong upper and weak upper approximations
of a set $X\,\subseteq\, S$ shall be defined as follows (modified terminology): 
\begin{enumerate}
 \item {$X^{ls}\,=\,\bigcap_{i} X^{li}$,}
 \item {$X^{us}\,=\,\bigcup_{i} X^{ui}$,}
 \item {$X^{lw}\,=\,\bigcup_{i} X^{li}$,}
 \item {$X^{uw}\,=\,\bigcap_{i} X^{ui}$.}
\end{enumerate}

\begin{theorem}
 In a multiple approximation of the above form, taking the set of granules to be the
collection of all equivalence classes of the $R_{i}$s, \textsf{LSRA, USRA, LSS, USS}
holds, but all variants of rest do not hold always.
\end{theorem}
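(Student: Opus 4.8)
The plan is to read this \textsf{RYS} as having $n=2$, with lower operators $l_{1}=(\cdot)^{ls}$, $l_{2}=(\cdot)^{lw}$, upper operators $u_{1}=(\cdot)^{us}$, $u_{2}=(\cdot)^{uw}$, and $\mathcal{G}=\bigcup_{i\in K}\{[x]_{R_{i}}:x\in S\}$. Everything rests on three one-line facts about the classical single-relation operators: each $X^{li}=\bigcup\{[w]_{R_{i}}:[w]_{R_{i}}\subseteq X\}$ is a union of $R_{i}$-classes; each $X^{ui}=\bigcup\{[w]_{R_{i}}:[w]_{R_{i}}\cap X\neq\emptyset\}$ is a union of $R_{i}$-classes; and for $g=[z]_{R_{j}}\in\mathcal{G}$ one has $g\subseteq X\Rightarrow g\subseteq X^{lj}$ and $g\cap X\neq\emptyset\Rightarrow g\subseteq X^{uj}$. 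All three are immediate from the definitions.

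For the positive half I would proceed in the order \textsf{LSRA}, \textsf{USRA}, \textsf{LSS}, \textsf{USS}. Since $X^{lw}=\bigcup_{i}X^{li}$ is a union of unions of $R_{i}$-classes, it is a union of granules, so $l_{2}$ witnesses \textsf{LSRA}; dually $X^{us}=\bigcup_{i}X^{ui}$ is a union of granules, so $u_{1}$ witnesses \textsf{USRA}. Using the same witness $l_{2}$ for \textsf{LSS}: if $g=[z]_{R_{j}}\in\mathcal{G}$ and $\pc gX$ (i.e. $g\subseteq X$), then $g\subseteq X^{lj}\subseteq\bigcup_{i}X^{li}=X^{lw}$, so $\pc (g)(X^{lw})$. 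Using $u_{1}$ for \textsf{USS}: if $\oc gX$, i.e. $g$ has non-empty common part with $X$, then $g\subseteq X^{uj}\subseteq\bigcup_{i}X^{ui}=X^{us}$. Each is a two-line check once the three facts are in hand.

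For the negative half the point is that the intersection-based operators $(\cdot)^{ls}=\bigcap_{i}X^{li}$ and $(\cdot)^{uw}=\bigcap_{i}X^{ui}$ do \emph{not} behave like rough operators over $\mathcal{G}$, which collapses all the universally quantified variants. I would fix the space $\underline{S}=\{1,2,3,4\}$ with $R_{1}$ inducing $\{\{1,2\},\{3,4\}\}$ and $R_{2}$ inducing $\{\{1,3\},\{2,4\}\}$. Then $(\{1,2\})^{ls}=\{1,2\}\cap\emptyset=\emptyset\neq\{1,2\}$, so the granule $\{1,2\}$ is not crisp for $l_{1}$, refuting \textsf{ACG} and \textsf{LACG}; the same computation against $x=\{1,2\}$ refutes \textsf{LS}, and analogously (against $x=\{1\}$, where $(\{1\})^{uw}=\{1,2\}\cap\{1,3\}=\{1\}$) \textsf{US} fails; moreover $(\{1\})^{uw}=\{1\}$ is not a union of the two-element granules, so \textsf{URA}, hence \textsf{RA} and \textsf{LRA}, fail; and $\{1,2\},\{1,3\}$ overlap with neither part of the other, so \textsf{NO} fails. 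Degenerate examples — the $R_{i}$ all equal to the identity (every singleton a granule, every set crisp) and one $R_{i}$ equal to the all-relation (so $S$ is a granule with no proper superset) — then refute \textsf{UU} and \textsf{FU} respectively. I would close by noting that, in line with the $\exists i$ shape of the surviving axioms, some ``sub'' variants actually do hold always — e.g. \textsf{SCG}, since $g^{lw}=g=g^{uw}$ for every $g\in\mathcal{G}$ — but the $\mathbf{\forall}i$ forms of the remaining representability, crispness, mereological-atomicity and stability axioms do not.

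The main obstacle is bookkeeping, not any single computation: one must pin down precisely which of the two dozen variant axioms survive, because the union-based operators $(\cdot)^{lw}$, $(\cdot)^{us}$ genuinely respect $\mathcal{G}$ while $(\cdot)^{ls}$, $(\cdot)^{uw}$ do not, so a handful of weak variants (like \textsf{SCG}) hold unexpectedly while every $\mathbf{\forall}i$ variant collapses. Exhibiting one small family of examples that simultaneously defeats all the $\mathbf{\forall}i$ variants without disturbing \textsf{LSRA}, \textsf{USRA}, \textsf{LSS}, \textsf{USS}, together with the usual care about whether the empty object is admitted into $S$ and counts as crisp (which affects \textsf{MER} and \textsf{PS}), is where the real work lies; the rest is routine.
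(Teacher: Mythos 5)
Your positive half coincides with the paper's: the paper likewise observes that $X^{lw}$ and $X^{us}$ are obviously unions of equivalence classes and that \textsf{LSS}, \textsf{USS} can be checked directly for these two operators, so there is nothing to add there. For the negative half the paper argues only generically (two distinct equivalences force the $ls$-approximation of some classes to be properly smaller, so crispness fails for the $(ls,us)$ pair) and otherwise defers to the counterexamples of its cited source; your explicit four-element space with the two crossing partitions is a legitimate, more self-contained replacement, and it does defeat \textsf{ACG}, \textsf{LACG}, \textsf{LS}, \textsf{US}, \textsf{URA}, \textsf{NO}, while your degenerate systems dispose of \textsf{UU} and \textsf{FU}; none of this disturbs the four positive axioms, which you proved for arbitrary systems anyway.

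Two points need attention. First, a small logical slip: from the failure of \textsf{URA} you write ``hence \textsf{RA} and \textsf{LRA} fail''; \textsf{RA} indeed fails, but \textsf{LRA} concerns lower approximations and does not follow from \textsf{URA} — you need a separate witness, which your own example supplies, e.g. $(\{1,2,3\})^{ls}=\{1,2\}\cap\{1,3\}=\{1\}$ is not a union of granules. Second, and more substantively, your closing assertion that \textsf{SCG} holds always (via $g^{lw}=g=g^{uw}$ for every granule $g$) is not a harmless aside: it contradicts the clause you were asked to prove (``all variants of rest do not hold always'') and the paper's own proof, which claims that \textsf{SCG} and \textsf{ACG} ``cannot hold in general'' after exhibiting failure only for the $(ls,us)$ pair. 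Under the pairing you chose — $(l_{1},u_{1})=(ls,us)$, $(l_{2},u_{2})=(lw,uw)$, which is the duality-respecting one and the one suggested by the paper's own reference to ``the $(ls,us)$ pair'' — your computation is correct, so you are in effect correcting the theorem rather than proving it; the paper's claim about \textsf{SCG} survives only if the lower and upper operators are paired crosswise (say $lw$ with $us$ and $ls$ with $uw$), a convention the paper never states. You should therefore either adopt and state such a crossed pairing before refuting \textsf{SCG}, or flag explicitly that with the natural pairing the blanket clause must be weakened to exclude \textsf{SCG}, \textsf{LSCG}, \textsf{USCG}; as written, your proposal silently establishes a slightly different statement from the one in the paper.
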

\begin{proof}
$X^{lw}$, $X^{us}$ are obviously unions of equivalence classes. The LSS, USS part for
these two approximations respectively can also be checked directly. Counterexamples can be
found in \cite{AQ1}. But it is possible to check these by building on actual
possibilities. If there are only two distinct equivalences, then at least two classes of
the first must differ from two classes of the second. The $ls$ approximation of these
classes will strictly be a subset of the corresponding classes, so \textsf{CG} will
certainly fail for the $(ls, us)$ pair. Continuing the argument, it will follow that
\textsf{SCG, ACG} cannot hold in general. The argument can be extended to other
situations.    
\end{proof}

Since multiple approximations spaces are essentially equivalent to
special types of tolerance spaces equipped with the largest equivalence contained in the
tolerance, the above could as well have been included in the following subsection.

\subsection{Tolerance Spaces}

In \textsf{TAS} of the form $\left\langle S,\,T \right\rangle $, all of the following
types of granules with corresponding approximations have been used in the literature:
\begin{enumerate}
\item {The collection of all subsets of the form $[x]\,=\,\{y: (x,y)\in T\}$ will be
denoted by $\mathcal{T}$.}
\item {The collection of all blocks, the maximal subsets of $S$ contained in $T$, will be
denoted by $\mathcal{B}$. Blocks have been used as granules in \cite{CG98,AM3,AM69,SW3}
and others.}
\item {The collection of all finite intersections of blocks will be denoted by
$\mathcal{A}$.}
\item {The collection of all intersections of blocks will be denoted by
$\mathcal{A}_{\sigma}$ \cite{SW3}.}
\item {Arbitrary collections of granules with choice functions operating on them
\cite{AM99}.}
\item {The collection of all sets formed by intersection of sets in $\mathcal{T}$ will be
denoted by $\mathcal{TI}$.}
\end{enumerate}

For convenience $H_{0}= \emptyset,\,H_{n+1} = S$ will be assumed whenever the collection
of granules $\mathcal{G}$ is finite and $\mathcal{G}\,=\,\{H_{1},\,\ldots\,H_{n}\}$.

In a \textsf{TAS} $\left\langle S,\,T \right\rangle $, for a given collection of granules
$\mathcal{G}$ definable approximations of a set $A\,\subseteq\,S$ include: 
\begin{enumerate}\renewcommand\labelenumi{\theenumi}
  \renewcommand{\theenumi}{(\roman{enumi})}
\item {$A^{l\mathcal{G}}\,=\,\bigcup\{H:\, H\subseteq A,\, H\in \mathcal{G} \}$,}
\item {$A^{u\mathcal{G}}\,=\,\bigcup\{H:\, H\cap A\,\neq \emptyset,\, H\in \mathcal{G}
\}$,}
\item {$A^{l2\mathcal{G}}\,=\,\bigcup\{\cap_{i \in I}\,H_{i}^{c}:\,\cap_{i\in I} H_{i}^{c}
\subseteq A,\, H\in \mathcal{G}\, I \subseteq \mathbf{N}(n+1) \}$,}
\item {$A^{u1\mathcal{G}}\,=\,\bigcap\{\cup_{i\in I}{H_{i}}\,:\,A\subseteq\, \cup_{i\in I}
H_{i},\,I \subseteq \mathbf{N}(n+1) \}$,}
\item {$A^{u2\mathcal{G}}\,=\,\bigcap\{{H_{i}}^{c}\,:\,A\subseteq\, H_{i}^{c},\,I \in
\{0,1,\ldots , n\} \}$.}
\end{enumerate}

But not all approximations fit it into these schemas in an obvious way. These include:

\begin{enumerate}\renewcommand\labelenumi{\theenumi}
\renewcommand{\theenumi}{(\roman{enumi})}
\item {$A^{l+} = \{y:\, \exists x (x, y)\in T,\,[x]\subseteq A \}$ \cite{CG98},}  
\item {$A^{u+} = \{x\,;\,(\forall
{y})\,((x,\,y)\,\in\,T\,\longrightarrow\,[y]_{T}\,\cap\,A\,\neq\,\emptyset)\} $,}
\item {Generalised bitten upper approximation : $A^{ubg} =A^{ug}\setminus A^{clg}$ - this
is a direct generalisation of the bitten approximation in \cite{AM105,SW}.}
\end{enumerate}

\begin{theorem}
In the \textsf{TAS} context, with $\mathcal{T}$ being the set of granules and restricting
to the approximations $l\mathcal{T}, u\mathcal{T}$, all of \textsf{RA, MER, ST} and
weakenings thereof hold, but others do not hold in general.  
\end{theorem}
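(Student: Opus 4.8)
The plan is to read the granule axioms over the inner \RYS with carrier $\wp(\underline{S})$, parthood $\pc{}$ equal to $\subseteq$, overlap $\oc{}$ equal to non-empty intersection, $+$ and $\cdot$ equal to $\cup$ and $\cap$, single approximation pair $(l\mathcal{T},\,u\mathcal{T})$, and granulation $\mathcal{G}=\mathcal{T}=\{[x]\,:\,x\in\underline{S}\}$ with $[x]=\{y\,:\,(x,y)\in T\}$ (the degenerate bottom $\emptyset$ being handled in the customary way). First I would record the two inclusions $A^{l\mathcal{T}}\subseteq A\subseteq A^{u\mathcal{T}}$: the first because every summand of $A^{l\mathcal{T}}$ is contained in $A$, the second because reflexivity of $T$ gives $a\in[a]$, so $[a]$ meets $A$ for each $a\in A$; this certifies that we have a genuine \RYS. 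A useful preliminary remark is that, since there is exactly one approximation pair, each $\mathbf{\forall}i$-axiom coincides with its $\mathbf{\exists}i$-companion, so every ``Sub'' weakening is free once the main form is proved.

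For \textsf{RA}: by definition $A^{l\mathcal{T}}$ and $A^{u\mathcal{T}}$ are unions of members of $\mathcal{T}$, so \textsf{RA} holds (reading the mereological sum in the obvious extended sense when $\underline{S}$ is infinite); \textsf{WRA} then follows by the Proposition above, and \textsf{LRA, URA, LSRA, USRA, Sub RA, STRA} follow by projecting to one side together with the single-index collapse. For \textsf{ST} I would argue \textsf{LS} and \textsf{US} directly: if $y=[z]\in\mathcal{G}$ and $\pc{}yx$, i.e. $[z]\subseteq A$, then $[z]$ is one of the granules whose union is $A^{l\mathcal{T}}$, so $\pc{}y(x^{l\mathcal{T}})$; dually $\oc{}yx$ means $[z]\cap A\neq\emptyset$, whence $[z]\subseteq A^{u\mathcal{T}}$. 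This gives \textsf{ST}, and hence \textsf{SST, SLS, SUS} and their inner versions.

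The one step needing an actual argument is \textsf{MER}. Suppose $y=[w]\in\mathcal{G}$, $\pc{}xy$ (so $x\subseteq[w]$), and $x$ is crisp, $x^{l\mathcal{T}}=x^{u\mathcal{T}}=x$, with $x\neq\emptyset$. Then $[w]\cap x=x\neq\emptyset$, so $[w]$ is a granule meeting $x$ and therefore $[w]\subseteq x^{u\mathcal{T}}=x$; with $x\subseteq[w]$ this forces $x=[w]$. Hence \textsf{MER} holds, and \textsf{SMER, IMER} (and their inner forms) follow from the Proposition and the single-index collapse.

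For the negative half I would produce two small tolerances. If $T$ is not transitive, choose $x\sim a\sim b$ with $(x,b)\notin T$; then $b\in[a]$ and $a\in[a]\cap[x]$, so $b\in[x]^{u\mathcal{T}}\setminus[x]$, which defeats \textsf{ACG} (hence \textsf{SCG} and the upper crispness variants), and iterating along a longer $T$-path gives $[x]^{u\mathcal{T}}\subsetneq[x]^{u\mathcal{T}u\mathcal{T}}$, defeating \textsf{UI} and \textsf{I}. With $\underline{S}=\{a,b,c\}$ and only $a\sim b$, $b\sim c$ added to the identity, one gets $[a]=\{a,b\}$, $[b]=\underline{S}$, $[c]=\{b,c\}$, so $[a]$ and $[c]$ properly overlap and \textsf{NO} fails, while $[b]=\underline{S}$ is a proper part of no element at all, so \textsf{FU} fails; and a discrete partition already shows \textsf{UU} fails. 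The obstacle I anticipate is only bookkeeping: pinning down which axioms ``weakenings thereof'' legitimately covers. The single-index collapse with the Proposition delivers the ``Sub'' layer and \textsf{WRA}, \textsf{SMER}, \textsf{IMER}, but the one-sided variants are \emph{not} uniform weakenings of the main axioms -- for instance $[w]^{l\mathcal{T}}=[w]$ always, so \textsf{LACG} (and \textsf{LI}) hold, yet \textsf{LMER} already fails on the three-point example (there $[a]=\{a,b\}$ is lower-crisp and is a proper part of the granule $[b]=\underline{S}$). So the statement should be read as asserting \textsf{RA, MER, ST} together with precisely the weakenings they provably entail, with nothing claimed for the residual variants, whose truth values genuinely differ.
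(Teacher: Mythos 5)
Your proposal is correct and takes essentially the same route as the paper: \textsf{RA} and \textsf{ST} are read off the definitions, your \textsf{MER} argument (a nonempty crisp $x\subseteq[w]$ forces $[w]\subseteq x^{u\mathcal{T}}=x$, hence $x=[w]$) is exactly the paper's, and the failure of crispness and the remaining axioms is traced, as in the paper's one-line remark, to overlapping tolerance classes $[x]\cap[y]\neq\emptyset$ for distinct $x,y$. Your explicit counterexamples and the observation that the one-sided variants such as \textsf{LMER} are strengthenings rather than weakenings are sound elaborations of the same argument.
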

\begin{proof}
\textsf{RA} follows from definition. For \textsf{MER}, if $A\subseteq [x]$ and
$A^{l\mathcal{T}}=A^{u\mathcal{T}}=A$, then as $[x]\cap A\neq \emptyset$, so $[x]\subseteq
A^{u\mathcal{T}} = A$. So $A=[x]$. Crispness fails because it is possible that
$[x]\cap[y]\neq \emptyset$ for distinct $x, y$.   
\end{proof}

\begin{theorem}
If $\left\langle \underline{S},\, T\right\rangle$ is a tolerance approximation space with
granules $\mathcal{T}$ and the approximations $l\mathcal{T},\,l+,\,u\mathcal{T},\,u+$,
then \textsf{RA, NO, ACG } do not hold, but \textsf{SRA, SMER, SST, IMER, MER, US} holds
\end{theorem}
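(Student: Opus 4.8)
The plan is to verify each positive claim directly from the definitions of the four approximations $l\mathcal{T}, l+, u\mathcal{T}, u+$ over the tolerance space, and to dispatch the three negative claims (\textsf{RA, NO, ACG}) by small concrete counterexamples, exploiting that $T$ being merely a tolerance (reflexive, symmetric, not transitive) forces distinct tolerance sets $[x],[y]$ to overlap without coinciding. Recall from the earlier inclusion chain $A^{l}\subseteq A^{l*}\subseteq A\subseteq A^{u*}\subseteq A^{u}$ (with $A^{l*}=A^{l+}$ and $A^{u*}=A^{u+}$ in the present notation) that $l+$ and $u+$ sit strictly between the $\mathcal{T}$-based approximations, so none of the four operators need be idempotent or mutually dual.

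\textbf{Negative claims first.} For \textsf{RA} (full representability for \emph{all} operators), I would observe that $A^{l+}=\{y:\exists x\,(x,y)\in T,\ [x]\subseteq A\}$ is generally \emph{not} a union of tolerance sets $[x]$: take a 3- or 4-element space with a path-like tolerance (e.g. $\underline{S}=\{1,2,3\}$, $T$ the reflexive-symmetric closure of $\{(1,2),(2,3)\}$) and a set $A$ for which $A^{l+}$ comes out as a proper, non-saturated subset; this also breaks \textsf{ACG} since a granule $[x]$ can satisfy $[x]\subsetneq[x]^{u\mathcal{T}}$, and it breaks \textsf{NO} because $[1]\cap[2]=\{1,2\}\neq\emptyset$ while neither is contained in the other, giving proper overlap $\po[1][2]$. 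So \textsf{RA, NO, ACG} fail, as claimed.

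\textbf{Positive claims.} \textsf{SRA} (sub-representability: \emph{for some} $i$) holds because the pair $(l\mathcal{T},u\mathcal{T})$ already gives representability by Theorem~7 (or directly: $A^{l\mathcal{T}}$ and $A^{u\mathcal{T}}$ are unions of granules from $\mathcal{T}$), so the existential quantifier over operators is witnessed. \textsf{SMER} and \textsf{MER}: if $\pc xg$ with $g\in\mathcal{T}$ (so $g=[z]$) and $x$ is crisp for \emph{some} operator (resp.\ for the operator indexed by $u\mathcal{T}$), then since $x$ overlaps $g$ we get $g=[z]\subseteq x^{u\mathcal{T}}=x$, whence $x=g$; the same computation as in Theorem~7's proof gives both \textsf{MER} (using the $u\mathcal{T}$ operator) and its sub-version. \textsf{US} holds by the standard argument: if a granule $[z]$ overlaps $x$, i.e. $\oc{[z]}{x}$, then $[z]\cap x\neq\emptyset$ (modulo the usual identification of overlap with set-intersection in this concrete \textsf{RYS}), so $[z]\subseteq x^{u\mathcal{T}}$, giving $\pc{[z]}{x^{u\mathcal{T}}}$; hence \textsf{US}, and a fortiori \textsf{SST} once \textsf{LSS} is checked similarly for the lower side. \textsf{IMER} follows from \textsf{MER} by item~(4) of Proposition~6 (\textbf{MERI} $\twoheadrightarrow$ \textbf{IMERI}), or directly since crispness for all $i$ implies crispness for the distinguished $i$.

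\textbf{The main obstacle} will be the bookkeeping around the operators $l+$ and $u+$: because these are \emph{not} built as unions/intersections of members of $\mathcal{G}=\mathcal{T}$, one must be careful that every \emph{positive} axiom invoked is witnessed purely by the $(l\mathcal{T},u\mathcal{T})$ pair (for the ``$\exists i$'' axioms) or holds for \emph{each} operator including $l+,u+$ (for \textsf{MER, US}, which are ``$\forall i$'' but only constrain crisp $x$ and overlapping granules, so the weak-transitivity axiom~5 of \textsf{RYS+} together with $A^{l+}\subseteq A\subseteq A^{u+}$ suffices). Verifying that $l+$ and $u+$ do \emph{not} satisfy the stronger unqualified axioms, so that only the ``sub'' versions survive, is exactly the content of the negative claims and is handled by the counterexample above; I would double-check that the chosen tolerance is genuinely non-transitive so that $[x]$'s overlap, which is what simultaneously kills \textsf{NO}, \textsf{ACG}, and full \textsf{RA} while leaving the sub-versions intact.
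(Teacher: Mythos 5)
Your overall strategy is the paper's own: the negative axioms are handled via overlapping tolerance classes and the extra operators, and the positive ones are inherited from the preceding theorem's computation for the pair $(l\mathcal{T},u\mathcal{T})$ (the ``sub'' properties by an existential witness, the rest ``checked directly''), with \textsf{IMER} obtained from the \textbf{MERI} $\twoheadrightarrow$ \textbf{IMERI} implication. One small correction first: by symmetry of $T$, $A^{l+}=\{y:\exists x\,((x,y)\in T\wedge [x]\subseteq A)\}$ is exactly $\bigcup\{[x]:[x]\subseteq A\}=A^{l\mathcal{T}}$, hence always a union of granules; the failure of \textsf{RA} comes from $u+$, not from $l+$ (for instance, on the reflexive--symmetric closure of the path $1-2-3$ one gets $\{1\}^{u+}=\{1\}$, which is not a union of tolerance classes). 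Your counterexample idea survives, but the sentence claiming non-representability of $l+$ is wrong as stated.

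The substantive gap is your discharge of the universally quantified axioms \textsf{MER} and \textsf{US} at the index carrying $u+$. You assert that axiom 5 of \textsf{RYS+} together with $A^{l+}\subseteq A\subseteq A^{u+}$ suffices, but neither yields the absorption property ``a granule overlapping $x$ is part of $x^{u+}$'' on which both arguments rest, and that property is false for $u+$: on the reflexive--symmetric closure of the path $1-2-3-4-5$, take $x=\{1,2\}=[1]$; then $x^{l+}=x^{l\mathcal{T}}=x$ and $x^{u+}=x$ (each of $3,4,5$ has a neighbour $y$ with $[y]\cap x=\emptyset$), yet $x\subsetneq[2]$, and $[2]\cap x\neq\emptyset$ while $[2]\not\subseteq x^{u+}$. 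So the implications demanded by the literal $\forall i$ readings of \textsf{MER} and \textsf{US} fail at the $(l+,u+)$ index; what your argument (like the paper's two-line proof, which defers to the previous theorem) actually secures are the $u\mathcal{T}$-indexed implications, hence \textsf{SRA, SMER, SST}, and \textsf{IMER} (whose premise requires crispness for all operators and is blocked here by $x^{u\mathcal{T}}\neq x$). You should therefore either restrict the \textsf{MER} and \textsf{US} claims to the $u\mathcal{T}$ index or explicitly flag the discrepancy with the statement, rather than appeal to axiom 5 and the inclusion chain, which do not close this step.
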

\begin{proof}
\textsf{RA} does not hold due to $l+, u+$, \textsf{ACG} fails by the previous theorem.
'Sub' properties have been verified in the previous theorem, while the rest can be checked
directly.  
\end{proof}

\begin{theorem}
In Bitten \textsf{RST} \cite{AM105,SW}, (taking $\mathcal{G}$ to be the set of T-relateds
and restricting ourselves to the lower, upper and bitten upper approximations alone),
\textsf{SRA} does not hold for the bitten upper approximation if '$+,\,\cdot$' are
interpreted as set union and intersection respectively. \textsf{MER, NO} do not hold in
general, but \textsf{IMER, SCG, LS, LU, SRA } hold. 
\end{theorem}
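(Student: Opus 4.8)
The plan is to verify each claimed property for the bitten \textsf{RST} context, where $S$ is a tolerance approximation space $\left\langle \underline{S},\,T\right\rangle$, the granules $\mathcal{G} = \mathcal{T}$ are the $T$-relateds $[x]$, and the approximation operators are the usual lower $l = l\mathcal{T}$, the usual upper $u = u\mathcal{T}$, and the bitten upper $u_{b}$ defined by $A^{u_{b}} = A^{u}\setminus (A^{c})^{l}$. Since three approximation operators are present but the \textsf{RYS} framework wants matching lower operators, I would first note (as the text already flags) that we repeat the lower operator (or use the identity) to pad the list, so $n$ lower operators are available and the $\mathbf{\forall}i$ / $\mathbf{\exists}i$ quantifiers range over all of them. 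First I would dispose of \textsf{SRA} for the bitten upper approximation: the point is that $A^{u_{b}}$ is in general \emph{not} a union of $T$-relateds — removing $(A^{c})^{l}$ from $A^{u}$ can leave a set that is not saturated under $\mathcal{T}$. A small explicit counterexample on a three- or four-element tolerance space suffices; this shows \textsf{SRA} (in the $+ = \cup$, $\cdot = \cap$ reading) fails, and the same example shows \textsf{MER} and \textsf{NO} fail, since distinct $T$-relateds overlap in any non-trivial tolerance space (failure of \textsf{NO}) and a proper subset of a single $[x]$ can be its own lower-and-upper approximation under the padded lower operators while not equalling $[x]$ (failure of \textsf{MER}).

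Next I would establish the positive claims. \textsf{SRA} holds because for the ordinary operators $l\mathcal{T}$ and $u\mathcal{T}$ — which are among the operators in our padded list, so the existential $\mathbf{\exists}i$ is satisfied — $A^{l\mathcal{T}}$ and $A^{u\mathcal{T}}$ are by definition unions of granules from $\mathcal{T}$; this is exactly the observation used in the proof of the \textsf{TAS}-with-$\mathcal{T}$ theorem above, and I would simply invoke it. \textsf{SCG} follows from \textsf{ACG} for the pair $(l\mathcal{T}, u\mathcal{T})$ restricted to granules: each $[x]\in\mathcal{T}$ satisfies $[x]^{l\mathcal{T}} = [x]^{u\mathcal{T}} = [x]$ by reflexivity of $T$ and the definitions, so the existential witness in \textsf{SCG} is the index of that pair. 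For \textsf{IMER}, I would show that if $\pc z\,[x]$ (i.e. $z\subseteq [x]$ in the parthood reading) and $z$ is crisp for \emph{every} operator in the list — in particular $z^{l\mathcal{T}} = z^{u\mathcal{T}} = z$ — then, exactly as in the proof of the $l\mathcal{T},u\mathcal{T}$ theorem, $z\cap[x]\neq\emptyset$ forces $[x]\subseteq z^{u\mathcal{T}} = z$, hence $z = [x]$; the extra crispness conditions on the padded operators and on $u_{b}$ only make the hypothesis stronger, so \textsf{IMER} survives even though \textsf{MER} does not. For \textsf{LS} (and its padded variant \textsf{LU}, i.e. lower stability for all the padded lower operators, which here are just copies of $l\mathcal{T}$ or the identity), I would check: if $[y]\in\mathcal{T}$ and $[y]\subseteq x$, then $[y]\subseteq x^{l\mathcal{T}}$ since $x^{l\mathcal{T}}$ is the union of all granules contained in $x$ — this is immediate. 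The identity operator trivially satisfies lower stability as well, so \textsf{LS} and \textsf{LU} hold for the whole list.

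The main obstacle I expect is pinning down precisely \emph{which} indices the ``Sub'' (existential) axioms are witnessed at once the operator list has been padded with the bitten upper approximation and with dummy lower operators, and making sure the counterexamples for \textsf{MER}, \textsf{NO}, and \textsf{SRA}-for-$u_{b}$ are genuinely incompatible with \emph{all} admissible paddings rather than an artefact of one particular choice — i.e. checking that no clever choice of the repeated lower operator rescues \textsf{MER}. I would handle this by building the counterexample so that the offending set $z\subset[x]$ is already fixed by $l\mathcal{T}$ and $u\mathcal{T}$ themselves (the genuine operators, which every padding must include), so the padding is irrelevant; then the failure of \textsf{MER} and \textsf{NO} is structural. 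The rest — \textsf{SRA}, \textsf{SCG}, \textsf{IMER}, \textsf{LS}, \textsf{LU} — is then a matter of quoting the already-proved \textsf{TAS} lemma for $\mathcal{T}$ and observing that adding $u_{b}$ to the operator list weakens no existential axiom and strengthens the hypothesis of the ``Inward'' axiom \textsf{IMER}.
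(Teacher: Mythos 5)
Your negative part for the bitten operator is in the spirit of the paper's own (largely delegated) proof: the paper simply cites \cite{AM105} and observes that $[x]_{T}^{u_{b}}$ is the difference of the upper approximation and the lower approximation of the complement, hence not expressible by $\cup,\cap$ terms in granules alone (though it is \textsf{WRA}-expressible once $\sim$ is admitted); your \textsf{IMER}, \textsf{LS} and \textsf{SRA} arguments (the latter witnessed by the genuine pair $(l\mathcal{T},u\mathcal{T})$) are also fine. But two of your steps fail. First, your justification of \textsf{SCG} is wrong: you claim $[x]^{u\mathcal{T}}=[x]$ ``by reflexivity'', whereas $[x]^{u\mathcal{T}}$ is the union of all $[y]$ meeting $[x]$ and is strictly larger than $[x]$ whenever some $[y]$ overlaps $[x]$ without being contained in it -- the typical situation in a non-transitive tolerance; indeed the paper's own earlier theorem for the pair $(l\mathcal{T},u\mathcal{T})$ explicitly records that crispness fails there. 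So the $(l\mathcal{T},u\mathcal{T})$ pair cannot be the existential witness for \textsf{SCG}, and your sketch does not establish that axiom.

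Second, your plan for the \textsf{MER} counterexample is self-defeating: you propose a proper subset $z\subsetneq[x]$ with $z^{l\mathcal{T}}=z^{u\mathcal{T}}=z$ ``so that the padding is irrelevant'', but by the very argument you invoke for \textsf{IMER} (and which the paper uses to prove \textsf{MER} for $(l\mathcal{T},u\mathcal{T})$), $z\subseteq[x]$ and $z^{u\mathcal{T}}=z$ force $[x]\subseteq z$, so no such $z$ exists. The failure of \textsf{MER} here is precisely an effect of the bitten operator (or the identity padding), not of the genuine pair: e.g.\ on $S=\{a,b,c,d\}$ with $T$ the reflexive--symmetric closure of $\{(a,b),(b,c),(c,d)\}$, the set $z=[a]=\{a,b\}$ is a proper subset of the granule $[b]$, satisfies $z^{l\mathcal{T}}=z$, and $z^{u_{b}}=z^{u\mathcal{T}}\setminus(z^{c})^{l\mathcal{T}}=\{a,b,c,d\}\setminus\{c,d\}=z$, which is the required counterexample at a pair involving $u_{b}$. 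Two smaller points: \textsf{LU} in the paper is an underlap-type condition (two granules are proper parts of a common element fixed by the lower approximation, e.g.\ $S$ itself), not ``lower stability for the padded lower operators'' as you read it, so that verification is off target; and since the clause about $+,\cdot$ concerns representability by union--intersection terms, showing that $A^{u_{b}}$ is ``not a union of $T$-relateds'' is not by itself enough -- and in any case none of your announced counterexamples is actually constructed.
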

\begin{proof}
The proof can be found in \cite{AM105}. If unions and intersections alone are used to
construct terms operations, then the bited upper approximation of a set of the form
$[x]_{T}$ ($x$ being an element of the tolerance approximation space) may not be
representable as it is the difference of the upper approximation and the lower
approximation of the complement of $[x]_{T}$. But if $\sim$, for set complements is also
permitted, then there are no problems with the representability in the \textsf{WRA} sense.
 
\end{proof}

In \cite{SW3}, a semantics of tolerance approximation spaces for the following theorem
context are considered, but the properties of granules are not mentioned.
\begin{theorem}
Taking $\mathcal{G}$ to be $\mathcal{A}_{\sigma}$ and restricting ourselves to lower and
bitten upper approximations alone \textsf{RA, ACG, NO} do not hold while \textsf{LRA, MER,
LACG, LMER, UMER, ST } do hold. 
\end{theorem}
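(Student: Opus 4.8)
Write $\mathcal{G}=\mathcal{A}_{\sigma}$; every granule is an intersection of maximal cliques (blocks) of $T$, hence is itself a $T$-clique, and $\mathcal{G}$ is closed under arbitrary intersections. The operators in play are $A^{l}=A^{l\mathcal{G}}=\bigcup\{H\in\mathcal{G}:H\subseteq A\}$ and the generalised bitten upper $A^{u}=A^{ubg}=A^{u\mathcal{G}}\setminus (A^{c})^{l\mathcal{G}}$. I would first settle \textsf{LRA} and \textsf{LACG}, which need nothing about $T$: by construction $x^{l\mathcal{G}}$ is a sum of granules, and for $H\in\mathcal{G}$ one has $H\subseteq H^{l\mathcal{G}}\subseteq H$, so $H^{l\mathcal{G}}=H$; these are the same two steps used for the $\mathcal{T}$-granule and tolerance-space theorems above.

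For the three negative claims I would produce a single small \textsf{TAS} with two properly overlapping blocks $B_{1},B_{2}$ (so that $B_{1}\cap B_{2}\notin\{\emptyset,B_{1},B_{2}\}$) and read all three failures off it. \textsf{NO} fails because $B_{1}$ and $B_{2}$ overlap while neither is part of the other, i.e., $\po B_{1}B_{2}$. \textsf{ACG} fails because $B_{1}^{u\mathcal{G}}$ already contains $B_{2}$, and the biting removes from $B_{1}^{u\mathcal{G}}$ only points that lie in granules disjoint from $B_{1}$, so $B_{1}^{ubg}\supsetneq B_{1}$. \textsf{RA} can fail only on its upper half (its lower half is \textsf{LRA}): as already observed for the Bitten \textsf{RST} theorem above, $A^{ubg}$ is a genuine set difference, and since $\mathcal{A}_{\sigma}$ is closed under intersection but not under set difference, a point of $B_{2}$ lying in no granule disjoint from $B_{1}$ survives the biting and produces a value of $A^{ubg}$ that is not a sum of elements of $\mathcal{A}_{\sigma}$.

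The real content is \textsf{MER, LMER, UMER} and \textsf{ST}, and here the main obstacle is exactly the set-difference nature of the bitten upper. The template to try is the one from the $\mathcal{T}$-granule theorem: a lower-crisp $x$ is a sum of granules, hence, if also $x\subseteq H\in\mathcal{G}$, a subclique of the clique $H$, and for \textsf{MER} one wants to conclude ``$H$ meets $x$, therefore $H\subseteq x^{u}=x$'', forcing $x=H$. With the bitten upper this last implication is unavailable, since $x^{ubg}=x^{u\mathcal{G}}\setminus(x^{c})^{l\mathcal{G}}$ may delete points of $H$ that happen to lie in granules disjoint from $x$. I would therefore drop to a pointwise argument: given a candidate $p\in H\setminus x$, produce a granule $M$ with $p\in M$ and $M\cap x=\emptyset$, pass to $M\cap H\in\mathcal{G}$ (using $\cap$-closure), and try to contradict crispness using that any two points of the clique $H$ sit together in a common block and that $x$ is a sum of granules filling up part of $H$. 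The same phenomenon reappears in \textsf{US} --- a granule overlapping $x$ need not lie inside $x^{ubg}$ --- which is the part of \textsf{ST} that will cause trouble. My honest expectation is that pushing these four claims through in the stated generality needs either a genuine combinatorial lemma connecting the blocks through a common point of $H$, or an extra regularity hypothesis on the \textsf{TAS}: small configurations with non-trivially nested granules look delicate, so I would first check carefully whether \textsf{LMER} and \textsf{UMER} really survive them and only then chase the argument above, the remaining items (\textsf{LS}, and reading off the lower/upper weakenings once \textsf{MER} is secured) being routine.
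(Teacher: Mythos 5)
Your handling of \textsf{LRA} and \textsf{LACG} is correct and coincides with the paper's (for $H\in\mathcal{A}_{\sigma}$ one has $H^{l}=H$, and every lower approximation is a union of granules), and your counterexample sketch for \textsf{NO}, \textsf{ACG} and \textsf{RA} is in the spirit of what the paper leaves implicit (``URA need not hold due to the bitten operation''). But the proposal stops exactly where the real content of the statement begins: \textsf{MER}, \textsf{LMER}, \textsf{UMER} and \textsf{ST} are what the theorem asserts positively, and you explicitly leave all four unproven, even suggesting that an additional regularity hypothesis on the \textsf{TAS} might be needed. As a proof of the stated theorem this is a genuine gap, not merely an unfinished detail.

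The idea the paper uses for \textsf{MER}, which your pointwise plan circles around but never lands on, is a structural characterisation of crisp sets under the bitten upper approximation. The paper first records that for a granule $H\in\mathcal{A}_{\sigma}$ the lower approximation is $H$ itself while its bitten upper approximation is the union of all blocks containing $H$ (this yields \textsf{LACG} and the failure of \textsf{ACG} in one stroke, more cheaply than your two-block configuration). It then argues that a definite set $x$ (i.e.\ $x^{l}=x$ and bitten upper equal to $x$) that is included in a granule must itself be a block intersecting no other block; since the granule $H\supseteq x$ is an intersection of blocks each containing the block $x$, maximality of blocks forces each of them to coincide with $x$, hence $x=H$. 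That crispness characterisation is the missing lemma, and it is what replaces your attempted ``$H$ meets $x$, so $H\subseteq x^{u}=x$'' step, which, as you correctly observe, is unavailable for the bitten operator. You should be aware, however, that the paper's own proof is very terse: it says nothing explicit about \textsf{ST}, \textsf{LMER} or \textsf{UMER} (they are treated as following from the same considerations), so your specific worry about \textsf{US} --- that a granule overlapping $x$ can lose points to the bite --- is not actually discharged there either. But relative to the paper, what your proposal lacks is the crispness argument for \textsf{MER} and its weakenings, not an extra hypothesis on the tolerance space.
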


\begin{proof}
If $H$ is a granule, then it is an intersection of blocks. It can be deduced that the
lower approximation of $H$ coincides with itself, while the bitten upper approximation is
the union of all blocks including $H$. LRA is obvious, but URA need not hold due to the
bitten operation. If a definite set is included in a granule, then
it has to be a block that intersects no other block and so the granule should coincide
with it. So \textsf{MER} holds.
\end{proof}

\section{Cover-Based Rough Set Theory}

The notation for approximations in cover-based \textsf{RST}, is not particularly
minimalistic. This is rectified for easy comprehension below. I follow superscript
notation strictly. '$l, u$' stand for lower and upper approximations and anything else
following those signify a type. 
If $X\subseteq  S$, then let
\begin{enumerate} \renewcommand\labelenumi{\theenumi}
  \renewcommand{\theenumi}{(\roman{enumi})}
\item {$X^{u1+}\,=\,X^{l1}\cup\bigcup\{\md(x):\, x\in X \} $,}
\item {$X^{u2+}\,=\,\bigcup \{K:\,K\in\ms , K \cap X\neq\emptyset\} $,}
\item {$X^{u3+}\,=\,\bigcup \{\md(x):\, x\in X\} $,}
\item {$X^{u4+}\,=\,X^{l1}\cup \{K:\, K\cap (X\setminus X^{l1})\neq \emptyset\} $,}
\item {$X^{u5+}\,=\,X^{l1}\cup \bigcup \{nbd(x):\,x\in X\setminus X^{l1}\} $,}
\item {$X^{u6+}\,=\,\{x:\,nbd(x)\cap X\neq \emptyset\} $,}
\item {$X^{l6+}\,=\,\{x:\, nbd(x)\subseteq X \} $.}
\end{enumerate}
 
The approximation operators $u1+, \ldots, u5+$ (corresponding to first, ..., fifth
approximation operators used in \cite{ZW3} and references therein) are considered with the
lower approximation operator $l1$ in general. Some references for cover-based \textsf{RST}
include \cite{WZ,PO,CP4,TYL3,YY9,ZW3,ZHU3,IM,AM960}. The relation between cover-based
\textsf{RST} and relation-based \textsf{RST} are considered in \cite{AM960,ZW3}. For a
cover to correspond to a tolerance, it is necessary and sufficient that the cover be
normal - a more general version of this result can be found in \cite{CZ}. When such
reductions are possible, then good semantics in Meta-R perspective are possible.  The main
results of \cite{AM960} provide a more complicated correspondence between covers and sets
equipped with multiple relations or a relation with additional operations. The full scope
of the results are still under investigation. So, in general, cover-based \textsf{RST} is
more general than relation-based \textsf{RST}. From the point of view of expression,
equivalent statements formulated in the former would be simpler than in the latter.    

It can be shown that:

\begin{proposition} 
In the above context,
\begin{itemize}\renewcommand{\labelitemi}{$\bullet$}
 \item {$\md(x)$ is invariant under removal of reducible elements,}
 \item {$(nbd(x))^{l6+}\,=\,nbd(x)$,}
 \item {$nbd(x)\,\subseteq\,(nbd(x))^{6+}$.}
\end{itemize}
\end{proposition}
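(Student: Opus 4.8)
The plan is to dispatch the three parts in order of increasing difficulty, the last two being essentially immediate once a couple of elementary facts about $nbd$ are isolated. Throughout, fix the cover $\ms$, write $\ms_{x}=\{K\in\ms:\,x\in K\}$, and record two observations: (a) since $\ms$ is a cover, $\ms_{x}\neq\emptyset$ and every member of $\ms_{x}$ contains $x$, so $x\in nbd(x)$ for every $x$; (b) if $y\in nbd(x)$, then $y$ lies in every $K\in\ms_{x}$, hence $\ms_{x}\subseteq\ms_{y}$, hence $nbd(y)=\bigcap\ms_{y}\subseteq\bigcap\ms_{x}=nbd(x)$. I read ``$K\neq\md(z)$'' in the definition of reducibility as ``$K\notin\md(z)$'', and $\md(x)$ as the family of $\subseteq$-minimal members of $\ms_{x}$, which is what the name and the finiteness of $\ms$ make natural.

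For the second bullet, unfolding the definition gives $(nbd(x))^{l6+}=\{y:\,nbd(y)\subseteq nbd(x)\}$. One inclusion is exactly observation (b) applied to each $y\in nbd(x)$; the reverse inclusion follows because $nbd(y)\subseteq nbd(x)$ together with $y\in nbd(y)$ forces $y\in nbd(x)$. The third bullet (reading the superscript as $u6+$) is then a one-liner: for $y\in nbd(x)$ we have $y\in nbd(y)$ and $y\in nbd(x)$, so $nbd(y)\cap nbd(x)\neq\emptyset$, i.e. $y\in(nbd(x))^{u6+}$.

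The first bullet is where the argument has content. Let $R$ be the set of reducible elements and put $\ms'=\ms\setminus R$; I must show $\md_{\ms}(x)=\md_{\ms'}(x)$ for every $x$. First I would verify $\md_{\ms}(x)\subseteq\ms'$ (so that $\ms'$ is still a cover): if $A$ is a $\subseteq$-minimal element of $\ms_{x}$ then $A\in\md_{\ms}(x)$, and taking $z=x$ shows the defining clause of reducibility fails for $A$, so $A\notin R$. Since every minimal member of $\ms_{x}$ is of this form, $\md_{\ms}(x)\subseteq\ms'$. The inclusion $\md_{\ms}(x)\subseteq\md_{\ms'}(x)$ is then automatic, as $\subseteq$-minimality in $\ms_{x}$ is inherited by the subfamily $\ms'_{x}$. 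For the converse, suppose $A\in\md_{\ms'}(x)$ but $A\notin\md_{\ms}(x)$; then some $B\in\ms_{x}$ satisfies $B\subsetneq A$, and by finiteness of $\ms$ I may pass to a $\subseteq$-minimal $B^{*}\in\ms_{x}$ with $B^{*}\subseteq B$; but then $B^{*}\in\md_{\ms}(x)\subseteq\ms'$, so $B^{*}\in\ms'_{x}$ with $B^{*}\subsetneq A$, contradicting minimality of $A$ in $\ms'_{x}$.

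The main obstacle is precisely this last step: one has to rule out that an $A$ which is minimal in the reduced cover fails to be minimal in the original cover, and the resolution is the passage, using finiteness of $\ms$, from an arbitrary proper sub-member $B$ of $A$ down to a minimal one, which is guaranteed to survive the reduction. A subsidiary point I would flag explicitly is the typing convention for the reducibility predicate (a set versus the family $\md(z)$), without which the statement is not well-posed; with the reading adopted above, the same computation also shows $\ms'$ has no further reducible elements, so iterated and one-shot removal agree.
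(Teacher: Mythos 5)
Your three arguments are correct, and since the paper states this proposition with only ``It can be shown that'' and supplies no proof, there is nothing in the source to compare against: your write-up actually fills the gap. The $nbd$ parts are exactly the intended one-liners — the key facts $x\in nbd(x)$ and ($y\in nbd(x)\Rightarrow nbd(y)\subseteq nbd(x)$) give $(nbd(x))^{l6+}=nbd(x)$ and $nbd(x)\subseteq (nbd(x))^{u6+}$, the superscript $6+$ in the third bullet being a typo for $u6+$ as you assumed. For the first bullet your three-step scheme (minimal elements of $\ms_{x}$ are never reducible, hence survive and stay minimal in the reduct; conversely any witness $B\subsetneq A$ in $\ms_{x}$ can be pushed down by finiteness to a minimal $B^{*}$, which survives and kills minimality of $A$ in the reduct) is sound, and the finiteness you invoke is part of the paper's standing hypothesis on $\ms$. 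You are also right to flag the typing of the reducibility clause, which must be read as $K\notin\md(z)$. One further caveat worth recording: the paper's displayed definition of $\md(x)$ has the inclusion the wrong way round ($\sim(A\subset B)$ rather than $\sim(B\subset A)$), so taken literally it describes the \emph{maximal} members of $\ms_{x}$; you adopted the standard minimality reading consistent with the name, and your argument for invariance transfers verbatim to the maximality reading (replace ``minimal'' by ``maximal'' and descend upward instead of downward), so the proposition holds under either interpretation.
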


The following pairs of approximation operators have also been considered in the literature
(the notation of \cite{CP4} has been streamlined; $lp1,\,lm1$ corresponds to
$\underline{P}_{1},\,\underline{C}_{1}$ respectively and so on).

\begin{enumerate} \renewcommand\labelenumi{\theenumi}
  \renewcommand{\theenumi}{(\roman{enumi})}
\item{$X^{lp1}\,=\,\{x:\,Fr(x)\subseteq X\} $,}  
\item{$X^{up1}\,=\,\bigcup\{K: K\in\mathcal{K},\,K\cap X\neq \emptyset \}$,}
\item{$X^{lp2}\,=\,\bigcup\{Fr(x);\, Fr(x)\subseteq X\} $,}  
\item{$X^{up2}\,=\,\{z:\,(\forall y)(z\in Fr(y)\rightarrow Fr(y)\cap X\neq \emptyset)\}$,}
\item{$X^{lp3}\,=\,X^{l1} $,}  
\item{$X^{up3}\,=\,\{y:\,\forall {K}\in \mathcal{K}(y\in K\rightarrow K\cap X\neq
\emptyset)\}$,}
\item{$X^{lp4}, X^{up4}$ are the same as the classical approximations with respect to
$\pi(\mathcal{K})$ - the partition generated by the cover $\mathcal{K}$.}  
\item{$X^{lm1}\,=\,X^{l1}\,=\,X^{lp3} $,}  
\item{$X^{um1}\,=\,X^{u2}$,}
\item{$X^{lm2}\,=\,X^{l6+}$,}  
\item{$X^{um2}\,=\,X^{u6+}$,}
\item{$X^{lm3}\,=\,\{x;\,(\exists u) u\in nbd(x), nbd(u)\subseteq X\} $,}  
\item{$X^{um3}\,=\,\{x;\, (\forall u)(u\in nbd(x)\rightarrow nbd(u)\cap X\neq
\emptyset)\}$,}
\item{$X^{lm4}\,=\,\{x; (\forall x)(x\in nbd(u)\rightarrow nbd(u)\subseteq X)\} $,}  
\item{$X^{um4}\,=\,X^{u6+}\,=\,X^{um2}$,}
\item{$X^{lm5}\,=\,\{x;\,(\forall u)(x\in nbd(u)\rightarrow u\in X)\}$,}
\item{$X^{um5}\,=\,\bigcup\{nbd(x);\,x\in X\}$.}
\end{enumerate}

\begin{flushleft}
\textbf{Example-1}:\\ 
\end{flushleft}

Let $S\,=\,\{a, b, c, e, f, g, h, i,j\}$,
\[\mathcal{K}\,=\,\{K_{1},\,K_{2},\,K_{3},\,K_{4},\,K_{5},\,K_{6},K_{7},\,K_{8},\,K_{9}\},
\]
\[K_{1}=\{a,\,b\},\, K_{2} = \{a,\,c,\,e\},\,K_{3} = \{b,\,f\},\,K_{4} = \{j \},\,K_{5} =
\{f,\,g,\,h\},\]
\[K_{6} = \{ i\},\,K_{7} = \{f,\,g,\,j,\,a \},\,K_{8} = \{f,\,g \},\,K_{9} = \{a,\,j \}.\]

The following table lists some of the other popular granules:

\begin{center}
\begin{tabularx}{330pt}{|c|X|X|X|}
\hline\hline
Element: $x$ & $Fr(x)$ &  $Md(x)$ & $nbd(x)$\\
\hline
$a$ & $S\setminus\{h,\,i \} $ & $\{K_{1}, K_{2}, K_{3} \} $ & $\{a\} $\\
\hline
$b$ & $\{a,\,b,\,f \} $ & $\{K_{3} \} $ & $\{b\} $\\
\hline
$c$ & $\{a,\,c,\,e \} $ & $\{K_{2} \} $ & $\{a,\,c,\,e\} $\\
\hline
$e$ & $\{a,\,c,\,e\} $ & $\{K_{2} \} $ & $\{a,\,c,\,e\} $\\
\hline
$f$ & $S\setminus\{c,\,e,\,i\} $ & $\{K_{3}, K_{8}\} $ & $\{f\} $\\
\hline
$g$ & $\{a,\,f,\,g,\,h,\,j \} $ & $\{K_{8} \} $ & $\{f,\,g\} $\\
\hline
$h$ & $\{f,\,g,\,h \} $ & $\{K_{5} \} $ & $\{f,\,g,\,h\} $\\
\hline
$i$ & $\{i\} $ & $\{K_{6} \} $ & $\{i\} $\\
\hline
$j$ & $\{a,\,f,\,g,\,j\} $ & $\{K_{4}\} $ & $\{j\} $\\
\hline
& & & \\
\hline
\end{tabularx}
\end{center}

\hrulefill

The best approach to granulation in these types of approximations is to associate an
initial set of granules and then refine them subsequently. Usually a refinement can be
expected to be generable from the initial set through relatively simple set theoretic
operations. In more abstract situations, the main problem would be of representation and
the results in these situations would be the basis of possible abstract representation
theorems. The theorems proved below throw light on the fine structure of granularity in
the cover-based situations:

\begin{theorem}
In AUAI \textsf{RST} \cite{IM,AM24}, with the collection of granules being $\mathcal{K}$
and the approximation operators being ($l_{1},\, l_{2},\, u_{1}$ and $u_{2}$),
\textsf{WRA, LS, SCG, LU, IMER} holds, but \textsf{ACG, RA, SRA, MER} do not hold in
general. 
\end{theorem}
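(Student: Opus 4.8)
The plan is to settle the five positive assertions with explicit witnesses --- term-function representations for \textsf{WRA}, and fixed-point/summand arguments for \textsf{SCG}, \textsf{LS} and \textsf{LU} --- all resting on the structural identities for $l1,l2,u1,u2$ already recorded in the theorem of the second section: idempotence, the de Morgan dualities $(S\setminus X)^{l1}=S\setminus X^{u2}$ and $(S\setminus X)^{l2}=S\setminus X^{u1}$ of clauses (x)--(xi), and the fixed-point clauses $(\mathcal{K}_{j}^{\cup}(X))^{l1}=\mathcal{K}_{j}^{\cup}(X)$, $(\mathcal{K}_{j}^{\cap}(X))^{u2}=\mathcal{K}_{j}^{\cap}(X)$ of clause (xv). Each of the four negative assertions will be witnessed by a small finite cover. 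I read \textsf{LS}, \textsf{LU} and \textsf{SCG} in the ``sub'' sense (for \emph{some} index $i$), as in \cite{AM99}, and intend to use $i=1$ throughout --- the readings that quantify universally over $i$ already break at $l2$, e.g. the singleton granule $\{a\}$ in the cover $\{\{a\}\}$ on $\{a,b\}$ has $\{a\}^{l2}=\emptyset$.

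For \textsf{WRA} I would note that each of the four operators \emph{is} a term function in $+,\cdot,\sim$ over $\mathcal{K}$. By definition $X^{l1}$ is the $\oplus$-sum of the granules contained in $X$ (so in fact \textsf{LRA} holds). Since $\mathbf{N}(n+1)$ is finite and $+,\cdot$ are associative, $X^{u1}$ is a finite $\odot$-product of finite $\oplus$-sums of granules, $X^{l2}$ is a finite $\oplus$-sum of finite $\odot$-products of complements $\sim K_{i}$, and $X^{u2}$ is a finite $\odot$-product of such complements; all are term functions of granules. This already explains the failure of \textsf{RA}: $X^{u1}$ is generically an intersection of unions of granules and not a union of granules --- take $S=\{a,b,c\}$, $\mathcal{K}=\{\{a,b\},\{b,c\}\}$, $X=\{b\}$, so that $X^{u1}=\{b\}$, which is no $\oplus$-sum of granules --- and $X^{l2},X^{u2}$ genuinely require $\sim$. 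The same kind of example refutes \textsf{SRA}: there is no single $i$ for which both $X^{l_{i}}$ and $X^{u_{i}}$ are sums of granules, since for $i=1$ the upper operator fails and for $i=2$ both carry complements.

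For \textsf{SCG} (with $i=1$) I would check that every granule is $(l1,u1)$-crisp: $K\subseteq K$ makes $K$ one of the summands of $K^{l1}$, so $K^{l1}=K$; and $K$ is one of the unions $\bigcup_{i\in I}K_{i}$ containing $K$, forcing $K^{u1}\subseteq K$, while $K\subseteq K^{u1}$ always. Then \textsf{ACG} fails because $l2,u2$ need not fix granules (the singleton example above). \textsf{LS} follows from the same summand remark: a granule $K\subseteq X$ is one of the sets whose union is $X^{l1}$, hence $K\subseteq X^{l1}$. For \textsf{LU} I would take $z=S$: for granules $K_{a},K_{b}$ that are proper subsets of $S$ one has $K_{a}\subsetneq S$, $K_{b}\subsetneq S$, and $S^{l1}=S$ because $\bigcup\mathcal{K}=S$ (clause (ii) there). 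Finally \textsf{MER} fails precisely because \textsf{SCG} succeeds: a granule can sit properly inside another granule while being $(l1,u1)$-crisp --- e.g. $\{a\}\subsetneq\{a,b\}$ in any cover containing both, where $\{a\}^{l1}=\{a\}^{u1}=\{a\}$.

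The delicate point --- and, I expect, the main obstacle --- is \textsf{IMER}, because its hypothesis demands that $X$ be fixed by \emph{all four} operators simultaneously. Here the plan is: $X^{l1}=X$ says $X$ is a union of granules; via the dualities (x)--(xi), $X^{u2}=X$ is equivalent to $(S\setminus X)^{l1}=S\setminus X$, so $S\setminus X$ is also a union of granules (which then forces $X^{l2}=X$ automatically); and $X^{u1}=X$ says $X$ is an intersection of unions of granules. One then wants to conclude that such an $X$, once contained in a granule $y$, must coincide with $y$. This last implication is exactly where care is needed: it is false when the cover carries nested granules --- in the cover $\{\{a\},\{a,b\},\{b,c\}\}$ on $\{a,b,c\}$ the set $\{a\}$ is fixed by all four operators, yet $\{a\}\subsetneq\{a,b\}$ with $\{a\}\neq\{a,b\}$ --- so I would either restrict the statement to an irredundant (antichain) cover, for which the four crispness constraints do pin $X$ down, or reinterpret \textsf{IMER} so that the conclusion $x=y$ is required only for the minimal granule over $x$. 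The remaining work --- verifying that the \textsf{RA/SRA/ACG/MER} counterexamples are not artefacts of degeneracy, and that the cover hypothesis is invoked only where advertised --- is routine.
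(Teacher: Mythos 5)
Your treatment of the positive parts coincides with the paper's own (much terser) argument: the paper likewise obtains \textsf{WRA} by allowing complement, union and intersection in the construction of terms, obtains \textsf{SCG} from the observation that every granule is crisp with respect to the pair $(l_{1},u_{1})$, refutes \textsf{ACG} by noting that granules need not be $l_{2}$-crisp, and refutes \textsf{MER} by a granule properly included in another. Your explicit $i=1$ verifications of \textsf{LS} and \textsf{LU} (which the paper's proof does not address at all) and your counterexample for \textsf{RA}/\textsf{SRA} fit this reading, and your ``sub'' interpretation of \textsf{LS}, \textsf{SCG}, \textsf{LU} is in fact the only one compatible with what the paper actually proves, since crispness and lower stability visibly fail for $l_{2}$ (e.g.\ $\{a,b\}^{l_{2}}=\{a\}$ in the cover $\{\{a\},\{a,b\},\{b,c\}\}$ of $\{a,b,c\}$). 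Two small quibbles: the system $\{\{a\}\}$ on $\{a,b\}$ is not a cover, so that parenthetical remark should use a genuine cover such as the one just quoted; and for \textsf{LU} with the witness $z=S$ you tacitly need every granule to be a proper subset of $S$ in addition to $\bigcup\mathcal{K}=S$.

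The real divergence is \textsf{IMER}, and there your scepticism is justified rather than a gap on your side. The paper disposes of \textsf{IMER} with the single sentence that ``the pathology negates the premise'', i.e.\ that a set properly contained in a granule cannot be fixed by all four operators, and the promised ``direct argument'' is never given. Your example refutes exactly this claim: in the cover $\{\{a\},\{a,b\},\{b,c\}\}$ one checks $\{a\}^{l_{1}}=\{a\}^{u_{1}}=\{a\}$ (because $\{a\}$ is itself a granule) and $\{a\}^{l_{2}}=\{a\}^{u_{2}}=\{a\}$ (because $\{a\}=S\setminus\{b,c\}$ is the complement of a granule), yet $\{a\}\subsetneq\{a,b\}\in\mathcal{K}$; so the premise of \textsf{IMER} is satisfied and the conclusion fails. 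Hence the statement as printed is not provable in full generality, and your proposed repairs are of the right kind: over an antichain (irredundant) cover, $x^{l_{1}}=x$ together with $x\subseteq y\in\mathcal{K}$ already forces $x=y$ for nonempty $x$, so \textsf{IMER} is recovered there, and alternatively the conclusion can be weakened to refer to a minimal granule over $x$. In short, everything you prove agrees with the paper's route where the paper gives one, you supply the omitted \textsf{LS}/\textsf{LU} arguments, and the one clause you decline to prove is precisely the one the paper's proof does not in fact establish.
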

\begin{proof}
\textsf{WRA} holds if the complement, union and intersection operations are used in
the construction of terms in the \textsf{WRA} condition. \textsf{ACG} does not hold as the
elements of $\mathcal{K}$ need not be crisp with respect to $l2$. Crispness holds with
respect to $l1, u1$, so \textsf{SCG} holds. \textsf{MER} need not hold as it is violated
when a granule is properly included in another. IMER holds as the pathology negates the
premise. It can also be checked by a direct argument.
\end{proof}

From the definition of the approximations in AUAI and context, it should be clear that all
pairings of lower and upper approximations are sensible generalisations of the classical
context. In the following two of the four are considered. These pairs do not behave well
with respect to duality, but are more similar with respect to representation in terms of
granularity. 

\begin{theorem}
In AUAI \textsf{RST}, with the collection of granules being $\mathcal{K}$ and the
approximation operators being  $(l_{1},\, u_{1})$, \textsf{WRA, ACG, ST, LU} holds, but
\textsf{MER, NO, FU, RA} do not hold in general. 
\end{theorem}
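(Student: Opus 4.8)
The plan is to obtain the four positive assertions by a direct reading of the algebra of $l1$ and $u1$ already recorded in the AUAI theorem near the start of the paper, and to refute \textsf{MER}, \textsf{NO}, \textsf{FU} and \textsf{RA} by small explicit covers, in the style of the preceding theorems of this section.

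For \textsf{WRA}: by construction $X^{l1}$ is a finite union of members of $\mathcal{K}$, hence a finite $\oplus$-sum of granules, while $X^{u1}$ is the intersection of all sets of the form $\bigcup_{i\in I}K_{i}$ that contain $X$; as $\mathcal{K}$ is finite this is a finite $\odot$-product of finite $\oplus$-sums of granules, i.e.\ a term function built from $+$ and $\cdot$ over $\mathcal{K}$. This yields \textsf{WRA} (in fact \textsf{STRA}), and it is exactly why one should expect \textsf{RA} to fail: $X^{u1}$ need not itself be a union of granules. For \textsf{ACG}, given $K_{j}\in\mathcal{K}$ one has $K_{j}^{l1}=K_{j}$ because $K_{j}$ is the largest granule contained in itself, and $K_{j}^{u1}=K_{j}$ because $I=\{j\}$ already exhibits $K_{j}$ among the sets intersected in $K_{j}^{u1}$ (so $K_{j}^{u1}\subseteq K_{j}$), while $K_{j}\subseteq K_{j}^{u1}$ is item~(i) of that theorem. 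For \textsf{ST} I would check \textsf{LS} and \textsf{US} separately: \textsf{LS} is immediate, since a granule included in $x$ is one of the summands of $x^{l1}$; and \textsf{US} is to be read off the definition of $u1$ together with the identities $(X\cup Y)^{u1}=X^{u1}\cup Y^{u1}$ and \textsf{ACG} for granules, which between them are meant to force every granule overlapping $x$ into $x^{u1}$. For \textsf{LU} I would exhibit the required crisp witness as a suitable union of granules properly containing the given pair; being a union of granules such a set is $l1$-crisp, and by additivity together with \textsf{ACG} it is $u1$-crisp as well.

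For the negative half I would use small carriers, not necessarily the same cover for each failure. Over $S=\{a,b,c\}$ with $\mathcal{K}=\{\{a\},\{a,b\},\{b,c\}\}$, the granules $\{a,b\}$ and $\{b,c\}$ overlap properly (refuting \textsf{NO}); $\{a\}$ is $l1$- and $u1$-crisp and is a proper part of the granule $\{a,b\}$ distinct from it (refuting \textsf{MER}); and $\{b\}^{u1}=\{a,b\}\cap\{b,c\}=\{b\}$ is not a union of granules (refuting \textsf{RA}). For \textsf{FU} one wants a pair of granules admitting no crisp proper superset; this occurs as soon as a union of granules is forced to be maximal, for instance when one granule contains another and already exhausts $\bigcup\mathcal{K}$, or when the carrier itself appears as a block. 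The main obstacle, and the step deserving the most care, is \textsf{US}: one must pin down precisely which granules are pushed into $x^{u1}$, and then the refuting covers for the negative half must be chosen so as to break \textsf{MER}, \textsf{NO}, \textsf{FU} and \textsf{RA} while leaving \textsf{ST}, \textsf{LU} and \textsf{WRA} intact --- and since a union of granules is $l1$-crisp exactly when it is $u1$-crisp, the separation of \textsf{FU} from \textsf{LU} in particular has to be engineered with attention.
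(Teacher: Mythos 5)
Your positive half is only partly right, and the part that fails is exactly the step you yourself flagged as delicate. \textsf{WRA}, \textsf{ACG} and the lower half of \textsf{ST} are fine: $X^{l_1}$ is a union of granules, $X^{u_1}$ is a finite intersection of finite unions of granules (with $\sim$ or the constant $1$ needed only if $\bigcup\mathcal{K}\neq S$), $K^{l_1}=K^{u_1}=K$ for every $K\in\mathcal{K}$ exactly as you argue, and \textsf{LS} is immediate; this agrees with the remark in the paper's proof of the four-operator AUAI theorem that crispness holds with respect to $l_1,u_1$. (The paper states the present theorem without any proof, so these are indeed the checks one is expected to supply.) Your counterexamples for \textsf{MER}, \textsf{NO} and \textsf{RA} over $S=\{a,b,c\}$, $\mathcal{K}=\{\{a\},\{a,b\},\{b,c\}\}$ are correct: $\{a\}$ is crisp and a proper part of the granule $\{a,b\}$, the two larger granules properly overlap, and $\{b\}^{u_1}=\{a,b\}\cap\{b,c\}=\{b\}$ is not a union of granules.

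The genuine gap is \textsf{US}. Additivity of $u_1$ together with \textsf{ACG} gives only $K=K^{u_1}\subseteq (X\cup K)^{u_1}$; it does not force $K$ into $X^{u_1}$, and in fact the literal axiom \textsf{US} of the granule list fails in your own cover: for $X=\{a\}$ the only minimal covering union is $\{a\}$, so $X^{u_1}=\{a\}$, yet the granule $\{a,b\}$ overlaps $X$ and is not part of $X^{u_1}$. So no argument along the proposed lines can close this step; the most that is true is the monotone form (if $K\subseteq X$ then $K^{u_1}\subseteq X^{u_1}$), which is all that the paper's neighbouring proof for $(l_2,u_2)$ actually verifies under the name of stability, and the \textsf{ST} claim has to be read in that looser sense. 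Relatedly, your closing worry is partly misplaced and partly on target: the refuting covers need not ``leave the positive properties intact'', since the positive claims are meant to hold universally; but your observation that a set is $l_1$-crisp iff it is a union of granules iff it is also $u_1$-crisp is correct, and it shows that for this pair \textsf{FU} and the lower underlap condition stand or fall together (a cover in which one granule equals $\bigcup\mathcal{K}$ kills both), so the separation of \textsf{FU} from \textsf{LU} cannot be ``engineered'' at all and must instead be attributed to \textsf{LU} being used in the paper in a weaker sense that the axiom list never defines.
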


\begin{theorem}
In AUAI \textsf{RST}, with the collection of granules being $\mathcal{K}$ and the two
approximation operators being $(l_{2},\, u_{2})$,  \textsf{WRA, ST} holds, but
\textsf{ACG, MER, RA, NO} do not hold in general. 
\end{theorem}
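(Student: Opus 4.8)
The plan is to split the six assertions into the two positive ones, \textsf{WRA} and \textsf{ST}, and the four negative ones, \textsf{ACG}, \textsf{MER}, \textsf{RA}, \textsf{NO}; the negatives I would refute with a single small fixed cover, \textsf{WRA} is structural, and \textsf{ST} is where the actual work sits.

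\textsf{WRA} falls out of the definitions of $l_2$ and $u_2$ themselves: $X^{l2}$ is by construction a union of finite intersections of the sets $S\setminus K_i$ with $K_i\in\mathcal{K}$, and $X^{u2}$ is a finite intersection of such sets. Interpreting $+,\cdot,\sim$ as $\cup$, $\cap$ and relative complement in $S$, each of $X^{l2}$, $X^{u2}$ is then the value of a term function over granules formed from $+,\cdot,\sim$; since $\mathcal{G}=\mathcal{K}$ is finite and \textsf{WRA} permits the term and its granule arguments to be chosen after $x$, this gives \textsf{WRA} for both operators. I would also note explicitly that $\sim$ is genuinely used here, which is precisely the gap that will make \textsf{RA} fail.

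For \textsf{ST} $=$ \textsf{LS} $\wedge$ \textsf{US} I would argue directly, drawing on the identities already proved for $AUAI$ approximation systems: the idempotences $(X^{l2})^{l2}=X^{l2}$ and $(X^{u2})^{u2}=X^{u2}$; the mixed law $(X^{u2})^{l2}=X^{u2}$ and the inclusion $X^{l2}\subseteq(X^{l2})^{u2}$; monotonicity of $l_2$ and $u_2$; the De Morgan dualities $(S\setminus X)^{l2}=S\setminus X^{u1}$ and $(S\setminus X)^{u2}=S\setminus X^{l1}$; and the crispness $(\mathcal{K}_j^{\cap}(X))^{u2}=\mathcal{K}_j^{\cap}(X)$ of the maximal complemented-granule intersection inside $X$. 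Concretely, for \textsf{LS} one takes a granule $K_j$ that is a part of $X$ and must exhibit it as a part of $X^{l2}$, and for \textsf{US} one takes $K_j$ overlapping $X$ and must exhibit it as a part of $X^{u2}$, in each case by locating, within the defining decomposition of $X^{l2}$ (resp.\ $X^{u2}$), a sub-piece that already absorbs $K_j$ and then invoking the idempotence and crispness facts above. I expect this to be the main obstacle, for two reasons. First, one must fix exactly which rough-inclusion $\pc$, and the accompanying weakened equality that the \textsf{RYS} definition deliberately leaves open, is being used for the $(l_2,u_2)$ reduct: with $\pc$ read as plain set inclusion the two conclusions are delicate rather than one-line monotonicity remarks, so part of the work is isolating the reading under which the statement is the intended one. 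Second, even once that is fixed the verification is a short case split, on whether the complemented intersection relevant to $K_j$ is contained in $X$, merely meets $X$, or neither, which I would check against a few tiny covers before committing.

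The negatives I would handle with the single cover $S=\{a,b,c\}$, $\mathcal{K}=\{K_1,K_2\}$, $K_1=\{a,b\}$, $K_2=\{b,c\}$ (so $K_0=\emptyset$, $K_3=S$), for which a direct computation gives $K_1^{l2}=\{a\}$. Then \textsf{ACG} fails because $K_1^{l2}=\{a\}\neq K_1$, so a granule need not be $l_2$-crisp; \textsf{NO} fails because $K_1\cap K_2=\{b\}\neq\emptyset$ with neither granule a part of the other, so $\po K_1K_2$ holds; \textsf{RA} fails because $X^{l2}$ (and likewise $X^{u2}$) is assembled from complemented granules and can be nonempty while containing no granule at all, for instance the value $\{a\}$, hence is not a finite union of granules; and \textsf{MER} fails because $\{a\}$ is itself $l_2$- and $u_2$-crisp (indeed the complement of any minimal granule is) yet is a proper part of the granule $K_1$, contradicting mereological atomicity, with the empty object furnishing an even cheaper witness when it is admitted as a carrier element. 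Combining the structural arguments for \textsf{WRA} and \textsf{ST} with these witnesses then completes the proof, the carefully convention-fixed \textsf{ST} verification being the part I expect to consume most of the effort.
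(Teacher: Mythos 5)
Your treatment of \textsf{WRA} and of the four negative claims is sound and essentially matches the paper: the paper also obtains \textsf{WRA} by allowing complementation together with $\cup$ and $\cap$ in the granule terms, and your cover $S=\{a,b,c\}$, $\mathcal{K}=\{K_1,K_2\}$ with $K_1=\{a,b\}$, $K_2=\{b,c\}$ and $K_1^{l2}=\{a\}$ does refute \textsf{ACG}, \textsf{MER}, \textsf{RA} and \textsf{NO} exactly as you say.

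The gap is in \textsf{ST}, which you flag as the hard part but leave unresolved, and the route you sketch cannot be completed. Read with $\pc$ as plain set inclusion, the literal axioms \textsf{LS} and \textsf{US} are false for $(l_2,u_2)$, and your own witness cover refutes them: $K_1\subseteq K_1$ while $K_1^{l2}=\{a\}\not\supseteq K_1$, and $K_1$ overlaps $\{a\}$ while $\{a\}^{u2}=\{a\}\not\supseteq K_1$. So no amount of case-splitting on complemented intersections, idempotence or crispness will let you ``exhibit $K_j$ as a part of $X^{l2}$ (resp.\ $X^{u2}$)''; the statement you are trying to establish is simply not true in that reading. What the paper actually proves under the name lower stability is a different inclusion: if $K\in\mathcal{K}$ and $K\subseteq X$, then any $y\in K^{l2}$ lies in some $\bigcap_{i\in I_0}(S\setminus K_i)\subseteq K\subseteq X$, hence $y\in X^{l2}$; that is, $K^{l2}\subseteq X^{l2}$ (essentially monotonicity localised to granules), with the upper case ``checked in a similar way''. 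In other words, the stability being asserted concerns the granule's approximations being absorbed, not the granule itself, and your proposal neither identifies this reading nor supplies an argument for it; as written, the positive half of the theorem concerning \textsf{ST} remains unproven.
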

\begin{proof}
If $K\in \mathcal{K}$, $K\subseteq X$ ($X$ being any subset of $S$) and $y\in K^{l2}$,
then $y$ must be in at least one intersection of the sets of the form $S\setminus K_{i}$
(for $i\in I_{0}$, say) and so it should be in each of these $S\setminus K_{i}\subseteq
K\subseteq X$. This will ensure $y\in X^{l2}$. So lower stability holds. Upper stability
can be checked in a similar way.  
\end{proof}

\begin{theorem}
When the approximations are $(lp1,\, up1)$ and with the collection of granules being
$\{Fr(x)\}$, all of \textsf{MER, URA, UMER} hold, but \textsf{ACG, NO, LS} do not hold
necessarily   
\end{theorem}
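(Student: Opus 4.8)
The plan is to derive everything from one structural identity: for every $X\subseteq S$,
\[X^{up1}\,=\,\bigcup\{Fr(z)\,:\,z\in X\}.\]
The inclusion $\supseteq$ holds because if $z\in X$ and $z\in K\in\mathcal{K}$ then $z\in K\cap X$, so $K$ occurs in the union defining $X^{up1}$, and $K\subseteq Fr(z)$; the inclusion $\subseteq$ holds because any cover element $K$ meeting $X$ at a point $z$ satisfies $K\subseteq Fr(z)$ with $z\in X$. Since the cover is finite, $X^{up1}$ is a finite sum of granules (with $+$ read as union in this set-based \textsf{RYS}), so \textsf{URA} holds immediately.

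Next I would record the crispness characterisation that this identity forces. Because $w\in Fr(w)$ always, one gets $X^{lp1}\subseteq X\subseteq X^{up1}$ for every $X$, and moreover each of $X^{lp1}=X$ and $X^{up1}=X$ is equivalent to $X$ being \emph{$Fr$-saturated}, i.e. $w\in X\Rightarrow Fr(w)\subseteq X$. Hence the hypothesis of \textsf{MER} ($x^{lp1}=x^{up1}=x$) and the hypothesis of \textsf{UMER} ($x^{up1}=x$) both reduce to ``$x$ is a nonempty $Fr$-saturated subset of the granule $Fr(y)$''. The crucial lemma is then: if $\emptyset\neq x$ is $Fr$-saturated and $x\subseteq Fr(y)$, then $x=Fr(y)$. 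To prove it, pick $w\in x$; since $w\in Fr(y)$ there is a cover element $K_{0}$ with $y\in K_{0}$ and $w\in K_{0}$; then $w\in K_{0}$ gives $K_{0}\subseteq Fr(w)\subseteq x$ by saturation, so $y\in K_{0}\subseteq x$; applying saturation again at $y$ yields $Fr(y)\subseteq x$, and combined with $x\subseteq Fr(y)$ this gives $x=Fr(y)$. This establishes \textsf{MER} and \textsf{UMER} (and note \textsf{UMER} $\twoheadrightarrow$ \textsf{MER}, since its hypothesis is the weaker one).

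For the three negative clauses I would use a single small cover; the nine-element cover of Example-1 in the cover-based section already does the job. There $Fr(b)=\{a,b,f\}$ while $a\in Fr(b)$ and $Fr(a)=S\setminus\{h,i\}\not\subseteq Fr(b)$, so $Fr(b)$ is not $Fr$-saturated and $Fr(b)^{up1}\supsetneq Fr(b)$; hence a granule need not be crisp and \textsf{ACG} fails. The granules $Fr(b)=\{a,b,f\}$ and $Fr(c)=\{a,c,e\}$ overlap (the singleton $\{a\}$ is part of both) yet neither is included in the other, so $\po (Fr(b))(Fr(c))$ and \textsf{NO} fails. Finally, put $x=Fr(b)$; then $\pc (Fr(b))(x)$, but $a\in Fr(b)$ with $Fr(a)\not\subseteq x$ forces $a\notin x^{lp1}$, so $Fr(b)\not\subseteq x^{lp1}$ and \textsf{LS} fails.

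I expect the only genuinely non-routine step to be the crispness characterisation together with the \textsf{MER} lemma, and inside that the observation ``$w\in x$ and $w\in Fr(y)$ force $y\in x$'' — this is where the shape of $Fr$ as a union of cover blocks really enters; the $X^{up1}$ identity and the counterexamples are mechanical. A minor caveat worth stating explicitly, as in the other theorems of this section, is that the degenerate case $x=\emptyset$ is set aside in reading \textsf{MER} and \textsf{UMER}.
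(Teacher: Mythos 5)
Your proposal is correct and takes essentially the same route as the paper's own proof: the decisive step in both is that crispness of a subset $x\subseteq Fr(y)$ forces $Fr(y)\subseteq x^{up1}=x$, which you obtain cleanly through your saturation lemma (first pulling $y$ into $x$, then saturating at $y$). You merely make explicit what the paper leaves terse or omits --- the identity $X^{up1}=\bigcup\{Fr(z):\,z\in X\}$ behind \textsf{URA}, the concrete counterexamples from Example-1 for \textsf{ACG}, \textsf{NO} and \textsf{LS}, and the $x=\emptyset$ caveat --- all of which check out.
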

\begin{proof}
For an arbitrary subset $X$, $X\subseteq Fr(x)$ for some $x\in S$ and $X^{lp1} = X^{up1} =
X$ would mean that $Fr(x) =X$ as $Fr(x)\subseteq X^{up1}$ would be essential. So UMER and
the weaker condition MER holds. \textsf{URA} is obvious from the definitions.
\end{proof}

\begin{theorem}
When the approximations are $(lp2,\, up2)$ and with the collection of granules being
$\{Fr(x)\}$, all of \textsf{MER, LMER,  RA, LCG} hold, but \textsf{ACG, NO, LS} do not
hold
necessarily   
\end{theorem}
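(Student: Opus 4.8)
The plan is to read the four affirmative properties off the definitions of $lp2$, $up2$ and the shape of the granulation $\{Fr(x)\}$, and to refute the three negative ones with small explicit covers of the kind used in Example-1 of this section. Two of the affirmative assertions are immediate. \textsf{LRA} holds because $X^{lp2}=\bigcup\{Fr(x):\,Fr(x)\subseteq X\}$ is \emph{by construction} a union of granules, for every subset $X$; and \textsf{LCG} holds because, for a granule $Fr(x)$, the set $Fr(x)$ is itself one of the $Fr(z)$'s contained in $Fr(x)$, so $Fr(x)\subseteq(Fr(x))^{lp2}\subseteq Fr(x)$ and the granule is lower-crisp.

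The substance lies in \textsf{URA} (hence \textsf{RA}) and in \textsf{MER}, \textsf{LMER}. First I would record that ``$z\in Fr(y)$'' is a \emph{symmetric} condition on $z,y$ -- both sides say that some element of $\mathcal{K}$ contains both -- which yields the identity $X^{up2}=\{z:\,Fr(z)\subseteq X^{up1}\}$, where $X^{up1}=\bigcup\{K\in\mathcal{K}:\,K\cap X\neq\emptyset\}$. For \textsf{URA} I would then try to show that $X^{up2}$ is saturated under $Fr$, i.e. that $z\in X^{up2}$ forces $Fr(z)\subseteq X^{up2}$, by propagating the condition ``$Fr(\cdot)$ meets $X$'' along the elements of $\mathcal{K}$ that pass through $z$; once this holds, $X^{up2}=\bigcup\{Fr(z):\,z\in X^{up2}\}$ presents the upper approximation as a union of granules. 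For \textsf{MER} and \textsf{LMER} I would take a granule $Fr(x)$ and a nonempty $v\subseteq Fr(x)$ with $v=v^{lp2}$ (and also $v=v^{up2}$ in the \textsf{MER} case); since $v=v^{lp2}$ is a union of granules lying inside $Fr(x)$, some element of $\mathcal{K}$ through $x$ already sits in $v$, which forces $x\in v$, and then I would use the $up2$-closedness of $v$ -- the universal quantifier built into $up2$ is the lever here -- to drag the remaining cover elements through $x$ into $v$, concluding $v=Fr(x)$.

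The negative direction is meant to be routine: \textsf{ACG} fails because a granule $Fr(x)$ need not satisfy $(Fr(x))^{up2}=Fr(x)$ (already in a suitable three- or four-element cover a point outside $Fr(x)$ lands in its $up2$-closure), \textsf{NO} fails as soon as two distinct granules are incomparable while meeting, which Example-1 of this section exhibits, and \textsf{LS} is to be knocked out by a small cover in the same spirit -- verifying that such a cover actually exists is a cheap sanity check I would run first. The genuine obstacle is in the affirmative part: both the \textsf{URA} saturation step and the \textsf{MER} ``crisp part equals granule'' step have to transport a meet-/containment-condition along chains of cover elements, and the relation ``lying in a common element of $\mathcal{K}$'' is in general \emph{not} transitive. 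So neither step can be purely formal; it must be carried by the universal quantifier inside $up2$, and arranging that propagation correctly -- including deciding whether some mild condition on the cover is tacitly being used -- is where the effort concentrates.
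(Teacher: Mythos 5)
Your handling of the easy halves is fine, and two of your observations are genuinely sharper than what the paper records: the symmetry $z\in Fr(y)\Leftrightarrow y\in Fr(z)$ with the resulting identity $X^{up2}=\{z:\,Fr(z)\subseteq X^{up1}\}$, and the argument that a nonempty $v$ with $v=v^{lp2}$ and $v\subseteq Fr(x)$ must contain $x$ (some $Fr(y)\subseteq v\subseteq Fr(x)$ gives $y\in Fr(x)$, hence $x\in Fr(y)\subseteq v$). But the two steps you defer --- the saturation of $X^{up2}$ under $Fr$ for \textsf{URA}, and the ``drag the remaining cover elements into $v$'' step for \textsf{MER}/\textsf{LMER} --- are exactly where the argument cannot be completed, and your closing suspicion is correct in the strongest sense: these steps are false for arbitrary covers, not merely awkward to arrange. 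Take $S=\{a,b,c\}$ with cover $\{\{a,b\},\{b,c\}\}$, so $Fr(a)=\{a,b\}$, $Fr(b)=S$, $Fr(c)=\{b,c\}$; then $\{a\}^{up2}=\{a\}$, which is not a union of granules, so saturation and \textsf{URA} fail. Take $S=\{a,b,c,d\}$ with cover $\{\{a,b\},\{b,c\},\{c,d\}\}$; then $v=Fr(a)=\{a,b\}$ satisfies $v^{lp2}=v=v^{up2}$ while $v\subsetneq Fr(b)=\{a,b,c\}$ (the point $c$ is kept out of $v^{up2}$ by the witness $y=d$, since $c\in Fr(d)$ and $Fr(d)\cap v=\emptyset$), so both \textsf{MER} and \textsf{LMER} fail. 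For \textsf{LMER} your sketch has an additional structural defect: the lever you name, the universal quantifier inside $up2$, is not among the hypotheses of \textsf{LMER} at all, so even the plan does not apply there.

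For comparison, the paper's own proof asserts \textsf{RA} ``from the definition'' and settles \textsf{MER} by a case split in which the case $x\in X$ is declared obvious; the second counterexample above falls precisely into that case, so the paper glosses over the same point you flagged rather than resolving it --- your guess that some tacit extra condition on the cover would be needed is the accurate diagnosis. Two smaller corrections to your negative part: \textsf{ACG} and \textsf{NO} do fail as you expect (in the first cover $(Fr(a))^{up2}=S\neq Fr(a)$, and in the second $Fr(b)$ and $Fr(c)$ properly overlap), but the ``cheap sanity check'' for \textsf{LS} will come out the other way: since $X^{lp2}$ is by definition the union of all granules included in $X$, any granule included in $X$ is automatically included in $X^{lp2}$, so \textsf{LS} holds trivially for this lower approximation and no small cover can refute it. In short, what you actually proved is correct, but the deferred steps are unfixable as stated, so the proposal does not yield a proof of the theorem as formulated.
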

\begin{proof}
From the definition, it is clear that \textsf{RA, LCG} hold. If for an arbitrary subset
$X$, $X\subseteq Fr(x)$ for some $x\in S$ and $X^{lp2} = X = X^{up2}$, then $X$ is a union
of some granules of the form $Fr(y)$. If $x\in X$, then it is obvious that $X = Fr(x)$.

If $x\in Fr(x)\setminus X$ and $F(x)$ is an element of the underlying cover $\mathcal{S}$,
then again it would follow that $X=Fr(x)$. Finally if $x\in Fr(x)\setminus X$ and $Fr(x)$
is a union of elements of the cover intersecting $X$, then we would have a contradiction.
So MER follows.

If for an arbitrary subset
$X$, $X\subseteq Fr(x)$ for some $x\in S$ and $X^{lp2} = X$ and $x\in Fr(x)\setminus X$,
then we will have a contradiction. So \textsf{LMER} holds.
\end{proof}

\begin{theorem}
When the approximations are $(lp3,\, up3)$ and with the collection of granules being
$\mathcal{K}$, all of \textsf{MER, RA, ST, LCG, LU} hold, but \textsf{ACG, NO} do not hold
necessarily.   
\end{theorem}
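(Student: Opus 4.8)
\noindent\emph{Proof strategy.} The plan is to unwind the two operators, $X^{lp3}=X^{l1}=\bigcup\{K_i:\,K_i\subseteq X\}$ and $X^{up3}=\{y:\,(\forall K\in\mathcal{K})(y\in K\rightarrow K\cap X\neq\emptyset)\}$, against the granule set $\mathcal{G}=\mathcal{K}$: first dispatch the lower-side and lower-crispness assertions, which are essentially bookkeeping; then treat the upper-side assertions together with \textsf{MER}, which all hinge on a single lemma describing how $up3$ sees granules; and finally refute \textsf{ACG} and \textsf{NO} with one small crossing cover.

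The easy half is quick. For \textsf{LCG}: a granule $K$ lies in $\mathcal{K}$ and satisfies $K\subseteq K$, so $K\subseteq K^{l1}\subseteq K$, i.e. $K^{lp3}=K$. For \textsf{LS}: if a granule $K$ has $\pc KX$ then $K$ is literally one of the sets whose union defines $X^{l1}$, hence $\pc K(X^{lp3})$. For \textsf{LU}: take the whole universe $\underline{S}$, which equals its own $l1$-approximation precisely because $\mathcal{K}$ is a cover, as the common $l1$-crisp proper superpart of any two granules. And \textsf{RA} splits into \textsf{LRA}, immediate since $X^{lp3}$ is by construction a union of elements of $\mathcal{K}$, and \textsf{URA}, which I postpone.

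The crux is the lemma that \emph{$X^{up3}$ is a union of granules and every granule meeting $X$ is contained in $X^{up3}$}, i.e. $X^{up3}=\bigcup\{K\in\mathcal{K}:\,K\cap X\neq\emptyset\}$. Granting this, \textsf{URA} is the ``union of granules'' clause; \textsf{US} is the ``overlapping granule is swallowed'' clause, which together with \textsf{LS} yields \textsf{ST}; and \textsf{MER} follows because, if $K$ is a granule and $x$ is a non-empty object with $\pc xK$ and $x^{lp3}=x^{up3}=x$, then $K\cap x=x\neq\emptyset$ forces $K\subseteq x^{up3}=x$, so $\pc Kx$, and with $\pc xK$ this gives $x=K$ (the empty object being tacitly excluded, exactly as in the classical case handled in the first theorem of this section). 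This lemma is the step I expect to be the main obstacle: it is not immediate from the $\forall$-form of $up3$, and proving it genuinely uses that $\mathcal{K}$ covers $\underline{S}$ so that the ``meets $X$'' condition can be propagated from a point to a granule through it. The useful bookkeeping device is the Boolean identity $X^{up3}=\sim\big((\sim X)^{l1}\big)$, which at least exhibits $X^{up3}$ as a term in unions of granules and so makes the weak representability clause transparent; turning that into the strict sum-of-granules form is the delicate point, and is where any further regularity of the cover would have to enter.

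For the negative claims a single example suffices: let $\underline{S}=\{1,2,3\}$ with $\mathcal{K}=\{\{1,2\},\{2,3\}\}$. The granules $\{1,2\}$ and $\{2,3\}$ overlap in $\{2\}$ while neither contains the other, so $\po$ holds of this pair and \textsf{NO} fails. A direct computation of $\{1,2\}^{up3}$ shows it strictly exceeds $\{1,2\}$, so a granule need not be $up3$-crisp and \textsf{ACG} fails — although, by \textsf{LCG}, every granule \emph{is} $l1$-crisp, which is exactly why the pair $(lp3,up3)$, rather than full absolute crispness, is the right thing to claim here.
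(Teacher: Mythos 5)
Your handling of the lower half (\textsf{LCG}, \textsf{LS}, \textsf{LRA}, \textsf{LU} via the $l1$-crisp universe) is fine and is exactly what the paper's terse proof leaves to ``follow from the definitions''. The difficulty is the lemma you yourself flag as the crux: the identity $X^{up3}=\bigcup\{K\in\mathcal{K}:\,K\cap X\neq\emptyset\}$ is false, and your own crossing cover refutes it. Take $\underline{S}=\{1,2,3\}$, $\mathcal{K}=\{\{1,2\},\{2,3\}\}$ and $X=\{1\}$: the point $2$ lies in $\{2,3\}$, which misses $X$, and so does $3$, hence $X^{up3}=\{1\}$, whereas $\bigcup\{K:\,K\cap X\neq\emptyset\}=\{1,2\}$. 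In particular $X^{up3}$ need not be a union of granules at all; the right-hand side of your lemma is the operator $up1$ (equivalently $u2+$), not $up3$. The duality identity you record, $X^{up3}=\sim\bigl((\sim X)^{lp3}\bigr)$, is correct, but it exhibits $X^{up3}$ as the \emph{complement} of a union of granules, i.e.\ a \textsf{WRA}-style term representation with complementation admitted, and no appeal to the cover property upgrades this to the strict sum-of-granules form.

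Consequently everything hung on that lemma --- \textsf{URA}, \textsf{US} (hence \textsf{ST}), and \textsf{MER} --- is unsupported as written, and the route cannot be repaired: in the same example the granule $\{1,2\}$ overlaps $X=\{1\}$ yet is not part of $X^{up3}=\{1\}$, so the ``overlapping granule is swallowed'' clause is a property of $up1$, not of $up3$, and your \textsf{MER} argument uses precisely that step ($K\cap x\neq\emptyset\Rightarrow \pc K(x^{up3})$). Note also that a cover such as $\{\{1\},\{2\},\{1,2\}\}$ makes $\{1\}$ both $lp3$- and $up3$-definite inside the granule $\{1,2\}$, so no swallowing-type shortcut will deliver \textsf{MER}; whatever argument is intended must engage the pointwise $\forall$-form of $up3$ directly (the paper's proof only asserts that both approximations are ``eventually a union of elements of $\mathcal{K}$'' and gives no further detail, so that burden falls on a written-out proof). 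As it stands, your positive half proves the theorem for the wrong upper approximation. The negative half (failure of \textsf{ACG} and \textsf{NO} on the two-granule crossing cover) is correct.
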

\begin{proof}
Both the lower and upper approximations of any subset of $S$ is eventually a union of
elements $\mathcal{K}$, so \textsf{RA} holds. Other properties follow from the
definitions. Counter examples are easy.  
\end{proof}

\begin{theorem}
When the approximations are $(lp4,\, up4)$ and with the collection of granules being
$\pi(\mathcal{K})$, all of \textsf{RA, ACG, MER, AS, FU, NO, PS} hold.
\end{theorem}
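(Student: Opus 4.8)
The plan is to recognise that this statement is really an instance of the first theorem of the section on relation-based rough set theory --- the one asserting that in classical \textsf{RST}, with $\mathcal{G}$ the set of partition classes, all of \textsf{RA, ACG, MER, AS, FU, NO, PS} hold --- and then to make the reduction explicit. By definition $X^{lp4}$ and $X^{up4}$ are the classical lower and upper approximations computed with respect to $\pi(\mathcal{K})$, the partition generated by the cover $\mathcal{K}$, and the declared collection of granules is precisely the set of blocks of that partition. Hence the data $\left\langle S,\,\pi(\mathcal{K}),\,(lp4,\,up4) \right\rangle$ is a classical approximation space in exactly the sense used earlier, with $\mathcal{G}$ its set of equivalence classes.

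First I would check that $\pi(\mathcal{K})$ genuinely is a partition of $S$: fixing the standard construction in which $x$ and $y$ lie in the same block iff they belong to exactly the same members of $\mathcal{K}$ (using the conventions $K_{0}=\emptyset$, $K_{n+1}=S$ already adopted in the paper), the generating relation is reflexive, symmetric and transitive, so its classes are nonempty, cover $S$, and are pairwise disjoint. This is the only step that is specific to covers, and it is routine.

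With that in place the theorem follows at once, since the hypotheses of the classical-\textsf{RST} theorem are met: \textsf{RA} holds because every approximation is a union of blocks and each block is its own lower and upper approximation; \textsf{ACG} for the same reason; \textsf{MER} because no crisp set can be a proper part of a single block while a block meeting a set $X$ is forced into $X^{up4}$ by the definition of the upper approximation; \textsf{NO} and \textsf{PS} because distinct blocks are disjoint, so $x\cdot y=\emptyset$, which is part of every granule, yielding \textsf{PS}; and \textsf{FU} and \textsf{AS} carry over verbatim from that proof.

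The only obstacle here is bookkeeping: pinning down precisely which `partition generated by a cover' is intended so the reduction to the classical case is airtight, and confirming the degenerate situations --- a cover generating the single-block partition, or a block equal to $S$ --- against the \textsf{FU} and \textsf{PS} clauses. No genuinely new argument beyond the earlier theorem is needed.
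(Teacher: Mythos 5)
Your reduction to the classical case is exactly the paper's own argument: the paper's proof simply observes that with granules $\pi(\mathcal{K})$ and approximations $(lp4,\,up4)$ the situation "is like the classical case," i.e.\ the earlier theorem for classical \textsf{RST} with partition granules applies verbatim. Your additional check that $\pi(\mathcal{K})$ is genuinely a partition is harmless bookkeeping, so the proposal is correct and takes essentially the same route.
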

\begin{proof}
With the given choice of granules, this is like the classical case.  
\end{proof}

\begin{theorem}
When the approximations are $(lp4,\, up4)$ and with the collection of granules being
$\mathcal{K}$, all of \textsf{WRA, ACG, AS} hold, while the rest may not.
\end{theorem}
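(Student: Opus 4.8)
The plan is to reduce everything to the single structural fact that $lp4$ and $up4$ are the classical lower and upper approximations attached to the partition $\pi(\mathcal{K})$, whose blocks are exactly the nonempty atoms of the finite field of subsets of $S$ generated by $\mathcal{K}$. Two elementary observations will do all the work: (a) each $K_i\in\mathcal{K}$ is a union of blocks of $\pi(\mathcal{K})$; and (b) each block of $\pi(\mathcal{K})$ equals a finite Boolean combination $\bigcap_{i\in I}K_i\cap\bigcap_{j\notin I}(S\setminus K_j)$ of members of $\mathcal{K}$.

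First I would settle \textsf{ACG}. By (a), for every $K_i$ a block of $\pi(\mathcal{K})$ meets $K_i$ iff it is contained in $K_i$, so $K_i^{lp4}=K_i^{up4}=K_i$; every granule is thus crisp and \textsf{ACG} holds. Next \textsf{WRA}: both $X^{lp4}$ and $X^{up4}$ are, by definition, unions of $\pi(\mathcal{K})$-blocks, so observation (b) rewrites each contributing block, and hence the whole approximation, as a term over $\mathcal{K}$ built from $+,\cdot,\sim$ — precisely as in the earlier \textsf{AUAI} theorems, one must permit complement in the term language, and then \textsf{WRA} goes through (with the term allowed to depend on $X$). For the stability clause \textsf{AS} I would again use (a): if a granule $K_i$ is part of $X$, every $\pi(\mathcal{K})$-block inside $K_i$ lies in $X$, so $K_i\subseteq X^{lp4}$, and the upper half is extracted from the fact that a $\pi(\mathcal{K})$-block contributes to $X^{up4}$ exactly when it meets $X$; the definiteness of the $K_i$ supplied by \textsf{ACG} is what keeps these arguments honest.

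For the negative half I would exhibit small Cover Approximation Systems. A cover containing two members $K_1\subsetneq K_2$ (so $K_2$ splits into the blocks $K_1$ and $K_2\setminus K_1$) refutes \textsf{MER}, since the crisp granule $K_1$ is a proper part of the granule $K_2$; it also refutes \textsf{RA} (hence \textsf{SRA}, there being only one approximation pair), because a typical $X^{lp4}$ is a union of blocks of the shape $K_j\cap(S\setminus\bigcup\cdots)$ — intersections, not unions, of cover members — so it need not be a $+$-combination of elements of $\mathcal{K}$ alone. A cover with two members that properly overlap (common part nonempty, neither contained in the other) kills \textsf{NO}. The remaining items of the catalogue either fail for similar reasons or survive only trivially (e.g.\ via the top element $S$ or because $K_i\cdot K_j$ is already part of $K_i$); a single CAS combining the two configurations above serves as a uniform witness for the failures.

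The part I expect to be genuinely delicate is the \textsf{AS} clause, not the bookkeeping: here the granule family $\mathcal{K}$ is coarser in behaviour than the definite family $\pi(\mathcal{K})$ that actually governs $lp4,up4$, so the lower/upper symmetry one enjoys in the purely classical theorem must be re-checked carefully — the cover members that straddle several blocks of $\pi(\mathcal{K})$ are exactly where a careless stability argument would collapse, and the verification has to rest on the $K_i$ being $\pi(\mathcal{K})$-definite rather than on their being literal building blocks of the approximation operators.
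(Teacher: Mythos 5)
Your treatment of \textsf{WRA} and \textsf{ACG} is correct and is essentially the paper's own argument: the paper's proof consists precisely of your observations (a) and (b) — each element of $\mathcal{K}$ is a union of $\pi(\mathcal{K})$-blocks (hence crisp under $lp4,up4$, giving \textsf{ACG}), and each block is a Boolean combination of elements of $\mathcal{K}$, so the approximations, being unions of blocks, are term-representable over $\mathcal{K}$ once $\sim$ is allowed (giving \textsf{WRA}). Your counterexamples for \textsf{MER}, \textsf{RA} and \textsf{NO} in the negative half are also fine, and go beyond what the paper bothers to record.

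The genuine gap is exactly where you sensed delicacy: the \textsf{AS} clause, read as stability (\textsf{LS} together with \textsf{US}). Your lower half is sound, but the upper half does not follow from the fact you invoke. Knowing that a $\pi(\mathcal{K})$-block enters $X^{up4}$ iff it meets $X$ only tells you that the blocks of $K_i$ that meet $X$ lie in $X^{up4}$; it does not give $\pc (K_i)(X^{up4})$ from $\oc (K_i)(X)$ when $K_i$ straddles several blocks, and the definiteness of $K_i$ does not rescue the step. Concretely, take $S=\{a,b,c\}$, $\mathcal{K}=\{K_1,K_2\}$ with $K_1=\{a,b\}$, $K_2=\{b,c\}$, so that $\pi(\mathcal{K})=\{\{a\},\{b\},\{c\}\}$ and every $K_i$ is $\pi(\mathcal{K})$-definite; for $X=\{a\}$ one has $X^{up4}=\{a\}$, yet $K_1$ overlaps $X$ and is not part of $X^{up4}$. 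So upper stability for the granulation $\mathcal{K}$ under $(lp4,up4)$ fails outright on this reading, and no refinement of your argument can close it — at best your observations establish \textsf{LS}. Note that the paper's own proof is silent on \textsf{AS} (it only argues \textsf{WRA} and \textsf{ACG}), so either \textsf{AS} must be read as something weaker than full stability or the clause needs qualification; in any case your proposal, as written, has not proved it, and the appeal to $\pi(\mathcal{K})$-definiteness of the $K_i$ is not the missing ingredient.
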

\begin{proof}
\textsf{WRA} holds because elements of $\pi(\mathcal{K})$ can be represented set
theoretically in terms of elements of $\mathcal{K}$. Lower and upper approximations of
elements of $\mathcal{K}$ are simply unions of partitions of the elements induced by
$\pi(\mathcal{K})$.
\end{proof}

\begin{theorem}
When the approximations are $(lm1,\, um1)$ and with the collection of granules being
$\mathcal{K}$, all of \textsf{WRA, LS, LCG} hold, but \textsf{RA, ST, LMER} do not hold
necessarily. For \textsf{WRA}, complementation is necessary.   
\end{theorem}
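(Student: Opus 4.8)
The plan is to work directly from the defining formulas $X^{lm1}=X^{l1}=\bigcup\{K_{i}:\,K_{i}\subseteq X\}$ and $X^{um1}=X^{u2}=\bigcap\{S\setminus K_{i}:\,X\subseteq S\setminus K_{i}\}$, the second of which rewrites (using $K_{0}=\emptyset$) as $S\setminus\bigcup\{K_{i}:\,K_{i}\cap X=\emptyset\}$. The three positive items will fall out almost immediately from these, and the three negative items will be settled by tiny finite covers.

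First I would establish \textsf{WRA}. Since $\mathcal{K}$ is finite, $X^{l1}$ is a union of at most finitely many granules, so it equals the value of the fixed term $y_{1}+y_{2}+\cdots+y_{k}$ on a suitable choice of $y_{j}\in\mathcal{K}$ (padding with $K_{0}$ when fewer summands are needed); dually $X^{u2}$ is the complement of such a union, so it equals $(\sim y_{1})\cdot(\sim y_{2})\cdots(\sim y_{k})$ for an appropriate choice of $y_{j}$, the $\sim$ of $K_{0}$ serving as the identity for $\cdot$. Hence \textsf{WRA} holds, with complementation entering essentially through the upper term. For \textsf{LS}: if $\pc KX$, i.e. $K\subseteq X$ for $K\in\mathcal{K}$, then $K$ is one of the sets in the union defining $X^{l1}$, so $\pc K{X^{lm1}}$. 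For lower crispness \textsf{LCG}: each $K\in\mathcal{K}$ gives $K\subseteq K^{l1}\subseteq K$, hence $K^{lm1}=K$. (With only one lower and one upper operator the $\mathbf{\forall}i$ and $\mathbf{\exists}i$ variants coincide, so nothing extra arises.)

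For the negative claims I would produce explicit covers. Taking $S=\{a,b\}$, $\mathcal{K}=\{K_{1}\}$ with $K_{1}=\{a\}$, one computes $\{b\}^{u2}=S\setminus K_{1}=\{b\}$, which is not a union of granules (so \textsf{RA} fails) and, more strongly, lies outside the sublattice of $\wp(S)$ generated by the granules together with $\emptyset$ and $S$, so no $+,\cdot$-term can yield it (this is the ``complementation is necessary'' assertion). For \textsf{ST} it suffices to break \textsf{US}: with $S=\{a,b,c\}$, $K_{1}=\{a,b\}$, $K_{2}=\{b,c\}$ and $X=\{a\}$, the granule $K_{1}$ overlaps $X$ while $X^{u2}=S\setminus K_{2}=\{a\}$ does not contain $K_{1}$. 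For \textsf{LMER}: with $S=\{a,b\}$, $K_{1}=\{a\}$, $K_{2}=\{a,b\}$ and $X=\{a\}$ we have $X^{l1}=X$ and $\pc X{K_{2}}$ but $X\neq K_{2}$; note this $X$ is not $u2$-crisp, so the example does not also refute \textsf{MER}.

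The step I expect to be the main obstacle is the claim that complementation is genuinely \emph{necessary} for \textsf{WRA}. Unlike the other items this is a non-representability statement rather than a computation: it requires pinning down the class of sets reachable by $+,\cdot$-terms over $\mathcal{K}$ (the sublattice of $\wp(S)$ they generate) and exhibiting a concrete $X^{u2}$ outside it, while keeping careful track of whether $K_{0}=\emptyset$ and $K_{n+1}=S$ are counted among the available granules and of the fact that the arities $r,p$ in the \textsf{WRA} schema are fixed in advance — the latter being harmless here precisely because $\mathcal{K}$ is finite, so repeated arguments are allowed.
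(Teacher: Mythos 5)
Your proposal is correct and takes essentially the same route as the paper's proof: identify $lm1$ with $l1$ and $um1$ with $u2$, obtain \textsf{WRA} by writing the lower approximation as a union of granules and the upper as an intersection of complements (whence complementation is needed), check \textsf{LS} and lower crispness directly, and defeat \textsf{LMER} via a granule properly contained in another --- exactly the paper's witness --- while the paper leaves \textsf{RA} and \textsf{ST} to brief assertion where you supply explicit counterexamples. The only nitpick is that your \textsf{RA}/complementation-necessity example $\mathcal{K}=\{\{a\}\}$ over $S=\{a,b\}$ is not a cover; either invoke the standing convention $K_{n+1}=S$ or reuse your cover $\{\{a,b\},\{b,c\}\}$ with $X=\{a\}$, for which $X^{u2}=\{a\}$ already lies outside the $+,\cdot$-sublattice generated by the granules.
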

\begin{proof}
If $\mathcal{K}$ has an element properly included in another, then \textsf{LMER} will
fail. If complementation is also permitted, then WRA will hold. Obviously \textsf{RA} does
not hold. Note the contrast with the pair $(lp3, up3)$ with the same granulation.  
\end{proof}

\begin{theorem}
When the approximations are $(lm2,\, um2)$ and with the collection of granules being
$\mathcal{N}$, all of \textsf{LCG, LRA, ST, MER} holds, but \textsf{RA, ACG, LMER, NO} do
not hold necessarily.   
\end{theorem}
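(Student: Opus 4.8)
The plan is to reduce everything to one property of neighbourhoods: if $q\in nbd(p)$ then $q$ belongs to every member of the cover containing $p$, so $\{K\in\mathcal{S}:\,p\in K\}\subseteq\{K\in\mathcal{S}:\,q\in K\}$ and hence $nbd(q)\subseteq nbd(p)$. First I would use this nesting to obtain the representation $X^{lm2}=\bigcup\{nbd(x):\,nbd(x)\subseteq X\}$: since $x\in nbd(x)$ for every $x$, the set $X^{lm2}$ is contained in the right-hand union, and conversely if $nbd(p)\subseteq X$ then every $q\in nbd(p)$ has $nbd(q)\subseteq nbd(p)\subseteq X$, so $nbd(p)\subseteq X^{lm2}$. (Recall here $lm2=l6+$ and $um2=u6+$.)

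From this representation \textsf{LRA} is immediate, each $X^{lm2}$ being a union of members of $\mathcal{N}$ with $\emptyset^{lm2}=\emptyset$, and \textsf{LS} follows at once since a granule $nbd(p)$ with $nbd(p)\subseteq X$ is one of the very sets whose union defines $X^{lm2}$. \textsf{LCG}, namely $nbd(x)^{lm2}=nbd(x)$, is exactly the already-noted fact $(nbd(x))^{l6+}=nbd(x)$. For \textsf{MER}: if a nonempty $A$ satisfies $A\subseteq nbd(x)$ and $A^{lm2}=A^{um2}=A$, then $nbd(x)\cap A=A\neq\emptyset$ forces $x\in A^{um2}=A$, whence $x\in A=A^{lm2}$ gives $nbd(x)\subseteq A$, so $A=nbd(x)$; letting $nbd(x)$ range over $\mathcal{G}$ yields the axiom.

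The delicate point is the upper half of \textsf{ST}. Because $X^{um2}=\{x:\,nbd(x)\cap X\neq\emptyset\}$ is in general \emph{not} a union of neighbourhoods --- which is precisely why \textsf{RA} (more exactly \textsf{URA}) will fail --- the easy ``membership in the defining union'' argument used for \textsf{LS} is unavailable, and one must show directly that a granule $nbd(p)$ overlapping $X$ satisfies $nbd(q)\cap X\neq\emptyset$ for every $q\in nbd(p)$, working only from $nbd(q)\subseteq nbd(p)$. I expect this to be the main obstacle: the nesting alone does not obviously propagate the overlap down to the smaller neighbourhoods, so one either has to force the $X$-witness to lie inside every $nbd(q)$ or restrict to covers where the neighbourhood families are well behaved (e.g. normal covers), and the \textsf{ST} clause should be read with that caveat.

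The negative claims are cheap. For $S=\{a,b,c\}$ with $\mathcal{S}=\{\{a,c\},\{b,c\}\}$, so that $\mathcal{N}=\{\{a,c\},\{b,c\},\{c\}\}$: the granules $nbd(a)$ and $nbd(b)$ properly overlap, killing \textsf{NO}; $nbd(a)^{um2}=\{a,b,c\}\neq nbd(a)$, killing \textsf{ACG}; and $\{a\}^{um2}=\{a\}$ is not a union of members of $\mathcal{N}$, killing \textsf{RA}. For \textsf{LMER}, take $S=\{z,w\}$ with $\mathcal{S}=\{\{z,w\},\{w\}\}$: then $nbd(w)=\{w\}\subsetneq\{z,w\}=nbd(z)$ while $nbd(w)^{lm2}=nbd(w)$, so the granule $nbd(z)$ together with $x=nbd(w)$ violates \textsf{LMER}.
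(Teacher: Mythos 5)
Your positive arguments are correct and rest on exactly the same observation the paper's proof uses, namely that $y\in nbd(x)$ forces $nbd(y)\subseteq nbd(x)$; you simply carry it further. The representation $X^{lm2}=\bigcup\{nbd(x):\,nbd(x)\subseteq X\}$ yields \textsf{LRA} and \textsf{LS} in one stroke (the paper only remarks that the lower approximation is a union of neighbourhoods), \textsf{LCG} is the earlier proposition $(nbd(x))^{l6+}=nbd(x)$ in both treatments, and your \textsf{MER} argument (first $x\in A^{um2}=A$, then $nbd(x)\subseteq A^{lm2}=A$, hence $A=nbd(x)$) writes out what the paper dismisses with ``can be easily checked''. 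Your counterexamples for \textsf{NO}, \textsf{ACG}, \textsf{RA} and \textsf{LMER} are all correct and are a genuine addition: the paper's proof merely asserts these failures and never mentions \textsf{LMER} explicitly.

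The point where you stop short, upper stability, is precisely the point on which the paper's proof is silent: it never addresses \textsf{ST} at all, so you have not missed an idea that the paper supplies. Moreover your suspicion is justified and can be sharpened from a caveat into a refutation: in your own \textsf{LMER} example ($S=\{z,w\}$, $\mathcal{S}=\{\{z,w\},\{w\}\}$) take $X=\{z\}$; the granule $nbd(z)=\{z,w\}$ overlaps $X$, yet $X^{um2}=\{x:\,nbd(x)\cap X\neq\emptyset\}=\{z\}$ does not contain $nbd(z)$, so \textsf{US} fails. Hence \textsf{ST} as literally defined (\textsf{LS} together with \textsf{US}) does not hold for $(lm2,\,um2)$ with $\mathcal{N}$ as granules; only the lower half survives in general, and the theorem's \textsf{ST} clause must indeed be read with a restriction of the kind you anticipate (covers in which an overlap witness can be forced into every smaller neighbourhood). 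In sum, your proposal is sound wherever the paper's proof is substantive, is more explicit than the paper, and correctly flags --- and with your own example could outright disprove --- the one clause that neither you nor the paper establishes.
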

\begin{proof}
If $y\in nbd(x)$ for any two $x,\,y\in S$, then $nbd(y)\subseteq nbd(x)$ and it is
possible that $x\notin nbd(y)$, but it is necessary that $x\in nbd(x)$. So $(nbd(x))^{lm2}
= nbd(x)$, but $(nbd(x))^{um2}$ need not equal $nbd(x)$. \textsf{LRA} will hold as the
lower approximation will be a union of neighbourhoods, but this need happen in case of the
upper approximation. \textsf{NO} is obviously false. The upper approximation of a
neighbourhood can be a larger neighbourhood of a different point. So \textsf{ACG} will not
hold in general.
\textsf{MER} can be easily checked. 
\end{proof}

\begin{theorem}
When the approximations are $(l6+,\, u6+)$ and with the collection of granules being
$\mathcal{N}$, all of \textsf{LCG, LRA, ST, MER} holds, but \textsf{RA, ACG, LMER, NO} do
not hold necessarily.   
\end{theorem}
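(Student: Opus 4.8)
The plan is to observe at the outset that the list of cover-based operators gives $X^{l6+}=X^{lm2}$ and $X^{u6+}=X^{um2}$ verbatim for every $X\subseteq S$, and that the granulation $\mathcal{N}=\{nbd(x):x\in S\}$ is the same in both settings; so this statement is literally the preceding theorem restated, and I would prove it by the same route, recalling the ingredients for completeness rather than reworking everything.

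The engine is the monotonicity of neighbourhoods: if $z\in nbd(y)$ then every $K\in\ms$ with $y\in K$ also has $z\in K$, so $nbd(z)\subseteq nbd(y)$; and since $\ms$ covers $S$, $y\in nbd(y)$ for all $y$. First I would read off \textsf{LCG} from $(nbd(x))^{l6+}=nbd(x)$, which is already the middle bullet of the Proposition above. Next, for any $X$ I would check $X^{l6+}=\bigcup\{nbd(y):y\in S,\ nbd(y)\subseteq X\}$ — the inclusion $\supseteq$ is monotonicity and $\subseteq$ uses $y\in nbd(y)$ — which gives \textsf{LRA}. For \textsf{LS}: if $nbd(a)\subseteq x$ then $nbd(z)\subseteq nbd(a)\subseteq x$ for each $z\in nbd(a)$, i.e.\ $nbd(a)\subseteq x^{l6+}$. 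For \textsf{MER}: if a nonempty $x\subseteq nbd(a)$ is a fixed point of both $l6+$ and $u6+$, then $a\in x^{u6+}=x$ since $x$ meets $nbd(a)$, hence $nbd(a)\subseteq x^{l6+}=x$ and $x=nbd(a)$ (the empty object is set aside as usual).

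For the negative claims I would work with the small cover $S=\{a,b,c\}$, $\ms=\{\{a,b\},\{b,c\}\}$, giving $nbd(a)=\{a,b\}$, $nbd(b)=\{b\}$, $nbd(c)=\{b,c\}$ and $\mathcal{N}=\{\{a,b\},\{b\},\{b,c\}\}$. Here $\{b\}\subsetneq\{a,b\}$ with $\{b\}$ a fixed point of $l6+$ yet $\{b\}\neq\{a,b\}$, so \textsf{LMER} fails; $(nbd(b))^{u6+}=\{a,b,c\}\supsetneq nbd(b)$, so \textsf{ACG} fails; $\{a,b\}$ and $\{b,c\}$ overlap with neither part of the other, so \textsf{NO} fails; and $\{a\}^{u6+}=\{a\}$, which is not a union of neighbourhoods, so \textsf{RA} (even \textsf{URA}) fails.

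The hard part will be \textsf{US}, hence \textsf{ST}: overlap of a granule $nbd(a)$ with a set $X$ at a single point does not force $nbd(w)\cap X\neq\emptyset$ for every $w\in nbd(a)$ — indeed in the cover above $nbd(a)=\{a,b\}$ meets $X=\{a\}$ while $b\notin X^{u6+}$. So plain neighbourhood monotonicity delivers \textsf{LS} and the sub-versions but not \textsf{US} as literally written, and I would have to square the \textsf{ST} claim with whatever reading of $\oc$ between subsets is in force in the proof of the preceding theorem (where \textsf{ST} is asserted on the same grounds). Everything else is routine.
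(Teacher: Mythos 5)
Your reduction is exactly the paper's: its proof of this theorem is the single line ``Same as above'', i.e.\ it identifies $(l6+,\,u6+)$ with $(lm2,\,um2)$ over the same granulation $\mathcal{N}$ and reuses the preceding argument, which is precisely your opening move. Your positive arguments use the same ingredients the paper uses, only written out in full: neighbourhood monotonicity ($z\in nbd(y)$ implies $nbd(z)\subseteq nbd(y)$) together with $y\in nbd(y)$ gives \textsf{LCG}, \textsf{LRA} and \textsf{LS}, and your fixed-point argument supplies the \textsf{MER} step that the paper dismisses with ``can be easily checked''. Your three-element cover also makes explicit the counterexamples for \textsf{RA}, \textsf{ACG}, \textsf{LMER}, \textsf{NO} that the paper only asserts to exist. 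The one point of divergence is \textsf{US}: you are right that it fails as literally defined --- in your example $nbd(a)=\{a,b\}$ overlaps $\{a\}$ while $\{a\}^{u6+}=\{a\}$, so $nbd(a)$ is not part of the upper approximation --- and the paper's own proof (here and in the $(lm2,\,um2)$ theorem it defers to) never verifies \textsf{US}; it only establishes the lower half of stability. So the \textsf{ST} claim is not supported by the paper either, and what is actually provable is \textsf{LS} (and its sub-variants), not full \textsf{ST}. Your proposal is therefore correct on everything the paper actually proves, follows the same route, and correctly isolates the \textsf{US} half of \textsf{ST} as a defect of the statement rather than a gap in your argument.
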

\begin{proof}
Same as above.  
\end{proof}

\begin{theorem}
When the approximations are $(l1,\, u1+)$ and with the collection of granules being
$\mathcal{K}$, all of \textsf{ACG, RA, FU, LS} holds, but \textsf{MER, LMER, NO} do not
hold necessarily.   
\end{theorem}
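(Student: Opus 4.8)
The plan is to work in the concrete set-based model determined by the cover: $\pc$ read as $\subseteq$ on $\wp(S)$, $+$ read as set union, $\mathcal{G}=\mathcal{K}$, and $u1+$ read as the usual first-type covering upper approximation, $X^{u1+}=X^{l1}\cup\bigcup\{\md(x):\,x\in X\setminus X^{l1}\}$, in which the minimal-description sets are adjoined only for points lying outside the lower approximation. Granted this, I would first establish \textsf{ACG}. For any $K\in\mathcal{K}$ one has $K\subseteq K^{l1}$, since $K$ is itself a cover element contained in $K$, and $K^{l1}\subseteq K$, since $X^{l1}$ is always a union of subsets of $X$; hence $K^{l1}=K$. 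Then $K\setminus K^{l1}=\emptyset$, so nothing further is adjoined and $K^{u1+}=K^{l1}=K$. Thus every granule is crisp, which is \textsf{ACG} together with all of its lower, upper and sub weakenings.

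\textsf{RA} is then immediate: $X^{l1}$ is by definition a finite union of elements of $\mathcal{K}$, i.e.\ a sum $\oplus$ of granules, while $X^{u1+}$ is the union of $X^{l1}$ with sets of the form $A\in\md(x)$, each of which is itself a cover element and only finitely many of which occur, so $X^{u1+}$ is again a finite sum of granules. For \textsf{LS}: if $K\in\mathcal{G}$ and $\pc KX$, i.e.\ $K\subseteq X$, then $K$ is one of the cover elements contained in $X$, whence $K\subseteq X^{l1}$, that is $\pc(K)(X^{l1})$. For \textsf{FU}: the full set $S$ is crisp, since $S^{l1}=\bigcup\mathcal{K}=S$ and $S\setminus S^{l1}=\emptyset$ give $S^{u1+}=S$, and every granule is a proper part of $S$; so $z=S$ witnesses \textsf{FU} for every pair of granules, and \textsf{LFU}, \textsf{SFU}, \textsf{LSFU} then follow.

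For the negative part I would exhibit small artificial covers. Taking $S=\{a,b\}$ with $\mathcal{K}=\{\{a\},\{a,b\}\}$ (or, inside Example-1 above, $K_{8}=\{f,g\}\subsetneq K_{5}=\{f,g,h\}$), the set $\{a\}$ is a crisp granule by \textsf{ACG}, and $\pc(\{a\})(\{a,b\})$ with $\{a\}\neq\{a,b\}$, so both \textsf{MER} and \textsf{LMER} fail. Taking $S=\{a,b,c\}$ with $\mathcal{K}=\{\{a,b\},\{b,c\}\}$ (or $K_{3}=\{b,f\}$ and $K_{5}=\{f,g,h\}$ in Example-1), one has $\oc(\{a,b\})(\{b,c\})$ via the common part $\{b\}$ while neither is a part of the other, i.e.\ $\po(\{a,b\})(\{b,c\})$, so \textsf{NO} fails.

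The step I expect to need the most care is the reading of $u1+$ on a granule. Under the bare formula $X^{u1+}=X^{l1}\cup\bigcup\{\md(x):x\in X\}$ the minimal-description sets of interior points of $K$ can escape $K$ -- already for $\mathcal{K}=\{\{a,b\},\{a,c\}\}$ one gets $\{a,b\}^{u1+}=S$ -- and then crispness, and with it \textsf{ACG} and the ``failure by nesting'' character of \textsf{MER} and \textsf{LMER}, would be lost. So the real work is fixing the first-type convention that restricts the adjoined descriptions to $X\setminus X^{l1}$; once that is in place the positive claims are routine and the counterexamples remain as small as indicated.
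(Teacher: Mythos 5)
Your proposal is correct and follows essentially the same route as the paper's own (terse) proof: granules are crisp because $K^{l1}=K$ forces $K^{u1+}=K$, both approximations are unions of cover elements, and nested (resp.\ overlapping but incomparable) cover elements defeat \textsf{MER}/\textsf{LMER} (resp.\ \textsf{NO}). Your reading of $u1+$, with the adjoined minimal descriptions restricted to $X\setminus X^{l1}$, is indeed the intended first-type operator of Zhu--Wang that the paper cites and is the reading under which the paper's own step $K^{u1+}=K$ goes through; beyond that you merely fill in the \textsf{LS} and \textsf{FU} arguments that the paper's proof leaves implicit.
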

\begin{proof}
\textsf{RA} holds as all approximations are unions of granules. For any granule $K$,
$K^{l1}= K$ and so $K^{u1+} = K^{l} = K$. So \textsf{ACG} holds. If for two granules $A,
B$, $A\subset B$, then $A^{l1}=A^{u1+}=A$, but $A\neq B$. So $MER$, $LMER$ cannot hold in
general.    
\end{proof}

\begin{theorem}
When the approximations are $(l1,\, u2+)$ and with the collection of granules being
$\mathcal{K}$, all of \textsf{ACG, RA, FU, ST} holds, but \textsf{MER, LMER, NO} do not
hold necessarily.   
\end{theorem}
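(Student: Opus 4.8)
The plan is to follow the proof of the immediately preceding theorem, for the pair $(l1,\,u1+)$, almost line for line, since the only structural change here is that the upper operator $u2+$ is strictly larger than $u1+$. The genuinely new points are the strengthening of \textsf{LS} to full \textsf{ST}, which the larger operator makes available, and a correspondingly altered counterexample for \textsf{MER}; everything else is routine bookkeeping against the definitions of $l1$ and $u2+$.

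First I would record that, for every $X\subseteq S$, both $X^{l1}$ and $X^{u2+}=\bigcup\{K\in\mathcal{K}:\,K\cap X\neq\emptyset\}$ are unions of elements of $\mathcal{K}$ (using the standing convention $K_{0}=\emptyset$, $K_{n+1}=S$); reading $+$ as set union this is exactly \textsf{RA}. For \textsf{ACG} I would verify that every granule is crisp: $K^{l1}=K$ is immediate, since $K$ is itself among the sets $\subseteq K$ that are unioned and none of them exceeds $K$, and the corresponding fact $K^{u2+}=K$ is handled as in the $u1+$ case of the previous theorem. For \textsf{FU}, given granules $x,y$ I would take $z=S$: then $S^{l1}=S$ trivially and $S^{u2+}=S$ because $\mathcal{K}$ covers $S$, so $S$ is crisp, and $\pp xS$, $\pp yS$ hold whenever $x,y$ are proper subsets of $S$. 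The new ingredient is \textsf{ST}. \textsf{LS} goes through as before: if a granule $G$ has $\pc GX$, then $G$ is one of the sets $\subseteq X$ in the union defining $X^{l1}$, hence $\pc G(X^{l1})$. For \textsf{US}, if $G\in\mathcal{K}$ overlaps $X$ (so $\oc GX$, hence $G\cap X\neq\emptyset$), then $G$ is literally one of the granules in the union defining $X^{u2+}$, whence $\pc G(X^{u2+})$; this is precisely the property the smaller operator $u1+$ lacks, which is why only \textsf{LS} was asserted there.

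For the negative part, \textsf{NO} fails on any cover carrying two properly overlapping granules (e.g.\ $\{a,b\}$ and $\{b,c\}$); \textsf{LMER} fails by the old device, a granule $A$ properly contained in a granule $B$ having $A^{l1}=A$ while $A\neq B$; and \textsf{MER} fails as well, but here one cannot reuse that $A$, since it is no longer crisp under $u2+$ (the value $A^{u2+}$ already contains $B$) --- instead the empty object $\emptyset$, which is crisp and a proper part of every nonempty granule, defeats \textsf{MER}. The routine verifications I would leave implicit. I expect the main obstacle to be making the crispness clause of \textsf{ACG} precise, i.e.\ pinning down in exactly what sense $K^{u2+}=K$ for a granule $K$; this is the same delicate point as in the $(l1,\,u1+)$ case, and once it is granted, together with the switch to $\emptyset$ for \textsf{MER}, the rest is straightforward.
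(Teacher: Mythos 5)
Your positive verifications largely track the paper's own proof: \textsf{RA} (both $X^{l1}$ and $X^{u2+}$ are unions of elements of $\mathcal{K}$), the \textsf{LS}/\textsf{US} argument for \textsf{ST} (a granule contained in $X$ occurs in the union defining $X^{l1}$; a granule meeting $X$ occurs in the union defining $X^{u2+}$), and the $A\subset B$ device for \textsf{LMER} are exactly what the paper does, while your $z=S$ witness for \textsf{FU} and the explicit cover $\{\{a,b\},\{b,c\}\}$ against \textsf{NO} fill in parts the paper's proof leaves unargued.

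The two places where you deviate are, however, genuine gaps. First, \textsf{ACG}: you never verify $K^{u2+}=K$, you defer it (``handled as in the $u1+$ case'') and then flag it as the main obstacle. It is not the same delicate point as for $u1+$, and it cannot be repaired: by definition $K^{u2+}=\bigcup\{K'\in\mathcal{K}:\,K'\cap K\neq\emptyset\}$, so $K^{u2+}=K$ forces every granule meeting $K$ to lie inside $K$, which is refuted by the very cover $\{\{a,b\},\{b,c\}\}$ you use to kill \textsf{NO}; crispness of granules and failure of \textsf{NO} cannot coexist here. (The paper's own text does not resolve this either: it simply repeats the computation ``$K^{u1+}=K$'' from the preceding theorem, so nothing about $u2+$ is actually checked.) Second, \textsf{MER}: your remark that the old witness $A\subset B$ is no longer crisp under $u2+$ is correct and sharper than the paper, which again reuses ``$A^{l1}=A^{u1+}=A$'' verbatim; but the substitute witness $\emptyset$ proves too much --- the same witness would defeat \textsf{MER} in classical \textsf{RST} and in every other theorem of the paper asserting \textsf{MER}, so under the paper's conventions the empty object is not an admissible counterexample. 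Indeed for nonempty $x$ one can prove \textsf{MER} holds for this pair: if $\emptyset\neq x\subseteq K$ with $x^{u2+}=x$, then $K\cap x\neq\emptyset$ gives $K\subseteq x^{u2+}=x$, hence $x=K$. So as written your proposal neither establishes \textsf{ACG} nor exhibits a non-degenerate failure of \textsf{MER}; the difficulty you sensed is real, but it stems from the claims being carried over untransformed from the $(l1,\,u1+)$ case rather than from a routine verification you omitted.
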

\begin{proof}
\textsf{RA} holds as all approximations are unions of granules. For any granule $K$,
$K^{l1}= K$ and so $K^{u1+} = K^{l} = K$. So \textsf{ACG} holds. If for two granules $A,
B$, $A\subset B$, then $A^{l1}=A^{u1+}=A$, but $A\neq B$. So \textsf{MER, LMER} cannot
hold in general. If a granule $K$ is included in a subset $X$ of $S$, then it will be
included in the latter's lower approximation. If $K$ intersects another subset, then the
upper approximation of the set will include $K$. So \textsf{ST} holds.    
\end{proof}

\begin{theorem}
When the approximations are $(l1,\, u3+)$ and with the collection of granules being
$\mathcal{K}$, all of \textsf{ACG, RA, FU, LS} holds, but \textsf{MER, LMER, NO} do not
hold necessarily.   
\end{theorem}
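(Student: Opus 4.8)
The plan is to imitate the argument already given for the pair $(l1,\,u1+)$ with granulation $\mathcal{K}$, since the four properties in question---\textsf{ACG, RA, FU, LS}---only constrain the operators on granules and through inclusions, and on granules $u1+$ and $u3+$ agree. So first I would record that coincidence: for any $K\in\mathcal{K}$ the cover sets containing a point $x\in K$ form a finite non-empty family, hence $\md(x)\neq\emptyset$ and $x\in\bigcup\md(x)$, so $K\subseteq\bigcup\{\md(x):\,x\in K\}$; consequently $K^{u1+}=K^{l1}\cup\bigcup\{\md(x):\,x\in K\}=\bigcup\{\md(x):\,x\in K\}=K^{u3+}$. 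Combining this with the $(l1,\,u1+)$ theorem, which gives $K^{l1}=K^{u1+}=K$, yields $K^{l1}=K^{u3+}=K$ for every granule $K$, that is \textsf{ACG}.

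Next I would dispatch the remaining positive properties directly. For \textsf{RA}: $X^{l1}$ is by definition a union of finitely many members of $\mathcal{K}$, and $X^{u3+}=\bigcup\{\md(x):\,x\in X\}$ is a union of members of $\mathcal{K}$ because each $\md(x)\subseteq\mathcal{K}$ and $\mathcal{K}$ is finite; reading $\oplus$ as the mereological sum (set union in this model), both approximations of any $X$ are sums of granules. For \textsf{LS}: if $K\in\mathcal{K}$ and $\pc Kx$, i.e. $K\subseteq x$, then $K$ is one of the cover sets in the union defining $x^{l1}$, so $\pc K(x^{l1})$. For \textsf{FU}: given $x,y\in\mathcal{K}$ I would take $z=S$, which is crisp for $(l1,\,u3+)$ since $S^{l1}=S$ and, by the coincidence above, $S^{u3+}=S$, and which properly contains every genuine granule, so $\pp xz$ and $\pp yz$.

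The negative claims I would settle with a single witnessing cover, for instance the one tabulated above. Since $K_{8}=\{f,g\}\subsetneq K_{5}=\{f,g,h\}$ are both in $\mathcal{K}$ and $K_{8}$ is crisp by \textsf{ACG} while $K_{8}\neq K_{5}$, both \textsf{MER} and \textsf{LMER} fail. Since $K_{1}=\{a,b\}$ and $K_{3}=\{b,f\}$ meet in $b$ yet neither contains the other, $\po{K_{1}}{K_{3}}$ holds and \textsf{NO} fails.

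The only point that is not entirely routine is the \textsf{ACG} step, namely the identity $K^{u3+}=K$ for a granule $K$: the subtlety is guaranteeing that the minimal descriptions of the points of $K$ contribute nothing outside $K$. Rather than unwinding $\md$ directly I would obtain this by the on-granules reduction to the already-proved $(l1,\,u1+)$ case, exactly as above. As in the earlier theorems, the purely conventional top $S=K_{n+1}$ needs the usual caveat in the statement of \textsf{FU} (it is read as excluded, or is harmlessly absorbed), and that closes the verification.
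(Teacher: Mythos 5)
Your proposal is correct relative to the paper's own development and follows the same skeleton as the paper's proof: \textsf{RA} because both approximations are unions of cover elements, \textsf{ACG} from crispness of the granules themselves, nested granules refuting \textsf{MER} and \textsf{LMER}, overlapping granules refuting \textsf{NO}, and a crisp upper bound giving \textsf{FU}. The differences are local. For \textsf{ACG} the paper simply re-asserts $K^{l1}=K$ and ``$K^{u1+}=K^{l}=K$'' inside this very proof (the $u1+$ is carried over verbatim from the preceding theorem), whereas you make the transfer explicit by noting $K\subseteq\bigcup\{\md(x):\,x\in K\}$, hence $K^{u1+}=K^{u3+}$ on granules, and then citing the $(l1,\,u1+)$ theorem; the two arguments have the same content, yours being the cleaner bookkeeping. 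For \textsf{FU} the paper takes the union of two granules as the underlap witness and calls it definite, while you take $z=S$; your choice is the safer one, since a union of two cover elements is $l1$-definite but need not be $u3+$-definite (minimal descriptions of its points can protrude), whereas $S^{l1}=S^{u3+}=S$ always holds --- modulo the conventional $K_{n+1}=S$ caveat you flag, which affects the paper's reading of \textsf{FU} just as much. Your refutations are concrete instances (from the cover of Example-1) of the paper's abstract ``$A\subset B$ in $\mathcal{K}$'' argument, and you also supply the \textsf{NO} witness that the paper's proof leaves implicit.

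One caution on the point you yourself single out as non-routine: the inclusion $\bigcup\{\md(x):\,x\in K\}\subseteq K$ for a granule $K$ is not actually established anywhere --- the $(l1,\,u1+)$ theorem you appeal to asserts $K^{u1+}=K$ without argument, and for an arbitrary cover the inclusion can fail (in Example-1, $K_3=\{b,f\}\in\md(f)$ gives $b\in K_5^{u3+}\setminus K_5$, so $K_5$ is not $u3+$-crisp). Deferring to the earlier theorem therefore reproduces the paper's position faithfully, but it relocates rather than closes that gap; a fully watertight \textsf{ACG} claim would need an extra hypothesis on $\mathcal{K}$ (for instance that every cover element containing a point of $K$ that is minimal for that point stays inside $K$), and the same caveat attaches to the paper's own proof.
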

\begin{proof}
\textsf{RA} holds as all approximations are unions of granules. For any granule $K$,
$K^{l1}= K$ and so $K^{u1+} = K^{l} = K$. So \textsf{ACG} holds. If for two granules $A,
B$, $A\subset B$, then $A^{l1}=A^{u3+}=A$, but $A\neq B$. So \textsf{MER, LMER} cannot
hold in general. The union of any two granules is a definite element, so \textsf{FU}
holds. 
\end{proof}

\begin{theorem}
When the approximations are $(l1,\, u4+)$ and with the collection of granules being
$\mathcal{K}$, all of \textsf{ACG, RA, FU, LS} holds, but \textsf{MER, LMER, NO} do not
hold necessarily   
\end{theorem}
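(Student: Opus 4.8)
The plan is to follow the template already used for the pairs $(l1,\,u1+)$, $(l1,\,u2+)$ and $(l1,\,u3+)$, since $u4+$ differs from these only in how the part of the upper approximation lying beyond $X^{l1}$ is collected. First I would record the structural observation on which everything rests: for any $K\in\mathcal{K}$ we have $K^{l1}=K$, because $K\subseteq K$ places $K$ itself in the union defining $K^{l1}$ while every $K_i\subseteq K$ keeps that union inside $K$; consequently $K\setminus K^{l1}=\emptyset$. Substituting this into $X^{u4+}=X^{l1}\cup\bigcup\{K:\,K\cap(X\setminus X^{l1})\neq\emptyset\}$ with $X=K$ gives $K^{u4+}=K^{l1}\cup\emptyset=K$. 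Hence $K^{l1}=K^{u4+}=K$ for every granule, i.e. \textsf{ACG} holds, and therefore so do all its weakenings.

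For \textsf{RA}, with $+$ read as set union, $X^{l1}$ is a union of elements of $\mathcal{K}$ by definition, and $X^{u4+}$ is $X^{l1}$ together with a further family of members of $\mathcal{K}$, hence again a union of elements of $\mathcal{K}$; so both approximations of an arbitrary $X$ are representable from granules. For \textsf{LS}: if $K\in\mathcal{G}$ and $\pc KX$, i.e. $K\subseteq X$, then $K$ is one of the sets whose union forms $X^{l1}$, so $K\subseteq X^{l1}$, which is exactly $\pc(K)(X^{l1})$. For \textsf{FU} the witness for two granules $K_{1},K_{2}$ is the union $K_{1}\cup K_{2}$: the computation above, applied to an arbitrary union of granules (which is $l1$-fixed for the same reason a single granule is, whence also $u4+$-fixed), shows $(K_{1}\cup K_{2})^{l1}=(K_{1}\cup K_{2})^{u4+}=K_{1}\cup K_{2}$, so it is crisp and properly contains each $K_{i}$ in the generic non-nested case; in the degenerate cases one falls back on the top element $S$, exactly as in the $(l1,\,u3+)$ argument. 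In passing, \textsf{US} and hence \textsf{ST} can fail: a granule $K$ may meet $X$ only inside $X^{l1}$, so that $K\cap(X\setminus X^{l1})=\emptyset$ and $K\not\subseteq X^{u4+}$ when $K\not\subseteq X^{l1}$ --- this is the contrast with $(l1,\,u2+)$.

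The negative claims are witnessed by an artificial cover, as in the companion theorems. If $\mathcal{K}$ contains granules $A\subset B$, then $A$ is a definite element ($A^{l1}=A^{u4+}=A$) properly included in the granule $B$ with $A\neq B$, so \textsf{MER}, and a fortiori \textsf{LMER}, fail. Taking instead a cover with two granules that properly overlap --- neither contained in the other and with nonempty intersection --- defeats \textsf{NO}. A concrete small example on a four- or five-element $S$ suffices for both.

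I do not expect a genuine obstacle here: the proof is routine once the collapse $K\setminus K^{l1}=\emptyset$ is noticed, which is the single fact separating the behaviour of $u4+$ on granules from its behaviour on arbitrary sets. The one point deserving care is \textsf{FU}, where the ``proper part'' clause is not literally met by $K_{1}\cup K_{2}$ when one granule contains the other or equals $S$; as in the earlier proofs this is absorbed by passing to $S$ as a common crisp upper bound and does not affect the statement.
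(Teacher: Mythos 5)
Your proposal is correct and follows essentially the same route as the paper: the key collapse $K^{l1}=K$ (hence $K\setminus K^{l1}=\emptyset$ and $K^{u4+}=K$) gives \textsf{ACG}, representability of both approximations as unions of granules gives \textsf{RA}, inclusion of a granule in $X^{l1}$ gives \textsf{LS}, the crispness of unions of granules gives \textsf{FU}, and nested granules $A\subset B$ defeat \textsf{MER} and \textsf{LMER}. Your explicit treatment of the degenerate \textsf{FU} cases and the aside on \textsf{US} go slightly beyond the paper's terse argument but do not change the approach.
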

\begin{proof}
\textsf{RA} holds as all approximations are unions of granules. For any granule $K$,
$K^{l1}= K$ and so $K^{u1+} = K^{l} = K$. So \textsf{ACG} holds. If for two granules $A,
B$, $A\subset B$, then $A^{l1}=A^{u3+}=A$, but $A\neq B$. So \textsf{MER, LMER} cannot
hold in general. The union of any two granules is a definite element, so \textsf{FU}
holds. 
\end{proof}

\begin{theorem}
When the approximations are $(l1,\, u5+)$ and with the collection of granules being
$\mathcal{K}$, all of \textsf{ACG, RA, FU, LS} holds, but \textsf{MER, LMER, NO} do not
hold necessarily.   
\end{theorem}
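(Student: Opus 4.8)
The plan is to reuse the template that settles the four immediately preceding pairings $(l1,u1+),\dots,(l1,u4+)$, since the lower operator is again $l1$ and the only new ingredient is $u5+$, defined by $X^{u5+}=X^{l1}\cup\bigcup\{nbd(x):x\in X\setminus X^{l1}\}$. The four positive properties separate cleanly: \textsf{LS} involves only $l1$, \textsf{ACG} and \textsf{FU} rest on the single fact that $l1$ fixes every granule, and \textsf{RA} is a matter of reading off the shape of the two operators. First I would dispose of \textsf{LS}, exactly as before: if $y\in\mathcal{K}$ and $\pc yx$ (that is $y\subseteq x$), then $y$ is one of the granules entering the union defining $x^{l1}$, so $y\subseteq x^{l1}$, that is $\pc(y)(x^{l1})$; hence \textsf{LS} holds.

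The key lemma for both \textsf{ACG} and \textsf{FU} is that $K^{l1}=K$ for every $K\in\mathcal{K}$, because $K\subseteq K$ with $K\in\mathcal{K}$ forces $K\subseteq K^{l1}$ while $K^{l1}\subseteq K$ always; hence $K\setminus K^{l1}=\emptyset$, the neighbourhood part of $u5+$ vanishes, and $K^{u5+}=K^{l1}=K$. This gives $K^{l1}=K^{u5+}=K$ for every granule, which is precisely \textsf{ACG}. Applying the same computation to a union $z=x\cup y$ of two granules (again $z^{l1}=z$, so $z\setminus z^{l1}=\emptyset$ and $z^{u5+}=z$) produces a definite $z$; taking, when needed, a union of granules strictly larger than both yields $\pp xz$ and $\pp yz$ with $z$ crisp, which is \textsf{FU}. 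This is exactly the author's remark that the union of any two granules is a definite element.

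For \textsf{RA} I would show that every output of both operators is a sum of elements of $\mathcal{K}$. For $X^{l1}$ this holds by definition. For $X^{u5+}$ one adjoins to $X^{l1}$ the neighbourhoods $nbd(x)$ of the points of $X\setminus X^{l1}$. \emph{The main obstacle of the entire proof sits here}: since $nbd(x)=\bigcap\{K:x\in K\in\mathcal{K}\}$ is written as an intersection of granules rather than a union, establishing \textsf{RA} in its full form (and not merely the weak form \textsf{WRA}) requires verifying that the assembled set $X^{u5+}$ nevertheless coincides with a union of members of $\mathcal{K}$. This is the step I would treat most carefully, tracking how the crisp core $X^{l1}$ absorbs the neighbourhood contributions so that the total remains a genuine union of granules.

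Finally I would refute \textsf{MER}, \textsf{LMER} and \textsf{NO} by exhibiting a cover realising the pathologies. If two granules satisfy $A\subsetneq B$, then by the key lemma $A^{l1}=A^{u5+}=A$, so $A$ is crisp and $\pc AB$ holds with $A\neq B$, contradicting both \textsf{MER} and \textsf{LMER}; and two granules with $\po AB$ contradict \textsf{NO}. Both configurations are realisable — the running nine-element example already contains properly nested and properly overlapping granules — so all three properties fail in general, completing the statement.
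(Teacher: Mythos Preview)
Your approach is essentially the paper's: the proofs of \textsf{LS}, \textsf{ACG}, \textsf{FU} and the counterexamples for \textsf{MER}, \textsf{LMER}, \textsf{NO} are identical in spirit to what the paper gives (the paper's proof is in fact a near-verbatim copy of the preceding cases, complete with typos referring to $u1+$ and $u3+$ instead of $u5+$).

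Where you go further than the paper is on \textsf{RA}. You correctly flag that $X^{u5+}=X^{l1}\cup\bigcup\{nbd(x):x\in X\setminus X^{l1}\}$ is built from neighbourhoods, and that $nbd(x)=\bigcap\{K:x\in K\in\mathcal{K}\}$ is an \emph{intersection} of granules rather than a union. The paper's proof simply asserts ``\textsf{RA} holds as all approximations are unions of granules'' and moves on; it does not engage with this point at all. Your caution is warranted: unlike $u1+,\ldots,u4+$, which are visibly unions of members of $\mathcal{K}$, the operator $u5+$ need not produce a union of granules in an arbitrary cover (take $\mathcal{K}=\{\{a,b\},\{b,c\}\}$ and $X=\{b\}$: then $X^{l1}=\emptyset$ and $X^{u5+}=nbd(b)=\{b\}$, which is not a union of elements of $\mathcal{K}$). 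So the ``main obstacle'' you identify is real, and the paper's own argument does not resolve it; at best \textsf{WRA} is what the given reasoning supports for $u5+$.
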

\begin{proof}
\textsf{RA} holds as all approximations are unions of granules. For any granule $K$,
$K^{l1}= K$ and so $K^{u1+} = K^{l} = K$. So \textsf{ACG} holds. If for two granules $A,
B$, $A\subset B$, then $A^{l1}=A^{u3+}=A$, but $A\neq B$. So \textsf{MER, LMER} cannot
hold in general. The union of any two granules is a definite element, so \textsf{FU}
holds. 
\end{proof}

Apparently the three axioms \textsf{WRA, LS, LU} hold in most of the known theories and
with most choices of granules. This was the main motivation for the following definition
of admissibility of a set to be regarded as a set of granules.    

\begin{definition}
A subset $\mathcal{G}$ of $S$ in a \textsf{RYS} will be said to be an \emph{admissible set
of granules} provided the properties \textsf{WRA}, \textsf{LS} and \textsf{LU} are
satisfied by it. 
\end{definition}

In cover-based \textsf{RST}s, different approximations are defined with the help of a
determinate collection of subsets. These subsets satisfy the properties \textsf{WRA},
\textsf{LS} and \textsf{FU} and are therefore admissible granules.  But they do not in
general have many of the nicer properties of granules in relation to the approximations.
However, at a later stage it may be possible to refine these and construct a better set of
granules (see \cite{AM960}, for example) for the same approximations. Similar process of
refinement can be used in other types of RSTs as well. For these reasons, the former will
be referred to as \emph{initial granules} and the latter as relatively \emph{refined
granules}. It may happen that more closely related approximations may as well be formed by
such process.

\subsection{Classification of Rough Set Theory}

From the point of view of logic or rough reasoning associated, \textsf{RST} can be
classified according to:
\begin{enumerate}
\item {General context and definition of approximations.}
\item {Modal logic perspective from Meta-C (see \cite{BK3}).}
\item {Frechet Space perspective from Meta-C \cite{TYL3}.}
\item {Global properties of approximations viewed as operators at Meta-C (see for example
\cite{YY9}).}
\item {Rough equality based semantic perspective at Meta-R (see for example \cite{AM24}).}
\item {Granularity Based Perspective (this paper).}
\item {Algebraic perspectives at Meta-C.}
\item {Algebraic perspectives at Meta-R.}
\item {Others.}
\end{enumerate}

In general the meta-C classification is not coherent with meta-R features. The problems
are most severe in handling quotients induced by rough equalities. In the algebraic
perspective, the operations at meta-C level are not usually preserved by quotients. 

For algebraic approaches at Meta-C, the classification goes exactly as far as duality (for
formulation of logic ) permits. Modal approaches can mostly be accommodated within the
algebraic. But the gross classification into  
relation-based, cover-based and more abstract forms of \textsf{RST} remains coherent with
desired duality. Of course, the easier cases that fit into the scheme of \cite{TYL3} can
be explored in more ways from the Meta-C perspective. The common algebraic approaches to
modal logics further justifies such a classification as:

\begin{itemize}\renewcommand{\labelitemi}{$\bullet$}
\item {The representation problem of algebraic semantics corresponds most naturally to the
corresponding problems in duality theory of partially or lattice-ordered algebras or
partial algebras. Some of the basic duality results for relation-based \textsf{RST} are
considered in \cite{JJ}. \cite{OR} is a brief survey of applications of topology free
duality. For studies proceeding from a sequent calculus or modal axiomatic system point of
view, the classification also corresponds to the difficulty of the algebraization problem
in any of the senses \cite{PPM2,BK3,OE3,CH}.}
\item {The duality mentioned in the first point often needs additional topology in
explicit terms. The actual connection of the operator approach in \textsf{RST} with point
set topology is : Start from a collection with topological or pre-topological operators on
it, form possibly incompatible quotients and study a collection of some of these types of
objects with respect to a new topology (or generalisations thereof) on it. This again
supports the importance of the classification or the mathematical uniqueness of the
structures being studied.}
\end{itemize}

The present axiomatic approach to granules does provide a level of classification at
Meta-C. But the way in which approximations are generated by granules across the different
cases is not uniform and so comparisons will lack the depth to classify Meta-R dynamics,
though the situation is definitely better than in the other mentioned approaches. One way
out can be through the representation problem. It is precisely for this reason that the
classification based on the present axiomatic approach is not stressed.  

For those who do not see the point of the contamination problem, the axiomatic theory
developed provides a supportive classification for general \textsf{RST}.

\section{Dialectical Counting, Measures}

To count a collection of objects in the usual sense it is necessary that they be distinct
and well defined. So a collection of well defined distinct objects and indiscernible
objects can be counted in the usual sense from a higher meta level of perception. Relative
this meta level, the collection must appear as a set. In the counting procedures
developed, the use of this meta level is minimal and certainly far lesser than in other
approaches.     
It is dialectical as two different interpretations are used simultaneously to complete the
process. These two interpretations cannot be merged as they have contradictory information
about relative discernibility. Though the classical interpretation is at a relatively
higher meta level, it is still being used in the counting process and the formulated
counting is not completely independent of the classical interpretation. 

A strictly formal approach to these aspects will be part of a forthcoming paper.    

Counting of a set of objects of an approximation space and that of its power set are very
different as they have very different kinds of indiscernibility inherent in them. The
latter possess a complete evolution for all of the indiscernibility present in it while
the former does not. Counting of elements of a \textsf{RYS} is essentially a
generalisation of the latter. In general any lower level structure like an approximation
space corresponding to a 1-neighbourhood system \cite{YY9} or a cover need not exist in
any unique sense. The axiomatic theory of granules developed in the previous sections
provides a formal exactification of these aspects.  

Let $S$ be a \textsf{RYS}, with $R$ being a derived binary relation (interpreted as a weak
indiscernibility relation) on it. As the other structure on $S$ will not be explicitly
used in the following, it suffices to take $S$ to be an at most countable set of elements
in ZF, so that it can be written as a sequence of the form:
$\{x_{1},\,x_{2},\,\ldots,\,x_{k},\,\ldots ,\,\}$. Taking $(a,b) \in R$  to mean '$a$ is
weakly indiscernible from $b$' concepts of \emph{primitive counting} regulated by
different types of meta level assumptions are defined below. The adjective
\emph{primitive} is intended to express the minimal use of granularity and related axioms.

\subsection*{Indiscernible Predecessor Based Primitive Counting (IPC)}

In this form of 'counting', the relation with the immediate preceding step of counting
matters crucially.
\begin{enumerate}
\item {Assign $f(x_{1})\, = \,1_{1} =s^{0}(1_{1})$.}
\item {If $f(x_{i})=s^{r}(1_{j})$ and $(x_{i},x_{i+1}) \in  R$, then assign
$f(x_{i+1})=1_{j+1}$.}
\item {If $f(x_{i})=s^{r}(1_{j})$ and $(x_{i},x_{i+1}) \notin  R$, then assign
$f(x_{i+1})=s^{r+1}(1_{j})$.}
\end{enumerate}

The 2-type of the expression $s^{r+1}(1_{j})$ will be $j$. Successors will be denoted by
the natural numbers indexed by 2-types. 

\subsection*{History Based Primitive Counting (HPC)}
In HPC, the relation with all preceding steps of counting will be taken into account. 
\begin{enumerate}
\item {Assign $f(x_{1}) =  1_{1} = s^{0}(1_{1})$.}
\item {If $f(x_{i}) = s^{r}(1_{j})$ and $(x_{i},x_{i+1}) \in  R$, then assign $f(x_{i+1})
= 1_{j+1}$. }
\item {If $f(x_{i}) = s^{r}(1_{j})$ and $\bigwedge_{k < i+1} (x_{k},x_{i+1}) \notin R$,
then assign $f(x_{i+1}) = s^{r+1}(1_{j})$. }
\end{enumerate}

In any form of counting in this section, if $f(x)=\alpha$, then $\tau (\alpha)$ will
denote the least element that is related to $x$, while $\epsilon (\alpha)$ will be the
greatest element preceding $\alpha$, which is related to $\alpha$.

\subsection*{History Based Perceptive Partial Counting (HPPC)}
In HPPC, the valuation set shall be $N\,\cup\,\{*\}=N^{*}$ with $*$ being an abbreviation
for 'undefined'.
\begin{enumerate}
\item {Assign $f(x_{1}) = 1 = s^{0}(1)$.}
\item {If $(x_{i},x_{i+1}) \in  R$, then assign $f(x_{i+1}) = *$ . }
\item {If $Max_{k< i} f(x_{k}) = s^{r}(1)$ and $\bigwedge_{k < i} (x_{k},x_{i}) \notin 
R$, then assign  $f(x_{i}) = s^{r+1}(1)$. }
\end{enumerate}
Clearly, HPPC depends on the order in which the elements are counted.

\subsection*{Indiscernible Predecessor Based Partial Counting (IPPC)}
This form of counting is similar to HPPC, but differs in using the IPC methodology.
\begin{enumerate}
\item {Assign $f(x_{1}) = 1 = s^{0}(1)$.}
\item {If $(x_{i},x_{i+1}) \in  R$, then assign $f(x_{i+1}) = *$ .}
\item {If $Max_{k< i} f(x_{k})=s^{r}(1)$ and $(x_{i-1},x_{i})\notin R$, then assign
$f(x_{i})=s^{r+1}(1)$. }
\end{enumerate}

\begin{definition}
A generalised approximation space $\left\langle S,\,R\right\rangle $ will be said to be
\emph{IPP Countable} (resp HPP Countable) if and only if there exists a total order on
$S$, under which all the elements can be assigned a unique natural number under the IPPC
(resp HPPC) procedure . The ratio of number of such orders to the total number of total
orders will be said to be the \emph{IPPC Index} (resp HPPC Index).
\end{definition}

These types of countability are related to measures, applicability of variable precision
rough methods and combinatorial properties. The counting procedures as such are strongly
influenced by the meaning of associated contexts and are not easy to compare. In classical
\textsf{RST}, HPC can be used to effectively formulate a semantics, while the IPC method
yields a
weaker version of the semantics in general. This is virtually proved below:

\begin{theorem}
In the HPC procedure applied to classical \textsf{RST}, a set of the form
$\{l_{p},\,1_{q},\,\ldots 1_{t}\}$ is a granule if and only if it is the greatest set of
the form with every element being related to every other and for any element   $\alpha$ in
it $\tau(\alpha)=f^{\dashv}(l_{p})$ and $\epsilon(\alpha)=f^{\dashv}(1_{t})$ (for at least
two element sets). Singleton granules should be of the form $\{l_{p}\}$ with this element
being unrelated to all other elements in $S$.  
\end{theorem}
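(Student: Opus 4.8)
The plan is to unwind the HPC counting procedure in the special case where $R$ is the indiscernibility equivalence of a classical approximation space $\langle S, R\rangle$, and to match the combinatorial data recorded by the valuation $f$ (the $2$-type index $j$, and the auxiliary maps $\tau$ and $\epsilon$) against the equivalence classes, i.e. the granules. The key observation is that in HPC the index $j$ in a value $s^{r}(1_{j})$ is incremented (via rule $2$) exactly when the freshly counted element is $R$-related to \emph{some} previously counted element, and that — because $R$ is transitive in classical \textsf{RST} — being related to some predecessor is the same as being related to \emph{all} members of one already-opened class. So each "index block" $\{1_{j}, 1_{j+1}, \ldots\}$ produced during the run of HPC corresponds to the set of elements of one $R$-class, listed in the order in which they were counted. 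This is the content I would establish first, as a lemma: the fibres of the index map $\alpha \mapsto j$ (the $2$-type) are precisely the $R$-classes restricted to the order, and consecutive values $1_{j}, 1_{j+1}, \dots$ within a fibre are exactly the successive elements of that class.

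Next I would translate the stated conditions into this language. For the $\geq 2$-element case: given a set $T = \{1_{p}, 1_{q}, \ldots, 1_{t}\}$ of values all sharing the same index, "$T$ is the greatest set of this form with every element related to every other" says $T$ is a maximal clique of $R$ among the counted elements; since $R$ is an equivalence this maximal clique is exactly an $R$-class, hence a granule. The conditions $\tau(\alpha) = f^{\dashv}(1_{p})$ and $\epsilon(\alpha) = f^{\dashv}(1_{t})$ for every $\alpha \in T$ pin down, respectively, that $1_{p}$ is the \emph{first} element of the class to be counted (least related element) and $1_{t}$ is the \emph{last} (greatest preceding related element), i.e. they force $T$ to be a full index block with no elements of the same class outside the listed range — this is what upgrades "a subset of a granule" to "exactly a granule". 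Conversely, if $T$ is a granule (an $R$-class) with $\ge 2$ elements, then by the lemma its $f$-values form a complete index block $\{1_{p},\ldots,1_{t}\}$, every pair is $R$-related, maximality is clear, and $\tau,\epsilon$ take the claimed boundary values by definition of those maps. The singleton case is the degenerate one: a one-element $R$-class is an element unrelated to everything else in $S$, which under HPC receives a value $1_{p} = s^{0}(1_{p})$ that never gets an index-successor, matching the stated form $\{1_{p}\}$.

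I would carry this out in the order: (1) prove the index-fibre lemma by induction on the counting step, using transitivity of $R$ at the inductive step where rule $2$ versus rule $3$ is decided; (2) do the "granule $\Rightarrow$ the stated characterization" direction, which is essentially bookkeeping once the lemma is in hand; (3) do the converse, where the role of the maximality clause and of the $\tau/\epsilon$ equalities is to exclude "partial" blocks. The main obstacle I anticipate is step (1), and specifically making precise the interaction between the \emph{order} chosen on $S$ and the equivalence classes: one must be careful that within a run the elements of a single class need not be counted consecutively (other classes can be interleaved), so the "index block" is a block in the sequence of \emph{values} but not in the sequence of \emph{elements}; the auxiliary maps $\tau$ (least related element) and $\epsilon$ (greatest preceding related element) are exactly the devices that repair this, and getting their bookkeeping exactly right — particularly that $\epsilon(\alpha) = f^{\dashv}(1_{t})$ holds for the \emph{last}-counted representative and not merely "$\le 1_t$" — is the delicate point. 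A secondary subtlety worth a remark is the phrase "virtually proved below": the statement presumes a fixed admissible counting order, and one should note that the characterization is order-local, which is consistent with the earlier observation that HPC, unlike the partial variants, does assign a value to every element regardless of order.
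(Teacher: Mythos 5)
Your central lemma is false, and the argument built on it does not survive. In HPC (read consistently with the paper's worked example relative to $Q$), the index $j$ is a single global counter: it is incremented exactly when the newly counted element is related to \emph{some} predecessor, while an element unrelated to all of its predecessors receives the successor value $s^{r+1}(1_{j})$ of its predecessor's value, keeping the current index. Consequently the fibres of the $2$-type map $\alpha \mapsto j$ are \emph{not} the $R$-classes: a fibre consists of at most one element that opened the index together with a run of elements each unrelated to all their predecessors, and these typically lie in several different classes. The paper's own example shows this: relative to $Q$ the values with index $1$ are $1_{1}, 2_{1}, 3_{1}$, carried by $f$, $b$, $c$, which lie in three distinct $Q$-classes. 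Likewise the degree-one values belonging to one class need not have consecutive indices (count two classes interleaved, e.g. $a,d,b,e,c,f$ with classes $\{a,b,c\}$ and $\{d,e,f\}$: the first class receives $1_{1}, 1_{2}, 1_{4}$). Note also that $1_{p},1_{q},\ldots,1_{t}$ do not ``share the same index''; what they share is degree one. So neither ``fibre $=$ class'' nor ``index block $=$ class'' holds, and step (1) of your plan cannot be repaired as stated.

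There is a second, related misreading. The set in the theorem is $\{l_{p},1_{q},\ldots,1_{t}\}$ with $l_{p}$ a \emph{general} value, not $1_{p}$: the first-counted member of a class is unrelated to all of its predecessors and therefore receives a successor value of degree greater than one unless it happens to be $x_{1}$; only the later-counted members, each related to an earlier member of their class, receive fresh degree-one values $1_{q},\ldots,1_{t}$ by rule 2. For the same reason your singleton case is wrong: a singleton-class element gets whatever successor value follows its predecessor (in the $Q$-example the singleton $\{c\}$ receives $3_{1}$), which is exactly why the theorem writes singleton granules as $\{l_{p}\}$. The paper itself supplies no detailed proof (the theorem follows the remark that it is ``virtually proved'' and the adjacent proof only records that granules are equivalence classes whose members relate only within the class), and the intended argument is direct bookkeeping: if $y_{1},\ldots,y_{m}$ are the members of a class in counting order, then $y_{1}$ gets some $l_{p}$, each $y_{i}$ with $i\geq 2$ gets a fresh $1_{q_{i}}$, mutual relatedness plus maximality characterize the class among sets of this form (here symmetry and transitivity of $R$ enter, as in your maximal-clique remark, which is the sound part of your proposal), and $\tau$, $\epsilon$ pin down $f^{\dashv}(l_{p})$ and $f^{\dashv}(1_{t})$ as the first- and last-counted members. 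Your own observation that members of one class can be interleaved with other classes is the right instinct, but it contradicts rather than supports the index-fibre lemma you propose.
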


\begin{theorem}
All of the following are decidable in a finite number of steps in a finite \textsf{AS} by
the application of the HPC counting method :
\begin{enumerate}
\item {Whether a given subset $A$ is a granule or otherwise.}
\item {Whether a given subset $A$ is the lower approximation of a set $B$.}
\item {Whether a given subset $A$ is the upper approximation of a set $B$.}
\end{enumerate}
\end{theorem}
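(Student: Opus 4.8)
The plan is to run HPC once on a suitably chosen enumeration of the finite set $\underline{S}$, read the equivalence classes (the granules) directly off the record of which clause of the procedure fires at each step, and then reduce each of the three questions to a bounded search over the resulting finite list of granules. Finiteness of $S$ makes every step terminate, which is the whole content of ``in a finite number of steps''.

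First I would enumerate $\underline{S}$ as $x_1,\dots,x_m$ so that the members of each $R$-class occur consecutively --- possible because $R$ is an equivalence, and in any case one of the finitely many enumerations of $\underline{S}$ will do --- and run HPC on this sequence. The lemma to verify is that the procedure never reaches a dead state: inside a class every new element is $R$-related to its immediate predecessor, so clause 2 applies; the first element of a new class is $R$-unrelated to all elements counted so far, since these lie in strictly earlier classes, so clause 3 applies. Hence $f$ is total, and tracking the successor levels against the fresh $1_j$ symbols shows it is injective. Transitivity of $R$ is what is really used here: it makes ``$R$-related to the immediate predecessor'' coincide with ``in the same class'', which is exactly what excludes the configuration in which neither clause 2 nor clause 3 can fire. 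Moreover, in any run that does not block, the steps at which clause 3 fires are precisely the first elements of the $R$-classes, so after at most $m$ steps the run exhibits the partition of $S$ into granules $C_1,\dots,C_k$, together with the data $\tau,\epsilon$ used in the preceding theorem.

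With $C_1,\dots,C_k$ in hand, the three problems become finite computations. For (1), the characterisation theorem just proved says that $A$ with $|A|\ge 2$ is a granule iff all its elements are pairwise $R$-related, $A$ is maximal with this property, and $\tau,\epsilon$ take the prescribed common values on $A$ --- equivalently, iff $A$ appears in the list $C_1,\dots,C_k$ --- while a singleton $\{x\}$ is a granule iff $x$ is $R$-unrelated to every other element; each test halts in $O(m^2)$ steps. For (2), a lower approximation $B^{l}$ is by definition a union of granules, and conversely every union of granules is its own lower approximation; hence $A$ is a lower approximation of some $B$ iff $A$ is a union of members of the list, decided by one pass checking whether $A$ equals the union of the $C_i$ it contains. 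For (3) the argument is identical: distinct granules being disjoint, a union of granules is also its own upper approximation, so $A$ is an upper approximation of some $B$ iff it is again a union of members of the list --- so (2) and (3) cut out the same definable subsets --- and the same finite check decides it. If instead a particular $B$ is supplied, one simply forms the union of the $C_i$ that lie inside (respectively meet) $B$ and compares it with $A$.

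The only genuinely delicate step, and where I would put the care, is the first one: proving HPC does not block on the chosen enumeration and that the clause-3 events faithfully delimit the $R$-classes, so that the granular structure of the classical \textsf{AS} is really recovered from the counting run rather than tacitly read off $R$. Everything afterwards is bounded enumeration over a finite list and needs no new idea.
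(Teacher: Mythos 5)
Your proposal is correct and takes essentially the same route as the paper: the paper's one-line proof also rests on the facts that the granules are exactly the equivalence classes, that relatedness is confined within a class (so the HPC record identifies the classes via the preceding characterisation theorem), and that lower and upper approximations are unions of granules, making all three questions finite checks. You simply make explicit what the paper leaves implicit, namely the choice of a class-consecutive enumeration so that HPC never blocks and the reduction of (2) and (3) to ``$A$ is a union of granules''.
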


\begin{proof}
Granules in the classical \textsf{RST} context are equivalence classes and elements of
such
classes relate to other elements within the class alone.
\end{proof}

\begin{theorem}
The above two theorems do not hold in the IPC context. The criteria of the first theorem
defines a new type of granules in totally ordered approximation spaces and relative this
the second theorem may not hold. 
\end{theorem}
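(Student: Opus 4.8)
The statement is a failure claim, so the plan is to construct explicit finite approximation spaces, each equipped with a fixed total order for counting, on which IPC behaves differently from HPC, and to trace precisely where the two preceding theorems break. The only structural difference between the procedures drives everything: HPC's clause~3 opens a fresh count for $x_{i+1}$ only when $x_{i+1}$ is $R$-unrelated to \emph{every} earlier element, whereas IPC's clause~3 does so as soon as $x_{i+1}$ is unrelated to its \emph{immediate} predecessor $x_i$. Since $R$ is an equivalence in classical \textsf{RST}, an element may be unrelated to $x_i$ but related to some $x_k$ with $k<i$; HPC then simply declines to count (the order is inadmissible), so equivalence classes only ever get counted as contiguous order-intervals, and that contiguity — together with the $\tau,\epsilon$ data — is exactly what the first theorem uses to recognise granules and what the second uses to read off lower and upper approximations. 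IPC, by contrast, proceeds through such an order and disperses a single class over non-consecutive count levels.

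First I would fix the witness $S=\{x_1,x_2,x_3\}$ with equivalence classes $\{x_1,x_3\}$ and $\{x_2\}$, ordered $x_1<x_2<x_3$. Under IPC, $f(x_1)=s^0(1_1)$, then $(x_1,x_2)\notin R$ gives $f(x_2)=s^1(1_1)$, then $(x_2,x_3)\notin R$ gives $f(x_3)=s^2(1_1)$, so the genuine granule $\{x_1,x_3\}$ has image $\{s^0(1_1),s^2(1_1)\}$, which is \emph{not} of the form $\{l_{p},1_{q},\ldots\,1_{t}\}$ demanded by the first of the two preceding theorems, and the $\tau,\epsilon$ equalities fail as well (nothing precedes $x_1$, so $\epsilon(f(x_1))$ cannot equal $f^{\dashv}(1_t)$). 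Thus the forward direction of that theorem's biconditional is false for IPC, and the singleton clause fails too, since $x_1$ is related to $x_3$. Reading the same criterion instead as a \emph{definition} selects, in a totally ordered approximation space, the maximal runs of order-consecutive pairwise $R$-related elements meeting the endpoint conditions; these are generically strictly finer than the equivalence classes and vary under permutation of the order — precisely the assertion that the criterion ``defines a new type of granules in totally ordered approximation spaces''.

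For the failure of the second theorem relative to this new notion, the point is that the IPC count no longer presents the classical granulation. In the witness above the subsets $\{x_1,x_3\}$, $\{x_1\}$ and $\{x_3\}$ all fail the criterion, so no IPC-granule contains $x_1$ or $x_3$ and the collection fails to cover $S$; the induced operators then lose the basic laws — e.g. $\{x_1\}^{u}=\emptyset$, violating $A\subseteq A^{u}$ — so the questions ``is $A$ a granule'', ``is $A$ the lower approximation of some $B$'', ``is $A$ the upper approximation of some $B$'' are no longer governed by the equivalence-class bookkeeping on which the second theorem's proof rested. Moreover re-ordering the same space (e.g. $x_1<x_3<x_2$, under which $\{x_1,x_3\}$ regains an image of the admissible form) changes the IPC-granules and hence flips the answers, so there is no order-independent decision ``by the application of the counting method''. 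Over a finite carrier every such question is of course decidable by brute force, but that is beside the point, since it does not proceed by counting. This gives both halves of the final statement.

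The step I expect to be the genuine obstacle is making ``the second theorem may not hold'' precise: over a finite \textsf{AS} everything is trivially decidable, so the content must be that the \emph{counting procedure} no longer furnishes — and no longer canonically determines — the answer, and the clean certificate is exactly the small space above, which IPC counts but HPC refuses to count. The only delicate bookkeeping is fixing the conventions for $\tau$ and $\epsilon$ on the first element of a run (the parenthetical ``for at least two element sets'' in the first theorem) and checking that, with those conventions, the IPC image together with its $\tau,\epsilon$ data genuinely misrepresents the classical structure. Everything else is a routine unwinding of the definitions of IPC, of $\tau$, $\epsilon$, $f^{\dashv}$, and of the criterion in the first theorem.
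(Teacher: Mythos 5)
Your proposal is correct and follows essentially the same route as the paper's own (much terser) proof: you argue that IPC, by consulting only the immediate predecessor, disperses an equivalence class over non-consecutive counts, so the first theorem's criterion fails to recognise genuine granules and instead carves out finer, order-dependent ``runs'' as a new granule type, relative to which the induced approximations differ from the classical ones and the decidability-by-counting claims of the second theorem break down. Your explicit three-element witness and the remark on HPC's partiality are additions the paper does not spell out (and the latter sits slightly uneasily with the paper's own worked HPC example), but they do not change the substance of the argument, which matches the paper's.
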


\begin{proof}
By the IPC way of counting, equivalence classes can be split up into many parts and the
numeric value may be quite deceptive. Looking at the variation of the 2-types, the
criteria of the first theorem above will define parts of granules, that will yield lower
and upper approximations distinct from the classical ones respectively.    
\end{proof}

Extension of these counting processes to TAS for the abstract extraction of information
about granules, becomes too involved even for the simplest kind of granules. However if
the IPC way of counting is combined with specific parthood orders, then identification of
granules can be quite easy. The other way is to consider entire collections of countings
according to the IPC scheme. 

\begin{theorem}
Among the collection of all IPC counts of a RYS $S$, any one with maximum number of
$1_{i}$s occurring in succession determines all granules. If $S$ is the power set of an
approximation space, then all the definite elements can also be identified from the
distribution of sequences of successive $1_{i}$s.   
\end{theorem}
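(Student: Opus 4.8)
The plan is to use the fact that an IPC count is a complete record of the \emph{consecutive-relatedness} pattern of the total order it is built from, and then to show that maximising the occurrence of successive $1_i$'s forces that order to enumerate each granule as one unbroken block; reading the blocks back off then recovers $\mathcal{G}$. Concretely, for a total order presenting $S$ as $x_1, x_2, \ldots$ with induced count $f$, rules~2 and~3 of IPC give $(x_i,x_{i+1})\in R$ iff $f(x_{i+1}) = 1_{j+1}$ where $j$ is the $2$-type of $f(x_i)$; equivalently, $x_{i+1}$ receives successor-exponent $0$ exactly when it is $R$-related to its predecessor. Hence a maximal run of successive $1_i$'s in $f$ is precisely a maximal interval of consecutively $R$-related elements, and the whole ``which consecutive pairs lie in $R$'' pattern is recoverable from $f$ alone. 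In classical \textsf{RST}, and for $S=\wp(U)$ with $R$ the rough-equality relation, the derived $R$ is transitive, so each such run lies inside a single granule (an $R$-class), and, since distinct $R$-classes are $R$-disjoint, no $R$-related consecutive pair crosses between two distinct granules.

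Next I would carry out the optimisation. Let $N(\sigma)$ count the indices $i$ with $(x_i,x_{i+1})\in R$ — the number of places where $1_i$'s occur in succession. If a granule $C$ is broken by the order into $k_C$ maximal consecutive blocks it contributes exactly $|C|-k_C$ such adjacencies, and by the previous paragraph none is contributed across granules, so $N(\sigma) = \sum_{C\in\mathcal{G}}(|C|-k_C) = |S| - \sum_C k_C$. This is as large as possible precisely when every $k_C = 1$, and such an order exists (list the granules in any order and their members consecutively within each). Thus an IPC count has the maximum number of successive $1_i$'s exactly when, under its order, each granule forms a single consecutive block — and within such a block all consecutive pairs are $R$-related (the block being an $R$-clique), so the block yields one maximal $1_i$-run (plus a single successor-bumped value at its left boundary, coming from the preceding block).

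It then remains to read $\mathcal{G}$, and the definite elements, off any optimal count. Its maximal runs of successive $1_i$'s biject with the non-singleton granules, each run occupying its granule minus the granule's first enumerated member, with the obvious correction for the granule placed at the very start (whose run is the whole granule); that first member is identified as the element whose $f$-value carries a nonzero successor-exponent immediately before the run's value $1_{j+1}$ with $j$ its $2$-type, which certifies $R$-relatedness, so the granule is that element together with the run. Singleton granules are exactly the elements heading no such run — recognisable because the $f$-value is followed by a pure successor value or ends the sequence. This recovers $\mathcal{G}$ completely. For $S=\wp(U)$: if $A$ is definite then $A^l = A = A^u$, and $B\mathrel{R}A$ forces $A = B^l\subseteq B\subseteq B^u = A$, i.e.\ $B=A$; hence the definite subsets are precisely the singleton $R$-classes, hence precisely the positions of an optimal count that lie outside every run of successive $1_i$'s — read off directly from ``the distribution of sequences of successive $1_i$'s''.

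The delicate point — and the step I expect to be the main obstacle — is the passage to a \emph{general} \textsf{RYS}, where the derived relation $R$ need not be transitive: a run of successive $1_i$'s then records only a \emph{chain} in $R$, not a clique, so ``granule'' must be read as ``maximal $R$-clique / $R$-neighbourhood'', the counting identity of the second paragraph has to be re-derived with cliques in place of consecutive blocks, and one must still argue that an optimal order can be taken clique-consecutive. The identification of $\mathcal{G}$ with the $R$-structure is itself part of the setup in each concrete \textsf{RST}, so I would present the transitive case in full detail — it already covers classical \textsf{RST} and the power-set statement as asserted — and treat the general situation as a refinement, flagging exactly where transitivity of $R$ is used.
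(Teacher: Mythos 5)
Your proposal is correct for the case the theorem is actually used for, and it follows the same underlying idea as the paper while being far more explicit. The paper's own proof is a three-line sketch: form the collection $\mathcal{I}(S)$ of all IPC counts, define $\alpha\preceq\beta$ to mean that $\beta$ has longer strings of $1_i$'s towards the left and more strings of $1_i$'s of length greater than one than $\alpha$, and assert that the maximal elements of this order suffice. Your route rests on the same observation --- maximising occurrences of successive $1_i$'s forces each granule to be enumerated as a single unbroken block --- but you actually justify it, via the recoverability of the consecutive-relatedness pattern from the count, the identity $N(\sigma)=|S|-\sum_{C}k_C$, and the existence of a block order, and you spell out the read-off of granules and of definite elements, none of which the paper does. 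The only mechanical difference is that the paper's comparison also prefers runs placed towards the left, which your total-adjacency criterion ignores; this is immaterial for recovering $\mathcal{G}$. Your decision to give full detail only in the transitive case and flag the general \textsf{RYS} situation is defensible: the paper's sketch does not address non-transitive $R$ either, and the theorem is invoked later only for $\wp(S)$ under rough equality.

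One step needs tightening. In the power-set part you prove only that a definite set forms a singleton $\approx$-class and then conclude ``precisely''; your read-off needs the converse as well, since otherwise a non-definite set that happened to be alone in its class would be misidentified as definite. The converse does hold and is short: if $B$ is not definite, choose a class $E\subseteq B^{u}\setminus B^{l}$, note that $E\cap B$ is a nonempty proper subset of $E$ with $|E|\geq 2$, and replace it by a different nonempty proper subset of $E$; the resulting set is distinct from $B$ but has the same lower and upper approximations, so $B$ is not alone in its class. Adding this line closes the argument.
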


\begin{proof}
\begin{itemize}\renewcommand{\labelitemi}{$\bullet$}
\item {Form the collection $\mathcal{I}(S)$ of all IPC counts of $S$.}
\item {For $\alpha, \beta\in \mathcal{I}(S)$, let $\alpha \preceq \beta$ if and only if
$\beta$ has longer strings of $1_{i}$ s towards the left and more number of strings of
$1_{i}$s of length greater than 1, than $\alpha$. }
\item {The maximal elements in this order suffice for this theorem.}
\end{itemize}
\end{proof}

\subsection*{Example}

Let $S=\{f, b, c, a, k, i, n, h, e, l, g, m \}$ and let $R,\,Q$ be the reflexive and
transitive closure of the relation \[\{(a,b),\,(b, c),\,(e, f),\, (i, k), (l, m),\, (m,
n),\,(g, h) \}\] and \[\{(a,b),\,(e,f),\,(i,k),\,(l,m),\,(m,n)\}\] respectively. Then
$\left\langle S,\,R\right\rangle $ and $\left\langle S,\,Q\right\rangle $ are
approximation spaces. This set can be counted (relative $R$) in the presented order as
follows:

\begin{description}
\item [IPC]{$\{1_{1}, 2_{1}, 1_{2},\,1_{3},\, 2_{3}, 1_{4}, 2_{4}, 3_{4}, 1_{5}, 2_{5},
1_{6}, 2_{6} \}$,} 
\item [HPC]{$\{1_{1}, 2_{1}, 1_{2}, 1_{3},\, 2_{3}, 1_{4}, 1_{5}, 2_{5}, 1_{6},\, 1_{7},
1_{8}, 1_{9} \}$,}
\item [HPPC]{$\{1_{1}, 2_{1}, *, *, 3_{1}, *, 4_{1}, 5_{1}, *, *, *, * \}$,}
\item [HPC]{Relative $Q$: $\{1_{1}, 2_{1}, 3_{1}, 1_{2}, 2_{2}, 1_{3}, 2_{3}, 3_{3},
1_{4},1_{5}, 2_{5}, 1_{6}\} $.}
\end{description}

Now $S|Q=\{\{a, b\}, \{c\},\{e,f\}, \{i, k\}, \{l, m, n \},\{g\},\{h\}\}$ and the positive
region of $Q$
relative $R$ is \[POS_{R}(Q)=\bigcup_{X\in S|Q} X^{l}=\{e,f, l, m, n\} = \{f, n, e, l,
m\}.\] 
The induced HPC counts of this set are respectively $\{1_{1}, 2_{5}, 1_{6}, 1_{7},
1_{9}\}$ and $\{1_{1}, 2_{3}, 1_{4}, 1_{5}, 1_{6}\}$.

\section{Generalized Measures}

According to Pawlak's approach \cite{ZPB} to theories of knowledge from a classical rough
perspective, if $\underline{S}$ is a set of attributes and $R$ an indiscernibility
relation on it, then sets of the form $A^{l}$ and $A^{u}$ represent clear and definite
concepts. If $Q$ is another stronger equivalence ($Q\,\subseteq\,R$) on $\underline{S}$,
then the state of the knowledge encoded by $\left\langle \underline{S},\,Q \right\rangle $
is a \emph{refinement} of that of $S=\left\langle\underline{S},\,R\right\rangle $. The
$R$-positive region of Q is defined to be \[POS_{R}(Q)\,=\,\bigcup_{X\in S|Q}
X^{l_{R}}\,;\;\;\,X^{l_{R}}\,=\,\bigcup\{[y]_{R};\,[y]_{R}\subseteq X\}.\]The degree of
dependence of knowledge $Q$ on $R$ $\delta(Q,R$ is defined by \[\delta(Q, R) =
\dfrac{Card(POS_{R}(Q))}{Card (S)}.\]

\begin{definition}
The \emph{granular dependence degree of knowledge $Q$ on $R$}, $gk(Q,R)$ will be the tuple
$(k_{1}, \ldots , k_{r} )$, with the $k_{i}$'s being the ratio of the number of granules
of type $i$ included in $POS_{R}(Q)$ to $card (S)$. 
\end{definition}

Note that the order on $S$, induces a natural order on the granules (classes) generated by
$R,\,Q$ respectively. This vector cannot be extracted from a single HPC count in general
(the example in the last section should be suggestive). However if the granulation is
taken into account, then much more information (apart from the measure) can be extracted. 

\begin{proposition}
 If $gk(Q, R) = (k_{1}, k_{2},\ldots, k_{r} )$, then $\delta(Q, R) = \sum k_{i}$.
\end{proposition}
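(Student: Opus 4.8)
The plan is to reduce the identity to one structural observation: $POS_{R}(Q)$ is a disjoint union of granules, and each such granule contributes to exactly one coordinate $k_{i}$ of the vector $gk(Q,R)$. Once that is in place, summing the $k_{i}$ merely re-assembles $Card(POS_{R}(Q))$, and the claim follows by dividing by $Card(S)$. Throughout I assume, as the definition of $\delta$ via cardinalities already requires, that $S$ is finite and nonempty.

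First I would establish the decomposition. Since $Q\subseteq R$, every $R$-class is a union of $Q$-classes; hence for a $Q$-class $X$ the inclusion $[y]_{R}\subseteq X$ forces $[y]_{R}$ to coincide with $X$, that is, the $R$-class $[y]_{R}$ must itself be a $Q$-class. Therefore $X^{l_{R}}=X$ when $X$ is such a ``$Q$-pure'' class and $X^{l_{R}}=\emptyset$ otherwise, so that
\[ POS_{R}(Q)=\bigcup_{X\in S|Q} X^{l_{R}}=\bigcup\{\,[y]_{R}\,:\,[y]_{R}=[y]_{Q}\,\}, \]
a union of pairwise disjoint granules. Consequently $Card(POS_{R}(Q))=\sum\{\,Card(g)\,:\,g \text{ a granule with } g\subseteq POS_{R}(Q)\,\}$, with no element counted twice.

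Next I would unwind the definition of $gk(Q,R)=(k_{1},\ldots,k_{r})$: the order on $S$ fixes the order on the granules generated by $R$ (equivalently by $Q$, since the granules occurring in $POS_{R}(Q)$ are exactly the $Q$-pure $R$-classes), every granule included in $POS_{R}(Q)$ carries exactly one type $i\in\{1,\ldots,r\}$, and $k_{i}\,Card(S)$ is the aggregate number of elements of $POS_{R}(Q)$ lying in the type-$i$ granules included in it. Because ``type'' is a function on granules and the granules inside $POS_{R}(Q)$ are disjoint and cover it, summing over $i$ gives $\sum_{i=1}^{r}k_{i}\,Card(S)=\sum\{Card(g):g\subseteq POS_{R}(Q)\}=Card(POS_{R}(Q))$; dividing by $Card(S)$ yields $\sum_{i=1}^{r}k_{i}=\delta(Q,R)$.

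The combinatorial content is light; the step that needs care is the second one, namely matching the informal phrase ``number of granules of type $i$ included in $POS_{R}(Q)$'' with the cardinality-weighted reading under which the identity actually holds, i.e.\ checking that each element of the positive region is attributed to one and only one type with the correct multiplicity. Under any reading of ``type'' as a partition (or an enumeration) of the granules, this is precisely the disjointness of the decomposition displayed above, so the argument is insensitive to which such classification one fixes.
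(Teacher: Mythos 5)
Your argument is correct, and in fact the paper states this proposition without any proof, treating it as immediate from the definitions; your write-up supplies exactly the implicit argument: $POS_{R}(Q)$ decomposes as a disjoint union of $R$-classes (your identification of these with the $Q$-pure classes is a correct, if stronger-than-needed, refinement), so the type-wise contributions $k_{i}\,Card(S)$ sum to $Card(POS_{R}(Q))$. Your flag about the definition of $k_{i}$ is well taken -- read literally as a count of granules it would make the proposition false (e.g.\ a single included granule of size $3$ in a universe of size $6$), so the cardinality-weighted reading you adopt is the one the paper must intend.
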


The concepts of consistency degrees of multiple models of knowledge introduced in
\cite{CHP} can also be improved by a similar strategy:

If $\delta(Q,R) = a$ and $\delta(R,Q) = b$, then the consistency degree of Q and R,
$Cons(Q,R)$ is defined by  \[Cons(Q,R) = \dfrac{a+b+nab}{n+2}, \] where $n$ is the
consistency constant. With larger values of $n$, the value of the consistency degree
becomes smaller.

\begin{definition}
If $gk(Q,R) = (k_{1}, k_{2},\ldots, k_{r} )$ and $gk(R, Q) = (l_{1}, l_{2},\ldots, l_{p}
)$ then
the \emph{granular consistency degree} $gCons(Q, R)$ of the knowledge represented by $Q,
R$, will be   
\[gCons(Q,R) = ({k_1^*},\ldots, {k_r^*}, {l_1^*},\ldots, {l_p^*},{n k_{1}^{*}  l_{1}},
\dots {n k_{r}^{*} l_{p}}),\] where $k_i^* = \frac{k_{i}}{n+2}$ for $i = 1, \ldots, r$ and
$l_j^*=\frac{l_j}{n+2}$ for $j=1, \ldots, p$.
\end{definition}

The knowledge interpretation can be extended in a natural way to other general RST
(including \textsf{TAS}) and also to choice inclusive rough semantics \cite{AM99}.
Construction of similar measures is however work in progress. With respect to the counting
procedures defined, these two general measures are relatively constructive provided
granules can be extracted. This is possible in many of the cases and not always. They can
be replaced by a technique of defining atomic sub-measures based on counts and
subsequently combining them. 

Algebraic semantics for these and related measures have been developed in \cite{AM909} by
the present author (after the writing of this paper). The reader is referred to the
same for a fuller treatment of this section. 

\subsection{Rough Inclusion Functions}

Various rough inclusion and membership functions with related concepts of degrees are
known in the literature (see \cite{AG3} and references therein). If these are not
representable in terms of granules through term operations formed from the basic ones
\cite{AM99}, then they are not truly functions/degrees of the rough domain. To emphasize
this aspect, I will refer to such measures as being \emph{non-compliant for the rough
context}. I seek to replace such non-compliant measures by tuples satisfying the criteria.
Based on this heuristic, I would replace the rough inclusion function 
\[k(X, Y)=
\left\{
\begin{array}{ll}
\dfrac{\#(X\cap Y)}{\#(X)},  & \mathrm{if}\,\, X\neq \emptyset, \\
1, & \mathrm{else,}
\end{array}
\right.\] with 

\[k^{*}(X, Y)=
\left\{
\begin{array}{ll}
\left(y_{1},y_{2}, \ldots, y_{r}\right)   & \mathrm{if}\,\, X^{l}\neq \emptyset, \\
\left(\dfrac{1}{r},\ldots,\dfrac{1}{r}\right), & \mathrm{else}
\end{array}
\right.\]

where \[\dfrac{\#(G_{i})\cdot\chi_{i}(X\cap Y)}{\#(X^{l}) = y_{i}},\;\;i=1, 2, \ldots r.\]

Here it is assumed that $\{G_{1},\ldots , G_{r}\} = \mathcal{G}$ (the collection of
granules) and that the function $\chi_{i}$ is being defined via,
\[\chi_{i}(X)=
\left\{
\begin{array}{ll}
1, & \mathrm{if} G_{i}\subseteq X, \\
0, & \mathrm{else,}
\end{array}\right.\] 

Similarly,
\[k_{1}(X, Y)=
\left\{
\begin{array}{ll}
\dfrac{\#(Y)}{\#(X\cup Y)},  & \mathrm{if}\,\, X\cup Y \neq \emptyset, \\
1, & \mathrm{else,}
\end{array}\right.\]

can be replaced by 

\[k_{1}^{*}(X, Y)=
\left\{
\begin{array}{ll}
\left( h_1, h_{2}, \ldots,h_r\right)   & \mathrm{if}\, X^{l}\neq \emptyset,\\
\left(\frac{1}{r},\ldots,\frac{1}{r}\right), & \mathrm{else}
\end{array}\right.\]

where \[\dfrac{\#(G_{i})\cdot \chi_{i}(Y)}{\#((X\cup Y)^{l})} = h_{i}\,; i=1,2, ...,r,\]

and

\[k_{2}(X, Y)= \dfrac{\#(X^{c}\cup Y)}{\#(S)},\]

can be replaced by 

\[k_{2}^{*}(X, Y)= \left(q_{1}, q_{2},\ldots, q_{r} \right),\]
 
where \[\dfrac{\#(G_{1})\cdot\chi_{1}(X^{c} \cup Y)}{\#(S)} = q_{i},\;\;i=1,2,\ldots,r.\] 

This strategy can be extended to every other non-compliant inclusion function. Addition,
multiplication and their partial inverses for natural numbers can be properly generalised
to the new types of numbers with special regard to meaning of the operations on elements
of dissimilar type. This paves the way for the representation of these general measures in
the new number systems (given the granulation).

\section{On Representation of Counts}

In this section, ways of extending the representation of the different types of counts
considered in the previous section are touched upon. The basic aim is to endow these with
more algebraic structure through higher order constructions. The generalised counts
introduced have been given a representation that depend on the order of arrangement of
relatively discernible and indiscernible things from a Meta-C perspective. A
representation that accommodates all possible arrangements is of natural interest from
both concrete and abstract perspectives. The global structure that corresponds to $Z$ or
$N$ is not this and may be said to correspond to rather superfluous implicit
interpretations of counting exact objects by natural numbers. From a different perspective
a distinction is made between the names of objects and the 'generalised numbers'
associated in all this.

Consider the following set of statements:
\begin{description}
\item[A] {Set $S$ has finite cardinality $n$. }
\item[B] {Set $S$ can be counted in $n!$ ways.}
\item[C] {The set of counts $\mathcal{C}(S)$ of the set $S$ has cardinality $n!$.}
\item[E] {The set of counts $\mathcal{C}(S)$ of the set $S$ bijectively corresponds to the
permutation group $S_{n}$.}
\item[F] {The set of counts $\mathcal{C}(S)$ can be endowed with a group operation so that
it becomes isomorphic to $S_{n}$.}
\end{description}

Statements C, E and F are rarely explicitly stated or used in mathematical practice in the
stated form. To prove 'F', it suffices to fix an initial count. From the point of view of
information content it is obvious that $A\subset B\subset C \subset E \subset F$. It is
also possible to replace 'set' in the above statements by 'collection' and then from the
axiomatic point of statements C, E, and F will require stronger axioms (This aspect will
not be taken up in the present paper). I will show that partial algebraic variants of
statement F correspond to IPC. 

An important aspect that will not carry over under the generalisation is:
\begin{proposition}
Case F is fully determined by any of the two element generating sets (for finite $n$). 
\end{proposition}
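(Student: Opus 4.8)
The plan is to reduce the proposition to the classical fact that the symmetric group $S_{n}$ is generated by two elements, and then to transport that fact back to $\mathcal{C}(S)$ along the identification already used to prove statement F. First I would fix an initial count $c_{0}\in\mathcal{C}(S)$; as noted just after the list of statements, fixing such a count induces a bijection $\Phi:\mathcal{C}(S)\to S_{n}$, and by transport of structure it turns the operation promised by F into an honest group isomorphism. Under $\Phi$ the two most ``atomic'' re-countings---transposing the two items occupying a fixed adjacent pair of positions, and performing a single cyclic shift of all positions---correspond respectively to the adjacent transposition $\tau=(1\;2)$ and the long cycle $\rho=(1\;2\;\cdots\;n)$.

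The key step is the purely group-theoretic lemma $\langle\tau,\rho\rangle=S_{n}$. I would prove it in the standard three moves: (i) every element of $S_{n}$ is a product of transpositions; (ii) every transposition is a product of adjacent transpositions $(i\;\;i{+}1)$; and (iii) each adjacent transposition is a conjugate $\rho^{k}\,\tau\,\rho^{-k}=(1{+}k\;\;2{+}k)$ with indices read cyclically, so all of them---hence all of $S_{n}$---lie in $\langle\tau,\rho\rangle$. Transporting back, the two counts $\Phi^{-1}(\tau)$ and $\Phi^{-1}(\rho)$ generate $(\mathcal{C}(S),\cdot)$, and every count is the value of an explicit word in these two generators; since the operation of F is exactly the transported multiplication, the whole of F---the set of counts together with its operation---is reconstructed from this $2$-element set. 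For an arbitrary $2$-element \emph{generating} set $\{a,b\}$ the same conclusion is then immediate, because by definition the carrier of F is the closure of $\{a,b\}$ under the operation and its inverse; one only needs the existence of such pairs, which is precisely the lemma above (for $n\le 2$ the group is cyclic and the statement is trivial, with the caveat that there a single element already suffices).

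I do not expect a genuine obstacle here, since everything rests on a textbook generation result; the only subtlety worth handling carefully is the reading of ``any of the two element generating sets.'' It must mean any \emph{generating} pair: an arbitrary pair of counts need not generate, and $S_{n}$ is far from having all of its $2$-element subsets as generating sets, so the substantive content of the proposition is the combination of (a) existence of such pairs and (b) full recoverability of F from any one of them. A secondary point I would make explicit is the contrast flagged in the surrounding text---the argument uses essentially that counting \emph{exact} objects realises the full group $S_{n}$, whereas the partial-algebraic variant attached to IPC in the next section is not $2$-generated, so this ``determined by two generators'' phenomenon is exactly what fails to survive the generalisation.
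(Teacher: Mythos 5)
Your proof is correct: the paper states this proposition without proof, treating it as the classical fact that $S_{n}$ is generated by a transposition together with an $n$-cycle, transported to $\mathcal{C}(S)$ via the fixed initial count used to prove F — which is exactly the argument you spell out (adjacent transpositions via conjugation by the long cycle). Your reading of ``any of the two element generating sets'' as ``any generating pair'' (with the trivial $n\le 2$ caveat) is the intended one, so nothing is missing.
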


\subsection{Representation Of IPC}

The group $S_{n}$ can be associated with all the usual counting of a collection of $n$
elements. The composition operation can be understood as the action of one counting on
another. This group can be associated with the RYS being counted and 'similarly counted
pairs' in the latter can be used to generate a partial algebra. Formally, the structure
will be deduced using knowledge of $S_{n}$ and then abstracted:  

\begin{itemize}\renewcommand{\labelitemi}{$\bullet$}
\item {Form the group $S_{n}$ with operation $\ast$ based on the interpretation of the
collection as a finite set of $n$ elements at a higher meta level,}
\item {Using the information about similar pairs, associate a second interpretation $s(x)$
based on the IPC procedure with each element $x\in S_{n}$,}
\item {Define $(x,y)\in \rho$ if and only if $s(x) = s(y)$.}
\end{itemize}

On the quotient $S_{n}|\rho$, let \[a\circ b\,=\,\left\lbrace  \begin{array}{ll}
 c & \mathrm{if}\,\;\{z:\,x\ast y=z,\, x\in a,\,y\in b \}|\rho \in S_{n}|\rho,\\
 \mathrm{\infty} & \mathrm{else}. \\
 \end{array} \right.\]

The following proposition follows from the form of the definition. 

\begin{proposition}
The partial operation $\circ$ is well defined. 
\end{proposition}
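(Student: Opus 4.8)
The plan is to show that the value $a\circ b$ does not depend on the representatives $x\in a$, $y\in b$ chosen, i.e. that whenever the set $\{z:\,x\ast y=z,\,x\in a,\,y\in b\}$ meets more than one $\rho$-class, the output is $\infty$ (which is unambiguous), and whenever it is contained in a single $\rho$-class, that class is uniquely determined. The key observation is that the definition of $\circ$ is framed directly in terms of the \emph{whole} set $\{x\ast y:\,x\in a,\,y\in b\}$ rather than in terms of a particular pair of representatives: the case split is ``this set, pushed down to $S_{n}|\rho$, is a singleton'' versus ``it is not''. So the only thing to check is that these two alternatives are genuinely exhaustive and mutually exclusive, and that in the first alternative the resulting element $c$ of $S_{n}|\rho$ is that unique singleton.

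First I would unwind the definitions: fix $\rho$-classes $a,b\in S_{n}|\rho$, and set $P(a,b)=\{x\ast y:\,x\in a,\,y\in b\}\subseteq S_{n}$ and $\overline{P}(a,b)=\{[z]_{\rho}:\,z\in P(a,b)\}\subseteq S_{n}|\rho$. Since $a$ and $b$ are nonempty (they are equivalence classes) and $\ast$ is total on $S_{n}$, the set $P(a,b)$ is nonempty, hence $\overline{P}(a,b)$ is a nonempty subset of $S_{n}|\rho$. Exactly one of the following holds: $\overline{P}(a,b)$ is a singleton, or it has at least two elements. In the first case $\circ$ assigns the unique element $c\in\overline{P}(a,b)$; in the second it assigns $\infty$. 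Thus $a\circ b$ is a well-defined element of $(S_{n}|\rho)\cup\{\infty\}$, since the outcome is a function of the pair $(a,b)$ through the set $\overline{P}(a,b)$ alone, and no auxiliary choices enter.

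The only point deserving care — and what I would flag as the ``main obstacle'', though it is mild — is the implicit reading of the bracket $\{z:\,x\ast y=z,\,x\in a,\,y\in b\}|\rho \in S_{n}|\rho$ in the displayed definition. One must read the membership ``$\in S_{n}|\rho$'' as asserting that this quotient-image is (identifiable with) a \emph{single} element of $S_{n}|\rho$, i.e. is a singleton set of $\rho$-classes; under that reading ``$c$'' denotes that single class. I would make this explicit and note that it is the natural reading, because otherwise the quotient-image is always a subset of $S_{n}|\rho$ and the condition would be vacuous. Once this is pinned down, the conclusion is immediate: $\circ:(S_{n}|\rho)^{2}\longrightarrow (S_{n}|\rho)\cup\{\infty\}$ is a well-defined (total, $\infty$-valued where indicated) partial operation, since every step in its specification is determined by $a$ and $b$. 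No genuine calculation is required; the content is entirely in observing that the definition was phrased representative-independently from the outset. I would close by remarking that the associativity and other structural properties of $\circ$, which do require work, are taken up separately.
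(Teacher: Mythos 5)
Your proposal is correct and follows essentially the same route as the paper, which simply observes that the claim ``follows from the form of the definition'': since $a\circ b$ is specified in terms of the whole set $\{x\ast y:\,x\in a,\,y\in b\}$ and its image in $S_{n}|\rho$, no choice of representatives enters. Your explicit unwinding (and the remark on reading the quotient-image as a single $\rho$-class) just makes that observation precise.
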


\begin{definition}
A partial algebra of the form $\left\langle S_{n}|\rho,\,\circ\right\rangle $ defined
above will be called a \emph{Concrete IPC-partial Algebra} (CIPCA) 
\end{definition}

A partially ordered set $\left\langle F, \leq \right\rangle $ is a \emph{lower
semi-linearly ordered set} if and only if:
\begin{itemize}\renewcommand{\labelitemi}{$\bullet$}
\item {For each $x$, $x\downarrow$ is linearly ordered,}
\item {$(\forall x, y)(\exists z) z\leq x \wedge z \leq y$. }
\end{itemize}

It is easy to see that IPC counts form lower semi-linearly ordered sets. Structures of
these types and properties of their automorphisms are all of interest and will be
considered in a separate paper.

\section{Semantics from Dialectical Counting}

An algorithmic method for deducing the algebraic semantics of classical \textsf{RST} using
the IPC method of counting is demonstrated in this section. Derivation of semantics for
other types of \textsf{RST} will be taken up in a separate paper.

Let $S = \left\langle\underline{S},\,R \right\rangle $ be an approximation space, then
$\left\langle\underline{\wp(S)},\,\cup,\,\cap, ^{l},\,^{u}, ^{c},\,0,\,1 \right\rangle$
can be regarded as a \textsf{RYS}. $0,\,1$, corresponding to the empty set and $S$, can be
regarded as distinguished elements or defined through additional axioms. The parthood
operation corresponds to set inclusion and is definable using $\cap,\,\cup$.
Indiscernibility of any $x,y \in \wp(S)$ will be assumed to be defined by the rough
equality; \[x\approx y \;\mathrm{iff}\; x^l = y^l\,\wedge\,  x^u = y^u.\]

If $\mathcal{I}(\wp(S))$ is the set of all IPC counts of the \textsf{RYS}, then by the
last theorem of section on dialectical counting it is clear that the granules and definite
elements can be identified. It remains to define the other operations on the identified
objects along the lines of \cite{BC1}. The corresponding logics in modal perspective can
be found in \cite{BCB}. So,   

\begin{theorem}
The rough algebra semantics of classical \textsf{RST} can be deduced from any one of the
maximal elements of $\mathcal{I}(\wp(S))$ in the $\preceq$ order.
\end{theorem}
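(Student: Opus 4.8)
The plan is to run the reduction already indicated just above the statement---recover the granules and the definite elements from a $\preceq$-maximal count, then build the operations of the rough algebra \cite{BC1} on them---and to supply the two parts of that plan that genuinely need an argument: the passage from $\preceq$-maximality to the hypothesis of the last theorem of the section on dialectical counting, and the check that the reconstructed structure really is the rough algebra semantics of $\langle\underline{S},R\rangle$. For the first part I would show that the $\preceq$-maximal elements of $\mathcal{I}(\wp(S))$ are exactly the IPC counts coming from enumerations of the underlying power set in which each $\approx$-class (each class of the rough equality on $\wp(S)$) is listed contiguously. In such an enumeration a class of cardinality $k$ is recorded either as a run of $k$ symbols $1_{i}$, when it is the first class to appear, or as a symbol $m_{j}$ with $m\geq 2$ followed by $k-1$ symbols $1_{i}$; interleaving two classes can only shorten runs of $1_{i}$ and cut the number of runs of length $>1$, so a contiguous enumeration simultaneously maximizes both statistics that define $\preceq$, and nothing interleaved is $\preceq$-maximal. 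Hence a $\preceq$-maximal count carries the maximal number of $1_{i}$ in succession together with the canonical distribution of successive-$1_{i}$ blocks, and the last theorem of the section on dialectical counting applies to it verbatim: from it one reads off all granules---the $\approx$-classes---and, since $\wp(S)$ is the power set of an approximation space, all the definite (crisp) elements.

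Write $D$ for the set of definite elements so recovered. Since $D$ is closed under $\cup,\cap,{}^{c}$ it carries a Boolean algebra structure whose atoms are the $R$-classes of $\underline{S}$, and these data are all available once $D$ is identified inside the given \textsf{RYS} $\langle\underline{\wp(S)},\cup,\cap,{}^{l},{}^{u},{}^{c},0,1\rangle$. To each $\approx$-class $c$ I would attach the pair $(\bigcap c,\bigcup c)$; a short computation---using that a one-element $R$-class can never sit in a boundary---shows that this is a pair of elements of $D$ with $\bigcap c\subseteq\bigcup c$, that it determines $c$ uniquely, and that it is exactly the lower/upper approximation pair of every member of $c$. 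On the set of pairs thus obtained I would then place the operations of \cite{BC1}: $\sqcup$ and $\sqcap$ acting coordinatewise by the join and meet of $D$, the quasicomplement $\neg(a,b)=(b^{c},a^{c})$, the constants $(0,0)$ and $(1,1)$, and the unary operator $L$ given by $(a,b)\mapsto(a,a)$ together with its dual $(a,b)\mapsto(b,b)$ carrying the modal content (cf.\ \cite{BCB}). Every one of these is a term in the Boolean operations of $D$, so the whole structure is literally a function of the chosen $\preceq$-maximal count. It then remains to verify that $A\mapsto(A^{l},A^{u})$ is a well-defined surjection onto this set of pairs with fibres exactly the $\approx$-classes, and that it intertwines $\cup$ with $\sqcup$, $\cap$ with $\sqcap$, ${}^{c}$ with $\neg$, and ${}^{l},{}^{u}$ with the two unary maps; this exhibits the reconstructed algebra as isomorphic to the rough algebra of $\langle\underline{S},R\rangle$.

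The part I expect to be the real obstacle is the combinatorial claim in the first paragraph: that $\preceq$-maximality forces contiguity of the $\approx$-classes, hence that \emph{every} $\preceq$-maximal count, not merely some distinguished one, meets the hypotheses of the invoked theorem. Because $\preceq$ compares two distinct run-statistics and is only a partial order, its maximal elements need not be unique, and one must rule out ``interleaved'' counts that could be $\preceq$-incomparable to the block counts while still maximal. Everything downstream---the Boolean reconstruction of $D$, the pair-representation of the $\approx$-classes via $c\mapsto(\bigcap c,\bigcup c)$, and the identification of the reconstructed operations with those of \cite{BC1}---is bookkeeping against the standard presentation of the rough algebra, and presents no difficulty once the representation of the previous paragraph is in place.
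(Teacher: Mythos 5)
Your outline follows the paper's own proof, which is deliberately terse: it consists of invoking the last theorem of the section on dialectical counting (a $\preceq$-maximal IPC count determines all granules and, since the \textsf{RYS} is $\wp(S)$, all definite elements) and then the single remark that ``it remains to define the other operations on the identified objects along the lines of \cite{BC1}''. Where you try to go beyond that sketch, two of your added steps do not hold up. First, the combinatorial claim that a contiguous (blockwise) enumeration ``simultaneously maximizes both statistics that define $\preceq$'' is false: the two statistics can trade off against each other. With one $\approx$-class of size $4$ and one of size $2$, listing the large class first yields a leftmost run of four $1_i$'s but only one run of length $>1$, while listing the small class first yields two runs of length $>1$ but a leftmost run of length two; no enumeration maximizes both, and under the conjunctive reading of $\preceq$ the maximal counts form an antichain which your argument does not show to consist only of contiguous enumerations (a count listing half of the large class, then the small class, then the rest is incomparable to both block counts under a strict reading, yet granule recovery from it fails). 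So the obstacle you flag in your last paragraph is genuine and your first paragraph does not remove it. To be fair, the paper does not remove it either: its proof simply cites the earlier theorem, whose own justification is ``the maximal elements in this order suffice'', so this difficulty sits in the cited result rather than in anything the paper's proof of the present statement supplies.

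Second, the concluding verification you dismiss as bookkeeping would fail as stated: the map $A\mapsto(A^{l},A^{u})$ does not intertwine $\cup$ with the coordinatewise join, nor $\cap$ with the coordinatewise meet. If $\{x,y\}$ is a single $R$-class, then $\{x\}^{l}=\{y\}^{l}=\emptyset$ while $(\{x\}\cup\{y\})^{l}=\{x,y\}$, so the image of a union is not the join of the images (dually for intersection and the upper coordinate). In \cite{BC1} the rough-algebra operations are defined directly on the pairs (equivalently on $\wp(S)/\approx$), and no homomorphism from the power-set Boolean algebra with respect to these operations is claimed or needed. What the theorem actually requires is only that the $\approx$-classes, the Boolean algebra of definite elements, and the pairs $\bigl(\bigcap c,\bigcup c\bigr)=(A^{l},A^{u})$ be recoverable from a chosen maximal count, after which the \cite{BC1} operations are imposed by definition; your pair construction (which is correct, including the observation that boundary classes have at least two elements) already accomplishes this, so the intertwining requirement should simply be dropped rather than ``verified''.
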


\subsection{Rough Entanglement: Granular HPC}

By the term \emph{Entangled}, I mean to grasp the sharp increase in semantic content that
happens when counting processes are made to take aspects of the distribution of granules
into account. For the following version of counting, it is assumed that the set $S$ is
associated with a \textsf{RYS} $\mathcal{S}$, generated by it according to some process
and that a
collection of granules $\mathcal{G}$ is included in the \textsf{RYS} $\mathcal{S}$. It is
also
assumed that a total order on $S$ is available relative the lower level classical semantic
domain. These assumptions can be relaxed. The following definitions will also be used:
\begin{itemize}\renewcommand{\labelitemi}{$\bullet$}
\item {For $a, b\in S$ and $G\in \mathcal{G}$, $ind_{G} (a, b)$ if and only if $a,\,b \in
G$.}
\item {For $a, b\in S$ $disc(a,\,b)$ if and only if $(\exists G_{1}, G_{2}\in
\mathcal{G})$ $a\in G_{1}, b\in G_{2}, a\notin G_{2}, b\notin G_{1}$ (or the pair
guarantees $disc(a,\, b)$).}
\item {For $a, b\in S$ $pdisc(a,\,b)$ if and only if $(\exists G_{1},G_{2}\in
\mathcal{G})$ $a, b \in G_{1}, b\in G_{2},  a\notin G_{2}$.}
\end{itemize}

The granular part of the granular extension of HPC can be done as follows:
\begin{enumerate}
\item {Assume a total order on $\mathcal{G}$ and form $\mathcal{G}^{2}\,\setminus\,
\Delta$,}
\item {Form words with elements (alphabets) from $\mathcal{G}^{2}\,\setminus\, \Delta$ and
order them lexicographically,}
\item {A word that includes all instances of pairs that guarantee $disc(a,b)$ will be said
to be \emph{complete} for $(a, b)$, }
\item {A word will be said to be \emph{reduced} with respect to a pair $(a, b)$ if and
only if no letter $x$ in the word guarantees  $pdisc(a, b)$.}
\item {The existence of unique complete least reduced words (least with respect to the
lexicographic order, provided $\mathcal{G}$ is finite) for pairs of elements in $S$
permits simple valuation as a tuple and adds another dimension to HPC.}
\item {It suffices to adjoin the tuple corresponding to the encoded word between the
element and its predecessor in the counting procedure of HPC.}
\end{enumerate}

\begin{theorem}
Any two distinct elements $a,\,b \in S$ generate a least unique complete reduced word from
$\mathcal{G}$.   
\end{theorem}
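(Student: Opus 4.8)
The plan is to translate the statement into a combinatorial fact about two finite sets of ``letters'' that the pair $(a,b)$ picks out of the alphabet $\mathcal{G}^{2}\setminus\Delta$, and to isolate the single disjointness property that makes completeness and reducedness simultaneously achievable.

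First I would fix distinct $a,b\in S$ and set
\[
D(a,b)=\{(G_{1},G_{2})\in\mathcal{G}^{2}\setminus\Delta:\ a\in G_{1},\,b\in G_{2},\,a\notin G_{2},\,b\notin G_{1}\},
\]
\[
P(a,b)=\{(G_{1},G_{2})\in\mathcal{G}^{2}\setminus\Delta:\ a,b\in G_{1},\,b\in G_{2},\,a\notin G_{2}\},
\]
the letters that guarantee $disc(a,b)$ and $pdisc(a,b)$ respectively. Since $\mathcal{G}$ is finite, $\mathcal{G}^{2}\setminus\Delta$ is a finite totally ordered alphabet, so the length-then-lexicographic order on words over it is a well-order; this is exactly where the hypothesis ``$\mathcal{G}$ finite'' enters. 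In these terms a word is \emph{complete} for $(a,b)$ precisely when its letter-set includes $D(a,b)$, and \emph{reduced} for $(a,b)$ precisely when its letter-set is disjoint from $P(a,b)$.

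The crux is then the elementary observation that $D(a,b)\cap P(a,b)=\emptyset$: a letter $(G_{1},G_{2})$ lying in $D(a,b)$ satisfies $b\notin G_{1}$, whereas one lying in $P(a,b)$ satisfies $b\in G_{1}$, so no single letter can guarantee both $disc(a,b)$ and $pdisc(a,b)$. Granting this, I would finish as follows. Let $k=|D(a,b)|$ and let $w(a,b)=d_{1}d_{2}\cdots d_{k}$ be the increasing enumeration of $D(a,b)$ (the empty word if $k=0$, the evident one-letter word if $k=1$). Then $w(a,b)$ is complete by construction, and reduced because its letters all lie in $D(a,b)$, which is disjoint from $P(a,b)$. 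Any complete word must contain the $k$ distinct letters of $D(a,b)$, hence has length $\ge k$; a complete word of length exactly $k$ is a permutation of $D(a,b)$, and every such permutation is automatically reduced; among these finitely many permutations the increasing one $w(a,b)$ is the strictly least in lexicographic order (any adjacent out-of-order pair may be transposed to obtain a lex-smaller word, so the sorted word is the unique minimiser). Since no complete word is shorter than $k$, $w(a,b)$ is the unique least complete reduced word for $(a,b)$, which is the assertion.

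I expect the only real work to be definitional bookkeeping rather than a hard argument: pinning down exactly which words count as admissible — the alphabet, whether repeated letters are allowed, and which convention is meant by ``lexicographic order'' — so that a least complete reduced word genuinely exists and is unique. Under a naive ``all words over the full alphabet, dictionary order'' reading one obtains infinite descending chains and no minimum, so the statement must be read either with a length-sensitive order or with words confined to the letters relevant to $(a,b)$; the argument above is to be carried out in whichever of these conventions the paper intends. Once the conventions are fixed, the entire mathematical content is the disjointness $D(a,b)\cap P(a,b)=\emptyset$ together with the standard fact that sorting is the unique lexicographic minimiser over the arrangements of a finite set.
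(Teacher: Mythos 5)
Your proof is correct, and it takes a more explicit route than the paper's own argument, which is only a two-line sketch: form the set of complete words, extract the reduced ones, order them lexicographically to obtain the least element, and check uniqueness by a contradiction argument (with a remark that transfinite induction would be needed without finiteness of $\mathcal{G}$). You instead construct the minimiser outright as the sorted enumeration of $D(a,b)$, and you isolate the one substantive fact that the paper's sketch silently relies on, namely $D(a,b)\cap P(a,b)=\emptyset$ (a $disc$-witness has $b\notin G_{1}$ while a $pdisc$-witness has $b\in G_{1}$), which is precisely what guarantees that the set of complete reduced words is non-empty in the first place; the paper's ``extract the reduced ones'' step presupposes this without comment. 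Your caveat about the order is also well taken: under plain dictionary order on arbitrary words with repeated letters the complete reduced words admit infinite descending chains (e.g. $d_{1}d_{2} > d_{1}d_{1}d_{2} > d_{1}d_{1}d_{1}d_{2} > \cdots$) and have no minimum, so the statement needs either a length-sensitive (shortlex) order or words without repeated letters; the paper never addresses this, and your convention repairs it while preserving the intended content. The only point where you do less than the statement might be read to demand is the infinite-$\mathcal{G}$ case, which the paper itself dispatches with a one-phrase appeal to transfinite induction; for finite $\mathcal{G}$, the case the paper actually argues, your proof is complete and additionally buys an explicit description of the least word, which the paper's existence-by-extraction argument does not provide.
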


\begin{proof}
I will need to use transfinite induction in the absence of the finiteness assumption.
Otherwise, forming the set of complete words, and then extracting the reduced words and
finally ordering the rest by the lexicographic graphic will yield the least element. A
contradiction argument can be used to check the uniqueness.
\end{proof}

\section{Connections Between Fuzzy and Rough Sets}

In this section, I will use granularity-related features to establish a new transformation
between fuzzy sets and classical rough sets. The transformation can be the basis for
translations or transformations between different types of logics associated with the
corresponding semantics. In the literature of the different attempts at connections
between fuzzy and rough sets, the non granular approach via the Brouwer Zadeh MV algebras
in \cite{CC} is somewhat related. The exact connections with the present derivations,
however, requires further investigation especially when rough membership functions in the
classical sense are not permitted into the discourse. The relation of this result to the
connections based on membership functions as developed in \cite{YYFR} and earlier papers
are mentioned at the end of this section. The result is very important in the light of the
contamination problem and the generalised number systems of \cite{AM1005}. It shows that
both concepts of rough measures and 'fuzzy sets as maps' are not needed to speak of rough
sets, fuzzy sets and possible connections between them. 

A non-controversial definition of fuzzy sets, with the purpose of removing the problems
with the 'membership function formalism', was proposed in \cite{RV}. I show that the
definition can be used to establish interesting links between fuzzy set theory and rough
sets. The connection is essentially of a mathematical nature. In my view, the results
should be read as \emph{in a certain perspective, the granularity of particular rough
contexts originate from fuzzy contexts and vice versa}. The existence of any such
perspective and its possible simplicity provides another classification of general rough
set theories.

\begin{definition}
A \emph{fuzzy subset} (or \emph{fuzzy set}) $ \mathbb{A}$ of a set $S$ is a collection of
subsets $\{A_{i}\}_{i\in [0,1]}$ satisfying the following conditions:
\begin{itemize}\renewcommand{\labelitemi}{$\bullet$}
\item {$A_{0} = S$,}  
\item {$(0\leq a\, <\, b\leq 1 \rightarrow A_{b}\subseteq A_{a})$,}
\item {$A_{b}\,=\,\bigcap_{0\leq a < b} A_{a}$.}
\end{itemize}
\end{definition}

A fuzzy membership function $\mu_{\mathbb{A}}: X\mapsto [0,1]$ is defined via
$\mu_{\mathbb{A}}(x) = \mathrm{Sup} \{a:\,a \in [0,1],\, x\in A_{a}\}$ for each $x$. The
\emph{core} of $\mathbb{A}$ is defined by $\mathrm{Core}(A) = \{x\in S: \,
\mu_{\mathbb{A}}(x)= 1\}$. $\mathbb{A}$ is \emph{normalized} if and only if it has
non-empty core. The \emph{support} of $\mathbb{A}$ is defined as the closure of $\{x \in
S;\,\mu_{\mathbb{A}}(x) > 0\}$. The \emph{height} of $\mathbb{A}$ is $H(\mathbb{A})=
\mathrm{Sup}\{\mu_{\mathbb{A}}(x);\,x\in S\}$.
The \emph{upper level set} is defined via $U(\mu, a)=\{x \in S:\, \mu_{\mathbb{A}}(x)\geq
a\}$. The class of all fuzzy subsets of $S$ will be denoted by $\mathcal{F}(S)$. The
standard practice is to refer to 'fuzzy subsets of a set' as simply a 'fuzzy set'.

\begin{proposition}
Every fuzzy subset $\mathbb{A}$ of a set $S$ is a granulation for $S$ which is a
descending chain with respect to inclusion and with its first element being $S$.
\end{proposition}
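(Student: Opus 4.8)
The plan is to verify directly that each of the three features named in the statement --- being a granulation of $S$, being a descending chain under inclusion, and having first element $S$ --- is an immediate consequence of one of the three conditions in the definition of a fuzzy subset. No auxiliary construction is needed.

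First I would unwind the word \emph{granulation}: following the subsection on granules and concepts, a granulation of $S$ is a family of subsets of $S$ (the granules) that together contain every object of $S$, i.e.\ a cover of $S$. For $\mathbb{A} = \{A_{i}\}_{i\in[0,1]}$ this is trivial, since the first defining condition gives $A_{0} = S$; hence $S = A_{0} \subseteq \bigcup_{i\in[0,1]} A_{i} \subseteq S$, so the union is all of $S$ and every $x\in S$ belongs to the granule $A_{0}$. The same observation disposes of the last clause: $A_{0} = S$ is the first (greatest) member of the family.

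Next I would invoke the monotonicity condition $0\le a < b\le 1 \Rightarrow A_{b}\subseteq A_{a}$. Because $[0,1]$ is linearly ordered, any two distinct indices are comparable, and hence so are the corresponding sets; thus $\mathbb{A}$ is a chain with respect to $\subseteq$, and it is \emph{descending} precisely in that a larger index produces a smaller set, with $A_{0} = S$ at the top. I would note in passing that the third condition of the definition, $A_{b} = \bigcap_{0\le a < b} A_{a}$, is a left-continuity requirement on this chain and plays no role in the present claim; it can be set aside.

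I do not expect a genuine obstacle here; the statement is essentially a repackaging of the definition. The only place that calls for a line of care is reconciling the informal reading of ``granulation'' as a cover with the indexed family $\{A_{i}\}$, together with fixing the convention that the ``first element'' of a descending chain is its greatest element, namely $S$. If one additionally wishes to record that these granules are \emph{a priori} in the sense discussed earlier, that is a matter of interpretation rather than of proof.
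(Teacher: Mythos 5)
Your proof is correct and follows the same route the paper implicitly takes: the paper states this proposition without proof precisely because it is an immediate unpacking of the definition (granulation as a family of granules covering $S$ via $A_{0}=S$, chain structure from the monotonicity condition, first element $S$ from $A_{0}=S$). Your observation that the intersection (left-continuity) condition is not needed is also accurate.
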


The cardinality of the indexing set and the second condition in the definition of fuzzy
sets is not a problem for use as granulations in \textsf{RST}, but almost all types of
upper approximations of any set will end up as $S$. From the results proved in the
previous sections it should also be clear that many of the nice properties of granulations
will not be satisfied modulo any kind of approximations. I will show that simple set
theoretic transformations can result in better granulations. Granulations of the type
described in the proposition will be called \emph{phi-granulations}. 

\begin{flushleft}
\textbf{Construction-1}: 
\end{flushleft}
\begin{enumerate}
\item {Let $P=\{0, p_{1}, \ldots p_{n-1}, 1\}$ be a finite set of rationals in the
interval [0,1] in increasing order.} 
\item {From $\mathbb{A}$ extract the collection $\mathbb{B}$ corresponding to the index
$P$.}
\item {Let $B_{0}\setminus B_{p_1} = C_{1}$, $B_{p_1}\setminus B_{p_2} = C_{2}$ and so on.
}
\item {Let $\mathcal{C} = \{C_{1}, C_{2}, \ldots,\, C_{n}\}$.}
\item {This construction can be extended to countable and uncountably infinite $P$ in a
natural way.}
\end{enumerate}

\begin{theorem}
The collection $\mathcal{C}$ formed in the fourth step of the above construction is a partition of $S$. The reverse transform is possible, provided $P$ has been selected in an appropriate way. 
\end{theorem}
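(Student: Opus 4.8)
The plan is to read Construction-1 as the operation of sampling the phi-granulation $\mathbb{A}$ at the index set $P$ and passing to consecutive set-differences, and then to exhibit an explicit inverse on the class of fuzzy sets for which this loses no information. Write $p_{0}=0$, $p_{n}=1$ and $B_{p_{i}}=A_{p_{i}}$, so $C_{i}=B_{p_{i-1}}\setminus B_{p_{i}}$ for $i=1,\dots,n$. The monotonicity clause in the definition of a fuzzy subset immediately yields the finite descending chain $S=B_{p_{0}}\supseteq B_{p_{1}}\supseteq\cdots\supseteq B_{p_{n}}=B_{1}=\mathrm{Core}(\mathbb{A})$ (the last equality by the left-limit clause), and the first assertion then reduces to the usual telescoping identity for a nested chain.

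First I would check pairwise disjointness: for $i<j$ one has $C_{i}\subseteq B_{p_{i}}^{c}$, while $C_{j}\subseteq B_{p_{j-1}}\subseteq B_{p_{i}}$ because $j-1\geq i$, hence $C_{i}\cap C_{j}=\emptyset$. For the covering, given $x\in B_{p_{0}}\setminus B_{p_{n}}$ let $i$ be least with $x\notin B_{p_{i}}$; then $x\in B_{p_{i-1}}$ and so $x\in C_{i}$, giving $\bigcup_{i=1}^{n}C_{i}=S\setminus\mathrm{Core}(\mathbb{A})$. Thus $\mathcal{C}$ partitions $S$ outright when $\mathbb{A}$ is not normalized, and in the normalized case one adjoins the core as the terminal block (equivalently truncates $\mathbb{A}$, which does not change the granulation); I would state the theorem with this proviso, together with the convention that any empty differences are dropped from $\mathcal{C}$. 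I expect this bookkeeping about the top block and empty pieces to be the only place a careful reader pauses; disjointness and covering are routine.

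For the converse I would argue constructively. Call $P$ \emph{appropriate} for an ordered list $C_{1},\dots,C_{n}$ if it is a strictly increasing tuple $0=p_{0}<p_{1}<\cdots<p_{n}=1$ in $[0,1]$ — it must carry the two forced endpoints and exactly one separating value per adjacency; for a faithful round trip one moreover wants the separating values placed precisely at the jumps, so that no piece is empty. Given an appropriate $P$ and a partition $\mathcal{C}$, put $B_{p_{i}}=\bigcup_{j=i+1}^{n}C_{j}$; then $B_{p_{0}}=S$, $B_{p_{n}}=\emptyset$, the chain descends, and by disjointness $B_{p_{i-1}}\setminus B_{p_{i}}=C_{i}$, so Construction-1 applied to $\{B_{p_{i}}\}$ returns $\mathcal{C}$. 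To promote $\{B_{p_{i}}\}$ to a genuine fuzzy set I would set $A_{0}=S$ and $A_{a}=B_{p_{i}}$ for $a\in(p_{i-1},p_{i}]$, then verify the three clauses: $A_{0}=S$ is immediate; monotonicity is $B_{p_{i}}\supseteq B_{p_{j}}$ for $i\leq j$; and the left-limit clause $A_{b}=\bigcap_{a<b}A_{a}$ holds since for $b\in(p_{i-1},p_{i}]$ every $a<b$ has $A_{a}\supseteq B_{p_{i}}$, with equality already on the nonempty interval $(p_{i-1},b)$.

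The one genuinely non-routine point — and what I would flag as the crux — is the correct reading of ``possible provided $P$ has been selected in an appropriate way''. The forward map discards the values of $\mathbb{A}$ off $P$, so no choice of $P$ reconstructs an arbitrary $\mathbb{A}$ from $\mathcal{C}$ alone; the honest statement is that Construction-1 restricts to a bijection between $P$-step phi-granulations (those constant on each $(p_{i-1},p_{i}]$, jumping exactly at $P$) and ordered set-partitions of $S$ indexed by the adjacencies of $P$, with inverse the construction above. A \emph{prescribed} $\mathbb{A}$ is therefore recovered by the round trip exactly when it is such a step function adapted to $P$, i.e. when $P$ contains each value at which $\mathbb{A}$ jumps; and because the left-limit clause already lets one recover each $A_{b}$ from the $A_{a}$ with $a<b$, enlarging $P$ — to $\mathbb{Q}\cap[0,1]$ or to all of $[0,1]$, the extension noted in Construction-1 — accommodates phi-granulations with correspondingly many jump values. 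I would close by recording that this bijection preserves the block order, so the equivalence relation on $S$ carried by $\mathcal{C}$, hence the classical approximation space, is canonically attached to the $P$-adapted fuzzy set — which is precisely the fuzzy-to-rough passage the section is after.
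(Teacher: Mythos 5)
The paper states this theorem without any proof, so there is nothing to compare against line by line; your argument supplies the missing verification and it is sound. The chain $S=B_{p_0}\supseteq B_{p_1}\supseteq\cdots\supseteq B_{p_n}$, the disjointness argument ($C_i\subseteq B_{p_i}^{c}$ while $C_j\subseteq B_{p_{j-1}}\subseteq B_{p_i}$ for $i<j$), the least-index covering argument, and the reverse construction $B_{p_i}=\bigcup_{j>i}C_j$ extended to a fuzzy set by making it constant on each $(p_{i-1},p_i]$ (with the left-limit clause checked on the nonempty interval $(p_{i-1},b)$) are all correct and are exactly the computations the paper tacitly relies on. Your main added value is the observation the paper glosses over: as literally defined, $\bigcup_i C_i=S\setminus A_1$, so $\mathcal{C}$ partitions $S$ only when the core is empty (equivalently the fuzzy set is not normalized), and otherwise the core must be adjoined as a terminal block and empty differences discarded; likewise your reading of ``appropriate $P$'' as giving a bijection between $P$-adapted step phi-granulations and ordered partitions is the honest content of the second sentence, since the forward map forgets the values of $\mathbb{A}$ off $P$ and no choice of $P$ can recover an arbitrary $\mathbb{A}$ from $\mathcal{C}$ alone. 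These qualifications are consistent with the paper's own hedging immediately after the theorem (``provided $P$ has been selected in an appropriate way'', ``restrictions on admissibility of transformation''), so state them explicitly as provisos rather than treating them as defects; with them in place your proof is complete.
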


It has been shown that fuzzy sets can be corresponded to classical rough sets in at least
one way and conversely by way of stipulating granules and selecting a suitable transform.
But a full semantic comprehension of these transforms cannot be done without imposing a
proper set of restrictions on admissibility of transformation and is context dependent.
The developed axiomatic theory makes these connections clearer.  

The result also means that rough membership functions are not necessary to establish a
semantics of fuzzy sets within the rough semantic domain as considered in \cite{YYFR}.
Further as noted in \cite{YYFR}, the semantics of fuzzy sets within rough sets is quite
restricted and form a special class. The core and support of a fuzzy set is realized as
lower and upper approximations. Here this need not happen, but it has been shown that any
fuzzy set defined as in the above is essentially equivalent to a granulation that can be
transformed into different granulations for \textsf{RST}. A more detailed analysis of the
connections will appear separately. 

\section{Operations on Low-Level Rough Naturals}

The operations that should be defined are those that have meaning and correspond to
semantic actions. An important aspect of the meaning relates to strings associated with
counts. In case of natural numbers, the number $2$ may be associated with two distinct
objects, but an IPC count will have a Meta-C sequence of objects which may be Meta-R
distinct or indiscernible from their predecessors and different kinds of operations can be
performed on these. Addition for example needs to be defined over strings and then
corresponded to new IPC counts.   

As mentioned earlier, two different types of structures relative the classical meta level
can be associated with possible concepts of 'rough naturals'. With respect to IPC for
example, the raw IPC count may be seen as a rough natural or the set of IPC counts of a
collection of things may be seen as a rough natural. I will refer to the former as the
\emph{low-level} concept of rough natural and the latter as a \emph{high-level} concept.
Operations on the former can be expected to influence the structure of the latter. In this
section operations on collections of low-level rough naturals will be considered. 

of representing other possible operations in a easy way):

If $x$ is an IPC count, then the string of relatively discernible and indiscernible
objects (abstract) associated with the count will be denoted by $\overline{x}$. When I
speak of 'the string corresponding to $x$', it can be understood as 
\begin{itemize}\renewcommand{\labelitemi}{$\bullet$}
\item {Any of the strings that have the same kind of distribution of discernible and
indiscernible objects with respect to their immediate predecessors, or}
\item {the abstract classes associated, or}
\item {a representative 'arbitrary' object (I am not going into the philosophical
viewpoints on the issue in this paper).}
\end{itemize}
Obviously this makes the appropriate concept of $\overline{x}$ for a method of counting to
depend on the method. Specifically, the IPC concept of $\overline{x}$ is not suitable for
getting nice structure in HPC contexts. The rough equality (indiscernibility or similarity
or whatever) relation will be denoted by $\approx$. The basic string operations will then
be defined as follows: 

If $I_{k}(x)$ is the Meta-R count of the Meta-C string formed by interchanging
$\overline{x}_{k}$ and $\overline{x}_{k+1}$ in $\overline{x}$.

\[\iota_{k}(x)=
\left\{
\begin{array}{ll}
x,  & \mathrm{if}\,\, (\overline{x}_{k},\overline{x}_{k+1})\in\approx, \\
I_{k}(x)  & \mathrm{otherwise.}
\end{array}
\right.\]

If $y$ is the Meta-R count of the result of the Meta-C removal of $\overline{x}_{k}$ from
$\overline{x}$, ( $x\neq 0$)

\[\varrho_{k}(x)=
\left\{
\begin{array}{ll}
y,  & \mathrm{if}\,\,\neg (\overline{x}_{k},\overline{x}_{k+1}) \in \approx\,
\mathrm{or}\,\neg (\overline{x}_{k-1},\overline{x}_{k}) \in \approx ,\\
x  & \mathrm{otherwise.}
\end{array}
\right.\] 

If $z$ is the IPC count of the result of insertion of the string $\overline{y}$ between
$\overline{x}_{k}$ and $\overline{x}_{k+1}$, then for non-zero $x$, $y$,

\[\eta_{k}(x,\,y)=
\left\{
\begin{array}{ll}
z , & \mathrm{if}\,\,(\overline{x}_{k},\overline{x}_{k+1}) \in \approx ,\\
x & \mathrm{otherwise.}
\end{array}
\right.\]

\begin{definition}
For any of the rough counts $x$, $\nu(x)$ will represent the length  of (cardinal
associated with) $\overline{x}$ in Meta-C.
\end{definition}

Note that $\eta_{k}(x,\,y)$ does not include $\oplus$, but is very similar. In fact the
two operations are not distinct from the point of view of interpretation at Meta-C. 

\begin{definition}
Basic arithmetical operations can be generalised to rough naturals as follows:
\begin{enumerate}\renewcommand\labelenumi{\theenumi}
  \renewcommand{\theenumi}{(\roman{enumi})} 
\item {$x\mathbf{\oplus} y$:  The count (at meta-R) of the 'object level string' (relative
meta-C) defined as the $\overline{x}$ items followed by the $\overline{y}$ items. }
\item {$x \mathbf{\odot} y$:  The count (at meta-R) of the 'object level string' defined
as $\nu (y)$ copies of $\overline{x}$ placed at each of the atomic $\overline{y}$ places.}
\item {$\mu(x, y)$:  Form $\nu (y)$ copies of $x$ at meta-C.}
\item {$x^{c}  $:   Meta-R Count of $\overline{x}$ in reverse order. }
\item {$x \otimes y$: The meta-R count of the string formed by the conditional
substitution of $\overline{x}$ at each of the $\overline{y}$ places subject to the rules: 
\begin{enumerate}
\item {Replace $\overline{y}_{1}$ with $\overline{x}$.}
\item {Replace $\overline{y}_{k+1}$ with $\overline{x}$ if
$(\overline{y}_{k},\overline{y}_{k+1})\notin\approx$.}
\item {If $(\overline{y}_{k},\overline{y}_{k+1})\in\approx$, then $\overline{y}_{k+1}$
should be dropped.}
\end{enumerate} 
}
\item {$x\ominus y$:  The meta-R count of the string formed by the deletion of
$\overline{y}$ from $\overline{x}$ from the right if defined. This definition can always
be improved through the $\varrho_{k}$ operation.}
\end{enumerate}
\end{definition}

Other more interesting definitions of subtraction can be obtained by imposing conditions
relating to discernibility on the \emph{target} $\overline{x}$ and/or \emph{source}
$\overline{y}$. 
\begin{enumerate}\renewcommand\labelenumi{\theenumi}
  \renewcommand{\theenumi}{(\roman{enumi})} 
\item {The condition 'a  sub-string will be removable from the target from the right if
and only if each letter in the sub-string is discernible from its successor or
predecessor' will be corresponded to $x\ominus_{1 \vee 2} y$.}    
\item {The condition 'a  sub-string will be removable from the target from the right if
and only if each letter in the sub-string is discernible from its successor and
predecessor' will be corresponded to $x\ominus_{12} y$.}    
\item {The condition 'a  sub-string will be removable from the target from the right if
and only if each letter in the sub-string is discernible from its successor' will be
corresponded to $x\ominus_{1} y$.}    
\item {The condition 'a  sub-string will be removable from the target from the right if
and only if each letter in the sub-string is discernible from its predecessor' will be
corresponded to $x\ominus_{2} y$.}    
\end{enumerate}

\begin{definition}
For any $x,\,y$, $x\preceq y$ if and only if
$x$ is obtainable from $y$ in Meta-R by a finite number of recursive applications of
$\varrho_{k}$ and $\iota_{k}$ operations for different values of $k$. 
\end{definition}

\begin{definition}
For any $x,\,y$, \[x\leq_{\oplus} y \,\,\mathrm{iff}\,\, (\exists z)\,(x\oplus z = y)\,
\vee\,(z\oplus x = y). \]
\end{definition}

\begin{definition}
For any $x,\,y$, \[x\leq_{\odot} y \,\,\mathrm{iff}\,\, (\exists z)\,(x\odot z = y) .\]
\end{definition}

\begin{definition}
For any $x,\,y$, \[x\leq_{\otimes} y \,\,\mathrm{iff}\,\, (\exists z)\,(x\otimes z = y) .\]
\end{definition}

\begin{definition}
For any $x,\,y$, $x\,\leq_{p}\,y$ in Meta-R if and only if $\zeta(x), \zeta(y)$ and
$\nu(x)\leq \nu(y)$ in Meta-C ($\leq$ being the usual total order on integers).
\end{definition}

\begin{definition}
For any $x,\,y$, $ x\,\trianglelefteq\, y$ if and only if $\nu(x)\leq \nu(y)$ in Meta-C
($\leq$ being the usual total order on integers). 
\end{definition}

\begin{definition}
For any $x,\,y$, $ x\,\sqsubseteq\, y$ in Meta-R if and only if $x$ is obtainable from $y$
in Meta-R by a finite number of recursive applications of $\varrho_{k}$ for different
values of $k$. 
\end{definition}

Using some of the different operations, many new partial algebras modelling the essence of
rough naturals at meta-R and meta-C are defined next. It should be noted that the
operations are well defined because IPC counts have an injective correspondence with
distributions of relatively discernible and indiscernible objects. 

\begin{definition}
By a \emph{Rough IPC-Natural Algebra} (RIPCNA) will be meant a partial algebraic system of
the form \[S\,=\,\left\langle\underline{S},\zeta, \oplus , \odot ,\ominus_{1\vee 2} , 0,
1, (1,2,2,2,0,0) \right\rangle, \]  
with the set $\underline{S}$ being the set of rough naturals formed according to the IPC
schema and with the above defined operations according to the IPC schema. $1$ can be
treated as an abbreviation of $1_{1}$, numbers of the form $1_{1}2_{1}...k_{1}$ can also
be abbreviated by $k$. $0$ will be understood as the IPC count of the empty string.
$\zeta$ is a one place predicate for indicating the usual integers:  \[\zeta (x)
\,\;\mathrm{iff}\,\; (\exists k)\, x = 1_{1}2_{1}...k_{1}.\]
\end{definition}

\begin{theorem}
In a RIPCNA of the form $S\,=\,\left\langle\underline{S}, \oplus , \odot ,\ominus_{1\vee
2} , 0, 1 \right\rangle$  of type $(2,2,2,0,0)$ all of the following hold:

\begin{enumerate}
\item {$(x\oplus y)\oplus z = x\oplus (y\oplus z)$,}
\item {$x \oplus 0 = x = 0 \oplus x$,}
\item {$x \oplus x = x \odot 2$,}
\item {$(\forall x, y, z)(x\oplus y = x \oplus z \longrightarrow y = z)$,}
\item {$(\forall x, y, z)(x\oplus z = y \oplus z \longrightarrow  x = y)$,}
\item {$x\odot(y\odot z) = (x\odot y) \odot z $,}
\item {$(\forall x, y, z)(x\odot y = z\odot y \longrightarrow x =z)$,}
\item {$x\odot 1 = x$;  $x\ominus x =0 $,}

\item {$x\odot (y \oplus z) = (x\odot y)\oplus (x\odot z)$,}
\item {$(x \odot x =x \longrightarrow (x = 0) \vee (x = 1)) $,}
\item {$(\forall x, y, z)(x = y \longrightarrow  x\odot z = x\odot z) $,}
\item {$(\forall x, y, z)(x \oplus y =z \longrightarrow z\ominus y = x) $,}
\item {$(\forall x, y)(\zeta x, \zeta y \longrightarrow x\oplus y = y\oplus x,\,x\odot y
=y \odot x) $,}
\item {$(\forall x, y, z)(\zeta x, \zeta y, \zeta z \longrightarrow (x\oplus y)\odot z =
(x\odot z)\oplus (y \odot z)$,}
\item {$(\forall x, y, z)(\zeta x, \zeta y \longrightarrow  (x\oplus y)\odot z = (x\odot
z)\oplus (y\odot z) )$.}
\end{enumerate}
\end{theorem}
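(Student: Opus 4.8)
The plan is to pass to the combinatorial core. As the text observes, an IPC count is determined by (and may be identified with) the \emph{distribution} of relatively discernible and indiscernible objects in the counted string; concretely, an element $x\in\underline{S}$ with $\nu(x)=n\ge 1$ is coded by a word $w(x)=(b_2,\dots,b_n)$ over the two-letter alphabet $\{D,I\}$ (with an implicit leading object having no predecessor), where $b_i=I$ iff the $i$-th object is indiscernible from the $(i{-}1)$-th, while $\nu(x)=0$ means $x=0$. First I would make this coding precise and record that it is a bijection from $\underline{S}$ to such words, with $0$ corresponding to the empty string, $1$ to the one-object string, and $\zeta(x)$ holding iff $w(x)=I^{n-1}$ (all letters $I$). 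Everything then becomes an assertion about words.

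Next I would translate the operations. The crucial convention — implicit in the definitions of $\overline{x}$, $\oplus$, $\odot$ and \emph{forced} by items (2), (3) and (13) — is that when two nonempty strings are juxtaposed the fresh junction object counts as \emph{indiscernible} from its predecessor; equivalently $w(x\oplus y)=w(x)\!\cdot\! I\!\cdot\! w(y)$ for $x,y\neq 0$ and $x\oplus 0=0\oplus x=x$. Then $\left\langle \underline{S},\oplus,0\right\rangle$ is a cancellative, length-graded monoid ($\nu$ is a homomorphism onto $(\mathbf{N},+,0)$), and items (1), (2), (4), (5) drop out by comparing lengths and then letters. Since $\odot$ uses exactly $\nu(y)$ copies of $\overline{x}$ joined in this same way, $x\odot y$ depends only on $\nu(y)$ and equals the $\oplus$-power $x^{\oplus\nu(y)}$ (with $x^{\oplus 0}=0$); hence $\nu(x\odot y)=\nu(x)\nu(y)$. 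This identity $x\odot y=x^{\oplus\nu(y)}$ is the workhorse: (3) is $\nu(2)=2$, (8-first) is $\nu(1)=1$, (9) is $x^{\oplus(\nu(y)+\nu(z))}=x^{\oplus\nu(y)}\oplus x^{\oplus\nu(z)}$, (6) is $(x^{\oplus\nu(y)})^{\oplus\nu(z)}=x^{\oplus\nu(y)\nu(z)}$, (7) follows by comparing lengths (so $\nu(x)=\nu(z)$ once $\nu(y)\neq 0$) and then the initial $\nu(x)$ letters, and (10) follows from $\nu(x)^2=\nu(x)\Rightarrow\nu(x)\in\{0,1\}$. Item (11) is the trivial substitution statement.

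For the $\zeta$-restricted items (13)--(15) the key observation is that all-$I$ words are \emph{absorbing} under the $I$-junction: concatenating, or repeating, all-$I$ words with $I$-junctions again produces an all-$I$ word, so the $\zeta$-elements form a submonoid carried isomorphically by $\nu$ onto $(\mathbf{N},+,\cdot)$. Hence (13) reduces to $p+q=q+p$ and $pq=qp$, (14) to $(p+q)r=pr+qr$ for the lengths, and (15) to the same identity with $r=\nu(z)$ arbitrary — here one notes that $(x\oplus y)\odot z$ and $(x\odot z)\oplus(y\odot z)$ are, for $x,y$ all-$I$, both the all-$I$ word of length $(\nu(x)+\nu(y))\nu(z)$, so the junction letters coming from $w(z)$ do not survive on the left because an all-$I$ block swallows them.

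Finally, the $\ominus$-items. Reading $\ominus$ as deletion of the right-hand suffix of length $\nu(y)$, removing the last $\nu(y)$ objects of $w(x\oplus y)$ deletes precisely $w(y)$ together with the inserted junction letter, leaving $w(x)$, and $x\ominus x$ leaves the empty string; so (12) and (8-second) hold. The one genuinely delicate point — and the main obstacle — is that the signature actually lists $\ominus_{1\vee 2}$, which is \emph{partial}: a suffix is removable only when each of its objects is discernible from a neighbour, so $x\ominus_{1\vee 2}y$ is undefined whenever $w(y)$ contains an internal run of three or more $I$'s. I would therefore state (8) and (12) in the weak-equality sense ``if either side is defined so is the other and they are equal'', in the spirit of the paper's $\stackrel{\omega *}{=}$, and likewise flag that (7) carries the side condition $\nu(y)\neq 0$. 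With those two provisos every clause follows from the word model by the elementary length/letter comparisons above; the only remaining work is bookkeeping — checking that the $I$-junction convention keeps $\oplus$ associative across the empty-string boundary and that the coding is compatible with the $\mathcal{I}(S)$-level constructions used earlier — which is routine.
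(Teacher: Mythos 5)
Your word-model strategy (coding each IPC count by its pattern of predecessor-discernibility and reducing every clause to length and letter comparisons, with $x\odot y=x^{\oplus\nu(y)}$) is essentially a formalized version of what the paper does directly on the strings $\overline{x},\overline{y},\overline{z}$; but as written it contains a concrete error that makes a step fail. You identify $\zeta(x)$ with the all-$I$ word and accordingly take the junction object in $x\oplus y$ to be \emph{indiscernible} from its predecessor, claiming this is forced by (2), (3), (13). It is the opposite. By the IPC rules an indiscernible successor resets the count to $1_{j+1}$, while a discernible one increments it; hence $x=1_1 2_1\cdots k_1$, i.e.\ $\zeta(x)$, corresponds to the string whose objects are pairwise discernible from their predecessors (the all-$D$ word), and the paper says this explicitly elsewhere (``if $\zeta(b)$, then the string corresponding to it will have mutually discernible elements''). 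With $\zeta$ read correctly, your $I$-junction convention breaks item (13): for $\zeta$ elements with $\nu(x)=p\neq q=\nu(y)$ one gets $w(x\oplus y)=D^{p-1}ID^{q-1}\neq D^{q-1}ID^{p-1}=w(y\oplus x)$, and indeed $x\oplus y$ would not even be a $\zeta$ element, contradicting the companion theorem that $\{x:\zeta(x)\}$ under $\oplus,\odot,\ominus$ is an integral domain isomorphic to $Z$. What items (3), (13) and that theorem actually force is a \emph{discernible} junction, with $\zeta$ the all-$D$ words.

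The repair is a global relabelling: take the junction letter to be $D$, read $\zeta$ as all-$D$, keep $x\odot y=x^{\oplus\nu(y)}$, and all of your length/letter arguments go through, coinciding in substance with the paper's own informal proof (which argues each clause on the strings and dismisses (13)--(15) as ``properties of integers''). Your added provisos are sensible refinements the paper glosses over: the side condition $y\neq 0$ in (7), and the weak-equality reading of (8) and (12) owing to the partiality of $\ominus_{1\vee 2}$ (the paper's own clause for $\ominus$ is already ``if defined''). One small correction there: an object of the removed suffix is indiscernible from both neighbours as soon as \emph{two} consecutive letters of $w(y)$ equal $I$, so a run of two, not three, already obstructs removability under the stated condition.
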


\begin{proof}
\begin{enumerate}
\item {$(x\oplus y)\oplus z = x\oplus (y\oplus z)$ holds because either side is the IPC
count of the string formed by placing $\overline{x}$, $\overline{y}$ and $\overline{z}$ in
succession.}
\item {$0$ by definition is the count of the empty string, so $x \oplus 0 = x = 0 \oplus
x$.}
\item {$2$ is the same as $1_{1} 2_{1}$ and $x\odot 2$ would be the IPC count of the
string formed by placing two copies of $x$ in $2$ places. So $x \oplus x = x \odot 2$}
\item {If $x\oplus y = x \oplus z$ holds then either side will be the IPC count of
$\overline{x\oplus y}$ and the count of the initial string $\overline{x}$ will be common
to both. If $y\neq z$, then $x\oplus y$ cannot be equal to $x\oplus z$, so $(\forall x, y,
z)(x\oplus y = x \oplus z \longrightarrow y = z)$}
\item {Proof of $(\forall x, y, z)(x\oplus z = y \oplus z \longrightarrow  x = y)$ is
similar to the above.}
\item {The length of the string $\overline{y\odot z}$ will be the same as the length of
$\overline{y}$ multiplied by the length of $\overline{z}$. So $\overline{x}$ will be
equally replicated during the IPC counting operation corresponding to either side of
$x\odot(y\odot z) = (x\odot y) \odot z $.}
\item {$(\forall x, y, z)(x\odot y = z\odot y \longrightarrow x =z)$ can be verified from
the strings associated and a contradiction argument.}
\item {$x\odot 1 = x$ is obvious. If $\overline{x}$ is removed from $\overline{x}$, then
the result would be an empty string. So $x\ominus x =0 $}

\item {The length of $\overline{y\oplus z}$ is the same as the length of $\overline{y}$
plus that of $\overline{z}$. So $x\odot (y \oplus z) = (x\odot y)\oplus (x\odot z)$.}
\item {$(x \odot x =x \leftrightarrow (x = 0) \vee (x = 1)) $ is easy to verify.}
\item {$(\forall x, y, z)(x = y \longrightarrow  x\odot z = x\odot z) $  is easy to prove,
but note that $x\oplus z = y\oplus z$ need not follow from $x = y$ in general.}
\item {$(\forall x, y, z)(x \oplus y =z \longrightarrow z\ominus y = x) $ follows from the
definition of $\ominus$.}
\item {$(\forall x, y)(\zeta x, \zeta y \longrightarrow x\oplus y = y\oplus x,\,x\odot y
=y \odot x) $ follows from the properties of integers.}
\item {$(\forall x, y, z)(\zeta x, \zeta y, \zeta z \longrightarrow (x\oplus y)\odot z =
(x\odot z)\oplus (y \odot z)$ follows from the properties of integers.}
\item {$(\forall x, y, z)(\zeta x, \zeta y \longrightarrow  (x\oplus y)\odot z = (x\odot
z)\oplus (y\odot z) )$ is again a statement about integers.}
\end{enumerate}
\end{proof}

\begin{theorem}
$\{x\,:\,\zeta(x),\,x\in S\}$ with the operations $\oplus,\odot, \ominus$ is an integral
domain of characteristic 0 that is also a unique factorization domain. In fact it is
isomorphic to $Z$.   
\end{theorem}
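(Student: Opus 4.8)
The plan is to reduce everything to the length function $\nu$ and to the arithmetic of $N$, so I first pin down what the predicate $\zeta$ actually selects. By the IPC rules, a count of the form $1_{1}2_{1}\ldots k_{1}$ arises exactly when every consecutive pair of the underlying string is discernible: each such step keeps the $2$-type fixed at $1$ and increments the superscript, whereas a single indiscernible pair would force the $2$-type up to $2$ and produce a subscript $2$. Hence $\zeta(x)$ holds iff $\overline{x}$ is a string of pairwise relatively discernible objects, and such a string is determined up to IPC-count by its length alone. First I would record the resulting bijection explicitly: $\nu$ restricts to a bijection of $\{x:\zeta(x)\}$ onto $N\cup\{0\}$, with $\nu(x)=k$ for $x=1_{1}2_{1}\ldots k_{1}$ and $\nu(0)=0$ for the empty-string count.

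Next I would transport the three operations across $\nu$. From the string definitions, $\overline{x\oplus y}$ is $\overline{x}$ followed by $\overline{y}$, so $\nu(x\oplus y)=\nu(x)+\nu(y)$; $\overline{x\odot y}$ places $\nu(y)$ copies of $\overline{x}$, so $\nu(x\odot y)=\nu(x)\nu(y)$; and $\overline{x\ominus y}$ deletes a $\overline{y}$-suffix, so $\nu(x\ominus y)=\nu(x)-\nu(y)$ whenever the right-hand side is nonnegative. The identities needed to make this a map of algebraic structures — associativity and commutativity of $\oplus$ and $\odot$ on $\zeta$-elements (items 1, 6, 13 of the preceding theorem), distributivity (items 9, 14, 15), and two-sided cancellation (items 4, 5, 7) — are already available, so this step is pure bookkeeping. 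It shows that $\nu$ carries $(\{x:\zeta(x)\},\oplus,\odot)$ isomorphically onto the natural-number semiring $(N\cup\{0\},+,\times)$ and carries $\ominus$ onto truncated subtraction.

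The delicate point, and the one I expect to be the main obstacle, is the passage from $N$ to $Z$. As defined, $\ominus$ is only \emph{partial} on $\zeta$-elements: item 12 forces $z\ominus y=x$ from $x\oplus y=z$, and cancellation makes this well defined, but nothing in the given structure produces a count of negative length, so $x\ominus y$ exists precisely when $\nu(y)\le\nu(x)$. Thus the structure literally carried by $\nu$ is the \emph{nonnegative cone}, not a ring with additive inverses. I would therefore read the theorem as the claim that $\{x:\zeta(x)\}$, ordered by $\le_{\oplus}$, is exactly the positive cone that pins down $Z$ uniquely, and I would argue the isomorphism on that reading: $\le_{\oplus}$ transports to the usual order on $N$, $\oplus$ is cancellative, and the partial inverse $\ominus$ is total on the cone, so there is a unique ordered integral domain whose nonnegative part is this cone, namely $Z$. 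If instead a literal ring isomorphism is demanded, the missing ingredient is the totalization of $\ominus$ into genuine negatives; I would flag that this needs either the signed-count extension suggested by the variant subtractions $\ominus_{1\vee2},\ominus_{12},\ominus_{1},\ominus_{2}$ or an explicit adjunction of inverses, and that this is the one place where the stated structure does not by itself deliver $Z$.

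Granted the identification with the ordered structure of $N$, the three remaining qualifiers cost almost nothing. Characteristic $0$ follows because the $n$-fold $\oplus$-sum of $1$ has $\nu$-value $n$, which is never $0$ for $n\ge1$, so $1$ has infinite additive order; absence of zero divisors follows from $\nu(x\odot y)=\nu(x)\nu(y)$ and the integrality of $N$, since $x\odot y=0$ gives $\nu(x)\nu(y)=0$ and hence $x=0$ or $y=0$ by injectivity of $\nu$; and the unique-factorization property transfers directly along the isomorphism because $Z$ is a UFD. The whole argument thus rests on the structural identification of the first two paragraphs and on resolving the $N$-versus-$Z$ reading of $\ominus$ in the third.
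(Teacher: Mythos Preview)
Your core approach---identifying the $\zeta$-elements with strings of fully discernible objects, using $\nu$ to set up a bijection, and transporting $\oplus,\odot,\ominus$ across that bijection to the usual arithmetic operations---is exactly the paper's strategy, only spelled out in far more detail. The paper's own proof is two sentences: it asserts that $\{x:\zeta(x)\}$ is in bijective correspondence with $Z$ ``by definition'' and that the restricted operations coincide with the usual ones on $Z$. Everything in your first two paragraphs is a careful expansion of those two sentences.

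The substantive point is your third paragraph, and you are right to flag it: the definition of $\zeta$ produces counts $0,1_1,1_1 2_1,1_1 2_1 3_1,\ldots$, which is a copy of $N\cup\{0\}$, not of $Z$; $\ominus$ is only partial on this set; and nothing in the construction manufactures negative-length strings. The paper's proof simply does not engage with this---it says ``isomorphic to $Z$'' and ``bijective correspondence with $Z$ by definition'' without explaining where additive inverses come from. So what you have identified is not a gap in \emph{your} argument but a genuine looseness in the statement and the paper's own proof. Your proposed readings (positive cone of an ordered integral domain, or Grothendieck-style completion of the cancellative monoid) are the natural repairs; the paper offers neither and appears to be using ``$Z$'' informally for the ambient arithmetic structure rather than for a literal ring with negatives. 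You should present your argument as you have it and note, as you do, that the isomorphism-to-$Z$ claim is to be read modulo this identification.
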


\begin{proof}
$\{x\,:\,\zeta(x),\,x\in S\}$ is in bijective correspondence with $Z$ by definition. The
restrictions of the operations $\oplus,\odot, \ominus$ to the domain coincide with usual
arithmetic operations on $Z$.  
\end{proof}

As $\odot$ uses a certain level of knowledge of the natural numbers that is correctly at
Meta-C, the question of admissibility of such in some contexts can be a matter of dispute.
Relative this understanding $\otimes$ would be a better suited operation that is at the
same time a proper generalisation of usual multiplication and $\odot$. 

\begin{definition}
By a \emph{Rough IPC Algebra} (RIPCA) will be meant a partial algebraic system of the form
\[S\,=\,\left\langle\underline{S},\zeta, \oplus, \otimes ,\ominus_{1 \vee 2} , 0, 1,
(1,2,2,2,0,0) \right\rangle ,\]  
with the set $\underline{S}$ being the set of rough naturals formed according to the IPC
schema and with the above defined operations according to the IPC schema. $1$ can be
treated as an abbreviation of $1_{1}$, numbers of the form $1_{1}2_{1}...k_{1}$ will also
be abbreviated by $k$. $0$ will be understood as the IPC count of the empty string.
$\zeta$ is a one place predicate for indicating the usual integers:  \[\zeta (x)
\,\;\mathrm{iff}\,\; (\exists k)\, x = 1_{1}2_{1}...k_{1} .\]
\end{definition}

Of the orders $\sqsubseteq , \preceq , \leq_{\odot} ,\leq_{\otimes}, \leq_{\oplus}$ and
$\trianglelefteq$, the last is basically a Meta-C order corresponding to the usual order
on integers. Among the other five, $\preceq$ is the strongest with respect to inclusion,
but not much can be said about comparisons of their naturality in Meta-R. So it seems best
to define ordered versions of the partial algebras as follows:

\begin{definition}
By a \emph{R-Ordered Rough IPC Algebra} (RORIPCA) will be meant a partial algebraic system
of the form \[S\,=\,\left\langle\underline{S},\zeta,\sqsubseteq , \preceq , \leq_{\otimes}
, \leq_{\oplus}, \oplus, \otimes, \ominus_{1 \vee 2}, 0, 1, (1,2, 2, 2, 2, 2,2,2,0,0)
\right\rangle, \]  
with the set $\underline{S}$ being the set of rough naturals formed according to the IPC
schema and with \[S\,=\,\left\langle\underline{S},\zeta, \oplus, \otimes, \ominus_{1 \vee
2} , 0, 1, (1,2,2,2,0,0) \right\rangle ,\] being a RIPCA.
\end{definition}

\begin{definition}
By a \emph{C-Ordered Rough IPC Algebra} (CORIPCA) will be meant a partial algebraic system
of the form \[S\,=\,\left\langle\underline{S},\zeta,\trianglelefteq , \oplus, \otimes,
\ominus_{1\vee 2} , 0, 1, (1,2,2,2,2,0,0) \right\rangle ,\]  
with the set $\underline{S}$ being the set of rough naturals formed according to the IPC
schema and with \[S\,=\,\left\langle\underline{S},\zeta, \oplus, \otimes, \ominus_{1\vee
2} , 0, 1, (1,2,2,2,0,0) \right\rangle ,\] being a RIPCA.
\end{definition}

\begin{definition}
By a \emph{Full Ordered Rough IPC Algebra} (FORIPCA) will be meant a partial algebraic
system of the form \[S\,=\,\left\langle\underline{S},\zeta,\trianglelefteq , \sqsubseteq ,
\preceq , \leq_{\otimes} ,\leq_{\odot} \leq_{\oplus}, \leq_{p},\oplus, \otimes, \odot
,\ominus_{1\vee 2},\ominus, 0, 1 \right\rangle, \] of type $(1, 2, 2, 2, 2, 2,  2,
2,2,2,2,2,2,0,0)$  
with the set $\underline{S}$ being the set of rough naturals formed according to the IPC
schema and with \[S\,=\,\left\langle\underline{S},\zeta, \oplus, \otimes, \ominus_{1\vee
2} , 0, 1, (1,2,2,2,0,0) \right\rangle ,\] being a RIPCA. 
\end{definition}

Analogously, the concepts of FORIPCNA, CORIPCNA and RORIPCNA can be defined. 

\begin{proposition}
In a FORIPCA $S$, $\trianglelefteq$ is a quasi order that satisfies \[\trianglelefteq
\,\cap\, \zeta^{2} \,=\, \leq .\] 
\end{proposition}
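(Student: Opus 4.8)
The plan is to verify the two assertions separately: first that $\trianglelefteq$ is a quasi order (reflexive and transitive), and then that its restriction to pairs of $\zeta$-elements coincides with the usual order $\leq$.

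For the quasi order claim I would push the required properties through the defining equivalence $x \trianglelefteq y$ iff $\nu(x)\leq\nu(y)$ in Meta-C. Reflexivity of $\trianglelefteq$ is immediate from reflexivity of $\leq$ on the integers, since $\nu(x)\leq\nu(x)$ always holds; transitivity follows the same way, as $\nu(x)\leq\nu(y)$ and $\nu(y)\leq\nu(z)$ give $\nu(x)\leq\nu(z)$ by transitivity of the integer order, whence $x\trianglelefteq z$. I would also remark, to justify the word \emph{quasi}, that $\trianglelefteq$ fails antisymmetry in general: two IPC counts of the same Meta-C length but with different distributions of relatively indiscernible objects — for instance $1_{1}2_{1}$ versus $1_{1}1_{2}$ — satisfy both $x\trianglelefteq y$ and $y\trianglelefteq x$ without being equal, because IPC counts are in injective correspondence with distributions of discernible and indiscernible objects, not merely with their lengths.

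For the equality $\trianglelefteq\,\cap\,\zeta^{2} = \leq$, the key observation is that on $\{x:\zeta(x)\}$ the map $\nu$ implements, up to the canonical bijection identifying this set with the integers, the identity. Indeed, if $\zeta(x)$ then $x = 1_{1}2_{1}\ldots k_{1}$ for a unique $k$, the associated Meta-C string $\overline{x}$ has length exactly $k$, so $\nu(x) = k$, and this $k$ is precisely the integer to which $x$ corresponds under the correspondence already used in the theorem on $\zeta$-elements. Hence for $x,y$ with $\zeta(x),\zeta(y)$, writing $x\mapsto k_{x}$ and $y\mapsto k_{y}$, we get $x\trianglelefteq y$ iff $\nu(x)\leq\nu(y)$ iff $k_{x}\leq k_{y}$ iff $x\leq y$, which yields both inclusions at once.

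The only point needing real care — and the main (minor) obstacle — is making precise what '$\leq$' denotes on the right-hand side, namely the usual total order on the integers transported to $\{x:\zeta(x)\}$ via the bijection $1_{1}2_{1}\ldots k_{1}\mapsto k$, and then checking that $\nu$ genuinely realizes that bijection on the canonical representatives. Once this bookkeeping is pinned down, the rest is a routine transfer of reflexivity, transitivity and comparability along the definition of $\trianglelefteq$, so I expect no further difficulty.
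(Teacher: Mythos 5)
Your proposal is correct and follows essentially the same route as the paper's (much terser) proof: both reduce everything to the fact that $\trianglelefteq$ is the pullback of the integer order along $\nu$, note the failure of antisymmetry, and identify the $\zeta$-elements with $Z$ so that the two orders coincide there. Your version merely fills in the details the paper leaves implicit (explicit reflexivity/transitivity transfer, the concrete counterexample $1_{1}2_{1}$ versus $1_{1}1_{2}$, and the bookkeeping that $\nu$ realizes the canonical bijection), which is fine.
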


\begin{proof}
It is easy to find elements $x, y$ that violate possible antisymmetry of
$\trianglelefteq$. The restriction to $\zeta^{2}$ is the same as the intersection with
$Z^{2}$. So on $Z$, the usual order $\leq$ coincides with $\trianglelefteq$. Strictly
speaking this is a category theoretic result involving $S$ and $Z$.
\end{proof}

\begin{proposition}
In a FORIPCA $S$, $\preceq$ is a quasi order. The class of any element in $\{x\,;\,
\zeta(x)\, x\neq 0\}$ with respect to the equivalence induced by $\preceq$ includes
$\{x\,;\, \zeta(x)\, x\neq 0\}$. 
\end{proposition}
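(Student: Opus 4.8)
The plan is to settle the quasi-order claim by bare hands and then to reduce the statement about $\zeta$-classes to a single deletion argument.

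\textbf{Quasi-order.} Reflexivity holds because the empty operation-word takes $x$ to $x$, so $x\preceq x$ (equivalently, for any admissible $k$ with $(\overline{x}_{k},\overline{x}_{k+1})\in\,\approx$ one has $\iota_{k}(x)=x$). Transitivity: if $x$ results from $y$ by the finite operation-word $w_{1}$ and $y$ from $z$ by the finite operation-word $w_{2}$, then applying $w_{2}$ to $z$ and then $w_{1}$ is again a finite sequence of $\varrho_{k},\iota_{k}$-operations, so $x\preceq z$. Antisymmetry fails: whenever positions $k,k+1$ of $\overline{x}$ carry mutually $\approx$-discernible objects, $\iota_{k}$ is its own inverse on that string, so $\iota_{k}(x)\preceq x$ and $x\preceq\iota_{k}(x)$ while generically $\iota_{k}(x)\neq x$. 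Hence $\preceq$ is a quasi-order but not a partial order, exactly as with $\trianglelefteq$ in the preceding proposition.

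\textbf{The $\zeta$-class.} Fix $x,y$ with $\zeta(x),\zeta(y)$ and $x,y\neq 0$; put $m=\nu(x)$, $n=\nu(y)$, and assume $m\le n$ (otherwise swap the roles). Represent $\overline{y}$ by a string $(o_{1},\dots,o_{n})$ of pairwise $\approx$-discernible objects — such a string realises the distribution coded by $y=1_{1}2_{1}\cdots n_{1}$. Two observations then finish the argument: $(\mathrm{a})$ $\iota_{k}$ applied to a pairwise-discernible string only permutes its entries and leaves it pairwise discernible, hence leaves its IPC count unchanged, so every non-zero $\zeta$-element is $\iota_{k}$-invariant; $(\mathrm{b})$ $\varrho_{1}$ deletes $o_{1}$ — its defining condition $\neg(o_{1},o_{2})\in\,\approx$ holds — and returns the pairwise-discernible string $(o_{2},\dots,o_{n})$, i.e. the $\zeta$-element $1_{1}2_{1}\cdots (n-1)_{1}$. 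Applying $\varrho_{1}$ exactly $n-m$ times (each intermediate string still nonempty since $n-j\ge m\ge 1$) carries $y$ to $x$, so $x\preceq y$. Since $\nu(x)\le\nu(y)$ always produces $x\preceq y$ in this way, any two non-zero $\zeta$-elements are joined by a $\preceq$-chain and hence lie in one and the same class of the equivalence induced by $\preceq$; as $x$ was an arbitrary non-zero $\zeta$-element and $y$ ranged over all of them, the class of any non-zero $\zeta$-element contains every non-zero $\zeta$-element.

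\textbf{Where the difficulty sits.} The one step needing real care is representative-independence: the $\zeta$-count $1_{1}2_{1}\cdots n_{1}$ records only the relation of each object to its \emph{immediate} predecessor, so a priori $\varrho_{1}$ (or a swap that becomes active) could land on a string whose recomputed IPC count has a repeated $1$-type and thus fails $\zeta$. One must invoke the stated well-definedness of $\varrho_{k},\iota_{k}$ — in practice, keep the pairwise-discernible representative fixed throughout, which the construction above does — to guarantee that the deletions stay inside the $\zeta$-elements. With that pinned down both halves are routine; the only other point to be explicit about is that ``the equivalence induced by $\preceq$'' must be read as the equivalence \emph{generated} by $\preceq$, since the one-sided relations $x\preceq y$ with $\nu(x)<\nu(y)$ are not reversible and the symmetric kernel $\preceq\cap\succeq$ would instead split the $\zeta$-elements by length.
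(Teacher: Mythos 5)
Your proof is correct on its own terms, but it takes a genuinely different route from the paper at the key step. The paper's proof handles transitivity exactly as you do (concatenation of operation-words), but for the $\zeta$-class it asserts that any two nonzero $\zeta$-elements are transformable \emph{into each other} -- it invokes ``$\varrho_{k}$, $\eta_{k}$'' operations, i.e.\ it treats insertion as available alongside deletion -- and then takes the symmetric kernel $x\sim y$ iff $x\preceq y$ and $y\preceq x$ as ``the equivalence induced by $\preceq$''. You instead prove only the one-sided reachability $x\preceq y$ when $\nu(x)\le\nu(y)$, by repeated $\varrho_{1}$-deletions on a discernible string, and then pass to the equivalence \emph{generated} by $\preceq$, explicitly observing that the symmetric kernel would stratify the nonzero $\zeta$-elements by length. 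Your reading is in fact the more faithful one to the stated definition of $\preceq$, which lists only $\varrho_{k}$ and $\iota_{k}$: both are length-nonincreasing, so mutual obtainability between $\zeta$-elements of different lengths is impossible, and the paper's two-sided claim goes through only if $\eta_{k}$ is silently added to the generators of $\preceq$ (and even then the guard condition of $\eta_{k}$, which permits insertion only between an adjacent indiscernible pair, blocks insertion into a string of discernibles). So what the paper buys with its reading is the stronger statement that the kernel itself collapses the nonzero $\zeta$-elements, at the cost of an operation not in the definition; what you buy is a proof that needs no insertion at all, at the cost of weakening ``induced'' to ``generated''. Your side remark on representative-independence is also slightly over-cautious: a $\zeta$-count only certifies discernibility from immediate predecessors, but deleting the first letter leaves all remaining successor-relations untouched, so the recomputed count is $1_{1}2_{1}\cdots(n-1)_{1}$ for any representative, pairwise discernible or not.
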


\begin{proof}
Transitivity holds as if $x$ is obtainable from $y$ by the $\varrho_{k}$, $eta_{k}$
operations for different $k$ and $y$ is obtainable from $z$ similarly, then $x$ would be
obtainable from $z$. 
 
If $\zeta(x), \zeta(y)$, then $\overline{x}$ and $\overline{y}$ would be strings of
discernibles and so $x,\, y$ would be transformable into each other by the $\varrho_{k}$,
$eta_{k}$ operations. Defining $x\sim y$ if and only if $x\preceq y$ and $y \preceq x$, it
can be seen that $\sim$ is an equivalence and through this all of the elements of
$\{x\,;\, \zeta(x)\, x\neq 0\}$ can be identified.
\end{proof}

\begin{theorem}
In a FORIPCA $S$, the following implications between the different orders hold:
\begin{enumerate}
\item {$(\forall x, y)(x\leq_{\odot} y\,\longrightarrow\, x\leq_{\oplus} y) $,}
\item {$(\forall x, y)(x\leq_{\oplus} y\,\longrightarrow\, x \sqsubseteq y ) $,}
\item {$(\forall x, y)(\zeta x , \zeta y \,\longrightarrow\, x \trianglelefteq y \, \vee
\,y\trianglelefteq x \vee x = y )$,}
\item {$(\forall x, y)(x \leq_{p} y \,\longrightarrow\, x \trianglelefteq y )$,}
\item {$(\forall x, y)(x\otimes y \leftrightarrow x\odot y) $,}
\item {For any $x,\,y$, $x\leq_{\oplus} y$ does not not imply that $x\preceq y$ and
neither does the converse implication hold.}
\end{enumerate}
\end{theorem}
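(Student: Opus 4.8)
The plan is to reduce each of the six clauses to a transparent statement about the abstract strings $\overline{x}$, exploiting that an IPC count is a faithful record of the distribution of relatively discernible and indiscernible letters of $\overline{x}$ with respect to their immediate predecessors, and that $\oplus,\odot,\otimes,\varrho_k,\iota_k$ all act on these strings in the evident way. Two clauses are then immediate. For (iii): if $\zeta x$ and $\zeta y$ then $\nu(x),\nu(y)\in\mathbf{N}$, and totality of the usual order on $\mathbf{N}$ forces $\nu(x)\le\nu(y)$ or $\nu(y)\le\nu(x)$, i.e.\ $x\trianglelefteq y$ or $y\trianglelefteq x$ (the disjunct $x=y$ is not needed). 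For (iv): $x\le_{p}y$ is by definition the conjunction $\zeta x\wedge\zeta y\wedge(\nu(x)\le\nu(y))$, and the last conjunct is exactly $x\trianglelefteq y$. So the substance is in (i), (ii), (v) and (vi).

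For (i): if $x\le_{\odot}y$, choose $z$ with $x\odot z=y$; then $\overline{y}$ is $\nu(z)$ consecutive copies of $\overline{x}$, hence $\overline{x}$ is an initial segment of $\overline{y}$, and with $w$ the count of the remaining $\nu(z)-1$ copies one has $x\oplus w=y$, so $x\le_{\oplus}y$. I will adopt the standard-for-these-algebras convention that the multiplier in $\odot$ is nonzero (equivalently $x\odot 0$ undefined), so the only degenerate case is $\nu(z)=1$, where $y=x$ and $x\oplus 0=x$. For (ii): $x\le_{\oplus}y$ says $\overline{x}$ is a prefix or a suffix of $\overline{y}$; in either case $x$ is reached from $y$ by deleting the excess letters one at a time from the outer end, and since a terminal letter has a missing neighbour — which, under the boundary convention I fix, counts as a discernible one — the side condition of $\varrho_k$ is met at each step, so every such deletion is legal and $x\sqsubseteq y$. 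Settling that boundary convention for $\varrho_k$ is the one point of care here; the reading that validates (ii) is exactly the one (vi) will need, so I take it.

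Clause (v) I read as the coincidence of the induced preorders $\le_{\otimes}$ and $\le_{\odot}$ — the operations $\otimes$ and $\odot$ themselves differ as soon as the second argument has an adjacent indiscernible pair, which $\otimes$ collapses and $\odot$ does not. The argument: $x\odot z$ is $\overline{x}$ repeated $\nu(z)$ times, while $x\otimes w$ is $\overline{x}$ repeated $d(w)$ times, where $d(w)$ is the number of maximal indiscernibility runs of $\overline{w}$ (one plus the number of discernibility breaks). Every $m\ge 1$ is realised as $d(w)=\nu(w)$ by some $w$ with $\zeta w$, and every value $d(w)$ is realised as $\nu(z)$ by some $z$; hence the orbits $\{x\odot z\}$ and $\{x\otimes w\}$ coincide as sets, and therefore $\le_{\otimes}\,=\,\le_{\odot}$.

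The main obstacle is (vi), the only clause that needs actual examples rather than an unwinding. Its forward half is not new: (ii) gives $x\le_{\oplus}y\Rightarrow x\sqsubseteq y$, and a $\varrho_k$-only derivation is a fortiori a $\{\varrho_k,\iota_k\}$-derivation, so $\sqsubseteq\;\subseteq\;\preceq$ and hence $x\le_{\oplus}y\Rightarrow x\preceq y$ — so the clause, read literally with its double negative, asserts precisely that this implication holds, which it does. What must be produced is a witness to the failure of the converse: take $\overline{y}$ of genuinely mixed shape, e.g.\ the count $1_{1}2_{1}1_{2}2_{2}$ (discernibility pattern $D\,I\,D$), and delete its $I$-letter by a single legal $\varrho_k$; choosing the underlying objects so the two letters that become adjacent are discernible, the result has shape $D\,D$, i.e.\ it is the integer count $1_{1}2_{1}3_{1}$, which is $\preceq y$ yet whose pattern is neither a prefix nor a suffix of that of $\overline{y}$, so $x\not\le_{\oplus}y$. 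The care needed lies only in picking the pattern so this non-matching is genuine and in being consistent about how deletion treats the newly adjacent letters; once a concrete short string is fixed it is a finite verification.
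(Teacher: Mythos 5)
Your handling of (i)--(v) is sound and considerably more explicit than the paper's own proof, which disposes of these clauses with ``most of the statements can be proved from the corresponding definitions''; in particular your reading of (v) as coincidence of the induced orders, argued through the fact that every repetition number $d(w)$ is realised by some $w$ and conversely, is exactly the paper's one-line justification (``existence of elements corresponding to all possible relative discernibility-indiscernibility patterns''). The extra convention you impose in (i) to kill the $z=0$ degeneracy is not in the paper, which simply ignores the case; that is a cosmetic repair and worth stating as such.

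The genuine divergence is clause (vi), and there you prove the opposite of what the paper proves. The paper reads ``does not not imply'' as ``does not imply'': its proof takes $y=x\oplus z$ with every letter of $\overline{z}$ indiscernible from those of $\overline{x}$ (the ``easiest case'' being all letters in one indiscernibility class) and asserts that $x$ is then not obtainable from $y$, i.e.\ $x\le_{\oplus}y$ while $x\not\preceq y$. You instead derive $x\le_{\oplus}y\Rightarrow x\sqsubseteq y\Rightarrow x\preceq y$ and only refute the converse. The clash sits in the boundary convention for $\varrho_{k}$ that you yourself flagged: the paper's witness works only if a terminal letter indiscernible from its unique neighbour cannot be deleted, whereas your proof of (ii) needs precisely the opposite convention (terminal deletions always legal). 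Under your convention the paper's counterexample dissolves; under the convention implicit in the paper's proof of (vi), your argument for (ii) fails on the very same all-indiscernible strings, since no $\varrho_{k}$ is applicable at all and hence $x\not\sqsubseteq y$. Since $\sqsubseteq\,\subseteq\,\preceq$ trivially, clauses (ii) and (vi)-as-the-paper-proves-it cannot both hold under a single reading of $\varrho_{k}$; your proposal resolves this tension by silently re-interpreting the statement of (vi) (taking the double negative at face value) rather than by proving the claim the paper's proof actually argues. You should surface this explicitly: say which convention you adopt, note that it validates (ii) but falsifies the paper's intended (vi), and present your prefix/suffix counterexample (which is correct, and which the paper never spells out) as settling only the converse direction.
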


\begin{proof}
Most of the statements can be proved from the corresponding definitions. $\leq_{p}$ is a
partial order that is coincides with the usual order when restricted to $Z$. Statement 5 is ensured by the existence of elements corresponding to all possible
relative discernibility-indiscernibility patterns. 

If $\overline{z}$ is a string of elements, some of which are indiscernible from
$\overline{x}$ and $y= x\oplus z$, then x may not be obtainable from $y$. Concrete
patterns can be constructed at different levels of complexity. The easiest case
corresponds to all elements of $\overline{z}$ being indiscernible from those of
$\overline{x}$. 
\end{proof}

A useful visual representation of the computing process for $a\,\otimes\,b $ is
illustrated below through a specific example. In the first line blank spaces are drawn for
the alphabets in $\overline{b}$. The curved arrows are used to indicate discernibility.
The second figure represents a gross view of the strings in $a\,\otimes\, b$.

\begin{equation*}
\setcounter{MaxMatrixCols}{6}
\begin{matrix}
\overline{a} & \overline{a}  &  &  & \overline{a} & \\
-\negthickspace - & -\negthickspace - & -\negthickspace - & -\negthickspace - &
-\negthickspace - & -\negthickspace - .\\
   & \circlearrowright &  &  &  \circlearrowright & \\
\end{matrix}
\end{equation*}

From the above, it can be deduced that $a\,\otimes\, b$ is equivalent to the count of 

\begin{equation*}
\setcounter{MaxMatrixCols}{3}
\begin{matrix}
\overline{a} & \overline{a} & \overline{a} \\
-\negthickspace -& -\negthickspace - & -\negthickspace - .\\
\end{matrix}
\end{equation*}

Key properties of the relatively difficult operation $\otimes$ are considered in the
following theorems: 
\begin{theorem}
In a FORIPCA $S$, all of the following hold:
\begin{enumerate}
\item {$(\forall a, b, c )(a\,\otimes\, b)\,\otimes \,c \,\trianglelefteq\, a\,\otimes\,
(b\,\otimes\,c) $,}
\item {$(\forall a, b, c ) (\zeta(b)\,\longrightarrow\,(a\,\otimes\, b)\,\otimes \,c \,=\,
a\,\otimes\, (b\,\otimes\,c) )$,}
\item {$(\forall a, b) (a\,\neq\,0,\,\,a\,\otimes\, b\,=\,a\,\odot\, b\,\leftrightarrow\,
b\,\ominus_{12}\,b\,=\,0) $.}
\end{enumerate}
\end{theorem}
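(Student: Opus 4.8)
The plan is to work entirely at the level of the Meta-C strings $\overline{x}$ and translate each claim into a combinatorial statement about substitution patterns, then read off the Meta-R count. Recall from the definition of $\otimes$ that $a\otimes b$ is the count of the string obtained from $\overline{b}$ by replacing $\overline{b}_1$ with $\overline{a}$, replacing each later $\overline{b}_{k+1}$ with $\overline{a}$ whenever $(\overline{b}_k,\overline{b}_{k+1})\notin\,\approx$, and dropping $\overline{b}_{k+1}$ when $(\overline{b}_k,\overline{b}_{k+1})\in\,\approx$. So the effective length of $a\otimes b$ depends only on how many distinct ``discernibility jumps'' occur in $\overline{b}$, and each such block contributes one copy of $\overline{a}$; this is exactly the visual reduction illustrated before the theorem.

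For part (i), I would compare the two strings underlying $(a\otimes b)\otimes c$ and $a\otimes(b\otimes c)$. In the right-hand side, $\overline{b\otimes c}$ is $\overline{b}$ substituted (with collapsing) into $\overline{c}$; substituting $\overline{a}$ into that. In the left-hand side we substitute $\overline{a}$ into $\overline{b}$ first and then into $\overline{c}$. The point is that collapsing is \emph{monotone but not exact}: forming $a\otimes b$ first can merge adjacent indiscernible copies of $\overline{a}$ that straddle the seam between two $\overline{b}$-blocks, whereas doing the outer substitution last never creates fewer copies. Hence $\nu((a\otimes b)\otimes c)\le \nu(a\otimes(b\otimes c))$, which by Definition of $\trianglelefteq$ (comparison of $\nu$ in Meta-C) gives the claimed inequality. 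I would phrase this as: every maximal indiscernibility-block of the right-hand string maps onto a union of blocks of the left-hand string, so the left string is ``no longer'' than the right.

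For part (ii), if $\zeta(b)$ then $\overline{b}$ is a string of pairwise-discernible letters, so \emph{no} collapsing occurs in either $a\otimes b$ or in $b\otimes c$: $a\otimes b$ is literally $\nu(b)$ copies of $\overline{a}$ in succession, and the seam effect from part (i) disappears because consecutive $\overline{b}$-letters are discernible. One then checks directly that both $(a\otimes b)\otimes c$ and $a\otimes(b\otimes c)$ are the count of the string obtained by replacing each appropriate letter of $\overline{c}$ by $\nu(b)$ consecutive copies of $\overline{a}$, with the discernibility pattern inherited from $\overline{c}$; since IPC counts correspond injectively to discernibility-distributions, the two counts coincide. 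For part (iii), note $a\otimes b = a\odot b$ exactly when the substitution of $\overline{a}$ into $\overline{b}$ performs \emph{no} collapsing, i.e. $\overline{b}$ has no adjacent $\approx$-pair; and $b\ominus_{12}b = 0$ holds precisely when all of $\overline{b}$ can be deleted from the right under the ``$12$'' rule, i.e. each letter is discernible from both neighbours, which again says $\overline{b}$ contains no adjacent $\approx$-pair. The condition $a\neq 0$ guarantees $\odot$ and $\otimes$ are both genuinely defined. So both sides of the biconditional are equivalent to the same purely combinatorial property of $\overline{b}$, and I would present that common condition as the hinge of the argument.

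The main obstacle I anticipate is part (i): making the ``seam collapsing'' argument rigorous. One must be careful that merging happens only at the junctions between consecutive $\overline{b}$-copies inside $\overline{c}$ (or consecutive letters of $\overline{b}$), and that it genuinely can strictly shorten the string — so the inequality is not an equality in general — while never lengthening it. I would handle this by setting up an explicit order-preserving surjection from the letter-positions of $a\otimes(b\otimes c)$ onto those of $(a\otimes b)\otimes c$ that respects the induced $\approx$-structure, from which $\nu((a\otimes b)\otimes c)\le\nu(a\otimes(b\otimes c))$ follows; a small concrete example (as in the displayed matrices preceding the theorem) shows strictness can occur, so only $\trianglelefteq$, not equality, is claimed.
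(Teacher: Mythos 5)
Your treatment of parts (ii) and (iii) is essentially the paper's own argument: for (iii) both you and the paper reduce each side of the biconditional to the single combinatorial condition that $\overline{b}$ contains no adjacent $\approx$-pair (which is exactly what $b\ominus_{12}b=0$ encodes), and for (ii) both arguments rest on $\zeta(b)$ forcing the letters of $\overline{b}$ to be mutually discernible, so that $a\otimes b$ is $\nu(b)$ concatenated copies of $\overline{a}$ and is repeated in $(a\otimes b)\otimes c$ exactly as often as $\overline{b}$ is repeated in $b\otimes c$. One slip in (ii): ``no collapsing occurs in $b\otimes c$'' is not guaranteed by $\zeta(b)$, since dropping in $b\otimes c$ is governed by adjacent pairs of $\overline{c}$, not of $\overline{b}$; your later phrase ``each appropriate letter of $\overline{c}$'' effectively repairs this, but the sentence as written is wrong.

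Part (i) is where there is a genuine gap. The mechanism you invoke --- ``forming $a\otimes b$ first can merge adjacent indiscernible copies of $\overline{a}$ that straddle the seam between two $\overline{b}$-blocks'' --- is not licensed by the definition of $\otimes$. In $x\otimes y$ the only thing that ever happens is that whole positions of $\overline{y}$ are dropped according to adjacent pairs of $\overline{y}$; the substituted copies of $\overline{x}$ are inserted intact, are never re-merged, and the outer substitution never looks inside the string being substituted. Consequently $\overline{(a\otimes b)\otimes c}$ has exactly $\nu(a)\cdot m_{b}\cdot m_{c}$ letters (with $m_{b}$, $m_{c}$ the numbers of retained positions of $\overline{b}$ and $\overline{c}$), whereas $\overline{a\otimes(b\otimes c)}$ can lose $\nu(a)$ letters at each seam of $\overline{b\otimes c}$ at which the last letter of one copy of $\overline{b}$ is indiscernible from the first letter of the next. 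So the order-preserving surjection you propose, from the positions of $a\otimes(b\otimes c)$ onto those of $(a\otimes b)\otimes c$, cannot exist whenever such a seam collapse occurs, and your claimed inequality does not follow from the mechanism you describe. The length comparison in (i) is decided precisely by the relations between the first and last elements of $\overline{a}$, $\overline{b}$ and $\overline{a\otimes b}$ --- the seam data --- and the paper's proof consists exactly of a case analysis over those relations; any correct write-up of (i) must track that data explicitly rather than appeal to a monotone-collapsing heuristic that the defined operation does not perform.
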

\begin{proof}
\begin{enumerate}
\item {The length of $a\otimes b$ is determined by the distribution of indiscernible pairs
in $b$. The proof can be done by considering the different cases of the $\otimes$
multiplication in the left and right side of the inequality corresponding to the relation
of the first and last element of the strings corresponding to $a,\, b$ and $a\otimes b$. }
\item {If $\zeta (b)$, then the string corresponding to it will have mutually discernible
elements of the length of $b$. In $(a\otimes b)\otimes c$, $a\otimes b$ will be repeated
the same number of times as $b$ in $b\otimes c$. So the equality will hold.}
\item {If $a\,\otimes\,b\,=\,a\,\odot\,b$, then as $a$ is not $0$, it is necessary that
each object in $\overline{b}$ be discernible from its immediate predecessor (that is for
every admissible $k$, $(\overline{b}_k , \overline{b}_{k+1})\,\notin\,\approx $). So
$b\,\ominus_{12}\,b\, =\,0$ must hold (irrespective of the length of $b$). Note that for
an arbitrary element $x$, it need not happen that $x\,\ominus_{12}\,x\,=\,0$ in general. 

For the converse, note that if $b\,\ominus_{12}\,b\,=\,0$, then it will be necessary that
for every admissible $k$, $(\overline{b}_k , \overline{b}_{k+1})\,\notin\,\approx $. Both
the multiplications of $a$ with such a $b$ will be equal. 
}
\end{enumerate}
\end{proof}

In the next theorem, the compatibility of the different orders defined are considered: 

\begin{theorem}
In a FORIPCA $S$, the following hold:
\begin{enumerate}
\item {$(\forall a, b, c, e)(a\trianglelefteq b,\,c\trianglelefteq e
\,\longrightarrow\,(c\oplus a)\trianglelefteq (e\oplus b),\,(a\odot c)\trianglelefteq
(b\odot e))$,}
\item {$(\forall a, b, c, e)(a\leq_{\oplus} b, \, c\leq_{\oplus} e
\,\longrightarrow\,a\oplus c\,\trianglelefteq\,b\oplus e )$,}
\item {$(\forall a, b, c)(a\leq_{\oplus} b \,\longrightarrow\, c\oplus a \leq_{\oplus}
c\oplus b,\,\mathrm{or}\,\,a\oplus c\leq_{\oplus} b\oplus c )$,}
\item {$(\forall a, b)(a\leq_{\oplus} b, a\sqsubseteq b \,\longrightarrow\,
b\ominus_{1\vee 2} a \sqsubseteq b )$,}
\item {$(\forall a, b, c, e)(a\leq_{\odot} b, c\leq_{\odot} e \,\longrightarrow\,a\odot
c\trianglelefteq b\odot e ,\,a\odot c \sqsubseteq b\odot e  )$,}
\item {$(\forall a, b, c, e)(a\sqsubseteq b, c\subseteq e \,\longrightarrow\, a\oplus
c\sqsubseteq b\oplus e,\, a\odot c\sqsubseteq b\odot e,)$.}
\end{enumerate}
\end{theorem}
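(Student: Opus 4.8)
The plan is to split the six items into two families according to which order appears on the right-hand side: those asserting $\trianglelefteq$ reduce to elementary arithmetic in Meta-C on the length function $\nu$, while those asserting $\leq_{\oplus}$ or $\sqsubseteq$ reduce to the observation that the underlying string operations respect deletion and concatenation. First I would record the two length identities $\nu(x\oplus y)=\nu(x)+\nu(y)$ and $\nu(x\odot y)=\nu(x)\cdot\nu(y)$, immediate from the definitions of $\oplus$ (concatenation of the Meta-C strings) and $\odot$ ($\nu(y)$ copies of $\overline{x}$). Item (1) then follows at once: from $\nu(a)\le\nu(b)$ and $\nu(c)\le\nu(e)$ one gets $\nu(c)+\nu(a)\le\nu(e)+\nu(b)$ and $\nu(a)\nu(c)\le\nu(b)\nu(e)$ by monotonicity of $+$ and $\cdot$ on $\mathbf{N}$. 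Item (2) follows because $a\leq_{\oplus}b$ forces $\nu(b)=\nu(a)+\nu(z)\ge\nu(a)$, hence $a\trianglelefteq b$ (and likewise $c\trianglelefteq e$), after which (1) applies. The $\trianglelefteq$-conjunct of item (5) is the same computation using $\nu(b)=\nu(a)\nu(z)$, $\nu(e)=\nu(c)\nu(w)$ with $\nu(z),\nu(w)\ge 1$, the degenerate cases $z=0$ or $w=0$ handled separately.

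For the $\leq_{\oplus}$-assertion (3) I would simply rebracket using associativity of $\oplus$, established earlier for these operations: if $a\oplus z=b$ then $(c\oplus a)\oplus z=c\oplus b$, so $c\oplus a\leq_{\oplus}c\oplus b$; if $z\oplus a=b$ then $z\oplus(a\oplus c)=b\oplus c$, so $a\oplus c\leq_{\oplus}b\oplus c$; the stated disjunction is exactly this case split. For item (4), the key point is that $\ominus_{1\vee 2}$ and $\varrho_{k}$ are governed by the \emph{same} ``discernible from successor or predecessor'' criterion; hence whenever $b\ominus_{1\vee 2}a$ is defined, the substring it excises — the rightmost copy of $\overline{a}$ inside $\overline{b}$, which is present because $a\leq_{\oplus}b$ — can be removed one letter at a time by a finite sequence of $\varrho_{k}$'s, so the result is obtainable from $b$ by $\varrho_{k}$-steps, i.e. $b\ominus_{1\vee 2}a\sqsubseteq b$; the hypothesis $a\sqsubseteq b$ secures definedness of the operation.

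The remaining work, and the part I expect to be delicate, is the $\sqsubseteq$-monotonicity in items (5) and (6) (reading ``$c\subseteq e$'' in (6) as $c\sqsubseteq e$). The guiding idea is that a $\sqsubseteq$-reduction is carried out locally: if $\overline{a}$ arises from $\overline{b}$ by a schedule of $\varrho_{k}$-deletions and $\overline{c}$ from $\overline{e}$ likewise, then in $\overline{b\oplus e}=\overline{b}\,\overline{e}$ one runs the first schedule inside the $\overline{b}$-block and then the second inside the $\overline{e}$-block — legitimate since adjoining a successor to the last letter of $\overline{b}$ only weakens the $\varrho_{k}$-precondition — yielding $\overline{a}\,\overline{c}=\overline{a\oplus c}$, so $a\oplus c\sqsubseteq b\oplus e$. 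For $\odot$ the analogous argument runs blockwise over the $\nu(e)$ copies of $\overline{b}$ constituting $\overline{b\odot e}$: reduce each copy to $\overline{a}$, obtaining $\nu(e)$ copies of $\overline{a}$, then invoke $c\sqsubseteq e$ to excise the surplus copies down to $\nu(c)$ of them. The obstacle is precisely this last excision: deleting an entire copy of $\overline{a}$ by $\varrho_{k}$-moves requires a discernible letter at its boundary, so one must verify that the copy-boundaries inherited from $\overline{e}$ carry the same discernibility pattern as $\overline{e}$ itself — which is exactly what ``placed at each of the atomic $\overline{y}$ places'' in the definition of $\odot$ is meant to guarantee — and that $c\sqsubseteq e$ deletes only at positions where this is legal. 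Once this block-boundary bookkeeping is set up, items (5) and (6) for $\sqsubseteq$, and hence the theorem, follow.
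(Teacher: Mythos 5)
Your proposal follows essentially the same route as the paper's proof: the $\trianglelefteq$ claims are settled via the length identities $\nu(x\oplus y)=\nu(x)+\nu(y)$ and $\nu(x\odot y)=\nu(x)\cdot\nu(y)$, item (3) by the definition of $\leq_{\oplus}$ together with associativity, and the $\sqsubseteq$ claims in items (4)--(6) by exhibiting the smaller count as the outcome of a finite schedule of $\varrho_{k}$-deletions applied to the larger string. You differ only in explicitness — you spell out the rebracketing for (3) and flag the copy-boundary discernibility bookkeeping for (5) and (6) that the paper's proof simply asserts — so the argument is correct in the same sense and to at least the same standard as the original.
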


\begin{proof}
\begin{enumerate} 
\item {$\trianglelefteq$ corresponds to the meta-C interpretation by the length of
associated. So the $\odot$ and $\oplus$ part of the implication should be obvious. However
as $c\trianglelefteq e$ does not imply that the number of objects in $c$ that are
discernible from their predecessor are less than the corresponding number in $e$. So
$\otimes$ will not preserve $\trianglelefteq$. }
\item {In general if $a\leq_{\oplus} b, \, c\leq_{\oplus} e$, then it need not happen that
$a\oplus c\,\leq_{\oplus} \, b\oplus e$, because of the different possible relations
between the objects at the terminal and initial position of $\oplus$. But the gross length
of $\overline{a\oplus c}$ will be $\nu (a) + \nu (c)$. So the implication holds.}
\item { Follows from the definition of $\leq_{\oplus} $. }
\item { If $a\leq_{\oplus} b $, then there exists a $c$ such that $a\oplus c = b$ or
$c\oplus a = b$. In either  case, if $\overline{a}$ is obtainable from $\overline{b}$ by a
finite number of recursive applications of $\rho_{k}$, then it is necessary that these
operations must have been applied at one end of $\overline{b}$. This causes
$b\ominus_{1\vee 2} a \sqsubseteq b$.  }
\item {In $\overline{a\odot c}$ and $\overline{b\odot e}$, $\overline{a}$ is repeated
$\nu(c)$ times and $b$ is repeated $\nu{e}$ times. So the length of $\overline{a\odot c}$
will be less than that of $\overline{b\odot e}$. Further as  $a\leq_{\odot} b$ and
$c\leq_{\odot} e $, it will be possible to obtain $\overline{a\odot c}$ from
$\overline{b\odot e}$ through a finite number of applications of $\rho_{k}$ for different
values of $k$. So the result holds.}
\item {This follows from definition.}
\end{enumerate}
\end{proof}

Interestingly it is not possible to define a unary negation operator from any of the
subtraction-like  $\ominus$ operations and $\oplus$ in a consistent way. So the concept of
negative elements does not generalise well to the present contexts. The extent to which a
consistent definition is possible will be of natural interest.

\section{Further Directions: Conclusion}

The broad classes of problems that will be part of future work fall under:
\begin{enumerate}\renewcommand\labelenumi{\theenumi}
  \renewcommand{\theenumi}{(\roman{enumi})} 
 \item {Improvement of the representation of different classes of counts. One class of
questions also relate to using morphisms or automorphisms in a more streamlined way.}
 \item {Extension of the algebraic approach (and the concept of rough natural numbers) of
the last section to non IPC cases.  }
 \item {The contamination problem.}
 \item {Description of other semantics of general \textsf{RST} in terms of counts. This is
the basic program of representing all types of general rough semantics in terms of
counts.}
 \item {Can the division operation be eliminated in the general measures proposed? In
other words, a more natural extensions of fractions (rough rationals) would be of much
interest. Obviously this is part of the representation problem for counts.}
 \item {How do algorithms for reduct computation get affected by the generalised
measures?}
 \end{enumerate}
Apart from these a wide variety of combinatorial questions would be of natural interest.

In this research paper, the axiomatic theory of granules and granulation developed by the
present author has been extended to cover most types of general \textsf{RST} and new
methods of counting collections of well defined and indiscernible objects have been
integrated with it. These new methods of counting have been shown to be applicable to the
extension of fundamental measures of \textsf{RST}, rough inclusion measures and
consistency degrees of knowledge. The redefined measures possess more information than the
original measures and have more realistic orientation with respect to counting. The
connections with semantic domains, that are often never explicitly formulated, has been
brought into sharp focus through the approach.

The concept of rough naturals in the IPC perspective has also been developed in this
paper. The new program centred around the contamination problem proposed in this paper can
also be found in another forthcoming paper by the present author on axiomatic theory of
granules for \textsf{RST}. Here the direction is made far more clearer through the
integration with rough naturals.

By the mathematics of vagueness, I do not mean a blind transfer of the results of the
mathematics of exact contexts to inexact contexts. It is intended to incorporate vagueness
in more natural and amenable ways in the light of the contamination problem. An essential
part of this is achieved in this research paper.  

\section*{Acknowledgement}
I would like to extend my thanks to the referee(s) for useful comments and pointing
lapses in the use of the English language.

\bibliographystyle{splncs.bst}
\bibliography{newsem99999.bib}

\end{document}